\newtheorem{theorem}{Theorem }[section]
\newtheorem{lemma}[theorem]{Lemma}
\newtheorem*{remark}{Remark}
\newtheorem{corollary}[theorem]{Corollary}
\newtheorem{proposition}[theorem]{Proposition}
\newtheorem{conjecture}[theorem]{Conjecture}
\def\soc{\mathop{\mathrm {soc}}\nolimits}
\def\PG{\mathsf{PG}}
\def\eop{\hspace*{\fill}{\footnotesize$\blacksquare$}}
\newcommand{\Aut}{\mathrm{Aut}}
\newcommand{\id}{\mathrm{id}}
\newcommand{\cB}{\mathcal{B}}
\newcommand{\cC}{\mathcal{C}}
\newcommand{\cD}{\mathcal{D}}
\newcommand{\cL}{\mathcal{L}}
\newcommand{\mP}{\mathcal{P}}
\newcommand{\mS}{\mathcal{S}}
\newcommand{\cX}{\mathcal{X}}
\newcommand{\mL}{\mathcal{L}}
\newcommand{\mW}{\mathcal{W}}
\newcommand{\F}{\mathbb{F}}
\newcommand{\mH}{\mathcal{H}}
\newcommand{\wE}{\widetilde{E}}
\newcommand{\mQ}{\mathcal{Q}}
\newcommand{\PSL}{\mathsf{PSL}}
\newcommand{\PGL}{\mathsf{PGL}}
\newcommand{\PGU}{\mathsf{PGU}}
\newcommand{\PSU}{\mathsf{PSU}}
\newcommand{\PSp}{\mathsf{PSp}}
\newcommand{\Sp}{\mathsf{Sp}}
\newcommand{\SL}{\mathsf{SL}}
\newcommand{\GL}{\mathsf{GL}}
\newcommand{\GU}{\mathsf{GU}}
\newcommand{\SU}{\mathsf{SU}}
\newcommand{\Out}{\mathsf{Out}}
\newcommand{\diag}{\mathsf{diag}}
\newcommand{\la}{\langle}
\newcommand{\ra}{\rangle}
\newcommand{\cS}{\mathcal{S}}
\newcommand{\cP}{\mathcal{P}}
\DeclareFontFamily{U}{matha}{\hyphenchar\font45}
\DeclareFontShape{U}{matha}{m}{n}{
      <5> <6> <7> <8> <9> <10> gen * matha
      <10.95> matha10 <12> <14.4> <17.28> <20.74> <24.88> matha12
      }{}
\DeclareSymbolFont{matha}{U}{matha}{m}{n}
\DeclareMathSymbol{\flatcap}           {2}{matha}{"58}
\DeclareMathSymbol{\flatcup}           {2}{matha}{"59}
\title[Ealy's conjecture]{Ealy's conjecture in odd characteristic}
\subjclass[2000]{05B25, 20B10, 20B25, 20E42, 51E12.}
\author{Tao Feng}
\address{Zhejiang University, School of Mathematical Sciences, 866 Yuhangtang road, Hangzhou, China}
\email{tfeng@zju.edu.cn}
\author{Koen Thas}
\address{{Ghent University},
{Department of Mathematics, Informatics and Statistics},
{Krijgslaan 281, S9, B-9000 Ghent, Belgium}}
\email{koen.thas@gmail.com}
\begin{document}
\maketitle

\begin{abstract}
We solve Ealy's conjecture from 1977 by showing that for each odd prime $p$, a finite generalized quadrangle each point of which admits a central symmetry of order $p$, is either a classical symplectic quadrangle in dimension $3$, or a Hermitian quadrangle in dimension $3$ or $4$. As a byproduct, we vastly generalize the aforementioned result by determining the finite generalized quadrangles whose every point admits at least one nontrivial central symmetry.
 \end{abstract}

\medskip
\begin{tcolorbox}
\setcounter{tocdepth}{1}
\tableofcontents
\end{tcolorbox}

\section{Introduction}
A finite generalized quadrangle  is a spherical building of type $\mathsf{B}_2$. One of the outstanding combinatorial problems in low rank incidence geometry is the elegant but for the moment totally intractable conjecture which predicts that a finite generalized quadrangle whose every dual hyperbolic lines are maximal, can be identified with the $\F_q$-rational points and lines of a nonsingular orthogonal quadric in $\PG(4,q)$ or $\PG(5,q)$ for some prime power $q$. If the parameters of the generalized quadrangle are of type $(a,a)$, the result is actually old and well known, cf. \cite[section 5.2.1]{FGQ} but if this is not the case, no upshot results are known without extra assumptions on automorphism groups. The natural group-theoretical analogue says (through invoking the appropriate Moufang conditions) that all finite generalized quadrangles whose every line is an {\em axis of symmetry}, are isomorphic to one of the aforementioned orthogonal quadrangles. Generalized quadrangles that satisfy this Moufang condition automatically have the property that all dual hyperbolic lines are maximal, but the extra group-theoretical knowledge one now disposes of yields a lot of algebraic machinery which one can use to understand the underlying geometry at a deeper level. Still, it took more than 40 years to develop a {\em geometric} approach to solve the group-theoretical version of the problem, cf. \cite{ACFGQ, SFGQ, JATMOUF}, and in particular \cite{ACFGQ} for more details on the history of this problem. Upon using deeper group-theoretical results, earlier approaches were known that mostly lacked the geometric information the more recent approaches came with. Fong and Seitz famously classified finite groups with a split BN-pair of rank $2$ by showing that such groups are extensions of rank $2$ adjoint Chevalley groups \cite{FS1, FS2} --- a result which is foundational in the first generation proof of the Classification of Finite Simple Groups (CFSG) --- and in geometrical terms, this equates to a complete classification of finite Moufang polygons. The quadrangular case, which is the most difficult one, indeed comprises the finite quadrangles all of whose lines are axes of symmetry. Later, Tent and Van Maldeghem \cite{KTHVM} showed that also in the infinite case Moufang polygons and spilt BN-pairs of rank $2$ are two guises of the same mathematical story. Around about the same time that \cite{FS1,FS2} was published, Tits produced a preprint \cite{JTQM} in which he classified (not necessarily finite) generalized quadrangles each line of which is an axis of symmetry, and each point of which is a center of symmetry. In the finite case, such quadrangles are of type $(a,a)$, and since all hyperbolic lines and dual hyperbolic lines are maximal, a classification was already known. Much later, the second author obtained a Lenz-Barlotti classification of finite generalized quadrangles based on the possible configurations of axes of symmetry \cite{SFGQ}, in which understanding the local behavior of quadrangles with symmetry was decisive. The question remains how much the {\em global} results can be stretched. In \cite{KTirred} for example, it was shown that if a finite generalized quadrangle has a strongly irreducible automorphism group, at least one axis of symmetry is enough to conclude that the quadrangle arises as the point-line geometry of an orthogonal quadric.

In the 1970s and 1980s, Ealy \cite{Ealy} and Walker \cite{walker77} developed much more general variations of the aforementioned global results through deep group-theoretical means. They imposed extra arithmetic properties on the size of certain automorphism groups in their respective work. In \cite{walker77} Walker classified geometrically irreducible finite generalized quadrangles which have a line with at least two nontrivial axial symmetries and a point with at least two nontrivial central symmetries, by showing that such a quadrangle $\Upgamma$ is isomorphic to a symplectic quadrangle $\mW(2^a)$ for some positive integer $a$, and moreover obtaining that the group generated by the axial and central symmetries of $\Upgamma$ must contain the simple group $\mathsf{PSp}_4(2^a)$. Walker also considered other finite buildings of rank $2$ in \cite{walker82,walker83}, but he turned to extra assumptions in order to deduce the appropriate classification results. Note the great generality of Walker's result compared to Tits's approach in the general case!

Ealy vastly generalized Walker's seminal result of \cite{walker77} by completely classifying finite generalized quadrangles in which each point admits at least one central symmetry of order $2$ | and hence only asking ``half" of the assumptions of Walker |  by showing that such quadrangles are isomorphic to either a symplectic quadrangle $\mW(2^a)$, a Hermitian quadrangle $\mH(3,2^{2a})$ or a Hermitian quadrangle $\mH(4,2^{2a})$ for some positive integer $a$. This confirmed a conjecture of Shult \cite[section 9.8]{FGQ}  As in Walker \cite{walker77}, Ealy also obtained accompanying group-theoretical counterparts for simple groups.  Ealy's result is truly remarkable: in case of Moufang quadrangles, one assumes that every root {\em and} dual root gives rise to a maximal group of (dual) root-elations, whereas Ealy only starts with (less than) the bare necessities: one single nontrivial involutory point-symmetry for every point is sufficient to obtain the whole ball of wax!  The innovative  proof comprises Ealy's entire 1977 PhD thesis \cite{Ealy}, and uses a blend of deep group-theoretical results involving involutions, such as Aschbacher's classification of finite groups generated by odd transpositions \cite{oddTrans}. Unfortunately no such results are available in the ``odd" case, and consequently the odd case is still open after almost 50 years. Ealy conjectured in \cite{Ealy} that for each odd prime $p$, a finite generalized quadrangle in which every point admits a group of central symmetries the size of which is divisible by $p$, is classical and arises from a rank $2$ polar space of symplectic or Hermitian type. (So in the odd case, for each odd prime $p$ we have a corresponding Ealy conjecture.) 

Solving Ealy's grand conjecture for each prime is the first main goal of the present paper:\\

\texttt{First Main Result.}\quad
Let $\mS$ be a finite thick generalized quadrangle, and let $r$ be any prime. If the order of the full group of symmetries about each point is divisible by $r$, then $\mS$ is isomorphic to one of $\mW(3,q) $, $\mH(3,q^2)$ and $\mH(4,q^2)$, where $q$ is a power of $r$.\\

In fact, we will do much more: after obtaining Ealy's classical  conjecture, we will classify all finite generalized quadrangles each point of which admits {\em at least one} nontrivial central symmetry: \\

\texttt{Second Main Result.}\quad
Let $\mS$ be a finite thick generalized quadrangle. If each point is the center of a nontrivial symmetry, then $\mS$ is isomorphic to one of $\mW(3,q) $, $\mH(3,q^2)$ and $\mH(4,q^2)$, where $q$ is a prime  power.\\

Our approach contains both group-theoretical, incidence-geometric techniques, and we need to invoke the Classification of Finite Simple Groups at various points. To our great surprise, the reduction of the ``general Ealy problem" to our first main result only requires purely synthetical geometric arguments, and no deep group theory is needed whatsoever. Our main results have applications in the study of locally primitive generalized quadrangles, which we will discuss in the last section.

\medskip
{\bf The present paper}.\quad
In Section \ref{secnot}, we fix the notation for the present paper, and describe Ealy's conjecture in detail. In Section \ref{subsec_prel}, we make a number of preliminary deductions about the structure of the automorphism group of the quadrangle $\cS$ (or rather, the subgroup $G$ generated by the point-symmetries), and show that its socle is a simple group of Lie type.
In Section \ref{secLie} and Section \ref{secclass} --- the longest and most technical sections of this paper --- we handle the cases in which the socle of $G$ is isomorphic to an exceptional group of Lie type, or a classical group. We exclude the exceptional cases through analyzing the possible pairs $(G,G_p)$, where $G_p$ is the stabilizer of a point $p$ in $G$ (which is shown to be a local maximal subgroup of $G$), using a number of arithmetical properties and estimates of both the groups and the underlying geometry. Once the exceptional cases are excluded, we show that if the socle of $G$ is a classical group, the possible groups give rise to the generalized quadrangles in the First Main Result.  (If $G$ is unitary, symplectic or orthogonal, we use detailed knowledge of the associated nondegenerate $\Upomega$-invariant Hermitian, alternating or quadratic form defined on the corresponding natural module $V$ to obtain the desired result, where $\Upomega$ is the quasisimple covering group of the socle of $G$ in $\GL(V)$.) 
This finishes the proof of Ealy's conjecture. In Section \ref{secgen}, we generalize Ealy's conjecture and prove our Second Main Result. The nature of the proof could not be more different than the proof of the First Main Result: we do {\em not} invoke CFSG, nor any other advanced group-theoretical machinery. Rather, we realize a number of geometric properties which eventually allow us to obtain that $G$ acts transitively on the point set of the generalized quadrangle, and that is sufficient to reduce it to the First Main Result. Finally, in Section \ref{sec_Kanto} we consider an application of our main results to the study of locally primitive generalized quadrangles and describe an approach to Kantor's conjecture for such quadrangles by using the Wielandt-Thompson type theorems in \cite{vanBon}. In an appendix, we recall and prove some (unpublished) results of Ealy's thesis for the convenience of the reader.

\medskip
{\bf Acknowledgment}.\quad
Part of the research presented in this paper was carried out while the second author visited Zhejiang University, whose hospitality he gratefully acknowledges.   \\

\medskip
\section{Description of the problem and notation}
\label{secnot}

Suppose that $\cS=(\cP,\cL)$ is a generalized quadrangle of order $(s,t)$, where $s,t>1$. For a point $u\in\cP$, we write $u^\perp=\{x\in\cP:x\sim u\}$ and note that $u \sim u$.  For two distinct points $u,v$, we define $\{u,v\}^\perp=u^\perp\cap v^\perp$ and define $\{u,v\}^{\perp\!\perp}$ to be the set of points that are collinear with all points of $\{u,v\}^\perp$.
The set $\{ u, v \}^{\perp\!\perp}$ is called a {\em hyperbolic line} if $u$ is not collinear with $v$, and if dually the lines $A$ and $B$ are nonconcurrent, then $\{ A, B \}^{\perp\!\perp}$ is a {\em dual hyperbolic line}.
\begin{lemma}\label{lem_1psbound}
If $\cS=(\cP,\cL)$ is a generalized quadrangle of order $(s,t)$, then
\[
  |\cP|^{1/4}<1+s<|\cP|^{2/5}.
\]
\end{lemma}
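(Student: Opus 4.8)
The plan is to combine the standard point-count for a generalized quadrangle with Higman's inequalities. Recall that a thick generalized quadrangle of order $(s,t)$ has exactly $|\cP|=(s+1)(st+1)$ points, and that, since $s,t>1$, Higman's inequalities hold in both forms, $t\le s^2$ and $s\le t^2$ (the latter being equivalent to $t\ge\sqrt{s}$). These two inequalities are exactly the inputs that produce the two exponents $\tfrac14$ and $\tfrac25$: the left-hand bound comes from the extreme $t=s^2$, and the right-hand bound from the opposite extreme $t=\sqrt{s}$.

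For the left-hand inequality I would feed the upper bound $t\le s^2$ into the point-count:
\[
|\cP|=(s+1)(st+1)\le (s+1)(s^3+1)<(s+1)(s+1)^3=(s+1)^4,
\]
the last strict step being merely $s^3+1<s^3+3s^2+3s+1=(s+1)^3$, valid for $s>1$. Taking fourth roots gives $|\cP|^{1/4}<1+s$. This step genuinely requires Higman's inequality: with no upper bound on $t$, the quantity $|\cP|$ is unbounded for fixed $s$, so the estimate cannot be purely formal.

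For the right-hand inequality the relevant input is the dual bound $s\le t^2$, i.e. $t\ge\sqrt{s}$. Dropping the additive units and inserting this gives
\[
|\cP|=(s+1)(st+1)> s\cdot st=s^2t\ge s^2\sqrt{s}=s^{5/2},
\]
so that $s<|\cP|^{2/5}$, which is the engine of the upper estimate. I expect the main obstacle to lie exactly here: the bound $s\le t^2$ is \emph{sharp}, with equality attained by the Hermitian quadrangle $\mH(3,q^2)$ (where $t=\sqrt{s}=q$), so there is no slack beyond the leading term and the lower-order contributions must be tracked with care to pass from the clean inequality $s<|\cP|^{2/5}$ to the stated two-sided estimate. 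Thus the exponent $\tfrac25$ is forced by the extremal family $t=\sqrt{s}$ precisely as $\tfrac14$ is forced by $t=s^2$; the two Higman inequalities pin down the whole estimate, and the symplectic and Hermitian quadrangles $\mW(3,q)$, $\mH(3,q^2)$, $\mH(4,q^2)$ of the main theorems are exactly the configurations sitting at or near these boundary cases.
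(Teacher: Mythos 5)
Your left-hand bound is correct and essentially identical to the paper's own argument: $t\le s^2$ gives $1+st\le 1+s^3<(1+s)^3$, whence $|\cP|<(1+s)^4$ and $|\cP|^{1/4}<1+s$. The genuine gap is on the right-hand side: what you actually prove is $s<|\cP|^{2/5}$, not the stated $1+s<|\cP|^{2/5}$, and the obstacle you flag cannot be overcome by ``tracking lower-order contributions with care'' --- the upgrade is impossible, because the stated inequality is false exactly in the extremal case $s=t^2$ that you single out. Writing $v=|\cP|=(1+s)(1+st)$, the inequality $1+s<v^{2/5}$ is equivalent to $(1+s)^3<(1+st)^2$, and at $s=t^2$ one computes $(1+t^2)^3-(1+t^3)^2=t^2(3t^2-2t+3)>0$ for every $t$. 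Concretely, $\mH(3,4)$ has order $(4,2)$ and $v=45$, with $(1+s)^5=3125>2025=v^2$; likewise $\mW(3,2)$ has order $(2,2)$ and $v=15$, with $3^5=243>225=v^2$. So the right-hand inequality of Lemma \ref{lem_1psbound} fails for existing classical quadrangles (indeed for the whole family $\mH(3,q^2)$ of order $(q^2,q)$), and no proof of it can exist.

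You should also know that the paper's own proof breaks at precisely the step you were worried about: from $s\le(st)^{2/3}$ it concludes $v<(1+st)^{5/3}$, which amounts to $1+s<(1+st)^{2/3}$; but $s\le(st)^{2/3}$ only yields $1+s\le 1+(st)^{2/3}$, and $1+x^{2/3}>(1+x)^{2/3}$ for $x>0$, so the deduction is a non sequitur, and by the computation above its conclusion is genuinely false when $s=t^2$. The damage to the paper is contained: every subsequent invocation of Lemma \ref{lem_1psbound} (the lower bounds $b(q)$ on $1+s$ in the tables, and bounds of the form $|\cP|<(1+s)^4$) uses only the left-hand inequality, never the right-hand one. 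Your weaker but correct statement $s<|\cP|^{2/5}$, obtained from $v>s\cdot st\ge s^{5/2}$, is exactly what the right-hand side should be corrected to if it is ever needed. In short: your proof of the half of the lemma that the paper actually uses is complete and agrees with the paper's; your suspicion about the other half is vindicated, since that half is not provable and the paper's derivation of it is erroneous.
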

\begin{proof}
Write $v=|\cP|$, so that $v=(1+s)(1+st)$. We have $s\le t^2$ and $t\le s^2$ by \cite{slet2}. Since $t\le s^2$, we have $1+st<(1+s)^{3}$ which gives the bound $1+s>v^{1/4}$. Since $s\le t^2$, we have $s\le (st)^{2/3}$ and so $v<(1+st)^{5/3}$. It follows that $1+st>v^{3/5}$, and correspondingly $1+s<v^{2/5}$.  This completes the proof.
\end{proof}

Let $\cP',\cL'$ be subsets of $\cP,\cL$ respectively. We say that $(\cP',\cL')$ is a {\em substructure} of $\cS$, if the following properties hold: (i) if $x,y$ are two collinear points in $\cP'$, then the line they determine is in $\cL'$,  (ii) if $\ell,\ell'$ are two concurrent lines in $\cL'$, then their intersection is in $\cP'$, (iii) if $(x,\ell)$ is a non-incident point-line pair, then the unique line through $x$ that intersects $\ell$ is in $\cL'$. By \cite[Theorem~2.1]{walker77}, a substructure is one of the following types:
\begin{enumerate}
  \item[(A)]$\cL'=\emptyset$, and $\cP'$ is a set of pairwise noncollinear points,
  \item[(B)]$\cP'=\emptyset$, and $\cL'$ is a set of pairwise noncoccurent lines,
  \item[(C)]$\cP'$ contains a point $p$ such that $\cP'\subseteq p^\perp$ and all lines of $\cL'$ are incident with $p$,
  \item[(C${}^\textup{d}$)]$\cL'$ contains a line $\ell$ such that $\cL'\subseteq \ell^\perp$ and all points of $\cP'$ are incident with $\ell$,
  \item[(D)]a grid,
  \item[(D${}^\textup{d}$)]a dual grid,
  \item[(E)] a subquadrangle.
\end{enumerate}
An {\em ovoid} of the GQ $\cS$ is a set of $1+st$ points that are pairwise noncollinear.
\begin{proposition}[\cite{FGQ}, 2.2.2 (dual version)] \label{prop_fgq222}
Suppose that $\cS'$ is a subquadrangle of order $(s',t)$ in the thick generalized quadrangle $\cS$ of order $(s,t)$, $s'<s$. Then we have the following:
\begin{itemize}
\item[{\rm (1)}] $s\ge t$, and $s=t$ implies $s'=1$.
\item[{\rm (2)}] If $s'>1$, then $\sqrt{s}\le t'\le s$ and $t^{3/2}\le s\le t^2$.
\item[{\rm (3)}] If $s=t^{3/2}$ and $s'>1$, then $ s'=\sqrt{t}$.
\item[{\rm (4)}] If $\cS'$ contains a proper subGQ $\cS''$ of order $(s'',t)$, then $s''=1$, $s'=t$ and $s=t^2$.
\end{itemize}
\end{proposition}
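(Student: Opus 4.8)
\medskip
\noindent\textbf{Proof proposal.}\quad
The plan is to deduce all four parts from a single counting estimate, and the one place where the hypothesis that $\cS'$ shares its \emph{second} parameter $t$ with $\cS$ will enter is the following observation: since every point of $\cP'$ is incident with $t+1$ lines of $\cL'$ and with exactly $t+1$ lines of $\cS$ in total, \emph{every line of $\cS$ through a point of $\cP'$ already lies in $\cL'$}. In particular, if an external point $x\in\cP\setminus\cP'$ is collinear with some $p\in\cP'$, then the line $xp$ belongs to $\cL'$. Hence, writing $\alpha_x$ for the number of lines of $\cL'$ incident with $x$, the number of points of $\cP'$ collinear with $x$ is exactly $\alpha_x(s'+1)$.

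The crucial step, which I expect to be the main obstacle, is to show that $\alpha_x\le 1$ for every external point $x$. Suppose instead that $x$ lay on two distinct lines $\ell,m\in\cL'$, and choose $p\in\ell\cap\cP'$, $q\in m\cap\cP'$ with $p,q\ne x$. Then $p\not\sim q$, for otherwise $x,p,q$ would form a triangle, and so $\{p,q\}^\perp$ has exactly $t+1$ points in $\cS$. But $p,q$ are non-collinear points of the subquadrangle $\cS'$ of order $(s',t)$, so they already possess $t+1$ common neighbours inside $\cP'$; as these sit among the $t+1$ common neighbours in $\cS$, the two sets coincide and $\{p,q\}^\perp\subseteq\cP'$. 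Since $x\in\{p,q\}^\perp$ while $x\notin\cP'$, this is a contradiction. The heart of the proof is exactly this perp-saturation argument; the remainder is arithmetic.

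Granting $\alpha_x\in\{0,1\}$, I count external incidences. Each of the $|\cL'|=(t+1)(s't+1)$ lines of $\cL'$ carries $s-s'$ external points, so the number of external points with $\alpha_x=1$ equals $(t+1)(s't+1)(s-s')$, and this cannot exceed the total number $|\cP|-|\cP'|$ of external points. A short computation (dividing by $s-s'>0$ and simplifying) reduces this to the master inequality
\[
  s\ \ge\ s'\,t. \qquad (\star)
\]
I now invoke the Higman inequalities $s\le t^2$, $t\le s^2$ for the thick quadrangle $\cS$ and, when $s'>1$, the inequalities $t\le s'^2$, $s'\le t^2$ for the thick subquadrangle $\cS'$, all from \cite{slet2}. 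From $(\star)$ with $s'\ge 1$ we get $s\ge t$, and $s=t$ forces $s'\le 1$, hence $s'=1$: this is (1). If $s'>1$ then $s'\ge\sqrt t$, so $(\star)$ gives $s\ge s't\ge t^{3/2}$, while $s\le t^2$ is ambient, and $(\star)$ with $s\le t^2$ yields $s'\le s/t\le t$; together with $s'\ge\sqrt t$ this gives $\sqrt t\le s'\le t$ and $t^{3/2}\le s\le t^2$, which is (2). If moreover $s=t^{3/2}$, then $s'\le s/t=\sqrt t$ while $s'\ge\sqrt t$, forcing $s'=\sqrt t$, which is (3).

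Finally, for (4) I apply the conclusions already obtained to each link of the chain $\cS''\subset\cS'\subset\cS$. Applying $(\star)$ to $\cS''\subset\cS'$ gives $s''t\le s'$, and applying it to $\cS'\subset\cS$ gives $s't\le s\le t^2$, whence $s'\le t$; therefore $s''t\le s'\le t$, forcing $s''=1$. With $s''=1$, part (1) applied to $\cS''\subset\cS'$ yields $s'\ge t$, so $s'=t$; then $(\star)$ for $\cS'\subset\cS$ gives $s\ge s't=t^2$, and combined with $s\le t^2$ we conclude $s=t^2$, as claimed.
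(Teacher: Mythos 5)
The paper itself contains no proof of this proposition: it is quoted (in dualized form) from Payne--Thas \cite{FGQ}, 2.2.2, so there is no in-paper argument to compare against. Your proof is correct, and it is essentially a reconstruction of the classical argument in \cite{FGQ}: your perp-saturation step (an external point $x$ cannot lie on two lines of $\cL'$, because the $t+1$ common neighbours in $\cS'$ of two non-collinear points $p,q\in\cP'$ already exhaust $\{p,q\}^{\perp}$ in $\cS$) is precisely where the shared parameter $t$ enters, and the double count of incidences between external points and lines of $\cL'$ then yields your master inequality $(\star)\colon s\ge s't$, which is the equal-parameter case of \cite{FGQ}, 2.2.1 (dual version); parts (1)--(4) then follow from $(\star)$ and Higman's inequalities exactly as in the book, including the chain argument for (4). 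Two remarks. First, in part (2) you prove $\sqrt{t}\le s'\le t$ together with $t^{3/2}\le s\le t^2$; this is the correct dualization, whereas the statement as printed in the paper, $\sqrt{s}\le t'\le s$, refers to a parameter $t'$ that does not exist in the dual setting and is evidently a transcription slip --- so your version is the one that should be (and is) proved. Second, your use of $(\star)$ and of part (1) for the pair $\cS''\subset\cS'$ in (4) is legitimate: properness forces $s''<s'$ since the two quadrangles share the parameter $t$, and your derivation of $(\star)$ nowhere requires thickness of the ambient quadrangle, only that both structures are generalized quadrangles with the same second parameter.
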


If $\uptheta$ is a collineation of $\cS$, then we write $\cP_\uptheta$ and $\cL_\uptheta$ for the set of fixed points and fixed lines of $\uptheta$ respectively. We say that $\cS_\uptheta=(\cP_\uptheta,\cL_\uptheta)$ is the fixed substructure of $\uptheta$. A {\em symmetry} $\uptheta$ about a point $p$ is an automorphism of $\cS$ that fixes all points collinear with $p$, and we call $p$ the {\em center} of $\uptheta$. We refer to a symmetry about a point as a {\em central symmetry}, or a {\em point-symmetry}.\medskip

An automorphism group $G$ of a generalized quadrangle $\cS$ is {\em geometrically irreducible} if the only substructure it leaves invariant is $(\emptyset,\emptyset)$ and $\cS$.

\begin{theorem}[Walker \cite{walker77}]
\label{thm_Walker77}
Let $G$ be an automorphism group of the generalized quadrangle $\cS$.
\begin{enumerate}
  \item Suppose that $G$ is geometrically irreducible. If $G$ contains at least $2$ nontrivial symmetries about some point $p$ and at least $2$ nontrivial symmetries about some line $\ell$, then $\cS$ is isomorphic to $\mW(3,q)$ and $G$ contains $\PSp_4(q)$, where $q=2^n$ for some $n\ge 2$.
  \item Suppose that $G$ leaves no point or line of $\cS$ fixed.  If there is a point $p$ and a line $\ell$ such that $G$ contains at least $2$ nontrivial symmetries with center $p$ and at least $2$ nontrivial symmetries with axis $\ell$, then $\cS$ contains a $G$-invariant subquadrangle $\cS'=\mW(3,q)$ and the restriction of $G$ to $\cS'$ contains $\PSp_4(q)$, where $q=2^n$ for some $n\ge 2$.
\end{enumerate}
\end{theorem}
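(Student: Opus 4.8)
The plan is to exploit the interplay between the group $S_p$ of central symmetries with centre $p$ and the group $S_\ell$ of axial symmetries with axis $\ell$, leveraging the global hypotheses on $G$ (geometric irreducibility in part (1), absence of a fixed point or line in part (2)) to force the symplectic structure. I would first record the local consequences of a single symmetry. A nontrivial symmetry $\uptheta$ with centre $p$ fixes $p^\perp$ pointwise, hence fixes every line through $p$ and stabilizes each span $\{p,x\}^{\perp\!\perp}$ for $x\not\sim p$; one checks that $S_p$ acts semiregularly on $\cP\setminus p^\perp$, since a nontrivial element fixing some $y\not\sim p$ would fix $y$, all lines through $y$ (each meeting $p^\perp$ in a fixed point), and $\{p,y\}^\perp$, forcing triviality. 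Thus $|S_p|$ divides $t$ and the $S_p$-orbit of $x$ lies in $\{p,x\}^{\perp\!\perp}\setminus\{p\}$; dually for $S_\ell$. The hypotheses $|S_p|,|S_\ell|\ge 2$ therefore already enlarge the relevant spans and traces.

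The decisive step is globalization via the group action. The set of all central symmetries in $G$ is closed under $G$-conjugation, so $N:=\langle\text{central symmetries}\rangle\trianglelefteq G$ and $\uptheta^g$ is a symmetry with centre $p^g$; likewise for axial symmetries. Using Walker's substructure trichotomy (\cite[Theorem~2.1]{walker77}), I would show that the substructure generated by the centres and axes is $G$-invariant and, being nonempty and (by thickness together with the simultaneous presence of a centre and an axis) not of the degenerate types (A)--(D${}^{\mathrm d}$), must be either all of $\cS$ in case (1), invoking geometric irreducibility, or a proper $G$-invariant subquadrangle $\cS'$ in case (2). On this invariant structure every point becomes a centre of symmetry and every line an axis of symmetry, so $S_p$ and $S_\ell$ attain their full orders; in particular every point is a regular point and every line a regular line.

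It then remains to identify the structure and extract the characteristic. All points being regular together with all lines being regular yields the complementary inequalities and forces $s=t=:q$, whereupon Payne's characterization of the symplectic quadrangle (\cite{FGQ}) gives $\cS\cong\mW(3,q)$ (respectively $\cS'\cong\mW(3,q)$ in case (2)). The presence of an axis of symmetry makes lines regular, and in $\mW(3,q)$ this occurs precisely when $q$ is even: in odd characteristic $\mW(3,q)$ has regular points but antiregular lines, so no line can be an axis of symmetry. Hence $q=2^n$, with $n\ge 2$ forced by thickness. Finally, the root groups realized by $S_p$, $S_\ell$ and their $G$-conjugates generate the little projective group of $\mW(3,q)$, which one recognizes as containing $\PSp_4(q)$; in case (2) this is the group $G$ restricted to $\cS'$.

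I expect the globalization to be the main obstacle: turning the bare existence of two symmetries at a single point and a single line into the homogeneous, Moufang-type symmetry structure needed before any identification is possible, which is exactly where the substructure classification and the normality of the symmetry-generated subgroup must do real work. Forcing $q$ to be even is the sharpest single point, and it is precisely the rigidity whose failure in odd characteristic motivates the present paper's treatment of Ealy's conjecture; but the heaviest machinery lies in the group-theoretic recognition of $\PSp_4(q)$ from the generated symmetry group, on which Walker's original argument fundamentally rests.
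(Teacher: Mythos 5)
First, note that the paper does not prove this statement at all: it is quoted verbatim as Walker's theorem with a citation to \cite{walker77}, so your proposal has to stand on its own merits against Walker's original argument. It does not, because the step you yourself flag as ``the main obstacle'' --- globalization --- is not merely hard in your outline, it is unjustified in a way that no amount of elaboration along the proposed lines can repair. Your mechanism is: the centres and axes generate a $G$-invariant substructure, geometric irreducibility (resp.\ absence of fixed elements) identifies it with $\cS$ (resp.\ a subquadrangle $\cS'$), and then ``on this invariant structure every point becomes a centre of symmetry and every line an axis of symmetry, so $S_p$ and $S_\ell$ attain their full orders.'' Neither half of that sentence follows. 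A point lying in the substructure \emph{generated} by the centres is obtained by iterating the closure operations (i)--(iii), and these operations confer no symmetries whatsoever on the new points and lines they adjoin; so ``the centres and axes generate $\cS$'' is far weaker than ``every point is a centre and every line is an axis.'' Moreover, even at the original point $p$ the hypothesis gives only $|S_p|\ge 2$, and a symmetry group of order $m$ forces only $\vert\{p,x\}^{\perp\!\perp}\vert\ge m+1$ (the paper's inequality \eqref{symineq}); regularity of $p$ needs the full order $t$, and upgrading ``at least two symmetries'' to ``full symmetry groups everywhere'' is essentially equivalent to the conclusion $G\supseteq\PSp_4(q)$ itself. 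Your final recognition step inherits the same circularity: you invoke ``the root groups realized by $S_p$, $S_\ell$ and their $G$-conjugates,'' but full root groups are exactly what was never established.

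Once the globalization claim is removed, the rest of the outline (regular points and lines force $s=t$; all points regular gives $\mW(3,q)$ by \cite[5.2.1]{FGQ}; regular lines force $q$ even; thickness forces $n\ge 2$) is correct but idle, since its hypotheses are never reached. Walker's actual proof runs in the opposite direction: one studies the group $N$ generated by the given symmetries and their conjugates, establishes transitivity properties of $N$ on suitable point and line sets, and then applies heavy group-theoretic recognition results to identify a subgroup $\PSp_4(q)$, $q=2^n$, from which the geometric identification of $\cS$ (or of the $N$-invariant subquadrangle $\cS'$) is deduced afterwards --- this is the same genre of argument (cf.\ Aschbacher's odd-transposition theorem in Ealy's extension) whose unavailability in odd characteristic is the entire reason the present paper exists. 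In short: the geometry-first strategy you propose would need the Moufang-type homogeneity as input, and nothing in the hypotheses supplies it; the group-theoretic work you defer to the end is where the theorem actually lives.
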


The {\em socle} of a group is the subgroup generated by all its minimal normal subgroups.

\begin{theorem}\label{thm_walker4.1}
Let $G$ be a geometrically irreducible automorphism group of the thick generalized quadrangle $\cS=(\cP,\cL)$. Assume that the full group $\widetilde{E}(p)$ of symmetries at each point is nontrivial, and let $S=\la \widetilde{E}(p):p\in\cP\ra$. Then  $S,G$ are almost simple groups with the same socle.
\end{theorem}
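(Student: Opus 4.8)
The plan is to reduce the entire statement to the single assertion that $T:=\soc(S)$ is nonabelian simple, and then to establish that assertion by excluding abelian and product-type composition of the socle. First I would record three structural facts. (i) Since conjugation by $g\in G$ carries a symmetry about $p$ to a symmetry about $g(p)$, we have $g\widetilde{E}(p)g^{-1}=\widetilde{E}(g(p))$, so $G$ permutes the generating subgroups of $S$ and hence $S\trianglelefteq G$. (ii) A nontrivial central symmetry has a unique center (its fixed points are exactly $p^\perp$); therefore if $g\in G$ commutes with a nontrivial symmetry $\theta$ of center $p$, then $g\theta g^{-1}=\theta$ forces $g(p)=p$. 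Applying this to every point (each is a center, by hypothesis) shows that $C_G(S)$ fixes every point, whence $C_G(S)=1$ and in particular $Z(S)=1$. (iii) The fixed substructure $\cS_N=(\cP_N,\cL_N)$ of any $1\neq N\trianglelefteq G$ is a $G$-invariant substructure, so by geometric irreducibility it is trivial; thus no nontrivial normal subgroup of $G$ fixes a point or a line. Moreover, for a minimal normal subgroup $M$ of $G$ one has $M\cap S\trianglelefteq G$ nontrivial (else $[M,S]\le M\cap S=1$ gives $M\le C_G(S)=1$), so minimality yields $M\le S$; hence $\soc(G)\le S$.

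Granting that $T=\soc(S)$ is nonabelian simple, the theorem follows formally. As $T$ is characteristic in $S\trianglelefteq G$, we have $T\trianglelefteq G$. Since $T$ is then the unique minimal normal subgroup of $S$, a nontrivial $C_S(T)\trianglelefteq S$ would contain $T$ and force $T$ abelian; so $C_S(T)=1$, giving $T\le S\le\Aut(T)$, i.e. $S$ is almost simple with socle $T$. Finally $C_G(T)\trianglelefteq G$ meets $S$ in $C_S(T)=1$, and both being normal gives $[C_G(T),S]\le C_G(T)\cap S=1$, so $C_G(T)\le C_G(S)=1$. Hence $T\le G\le\Aut(T)$ and $G$ is almost simple with $\soc(G)=T=\soc(S)$. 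Thus the whole content is the simplicity of $\soc(S)$.

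To rule out abelian composition it suffices to show $G$ has no nontrivial abelian normal subgroup $A$ (the abelian part of $\soc(S)$ is characteristic in $S$, hence normal in $G$, and by (iii) lies in $S$). The easy sub-case is when some $\widetilde{E}(p)$ meets $A$: taking a nontrivial symmetry $\theta\in A$ of center $p$, every $G$-conjugate $g\theta g^{-1}\in A$ is a symmetry commuting with $\theta$, so by (ii) it fixes $p$; thus each $g(p)\in p^\perp$, and running this over the orbit shows $p^G$ is a clique in the collinearity graph. Since a generalized quadrangle contains no triangles, $p^G$ lies on a single line, so $G$ fixes that line or the point $p$, contradicting geometric irreducibility. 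For a general abelian $A$ I would first upgrade to transitivity, indeed primitivity, of $S$ on $\cP$ (using the generation by symmetries together with (iii)) to force a minimal such $A$ to act regularly; writing $\cP\cong A$, a symmetry $\theta\in G_p$ fixes exactly the subgroup $C_A(\theta)$, so $|\cP_\theta|=|p^\perp|=1+s+st$ gives $|A:C_A(\theta)|=(1+s)(1+st)/(1+s+st)$. As $1+s+st$ is coprime to $s$ and divides $(1+s)(1+st)-(1+s+st)=s^2t$, it would divide $t$, which is impossible since $1+s+st>t$. Hence no abelian $A$ survives and $\soc(S)$ is a direct product of nonabelian simple groups.

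It remains to show this product is a single factor, and this is the step I expect to be the main obstacle. Suppose $\soc(S)=T_1\times\cdots\times T_k$ with $k\ge 2$ and $G$ permuting the $T_i$. The strategy is to prove that each symmetry normalizes every $T_i$, so that each $T_i\trianglelefteq S$, and then to exploit (ii): if a nontrivial symmetry $\theta$ of center $p$ localizes into a single factor $T_1$, its complement $\prod_{i\neq 1}T_i$ centralizes $\theta$ and hence fixes $p$, producing a nontrivial normal subgroup with nonempty fixed structure and contradicting (iii). The localization of symmetries into single factors is forced by the geometry of $p^\perp$: a symmetry fixes the large substructure $p^\perp$ pointwise, and this cannot be spread diagonally across several factors. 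Excluding the diagonal- and product-type configurations is where the rigidity of the quadrangle — triangle-freeness together with the parameter bounds of Lemma \ref{lem_1psbound} and Proposition \ref{prop_fgq222} — must be used most carefully, and is the hardest part of the argument. Once $k=1$ is established, $\soc(S)$ is nonabelian simple and the reduction of the second paragraph completes the proof.
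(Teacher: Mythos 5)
Your skeleton is sound: facts (i)--(iii), the identity $C_G(S)=\{\id\}$ via uniqueness of centers, the formal reduction of the whole theorem to the simplicity of $\soc(S)$, and the ``easy'' abelian sub-case (the orbit of $p$ becomes a clique, hence lies on a line by triangle-freeness, contradicting irreducibility) are all correct; the arithmetic $1+s+st\mid s^2t$ forcing a contradiction is also fine \emph{once regularity is granted}. But the two steps that remain are precisely the substance of the theorem, and neither is proved. First, you propose to ``upgrade to transitivity, indeed primitivity, of $S$ on $\cP$ using the generation by symmetries together with (iii)''. No such upgrade is available at this level of generality. The hypothesis gives only a nontrivial $\widetilde{E}(p)$ at each point, with no common prime dividing all the $|\widetilde{E}(p)|$; the orbit-counting argument that moves one point to a noncollinear one (Lemma \ref{lem_Omegauv}, used throughout the Appendix) works modulo a prime $r$ dividing the orders of \emph{both} symmetry groups, and it collapses when the two points lie in different sets $\cP_r$. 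That is exactly the obstruction which the paper's Section \ref{secgen} is built to overcome, and there it takes homologies, the $m_r$-ovoid structure, and even an appeal to the First Main Result (via Lemma \ref{lem_subGQ}) before a single point class is obtained. Geometric irreducibility is no substitute: a point orbit is not a substructure, so irreducibility says nothing about it. Without transitivity --- let alone primitivity --- a minimal abelian normal subgroup need not act regularly (nor even transitively), so your count $|C_A(\theta)|=1+s+st$ has no foundation.

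Second, the case $\soc(S)=T_1\times\cdots\times T_k$ with $k\ge 2$ is not argued at all: the pivotal claim --- that each symmetry normalizes every $T_i$ and in fact lies in a single factor, so that the complementary product centralizes it and fixes its center --- is merely asserted (``this cannot be spread diagonally across several factors''), and you yourself flag it as the hardest part. This exclusion of product-type socles is the core of the result; it is what occupies Walker's analysis, and it is also where modern treatments of primitive and quasiprimitive quadrangles (e.g.\ \cite{AS}) do their hardest work, so it cannot be left as a strategy sketch. For comparison, note that the paper does not reprove this statement either: its proof is a one-line reduction to the point-line dual of \cite[Theorem~4.1]{walker77}, with geometric irreducibility invoked only to exclude Walker's cases (2) and (3). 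Your proposal is thus an attempted reconstruction of Walker's theorem; as it stands it establishes the formal reduction correctly, but both load-bearing pillars --- transitivity/regularity in the abelian case and the elimination of product socles --- are unsupported, so the statement is not proved.
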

\begin{proof}
This is a special case of the point-line dual version of \cite[Theorem~4.1]{walker77}. Under the assumption of geometrical irreducibility, the cases (2) and (3) in \cite[Theorem~4.1]{walker77} do not occur. 
\end{proof}

For an integer $n$ and a prime $r$, we use $n_{r'}$ for the largest divisor of $n$ that is relatively prime to $r$, and use $n_r$ for the highest power of $r$ that divides $n$. We use the same notation for group structures as in \cite{Atlas}. For two groups $A$ and $B$, $A:B$ is the split extension of $A$ by $B$, $A.B$ is an extension of $A$ by $B$ which may be split or non-split, and $A\circ B$ is a central product of $A$ and $B$.

\subsection{The known classical GQs with a nontrivial symmetry}\label{sec_examples}

Let $\mS$ be a thick generalized quadrangle of order $(s,t)$. For a point $x$, let $\widetilde{E}(x)$ be the full group of symmetries with center $x$. It is known that if there is a nontrivial symmetry about a point in a thick generalized quadrangle, then $s+t\mid st(t+1)$ by \cite[8.1.2]{FGQ}. For two noncollinear points $x,y$, we have the following inequality:
\begin{equation}\label{symineq}
\vert \{ x, y \}^{\perp\!\perp} \vert \ \geq\ \vert \widetilde{E}(x) \vert + 1.
\end{equation}
This follows from the fact that $\widetilde{E}(x)$ acts freely on the points of $\mS$.

Below, we explicitly describe the nontrivial central symmetries in each of the classical generalized quadrangles of order $(s,t)$. Let $\widetilde{E}(p)$ be the full group of symmetries about the point $p$. Since the full automorphism groups of those quadrangles act transitively on points, it suffices to describe $\widetilde{E}(p)$ for a particular point $p$.  Let $V=\F^n$ be a vector space equipped with an alternating form $\upkappa$, a quadratic form $Q$ or a Hermitian  form $H$. Here, $\F=\F_{q^2}$ if the form is unitary and $\F=\F_q$ otherwise. Let $e_1= (1,0,\ldots,0),\ldots,e_n=(0,\ldots,0,1)$ be the standard basis of of $V$. 

\begin{itemize}
  \item[(a)]$\mW(3,q) $, $\upkappa(x,y)=x_1y_2-x_2y_1+x_3y_4-x_4y_3$. We have $(s,t)=(q,q)$, and $\widetilde{E}(\la e_1\ra)=\{t_a:a\in\F_q\}$, where $t_a(x)=x+a\upkappa(x,e_1)e_1$ for $x\in V$.
  \item[(b)]$\mQ(4,q) $,  $Q(x)=x_1x_2+x_3x_4+x_5^2$, $q$ odd, and $(s,t)=(q,q)$. It is well known that for every pair $(x,y)$ of noncollinear points in $\mQ(4,q)$ with $q$ odd, we have that $\vert \{ x, y \}^{\perp\!\perp} \vert = 2$, so by  \eqref{symineq} there is no nontrivial central symmetry.
  \item[(c)]$\mQ^-(5,q) $,  $Q(x)=x_1x_2+x_3x_4+x_5^2+ax_5x_6+bx_6^2$, where $X^2+aX+b$ is irreducible. We have $(s,t)=(q,q^2)$, and $s+t\mid st(t+1)$ does not hold. It follows that there is no nontrivial symmetry about $\la e_1\ra$.
  \item[(d)]$\mH(3,q^2)$, $H(x)=x_1x_2^q+x_2x_1^q+x_3x_4^q+x_4x_3^q$. We have $(s,t)=(q^2,q)$, and it is the point-line dual of $\mQ^-(5,q)$. The group $\widetilde{E}(\la e_1\ra)$ consists of
      \[
        (e_1,\ldots,e_4)\mapsto (e_1,e_2+ae_1,e_3,e_4), \mbox{ where }\ a+a^q=0.
      \]
  \item[(e)]$\mH(4,q^2)$, $H(x)=x_1x_2^q+x_2x_1^q+x_3x_4^q+x_4x_3^q+x_5^{q+1}$. The group $\widetilde{E}(\la e_1\ra)$ has order $q$, which is similarly described as in the $\mH(3,q^2)$ case. Its point-line dual has parameters $(q^3,q^2)$, and $s+t\mid st(t+1)$ does not hold, so the dual quadrangle does not have a nontrivial symmetry.
\end{itemize}

To summarize, a classical generalized quadrangle has a nontrivial central symmetry if and only if it is one of $\mW(3,q) $, $\mH(3,q^2)$ and $\mH(4,q^2)$ for a prime power $q$. The set $\{\la e_1\ra,\la e_2\ra\}^{\perp\!\perp}$ has size $q+1$, and coincides with the set of singular or isotropic points on $\la e_1,e_2\ra_{\F}$ in all those quadrangles. We have $\la \widetilde{E}(e_1), \widetilde{E}(e_2)\ra=\PSL_2(q)$, and it acts $2$-transitively on the above set. Moreover, $\la \widetilde{E}(e_1), \widetilde{E}(e_2)\ra_{\la e_1\ra}$ has $\widetilde{E}(e_1)$ as its unique minimal normal subgroup.

\section{Outline of the proof of the first main theorem}\label{subsec_prel}

In this section, we obtain some preliminary results and give an outline of the proof for our first main theorem.  Let $\cS=(\cP,\cL)$ be a thick generalized quadrangle of order $(s,t)$, and write $\widetilde{E}(x)$ for the full group of symmetries about a point $x$. If the order of $\widetilde{E}(x)$ is even for each point $x$, then Ealy \cite{Ealy} showed that $\cS$ is (isomorphic to) one of $\mW(3,q) $, $\mH(3,q^2)$ and $\mH(4,q^2)$, where $q$ is a power of $2$. Our first main result thus reduces to the next theorem.

\begin{theorem}\label{thm_EalyOdd}
Let $\cS$ be a finite thick generalized quadrangle, and let $r$ be an odd prime. If the full group of symmetries about each point has an order divisible by $r$, then $\cS$ is one of $\mW(3,q) $, $\mH(3,q^2)$ and $\mH(4,q^2)$, where $q$ is a power of $r$.
\end{theorem}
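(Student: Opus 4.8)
The plan is to study the group $G=\la \wE(x):x\in\cP\ra$ generated by all point-symmetries of $\cS$. First I would verify that $G$ is geometrically irreducible: any $G$-invariant substructure is one of the types (A)--(E) listed above, and since $\wE(x)$ acts freely on the points not collinear with $x$ (so that $|\{x,y\}^{\perp\!\perp}|\ge |\wE(x)|+1\ge r+1$ by \eqref{symineq}) while $G$ contains a nontrivial symmetry about \emph{every} point, none of the proper types can be $G$-invariant. Geometric irreducibility then lets me apply Theorem~\ref{thm_walker4.1}, so that $G$ is almost simple with socle $T=\soc(G)$ a nonabelian simple group. Next I would show that $G$ is transitive on $\cP$ and that the stabilizer $G_p$ is maximal in $G$. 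Since $\wE(p)\trianglelefteq G_p$ is abelian of order divisible by $r$, the group $G_p$ normalizes the nontrivial characteristic $r$-subgroup $R=O_r(\wE(p))$, so $G_p=N_G(R)$ is a maximal \emph{$r$-local} subgroup. The whole analysis thus reduces to classifying the pairs $(G,G_p)$ with $G$ almost simple, $G_p$ a maximal $r$-local subgroup, and index $|G:G_p|=|\cP|=(1+s)(1+st)$ subject to the numerical constraints $s\le t^2$, $t\le s^2$, $s+t\mid st(t+1)$ and Lemma~\ref{lem_1psbound}.

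Invoking CFSG, I would run through the possibilities for $T$. Alternating and sporadic socles should be eliminated by arithmetic: their maximal $r$-local subgroups have indices which, once equated to $(1+s)(1+st)$, force parameters violating $|\cP|^{1/4}<1+s<|\cP|^{2/5}$ or the divisibility $s+t\mid st(t+1)$. If $T$ is of Lie type in defining characteristic $p_0\ne r$, its maximal $r$-local subgroups are essentially normalizers of tori and are far too small, their indices retaining the full $p_0$-part of $|T|$ and hence admitting no valid GQ parameters; this forces $r=p_0$, i.e. $T$ is of Lie type in characteristic $r$. By the Borel--Tits theorem a maximal $r$-local subgroup in characteristic $r$ is a parabolic, so $G_p$ is a maximal parabolic and $\wE(p)$ lies inside its unipotent radical. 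At this stage I would pin down $q$ as a power of $r$: for two opposite points $x,y$ the group $\la\wE(x),\wE(y)\ra$ acts on the hyperbolic line $\{x,y\}^{\perp\!\perp}$, and matching this against the structure recorded in Section~\ref{sec_examples} identifies a field of order $q=r^f$.

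Then comes the case division on the Lie type of $T$. For $T$ exceptional (types $G_2,F_4,E_6,{}^2E_6,E_7,E_8,{}^3D_4,{}^2B_2,{}^2G_2,{}^2F_4$) I would enumerate the maximal parabolics, compute each index $|G:G_p|$ explicitly, and try to solve $(1+s)(1+st)=|G:G_p|$ for admissible $(s,t)$. Lemma~\ref{lem_1psbound} confines $1+s$ to a narrow window around $|\cP|^{3/8}$, and comparing this window with the actual prime factorizations of the parabolic indices, together with $s+t\mid st(t+1)$ and the order of $R=O_r(\wE(p))$, should exclude every exceptional parabolic. I expect \emph{this} to be the main obstacle: it is a long, type-by-type arithmetic analysis with no single unifying trick, and the estimates must be sharp enough to kill all the sporadically occurring factorizations.

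Finally, for $T$ a classical group I would determine which maximal parabolic can serve as $G_p$ with the correct index, and then reconstruct the geometry from the natural module $V$ and its $\Upomega$-invariant form. The point-symmetries are forced to be (long-)root elements, so $\wE(p)$ is an elementary abelian $r$-group acting on $V$ by transvection-like maps; reading off the Hermitian, alternating or quadratic form then matches $\cS$ with the point-line geometry of the corresponding polar space. This yields $\PSp_4(q)\rightsquigarrow\mW(3,q)$, $\PSU_4(q)\rightsquigarrow\mH(3,q^2)$ and $\PSU_5(q)\rightsquigarrow\mH(4,q^2)$, while the orthogonal candidates either collapse to these via the exceptional isomorphisms or are excluded because $\mQ(4,q)$ and $\mQ^-(5,q)$ carry no nontrivial central symmetry for odd $q$ (items (b) and (c) of Section~\ref{sec_examples}). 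As $q$ is a power of $r$, this leaves exactly the three quadrangles in the statement and completes the proof.
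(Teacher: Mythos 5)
Your proposal reproduces the paper's overall architecture --- geometric irreducibility, Walker's Theorem \ref{thm_walker4.1}, maximality and $r$-locality of $G_p$, CFSG, Borel--Tits in defining characteristic, then reconstruction of the polar space from the natural module --- but one of its steps is simply false and another essential case is missing. The false step is your disposal of the cross-characteristic case: you claim that if $\soc(G)$ has defining characteristic $p_0\ne r$, then the maximal $r$-local subgroups are ``essentially normalizers of tori'' and are ``far too small'' to admit valid parameters. Cross-characteristic $r$-local maximal subgroups are much more varied than torus normalizers (Aschbacher classes $\cC_2$, $\cC_3$, $\cC_6$ for classical groups, and for exceptional groups the local subgroups $3^3.\SL_3(3)\le \mathsf{F}_4(q)$ and $5^3.\SL_3(5)\le \mathsf{E}_8(q)$ of \cite{CLSS}), and --- decisively --- the exclusion itself has a counterexample: $G=\PSU_4(2)$ with $r=3$ and $p_0=2$ has the maximal $3$-local subgroup $G_p=3^3{:}S_4$ (of type $\GU_1(2)\wr S_4$, which \emph{is} a torus normalizer) of index $40=(1+3)(1+3\cdot 3)$, and this configuration genuinely occurs: it is $\mW(3,3)$, as shown in the last lemma of Section \ref{sec_class}. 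The theorem survives only because of the exceptional isomorphism $\PSU_4(2)\cong\PSp_4(3)$, so that $q=3$ is still a power of $r$; but your argument purports to derive a contradiction in \emph{every} cross-characteristic case and would thus ``prove'' this example out of existence. The correct treatment is the case-by-case analysis of the classes $\cC_2$--$\cC_8$ (classical) and of the \cite{CLSS} subgroups together with maximal-rank subgroups (exceptional) carried out in Lemmas \ref{lem_ExpCLSS}, \ref{lem_ExpMaxRank} and the final lemma of Section \ref{sec_class}; there is no shortcut here.

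The missing case is that the point-symmetries may lie outside the socle: Theorem \ref{thm_walker4.1} makes $G$ almost simple with socle $X$, but it does not place $\wE(p)$ inside $X$, and a priori one can have $\wE(p)\cap X=\{\id\}$. This breaks your pivotal Borel--Tits step: Borel--Tits applies to $r$-subgroups of inner(-diagonal) automorphisms, whereas if $E(p)$ projects to a field automorphism of order $r$ with $q=q_0^r$ (case (R3) of the paper, which can occur with $r=r_0$), then $N_G(E(p))$ contains the subfield subgroup $C_X(E(p))$ over $\F_{q_0}$ and is not contained in any parabolic. The paper spends Proposition \ref{prop_adjEr} reducing to ``$G=X$, or $G=X{:}E(p)$ with $E(p)$ of order $r$ outer,'' and then Lemmas \ref{lem_ExpGsimple} and \ref{lem_ClaGsimple} eliminating the outer case via the Gorenstein--Lyons data on odd-order outer automorphisms and their centralizers \cite{GorLyons,GorLySol}; your plan has no counterpart to this. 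A smaller related point: you assert that $\wE(p)$ is abelian so that $R=O_r(\wE(p))\ne\{\id\}$, but neither is known a priori (a solvable group of order divisible by $r$ can have $O_r=\{\id\}$). The paper's fix is to prove first that $\wE(p)$ has odd order (Proposition \ref{GisLie}(3)), hence is solvable by Feit--Thompson, and then to replace $\wE(p)$ by a minimal normal subgroup of $G_p$ contained in it, which is elementary abelian; only then is $G_p$ exhibited as an $r$-local maximal subgroup.
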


We first fix some notation and record some results from \cite{Ealy} which hold for all primes $r$.  Let $\widetilde{E}(p)$ be the full group  of symmetries about each point $p$, and let $\widetilde{G}=\la \widetilde{E}(x):x\in\cP\ra$. Take a subgroup $E(p)$ of $\widetilde{E}(p)$,  and suppose that there is an odd prime $r$ such that $|E(p)|\equiv 0\pmod{r}$ for each point $p$.  The group $E(p)$ acts freely on points of $\cP\setminus p^\perp$, cf. \cite[Theorem~III.2.3]{Ealy}. For two distinct points $x,y$, by \cite[Lemmas~III.4, III.6]{Ealy} we have
\begin{enumerate}
  \item[(E1)] $E(x)\cap E(y)= \{ \id \}$, and $[E(x),E(y)]= \{ \id\}$ if and only if $x$ and $y$ are collinear.
\end{enumerate}
For a nonempty subset $Y$ of $\cP$, we write $I(Y)=\la E(u): u \in Y\ra$ (and sometimes we will use the same notation in case each $E(u)$ is replaced by the full group of symmetries $\widetilde{E}(u)$). In particular, we set $G=I(\cP)$. For a line $\ell$, we identity $\ell$ with the set of points on it, so that $I(\ell)$ is well defined. We write $G_\ell$ for the stabilizer of $\ell$ in $G$, which clearly fixes the points of $\ell$ setwise. By Lemma III.3.4 and Corollary III.3.6 of \cite{Ealy}, we have the following results:
\begin{enumerate}
  \item[(E2)] $I(p^\perp)$ is transitive on $p^\perp\setminus\{p\}$ and the set of lines through the point $p$, and $I(p^\perp)\unlhd G_p$;
  \item[(E3)] $G_\ell$ is $2$-transitive on the points of the line $\ell$.
\end{enumerate}
It follows  that $G$ is transitive on points, lines, flags and the set of collinear point pairs, cf. \cite[Lemmas~III.3.7-3.9]{Ealy}.  The action of $G$ on $\cP$ is primitive by  \cite[Theorem~III.4.1]{Ealy}. We refer the reader to the Appendix for proofs of the aforementioned results.

\begin{proposition}\label{GisLie}
Let $X=\soc(\widetilde{G})$, the socle of $\widetilde{G}$. The following hold.
\begin{enumerate}
  \item[(1)]The group $G$ is geometrically irreducible on $\cS$,
  \item[(2)]The group $G$ is an almost simple group of Lie type, and we have $\soc(G)=X$.
  \item[(3)] The group $\widetilde{E}(x)$ has odd order for each $x\in\cP$.
\end{enumerate}
\end{proposition}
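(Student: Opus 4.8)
The plan is to prove the three assertions in the order (1), (3), (2), since geometric irreducibility feeds Theorem~\ref{thm_walker4.1} in the argument for (2), whereas (3) is logically independent. For \textbf{(1)} I would argue purely synthetically. We already have, from (E2), (E3) and the derived facts recalled above, that $G$ is transitive on $\cP$ and on $\cL$. Given a $G$-invariant substructure $(\cP',\cL')$, a $G$-invariant subset of a transitive $G$-set is a union of orbits, so $\cP'\in\{\emptyset,\cP\}$ and $\cL'\in\{\emptyset,\cL\}$; I then rule out the two mixed possibilities using the substructure axioms and thickness. If $\cP'=\cP$, two collinear points (which exist in a thick GQ) force a line into $\cL'$ by axiom (i), so $\cL'=\cL$; conversely, if $\cL'=\cL$, then applying axiom (ii) to the $t+1\ge 2$ lines through any point $z$ places $z$ in $\cP'$, so $\cP'=\cP$. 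Hence only $(\emptyset,\emptyset)$ and $(\cP,\cL)$ are invariant, which is exactly geometric irreducibility.

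For \textbf{(3)} I would suppose $\widetilde E(x_0)$ has even order for some $x_0\in\cP$ and derive a contradiction. Because conjugation by $g\in G\le\Aut(\cS)$ carries the full symmetry group $\widetilde E(x_0)$ onto $\widetilde E(x_0^{\,g})$, point-transitivity of $G$ propagates even order to every $\widetilde E(x)$, so each point is the center of an involution. Ealy's even-characteristic theorem \cite{Ealy} then identifies $\cS$ with $\mW(3,q)$, $\mH(3,q^2)$ or $\mH(4,q^2)$ for $q$ a power of $2$; but in each of these the order of $\widetilde E(p)$ equals $q$, a power of $2$ (Section~\ref{sec_examples}), which cannot be divisible by the odd prime $r$ although $r\mid|E(p)|\le|\widetilde E(p)|$. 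This contradiction shows every $\widetilde E(x)$ has odd order.

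For \textbf{(2)}, geometric irreducibility from (1) together with the nontriviality $\widetilde E(p)\supseteq E(p)\neq\{\id\}$ lets me apply Theorem~\ref{thm_walker4.1} to $G$, giving that both $G$ and $\widetilde G$ are almost simple with $\soc(G)=\soc(\widetilde G)=X$ a nonabelian finite simple group; what remains is to see that $X$ is of Lie type. Here I would invoke CFSG and exclude the alternating and sporadic possibilities using the rigid parameters of the primitive $G$-action: $G_p$ is maximal, has $\{p\}$ and the orbit $p^\perp\setminus\{p\}$ of length $s(t+1)$ among its orbits, and $E(p)$ acts semiregularly on $\cP\setminus p^\perp$ (of size $s^2t$), so that $r\mid|E(p)|\mid s^2t$ while $|\cP|=(1+s)(1+st)$ satisfies $|\cP|^{1/4}<1+s<|\cP|^{2/5}$ by Lemma~\ref{lem_1psbound}.

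\textbf{The main obstacle} is precisely this last exclusion. The plan is to use Lemma~\ref{lem_1psbound} together with $|E(p)|\mid s^2t$ and the subdegree $s(t+1)$ to bound the degree and the relevant group orders, reducing the sporadic groups and the small-degree alternating actions to a finite, if tedious, check, and then to dispatch the large alternating representations by the incompatibility of their O'Nan--Scott subdegree patterns with the quadrangular identity $|\cP|=(1+s)(1+st)$. I expect this matching of permutation-group data against the GQ constraints, rather than any step in (1) or (3), to be the delicate heart of the proposition, and the one place where CFSG is genuinely unavoidable.
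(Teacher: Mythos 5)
Your parts (1) and (3) are correct and essentially identical to the paper's own argument: (1) is the same transitivity-plus-substructure-axiom proof (the paper states it more tersely), and (3) is exactly the paper's reduction to Ealy's even-characteristic theorem followed by the observation that $\vert\widetilde{E}(p)\vert=q=2^a$ in $\mW(3,q)$, $\mH(3,q^2)$, $\mH(4,q^2)$ is incompatible with the odd prime $r$ dividing $\vert E(p)\vert$. Your reordering (proving (3) before (2)) is harmless, since the conjugation argument $\widetilde{E}(x^g)=\widetilde{E}(x)^g$ only needs point-transitivity of $G$, which is available before the proposition. Likewise, the first half of your (2) --- applying Theorem \ref{thm_walker4.1} to the geometrically irreducible group $G$ to get that $G$ and $\widetilde{G}$ are almost simple with common socle $X$ --- coincides with the paper.

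The genuine gap is in the second half of (2), at precisely the step you yourself flag as ``the delicate heart'': excluding alternating and sporadic socles. The paper does not prove these exclusions; it cites them. For sporadic socles it invokes the theorem of \cite{spor} that no sporadic almost simple group acts primitively on the points of a generalized quadrangle, and for alternating socles it invokes \cite[Theorem~1.2]{Alt}, which says that a flag-transitive, point-primitive generalized quadrangle with alternating socle forces $X=A_6$ and $\cS\cong\mW(3,2)$; that residual case is then eliminated by the odd prime $r$ exactly as in your part (3), since $\vert\widetilde{E}(p)\vert=2$ there. Your plan to re-derive these exclusions from Lemma \ref{lem_1psbound}, the divisibility $\vert E(p)\vert\mid s^2t$, and subdegree data is not a proof, and it would not become one by a ``finite, if tedious, check'': each citation is the main theorem of a substantial paper. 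In particular, for large alternating socles the point stabilizer may be a \emph{primitive} maximal subgroup of $A_n$, for which there is no uniform description of subdegrees, so ``O'Nan--Scott subdegree patterns'' give no handle there; and the sporadic case requires confronting the maximal subgroups of all twenty-six sporadic groups (and their automorphism groups) with the identity $(1+s)(1+st)=[G:G_p]$, which is exactly what \cite{spor} carries out at length. The repair is simply to cite \cite{spor} and \cite{Alt}, noting that the needed hypotheses (flag-transitivity and point-primitivity of $G$) are already available from (E2), (E3) and \cite[Theorem~III.4.1]{Ealy}, as the paper does.
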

\begin{proof}
(1) Let $(\cP',\cL')$ be a $G$-invariant substructure. If $\cP'$ contains a point, then it equals $\cP$ by the fact that $G$ is transitive on points. It follows that $\cL'=\cL$. Hence $\cP'$ is empty. Similarly, $\cL'$ is empty by the fact $G$ is transitive on lines. This proves the first claim.

(2) By Theorem \ref{thm_walker4.1}, $G$ is an almost simple group. Write $X=\soc(G)$. It is shown in \cite{spor} that no sporadic almost simple group acts primitively on the points of a generalized quadrangle, so $X$ is not a sporadic group. Assume that $X$ is an alternating group.  Since $G$ is flag-transitive and point-primitive,  we have $X=A_6$ and $\cS$ is the unique generalized quadrangle $\mW(3,2)$ of order $2$ by \cite[Theorem~1.2]{Alt}. The full group of symmetries about a point of $\mW(3,2)$ has order $2$, cf. Section \ref{sec_examples}, and this contradicts our assumption that $E(p)$ has odd order. We conclude that $X$ is a simple group of Lie type. We have $\soc(G)=\soc(\widetilde{G})$ by Theorem \ref{thm_walker4.1}.

(3) Suppose that $\widetilde{E}(p)$ has even order for a point $p$. Since $\widetilde{E}(p^g)=\widetilde{E}(p)^g$ for $g\in X$ and $X$ is transitive on $\cP$, we deduce that $\widetilde{E}(x)$ has  even order for each point $x$. By the main theorem of \cite{Ealy} $\cS$ is one of $\mW(3,q) $, $\mH(3,q^2)$ and $\mH(4,q^2)$, where $q$ is a power of $2$. The group $\widetilde{E}(x)$ at a point $x$ is of order $q$ in each case, cf. Section \ref{sec_examples}. This contradicts the assumption that its order is divisible by an odd prime $r$. This completes the proof.
\end{proof}

Let $X=\soc(\widetilde{G})$ be the socle of $\widetilde{G}$, which is a simple group of Lie type by Proposition \ref{GisLie}. It is transitive on $\cP$ by the fact $\widetilde{G}$ is primitive on points. The above conclusions hold so long as we choose $E(p)$ to be  a subgroup of $\widetilde{E}(p)$ such that $r$ divides $|E(p)|$ at each point $p$. Recall that an {\em $r$-local subgroup} of a finite group is the normalizer of a nontrivial $r$-subgroup. In the next result, we show that we can choose the $E(p)$'s and the odd prime $r$ in a nice way.
\begin{proposition}\label{prop_adjEr}
Let $X=\soc(\widetilde{G})$, and fix a point $p\in \cP$. There is an odd prime $r$ and a subgroup $E(x)$ of $\widetilde{E}(x)$ for each point $x$ such that the following properties hold:
\begin{itemize}
  \item[(a)]$E(x^g)=E(x)^g$ for $x\in \cP$ and $g\in G$, where $G=\la E(x):x\in\cP\ra$;
  \item[(b)]either $G=X$ and $E(p)$ is an elementary abelian $r$-group, or $G=X:E(p)$ is a semidirect product and $E(p)$ is a subgroup of order $r$ in $C_G(X_p)$;
  \item[(c)]$E(p)$ is a minimal normal subgroup of $G_p$;
  \item[(d)]$G_p=N_G(E(p))$, and $G_p$ is an $r$-local maximal subgroup of $G$.
\end{itemize}
\end{proposition}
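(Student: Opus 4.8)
The plan is to sidestep the apparent circularity — the group $G$ is defined from the very subgroups $E(x)$ we are trying to construct — by carrying out all constructions relative to the fixed, transitive socle $X=\soc(\widetilde G)$ and only identifying $G$ at the end. Throughout I use that $X$ is a transitive simple group of Lie type with point stabiliser $X_p$ (Proposition \ref{GisLie}), that $\widetilde E(p)$ has odd order (Proposition \ref{GisLie}(3)), that the full symmetry groups are equivariant in the sense $\widetilde E(p)^g=\widetilde E(p^g)$, and that any $G=\langle E(x):x\in\cP\rangle$ built from subgroups $E(x)\le\widetilde E(x)$ with $r\mid|E(x)|$ is primitive on $\cP$ with $\soc(G)=X$. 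The whole argument then splits according to whether $\widetilde E(p)\cap X$ is trivial, and this dichotomy is exactly what produces the two alternatives in (b).

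Suppose first that $N:=\widetilde E(p)\cap X\neq 1$. From $\widetilde E(p)^g=\widetilde E(p)$ for $g\in X_p$ one gets $N\trianglelefteq X_p$, and $N$ has odd order, hence is solvable by the Feit--Thompson theorem. Its Fitting subgroup is therefore nontrivial and characteristic in $N$, so $O_r(N)\neq 1$ for some odd prime $r$; passing to $\Omega_1(Z(O_r(N)))$ and then to a minimal $X_p$-invariant subgroup $E(p)$ inside it produces an elementary abelian $r$-group that is minimal normal in $X_p$. I spread this choice over the orbit by setting $E(x):=E(p)^g$ for $x=p^g$, $g\in X$; this is well defined because $X_p$ normalises $E(p)$. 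Since every $E(x)\le X$, the group $G=\langle E(x)\rangle$ lies in $X$, while $\soc(G)=X$ forces $G=X$, so $G_p=X_p$. This yields (a), the first alternative of (b), and (c) at once.

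Now suppose $N=\widetilde E(p)\cap X=1$. For $\varepsilon\in\widetilde E(p)$ and $x\in X_p$ the commutator $[\varepsilon,x]$ lies in $\widetilde E(p)$ (as $X_p$ normalises $\widetilde E(p)$) and in $X$ (as $X\trianglelefteq\widetilde G$), hence in $N=1$; thus $X_p$ centralises $\widetilde E(p)$. Choosing $E(p)$ to be any subgroup of order $r$ of $\widetilde E(p)$, where $r$ is an odd prime dividing $|\widetilde E(p)|$ (such an element exists by Cauchy), and spreading it by $X$ as before, well-definedness is automatic and $E(p)\le C_{\widetilde G}(X_p)$ gives $E(p)\le C_G(X_p)$. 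Since $E(p)\not\le X$, the image of $E(p)$ in $G/X$ has order $r$; tracking images in $G/X$ shows $E(x)\le XE(p)$ for every $x$, so $G=XE(p)=X:E(p)$ with $E(p)\cap X=1$. This is the second alternative of (b); as $E(p)$ is cyclic of prime order it is simple, hence automatically minimal normal in $G_p=X_p\times E(p)$, giving (c).

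Finally (d) is uniform. In both cases $E(p)\trianglelefteq G_p$ by construction, so $G_p\le N_G(E(p))$, and $G_p$ is maximal in $G$ by primitivity. Because $G$ is almost simple with socle $X$, every nontrivial normal subgroup of $G$ contains $X$; as $E(p)$ is a nontrivial $r$-group it cannot, so $E(p)$ is not normal in $G$ and $N_G(E(p))\neq G$. Maximality then forces $G_p=N_G(E(p))$, which, being the normaliser of a nontrivial $r$-subgroup, is an $r$-local maximal subgroup. The only genuinely delicate points are the circularity, handled by constructing everything relative to $X$ before identifying $G$, and the extraction in the case $N\neq 1$ of an $X_p$-invariant elementary abelian minimal normal $r$-subgroup, where the solvability of odd-order groups is precisely what makes the Fitting-subgroup argument go through.
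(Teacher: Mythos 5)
Your proof is correct and takes essentially the same route as the paper: the paper's auxiliary device of a minimal intermediate group $\widehat{G}$ with $X\le\widehat{G}\le\widetilde{G}$ and $\widetilde{E}(p)\cap\widehat{G}\ne \{\id\}$ is just a wrapper for your direct dichotomy on whether $\widetilde{E}(p)\cap X$ is trivial, and in each branch the paper performs the same construction as you do (a minimal normal subgroup of $X_p$ inside the solvable odd-order group $\widetilde{E}(p)\cap X$, respectively an order-$r$ subgroup centralized by $X_p$), spreads it by $X$-conjugation, and identifies $G$ as $X$ or $X{:}E(p)$ via Ealy's primitivity theorem and Proposition \ref{GisLie}. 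The only cosmetic remark is that in the case $\widetilde{E}(p)\cap X=\{\id\}$ you should state explicitly that property (a) for elements of $G\setminus X$ follows from $E(p)\unlhd G_p$ (a fact you do establish), which is exactly the one-line argument the paper uses.
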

\begin{proof}
Fix a point $p$ of $\cS$, and let $\cX=\{\widehat{G}:\,X\le\widehat{G}\le\widetilde{G},\widetilde{E}(p)\cap\widehat{G}\ne \{\id\}\}$. Let $\widehat{G}$ be a minimal element of $\cX$ with respect to inclusion, and set $\widehat{E}(x)=\widetilde{E}(p)\cap \widehat{G}$ for $x\in \cP$. We have $\soc(\widehat{G})=X$, and $\widehat{G}$ is primitive on $\cP$ by  \cite[Theorem~III.4.1]{Ealy}.  The action of $X$ on the $\widehat{E}(x)$'s via conjugation is transitive, so the $\widehat{E}(x)$'s are all isomorphic groups. The group $\la \widehat{E}(x):x\in\cP\ra$ has socle $X$ by Proposition \ref{GisLie}, and thus it is in $\cX$. We deduce that $\widehat{G}=\la \widehat{E}(x):x\in\cP\ra$ by the choice of $\widehat{G}$. Since $\widehat{E}(x^g)=\widehat{E}(x)^g$ for $g\in X$ and $x\in\cP$, we have $\widehat{G}=\la X,\widehat{E}(p)\ra$. Both $X_p$  and $\widehat{E}(p)$ are normal subgroups of $\widehat{G}_p$. Also, $\widehat{E}(p)$ has odd order by Proposition \ref{GisLie} (3) and thus is solvable.

We consider two separate cases according as $\widehat{G}=X$ or not. First assume that $\widehat{G}=X$. Let $E(p)$ be a minimal normal subgroup of $X_p$ contained in $\widehat{E}(p)$. Since $\widehat{E}(p)$ is solvable, $E(p)$ is an elementary abelian $r$-group for an odd prime $r$. For $x=p^g\in\cP$ with $g\in X$, let $E(x)=E(p)^g$ (which is well defined by the fact $E(p)\unlhd X_p$). Let $G=\la E(x):x\in\cP\ra$, which is primitive on $\cP$ by  \cite[Theorem~III.4.1]{Ealy}. As in the previous paragraph, we deduce that $G=X$. We have $G_p=N_G(E(p))$ by the fact $G_p$ is maximal in $G$. This proves all the claims in (a)-(d) when $\widehat{G}=X$.

We next assume that $\widehat{G}\ne X$. We deduce that $\widetilde{E}(p)\cap X= \{ \id \}$ by the choice of $\widehat{G}$. It follows that $[\widehat{E}(p),X_p]=\widehat{E}(p)\cap X_p=\{\id\}$. Let $E(p)$ be a subgroup of odd prime order $r$ in $\widehat{E}(p)$. We have $\la X,E(p)\ra= X:E(p)$ and $[E(p),X_p]=\{\id\}$. For $x=p^g\in\cP$ with $g\in X$, let $E(x)=E(p)^g$ (which is well defined). Let $G=\la E(x):x\in\cP\ra$, which is primitive on $\cP$ by \cite[Theorem~III.4.1]{Ealy}. We similarly deduce that $G=X:E(p)$ in this case.  We then have $G_p=X_p E(p)$ by Dedekind's modular law, so $E(p)\le C_G(X_p)$ and $E(p)$ is minimal normal in $G_p$. We have $G_p=N_G(E(p))$ by the fact $G_p$ is a maximal subgroup of $G$, and $E(x^g)=E(x)^g$ for $x\in \cP$ and $g\in G$ by the fact that $E(p)\unlhd G_p$.  This establishes all the claims in (a)-(d) in this case. This completes the proof.
\end{proof}

\begin{proposition}\label{prop_Gell}
Let $r$, $G$ and the $E(x)$'s be as in Proposition \ref{prop_adjEr}, and write $|E(p)|=r^m$ with $m\in\mathbb{N}$. Let $z_0,z_1\ldots,z_s$ be the  points on a line $\ell$ and let $H=\la E(z_i):0\le i\le s\ra$.
\begin{itemize}
  \item[(a)]$H$ is an elementary abelian $r$-group, and $|H|\ge (r^m-1)(s+1)+1$;
  \item[(b)]We have $\{x\in\cP:E(x)\le H\}=\{z_0,\ldots,z_s\}$;
  \item[(c)]We have $G_{\ell}=N_G(H)$.
\end{itemize}
\end{proposition}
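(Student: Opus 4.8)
The plan is to establish the three claims in order, relying only on property (E1), the freeness of each $E(x)$ on $\cP\setminus x^\perp$, and the equivariance $E(x^g)=E(x)^g$ furnished by Proposition \ref{prop_adjEr}. For part (a), observe that $z_0,\dots,z_s$ are pairwise collinear, so by (E1) the subgroups $E(z_i)$ pairwise commute; hence $H=\la E(z_i):0\le i\le s\ra$ is their product and is abelian. Each $E(z_i)$ is an elementary abelian $r$-group (it is elementary abelian by construction in one case of Proposition \ref{prop_adjEr}, and of prime order $r$ in the other), so $H$ is generated by elements of order dividing $r$ and, being abelian, is itself elementary abelian of exponent $r$. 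For the cardinality bound, note that all the $E(x)$ are $G$-conjugate, so each $E(z_i)$ has order $r^m$, and by (E1) distinct ones intersect trivially. Thus the sets $E(z_i)\setminus\{\id\}$ are pairwise disjoint and contribute at least $(s+1)(r^m-1)$ nonidentity elements, giving $|H|\ge (r^m-1)(s+1)+1$.

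For part (b), one inclusion is immediate since $E(z_i)\le H$. The reverse inclusion rests on the key observation that \emph{every element of $H$ fixes $\ell$ pointwise}: each generating group $E(z_i)$ consists of symmetries about $z_i$, and since every point of $\ell$ is collinear with $z_i$, each such symmetry fixes all of $z_0,\dots,z_s$. Now suppose $E(x)\le H$ for some point $x\notin\ell$. By the defining quadrangle axiom, $x$ is collinear with exactly one point of $\ell$ (two would force a triangle), say $z_0$, so in particular $z_1\in\cP\setminus x^\perp$. Any nontrivial $\sigma\in E(x)$ then lies in $H$ and hence fixes $z_1$; but $E(x)$ acts freely on $\cP\setminus x^\perp$, forcing $\sigma=\id$, contradicting $|E(x)|=r^m>1$. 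Therefore $x$ lies on $\ell$, i.e. $x\in\{z_0,\dots,z_s\}$.

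For part (c), I would prove the two inclusions separately. If $g\in G_\ell$, then $g$ permutes the point set $\{z_0,\dots,z_s\}$ of $\ell$, and by equivariance $H^g=\la E(z_i)^g\ra=\la E(z_i^g)\ra=\la E(z_i)\ra=H$, so $g\in N_G(H)$. Conversely, if $g\in N_G(H)$, then for each $i$ we have $E(z_i^g)=E(z_i)^g\le H^g=H$, so by part (b) $z_i^g\in\{z_0,\dots,z_s\}$; thus $g$ permutes the points of $\ell$, and since a line is determined by any two of its points, $\ell^g=\ell$, i.e. $g\in G_\ell$.

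The delicate step is part (b): everything there hinges on combining the pointwise fixing of $\ell$ by $H$ with the freeness of $E(x)$ off $x^\perp$, together with the quadrangle fact that a point off a line is collinear with exactly one of its points. Once (b) is secured, part (c) follows formally from the equivariance relation, and part (a) is a disjoint-counting argument.
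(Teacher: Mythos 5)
Your proposal is correct, and parts (a) and (c) coincide with the paper's own proof: (a) is the same pairwise-commuting and disjointness count via (E1), and (c) is the same two-inclusion argument combining the equivariance $E(x^g)=E(x)^g$ with part (b). Where you genuinely diverge is (b). The paper argues algebraically: since $H$ is abelian by (a), any $E(x)\le H$ commutes with $E(z)$ for every $z\in\ell\setminus\{x\}$, and the ``commute if and only if collinear'' direction of (E1) then forces $x$ to be collinear with all points of $\ell\setminus\{x\}$, hence incident with $\ell$. You argue geometrically: every element of $H$ fixes $\ell$ pointwise (a symmetry about $z_i$ fixes $z_i^\perp\supseteq\ell$), so a nontrivial $\sigma\in E(x)\le H$ with $x\notin\ell$ would fix the point of $\ell$ lying outside $x^\perp$ guaranteed by the quadrangle axiom, contradicting the freeness of $E(x)$ on $\cP\setminus x^\perp$ (Ealy's Theorem III.2.3, quoted in Section \ref{subsec_prel}). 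Both ingredients are available at this point of the paper, so your proof is complete and at the same level of rigor; the paper's version is marginally shorter because it reuses (E1) wholesale, while yours bypasses the nontrivial ``commuting implies collinear'' half of (E1) and rests only on the definition of a symmetry plus semiregularity, which is arguably the more primitive pair of facts.
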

\begin{proof}
(a) The group $H$ is an elementary abelian $r$-group, since the $E(z_i)$'s are elementary abelian $r$-groups and they pairwise commute by the property (E1).  Since each $E(z_i)\setminus\{ \id \}$ has size $r^m-1$ and since they are pairwise disjoint by (E1), we have $|H|\ge (s+1)(r^m-1)+1$.

(b) Let $U=\{x\in\cP:E(x)\le H\}$, which clearly contains the $z_i$'s. For $x\in U$, we have  $[E(x),E(z)]= \{ \id \}$ for each $z\in \ell\setminus\{x\}$. By (E1) again,  we deduce that $x$ is collinear with all points of $\ell\setminus\{x\}$. We conclude that $U=\{z_0,\ldots,z_s\}$ as desired.

(c). We have $G_{\ell}\le N_G(H)$, since $G_\ell$ permutes $\{E(z_i):0\le i\le s\}$ by Proposition \ref{prop_adjEr} (a). For $g\in N_{G}(H)$,  it permutes the points on $\ell$ by claim (b) and Proposition \ref{prop_adjEr} (a). It follows that $g$ is in $G_\ell$. Hence $N_G(H)\le G_\ell$, and so equality holds. This completes the proof.
\end{proof}

\begin{corollary}\label{cor_GltGp2}
Let $r$, $G$ and the $E(x)$'s be as in Proposition \ref{prop_adjEr}, and write $|E(p)|=r^m$ with $m\in\mathbb{N}$. For a point $p$, $G_p$ has an elementary abelian $r$-subgroup $H$ such that $|H|\ge(r^m-1)(s+1)+1$ and $[H,E(p)]= \{ \id \}$. Moreover, we have $|G|<|G_p|^2$.
\end{corollary}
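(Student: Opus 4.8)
The plan is to prove the two assertions separately: the group $H$ will come from a single line through $p$, and the index bound $|G|<|G_p|^2$ will be squeezed out of the transitivity properties (E2) and (E3).

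First I would fix a line $\ell$ through $p$, write its points as $z_0=p,z_1,\dots,z_s$, and set $H=\langle E(z_i):0\le i\le s\rangle$ exactly as in Proposition~\ref{prop_Gell}, which already records that $H$ is an elementary abelian $r$-group with $|H|\ge (r^m-1)(s+1)+1$. The key observation is that each generator $E(z_i)$ fixes every point collinear with $z_i$; since $p$ and $z_i$ both lie on $\ell$ they are collinear, so $E(z_i)\le G_p$ and hence $H\le G_p$. As $p=z_0$, we have $E(p)=E(z_0)\le H$, and since $H$ is abelian this gives $[H,E(p)]\le[H,H]=\{\id\}$. That settles the first claim.

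For the inequality I would first reduce it to a size estimate for $G_p$. Since $G$ is transitive on $\cP$ (the primitivity established in Proposition~\ref{prop_adjEr}), orbit--stabilizer gives $|G|=|\cP|\,|G_p|$, so $|G|<|G_p|^2$ is equivalent to $|G_p|>|\cP|=(1+s)(1+st)$. To bound $|G_p|$ from below I would harvest two multiplicative factors. By (E2) the group $G_p$ is transitive on the $t+1$ lines through $p$, so $|G_p|=(t+1)\,|G_{p,\ell}|$; by (E3) the stabilizer $G_\ell$ is $2$-transitive on the $s+1$ points of $\ell$, so $G_{p,\ell}$ is transitive on the $s$ points of $\ell\setminus\{p\}$, giving $|G_{p,\ell}|=s\,|G_{p,\ell,z_1}|$. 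Because $H$ fixes $\ell$ pointwise it lies inside $G_{p,\ell,z_1}$, so $|G_{p,\ell,z_1}|\ge|H|\ge(r^m-1)(s+1)+1$, and altogether $|G_p|\ge (t+1)\,s\,[(r^m-1)(s+1)+1]$. Using $r^m\ge 3$ (as $r$ is an odd prime and $m\ge 1$) this exceeds $2s(s+1)(t+1)$, and dividing the desired inequality $2s(s+1)(t+1)>(1+s)(1+st)$ by $(1+s)$ reduces it to $2s(t+1)>1+st$, i.e. $st+2s-1>0$, which holds for all $s,t\ge 2$. Hence $|G_p|>|\cP|$ and $|G|<|G_p|^2$.

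The hard part is not any individual computation but choosing a lower bound for $|G_p|$ that is actually large enough: the naive estimate $|G_p|\ge|H|$ is only linear in $s$, whereas $|\cP|$ grows like $s^2t$, so it is hopeless by itself. The real content is realizing that both transitivity factors --- the $t+1$ coming from the action on the lines through $p$ and the extra $s$ coming from the $2$-transitivity along $\ell$ --- must be multiplied into $|H|$, and then checking that this product just barely beats $|\cP|$.
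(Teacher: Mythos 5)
Your proof is correct and follows essentially the same route as the paper: the same subgroup $H=\langle E(z_i):0\le i\le s\rangle$ supplied by Proposition \ref{prop_Gell}, the same transitivity facts (E2)/(E3), and the same orbit--stabilizer bookkeeping ending in a trivial inequality in $s$ and $t$. The only cosmetic difference is that you bound $|G_p|$ directly through the stabilizer chain $G_p\ge G_{p,\ell}\ge G_{p,\ell,z_1}\ge H$, whereas the paper bounds $|G_\ell|\ge s(s+1)|G_{[\ell]}|$ and converts via the flag identity $(1+s)|G_p|=(1+t)|G_\ell|$ --- an equivalent computation.
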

\begin{proof}
Let $\ell$ be a line incident with $p$, and let $z_0,\ldots,z_s$ be the points on $\ell$. Let $G_{[\ell]}$ be the pointwise stabilizer of $\ell$ in $G$, and let $H=\la E(z_i):0\le i\le s\ra$. We have $H\le G_{[\ell]}$, and it is the desired subgroup by Proposition \ref{prop_Gell} (a). We have $|G_{[\ell]}|\ge |H|>2(s+1)$. Since $G_\ell$ is $2$-transitive on the points of $\ell$ with kernel $G_{[\ell]}$ by (E3), we deduce that
\[
  |G_\ell|\ge s(s+1)|G_{[\ell]}|>2s(s+1)^2>(s+1)^3.
\]
Since $|G|=(1+s)(1+st)|G_p|$ and $(1+s)|G_p|=(1+t)|G_\ell|$, the claim $|G|<|G_p|^2$  is equivalent to $(1+t) |G_\ell| >(1+s)^2(1+st)$. Since $|G_\ell|>(s+1)^3$, it suffices to show that $(1+t) (s+1) >(1+st)$. The latter inequality clearly holds, and this completes the proof.
\end{proof}

\begin{lemma}\label{lem_stcond}
Let $X=\soc(\widetilde{G})$, and let $r$, $G$ and the $E(x)$'s be as in Proposition \ref{prop_adjEr}. Write $v=|\cP|$, $|E(p)|=r^m$ with $m\in\mathbb{N}$.
\begin{enumerate}
  \item[(a)]We have $t\equiv 0\pmod{r^m}$, $\gcd(s,v)=1$, and $s$ divides both $v-1$ and $|X_p|$.
  \item[(b)]If $r$ is the defining characteristic of $G$, then $(v-1)/s$ is relatively prime to $r$.
\end{enumerate}
\end{lemma}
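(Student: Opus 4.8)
The plan is to prove $t\equiv 0\pmod{r^m}$ first, since the remaining assertions follow from it together with the factorisation $v=(1+s)(1+st)$.

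\medskip
\emph{The divisibility $r^m\mid t$.} Fix a point $q\sim p$ with $q\neq p$, put $\ell=pq$, and let $E(p)$ act on the $t$ lines through $q$ distinct from $\ell$. I claim this action is semiregular. Suppose $\id\neq\theta\in E(p)$ fixes such a line $m$. Choose $u\in m$ with $u\neq q$; since $q$ is the unique point of $m$ collinear with $p$, we have $u\not\sim p$. As $\theta$ fixes $p^\perp$ pointwise it fixes $\{p,u\}^\perp\subseteq p^\perp$ pointwise, hence stabilises the hyperbolic line $\{p,u\}^{\perp\!\perp}$, so $\theta(u)\in\{p,u\}^{\perp\!\perp}$. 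But $\theta$ also fixes $m$, so $\theta(u)\in m$; thus $u$ and $\theta(u)$, if distinct, are collinear, which is impossible because the points of $\{p,u\}^{\perp\!\perp}$ are pairwise noncollinear. Hence $\theta(u)=u$, and since $E(p)$ acts freely on $\cP\setminus p^\perp$ by \cite[Theorem~III.2.3]{Ealy}, we conclude $\theta=\id$. Every orbit therefore has length $|E(p)|=r^m$, whence $r^m\mid t$. (The same argument gives $|\widetilde E(p)|\mid t$.)

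\medskip
\emph{The elementary assertions and part (b).} Since $v=(1+s)(1+st)$ and $s$ is coprime to both $1+s$ and $1+st\equiv 1\pmod s$, we get $\gcd(s,v)=1$. Moreover $v-1=s(1+t+st)$, so $s\mid v-1$ and $(v-1)/s=1+t(1+s)$. As $r\mid t$, this quantity is $\equiv 1\pmod r$, which yields part (b) at once; note this uses only $r\mid t$, so it holds for every prime $r$ dividing $t$, in particular when $r$ is the defining characteristic.

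\medskip
\emph{The divisibility $s\mid |X_p|$.} The real point here is to descend transitivity from $G$ to its socle $X$, since $X_p\unlhd G_p$ need not be transitive a priori; this is the main obstacle. By (E2) the group $G_p$ is transitive on the $t+1$ lines through $p$, so the $X_p$-orbits on these lines all have the same size, and their number divides $\gcd(t+1,|G_p:X_p|)$. Now $|G_p:X_p|$ divides $|G:X|$, which by Proposition~\ref{prop_adjEr} is $1$ or $r$; as $r\mid t$ we have $r\nmid t+1$, so this gcd equals $1$ and $X_p$ is transitive on the lines through $p$. Fix such a line $m$. By (E3) the flag stabiliser $G_{p,m}$ is transitive on the $s$ points of $m\setminus\{p\}$. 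Since a symmetry fixes every line through its centre, $E(p)\le G_{p,m}$, and since $E(p)$ fixes $m\setminus\{p\}\subseteq p^\perp$ pointwise, we have $G_{p,m}=X_{p,m}E(p)$ with $X_{p,m}$ and $G_{p,m}$ sharing the same orbits on $m\setminus\{p\}$; hence $X_{p,m}$ is already transitive on those $s$ points. Combining, $s(t+1)\mid |X_p|$, and in particular $s\mid |X_p|$. The two facts that unlock the descent are $r\nmid t+1$ and the triviality of the $E(p)$-action on $p^\perp$.
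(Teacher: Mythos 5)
Your proof is correct, and its key third step takes a genuinely different route from the paper's. The first two steps coincide in substance with the paper's argument: the divisibility $r^m\mid t$ is exactly the free-action-on-lines-through-a-collinear-point argument that the paper disposes of by citing \cite{tmodrm} (you prove it from scratch, which makes the lemma self-contained), and $\gcd(s,v)=1$, $s\mid v-1$ and part (b) are the same elementary manipulations of $v=(1+s)(1+st)$. Where you diverge is $s\mid |X_p|$. The paper works with the line stabilizer: setting $H=\langle E(z):z\in\ell\rangle$, it notes that $H$ lies in the kernel of the $2$-transitive action of $G_\ell$ on the $s+1$ points of $\ell$ (by (E3)), so $s(s+1)|H|$ divides $|G_\ell|$; the flag-counting identity $(1+s)|G_p|=(1+t)|G_\ell|$ then gives $sr\mid|G_p|$, and the dichotomy $|G_p|\in\{|X_p|,\,r|X_p|\}$ from Proposition \ref{prop_adjEr} lets the factor $r$ cancel. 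You instead descend transitivity to the normal subgroup $X_p$ directly: transitivity of $G_p$ on the $t+1$ lines through $p$, together with $X_p\unlhd G_p$, $|G_p:X_p|$ dividing $|G:X|\in\{1,r\}$ and $r\nmid t+1$, forces $X_p$ to be transitive on those lines; then the factorization $G_{p,m}=X_{p,m}E(p)$, with $E(p)$ acting trivially on $m\setminus\{p\}$, transfers the transitivity of $G_{p,m}$ on $m\setminus\{p\}$ (from (E3)) to $X_{p,m}$. Both arguments hinge on $r\mid t$, but yours trades the counting identity and the auxiliary subgroup $H$ for a coprimality/normal-orbit argument, and it states the slightly sharper conclusion $s(t+1)\mid|X_p|$ explicitly (the paper's computation also yields this, though the lemma only records $s\mid|X_p|$). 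Both routes are equally valid; yours has the mild advantage of never leaving the point stabilizer, while the paper's reuses the subgroup $H$ and the identity $(1+s)|G_p|=(1+t)|G_\ell|$, which it needs elsewhere anyway.
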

\begin{proof}
(a) Take a point $p$ and a line $\ell$, and set $H=\la E(z):z\in\ell\ra$. If $x,y$ are distinct points on $\ell$, then $E(x)$ acts freely on the set of lines through $y$ that are distinct from $\ell$. This yields $t\equiv 0\pmod{r^m}$, cf. \cite{tmodrm}. We have $v=(1+s)(1+st)$, so  $v\equiv 1\pmod{s}$ and $\gcd(s,|\cP|)=1$. Since $G_\ell$ is $2$-transitive on the points of $\ell$ by (E3) and $H$ lies in the kernel of this action, we deduce that $s(s+1)|H|$ divides $|G_\ell|$.  It follows from $(1+s)|G_p|=(1+t)|G_\ell|$ that $sr$ divides $|G_p|$.
We have $|G_p|=|X_p|$ or $r|X_p|$ by Proposition \ref{prop_adjEr}, so $s$ divides $|X_p|$. Part (b) follows from the facts $s(1+t+st)=v-1$ and $t\equiv 0\pmod r$. This completes the proof.
\end{proof}

\begin{lemma}\label{lem_GeqXEp}
Let $X=\soc(\widetilde{G})$, and let the odd prime $r$, the $E(x)$'s and the group $G$ be as introduced in Proposition \ref{prop_adjEr}. Fix a point $p$ and a line $\ell$ incident with $p$, and let $H=\la E(z):z\in \ell\ra$. If $G\ne X$,  then we have the following properties.
\begin{enumerate}
  \item[(a)]$G=X:E(p)$, $X_p=C_X(E(p))$ and $G_p=C_G(E(p))=X_p\times E(p)$;
  \item[(b)]$E(p)$ has prime order $r$, and is isomorphic to a subgroup of $\Out(X)$;
  \item[(c)]$X_p\cap H$ is an elementary abelian $r$-subgroup of size at least $\frac{r-1}{r}(s+1)$;
  \item[(d)]If $|X|>|\Out(X)|_{2'}^3$, then $|X|<|X_p|^3$.
\end{enumerate}
\end{lemma}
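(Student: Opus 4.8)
The plan is to treat (a) and (b) as bookkeeping on top of Proposition \ref{prop_adjEr}, and to reserve the genuine arithmetic for (d). For (a), recall that in the branch $G\neq X$ Proposition \ref{prop_adjEr}(b) already furnishes $G=X:E(p)$ with $\lvert E(p)\rvert=r$ and $[E(p),X_p]=\{\id\}$; moreover the proof of that proposition establishes $\widetilde{E}(p)\cap X=\{\id\}$ in this branch, so $E(p)\cap X=\{\id\}$. First I would note $G_p=X_pE(p)=X_p\times E(p)$ and hence $G_p\cap X=X_p$. Since $r$ is prime we have $C_G(E(p))\le N_G(E(p))=G_p$, while both $X_p$ and $E(p)$ centralize $E(p)$, giving $G_p\le C_G(E(p))$; thus $C_G(E(p))=G_p=X_p\times E(p)$, and intersecting with $X$ yields $C_X(E(p))=X_p$. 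This disposes of (a).

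For (b), the prime order is immediate from Proposition \ref{prop_adjEr}(b). For the embedding into $\Out(X)$ I would use that $G$ is almost simple with socle $X$ (Proposition \ref{GisLie}), so $C_G(X)=\{\id\}$ and $G$ embeds into $\Aut(X)$ with $X$ mapping onto $\Inn(X)$; therefore the cyclic quotient $E(p)\cong G/X$ embeds into $\Out(X)$. For (c), I would apply Proposition \ref{prop_Gell}(a) with $m=1$ to get that $H$ is elementary abelian of order at least $(r-1)(s+1)+1$. As $H\le G_{[\ell]}\le G_p$ fixes $p$, one has $X_p\cap H=H\cap X$, which is again elementary abelian $r$. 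Because $\lvert G:X\rvert=r$, the index $\lvert H:H\cap X\rvert$ divides $r$, so $\lvert X_p\cap H\rvert\ge \lvert H\rvert/r\ge\frac{(r-1)(s+1)+1}{r}>\frac{r-1}{r}(s+1)$, as required.

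The heart of the matter is (d). From $G=X:E(p)$ and $G_p=X_p\times E(p)$ I record $\lvert G\rvert=r\lvert X\rvert$ and $\lvert G_p\rvert=r\lvert X_p\rvert$, so Corollary \ref{cor_GltGp2}'s bound $\lvert G\rvert<\lvert G_p\rvert^2$ rewrites as $\lvert X\rvert<r\lvert X_p\rvert^2$. On the other hand, by (b) the cyclic group $E(p)$ of odd order $r$ embeds into $\Out(X)$, so $r$ divides $\lvert\Out(X)\rvert_{2'}$, whence $r^3\le\lvert\Out(X)\rvert_{2'}^3<\lvert X\rvert$ by hypothesis. Combining the two inequalities gives $r^3<r\lvert X_p\rvert^2$, hence $r<\lvert X_p\rvert$, and then $\lvert X\rvert<r\lvert X_p\rvert^2<\lvert X_p\rvert^3$.

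The main (and essentially only nontrivial) obstacle is exactly this last step: Corollary \ref{cor_GltGp2} by itself only delivers the too-weak bound $\lvert X\rvert<r\lvert X_p\rvert^2$, and the role of the hypothesis $\lvert X\rvert>\lvert\Out(X)\rvert_{2'}^3$ is precisely to force $r<\lvert X_p\rvert$ and thereby upgrade the exponent from $2$ to $3$. Everything preceding it is unwinding the structure of $G=X:E(p)$ from Proposition \ref{prop_adjEr} together with a routine index count against $\lvert G:X\rvert=r$.
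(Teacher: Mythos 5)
Your proof is correct and follows essentially the same route as the paper's: parts (a)--(b) unwind Proposition \ref{prop_adjEr} (the paper derives $G_p=C_G(E(p))$ from maximality plus the observation that $E(p)$ cannot centralize $X$, while you shortcut via $C_G(E(p))\le N_G(E(p))=G_p$ from Proposition \ref{prop_adjEr}(d) --- the same content), part (c) uses Proposition \ref{prop_Gell} with an index count where the paper uses the Dedekind decomposition $H=(H\cap X_p)E(p)$, and part (d) fills in, correctly, exactly the ``straightforward'' step the paper leaves implicit ($r^3<|X|<r|X_p|^2$ forces $r<|X_p|$, hence $|X|<|X_p|^3$).
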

\begin{proof}
We write $E=E(p)$ in this proof for brevity. By Proposition \ref{GisLie}, we have $\soc(G)=X$. By Proposition \ref{prop_adjEr} and its proof, $|E|=r$, $[E,X_p]=X_p\cap E= \{ \id \}$, $G=X:E$ and $G_p=X_p:E$. The group $E$ does not centralize $X$, since otherwise $E$ would be a minimal normal subgroup of $G$ and thus a subgroup of $\soc(G)=X$. In particular, $E$ acts nontrivially on $X$ via conjugation. We thus have $X_p\le C_X(E)<X$, and $E\cong G/X\le \Out(G)$. It follows that $G_p\le C_X(E)\times E\le C_G(E)<G$. Since $G_p$ is maximal in $G$, we deduce that $G_p=C_X(E)\times E=C_G(E)$. Taking intersection with $X$ on both sides, we obtain $X_p=C_X(E)$.  This proves (a) and (b).

By Proposition \ref{prop_Gell}, $H$ is an elementary abelian $r$-group and $|H|\ge (s+1)(r-1)+1$. Since $H\le G_p$ and $E\le H$, we have $H=H\cap G_p=(H\cap X_p)E$. By the fact $X_p\cap E= \{ \id \}$, we deduce that $|H\cap X_p|=\frac{1}{r}|H|>\frac{r-1}{r}(s+1)$. This proves (c).

Finally, we have $|G|<|G_p|^2$ by Corollary \ref{cor_GltGp2}. By (a), this is equivalent to $|X|<r|X_p|^2$. By (b) we have $r\le |\Out(X)|_{2'}$. It is then straightforward to see that $|X|<|X_p|^3$ if $|X|>|\Out(X)|_{2'}^3$. This proves (d) and completes the proof.
\end{proof}

From now on, we shall take the odd prime $r$ and the $E(x)$'s as introduced in Proposition \ref{prop_adjEr}. Let $G=\la E(x):x\in\cP\ra$, and write $|E(x)|=r^m$ with $m\in\mathbb{N}$. The group $G$ is an almost simple group of Lie type with socle $X=\soc(\widetilde{G})$ by Proposition \ref{GisLie}. The point stabilizer $G_p$ is an $r$-local maximal subgroup of $G$ by Proposition \ref{prop_adjEr}. We write $X={}^t\mathsf{L}(q)'$ in the standard Lie-theoretic notation for Chevalley groups as in \cite[\S~3]{GorLyons}, where $\mathsf{L}\in\{\mathsf{A},\mathsf{B},\ldots,\mathsf{G}\}$ and $q=r_0^f$ with $r_0$ prime. We have $X={}^t\mathsf{L}(q)$ unless $X$ is one of $\mathsf{PSp}_4(2)'$, $\mathsf{G}_2(2)'$, ${}^2\mathsf{F}_4(2)'$ and ${}^2\mathsf{G}_2(3)'$. We write $\texttt{Inndiag}(X)$ for the subgroup of $\Aut(X)$ generated by inner and diagonal automorphisms.\medskip

In the case $G\ne X$, we have summarized the structural results about $G$ and some numerical constraints in Lemma \ref{lem_GeqXEp}. In view of this lemma, we need information about the elements of $\Out(X)$ whose order is an odd prime $r$ for a simple group $X$ of Lie type, cf. \cite[\S 7]{GorLyons}  and \cite[Chapter 4]{GorLySol}. Take an element $g$ of $E(p)$ of order $r$, and regard it as an element of $\Out(X)$, which we can do by Lemma \ref{lem_GeqXEp}. By (7-3) and (7-4) of \cite{GorLyons}, one of the following holds:
\begin{enumerate}
  \item[(R1)] $X=\PSL_{rn}(q)$ or $\PSU_{rn}(q)$ for some $q$ and $n$, with $q\equiv1$ or $-1\pmod{r}$ respectively, and $g$ is an inner-diagonal automorphism;
  \item[(R2)] $r=3$, $X=\mathsf{E}_6(q)$ or ${}^2\mathsf{E}_6(q)$, with $q\equiv1$ or $-1\pmod{3}$ respectively, and $g$ is an inner-diagonal automorphism;
  \item[(R3)] $X$ is defined over $\F_{q}$, where $q=q_0^r$ for some prime power $q_0$, and $g$ is a field automorphism;
  \item[(R4)] $r=3$, $X=\mathsf{P\Upomega}_8^+(q)$ for some $q$, and $g$ is a graph or graph-field automorphism.
  \item[(R5)] $r=3$, $X={}^3\mathsf{D}_4(q)$ for some $q$, and $g$ is a graph automorphism.
\end{enumerate}
We refer to \cite[Chapter 4]{GorLySol} for the conjugacy classes of outer automorphisms of prime order. \medskip

In Theorem \ref{thm_Exceptional} and Theorem \ref{thm_Classical}, we handle the cases where $X=\soc(G)$ is an exceptional group of Lie type and a classical group respectively. Section \ref{secLie} is devoted to the proof of Theorem \ref{thm_Exceptional}, and Section \ref{sec_class} is devoted to the proof of Theorem \ref{thm_Classical}. By combining the results in this section and those two theorems, we complete the proof of Theorem \ref{thm_EalyOdd}.

\section{Proof of the first main result: exceptional groups}\label{secLie}

We take the same notation as in Section \ref{subsec_prel}. In particular, the odd prime $r$ and the $E(x)$'s are as introduced in Proposition \ref{prop_adjEr}, and $X={}^t\mathsf{L}(q)'$ for a prime power $q=r_0^f$ with $r_0$ prime. Let $G=\la E(x):x\in\cP\ra$, and write $|E(x)|=r^m$ with $m\in\mathbb{N}$.
\begin{theorem}\label{thm_Exceptional}
  The socle $X=\soc(G)$ is not a finite exceptional group of Lie type unless it is isomorphic to a finite classical group.
\end{theorem}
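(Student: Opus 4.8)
The central lever is Corollary \ref{cor_GltGp2}, which gives $|G|<|G_p|^2$; writing $v=|\cP|$ this reads $v=|G:G_p|<|G_p|$. Thus $G_p$ is a \emph{large} maximal subgroup of the almost simple group $G$ (its index is smaller than its order), and it is moreover $r$-local by Proposition \ref{prop_adjEr}(d). The plan is to combine this ``large $r$-local'' constraint with the known structure of maximal subgroups of exceptional groups of Lie type to reduce to a short list of candidate pairs $(X,X_p)$, and then to eliminate each survivor using the fine arithmetic of the quadrangle: the factorization $v=(1+s)(1+st)$, the narrow window $v^{1/4}<1+s<v^{2/5}$ from Lemma \ref{lem_1psbound}, the divisibilities $t\equiv 0\pmod{r^m}$ and $s\mid|X_p|$ together with $\gcd(s,v)=1$ from Lemma \ref{lem_stcond}, and the presence in $G_p$ of a large elementary abelian $r$-subgroup $H$ with $|H|\ge(r^m-1)(s+1)+1$ from Proposition \ref{prop_Gell}. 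Throughout I would keep the outer-automorphism restrictions (R2), (R3), (R5) in reserve for the subcase $G\ne X$, since they already pin $X$ down to $\mathsf{E}_6$, ${}^2\mathsf{E}_6$, ${}^3\mathsf{D}_4$, or a field-automorphism situation.

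First I would split according to whether $r$ equals the defining characteristic $r_0$ of $X$. If $r\ne r_0$, the key observation is that $H\le G_p$ is elementary abelian with $|H|>s+1>v^{1/4}$, so its rank satisfies $\mathrm{rank}(H)>\frac{1}{4}\log_r v$; on the other hand, for a non-defining prime the rank of any elementary abelian $r$-subgroup of $X$ is bounded by a constant $c$ depending only on the Lie type. Hence $v<r^{4c}$, which forces $r$ to be large relative to $q$. But then $r$ is a primitive prime divisor of some $q^i-1$, so any $r$-local subgroup lies in the normalizer of a maximal torus, whose order is at most about $q^{\mathrm{rank}(X)}$; this is far smaller than the minimal index $v$ of a maximal subgroup of $X$, contradicting $v<|G_p|$. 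These two pressures are compatible only for boundedly many $(X,q,r)$, which are then discarded by direct inspection against $v=(1+s)(1+st)$.

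If instead $r=r_0$, then $G_p$ is an $r_0$-local maximal subgroup in the defining characteristic, hence a maximal parabolic subgroup $P$ of $G$ by the Borel--Tits theorem. Here $v=|G:P|$ is the number of points of the associated flag variety and has an explicit form as a product of cyclotomic-type factors in $q$, and the ``large'' condition $v<|P|$ already restricts $P$ to parabolics of small flag-variety dimension. For each such $P$ I would compare the polynomial factorization of $v$ with the required shape $(1+s)(1+st)$, using that $1+s$ must lie in the interval $(v^{1/4},v^{2/5})$, that $s$ divides $|X_p|$, and that $\gcd(s,v)=1$, to force a numerical contradiction; Zsygmondy primitive prime divisors of the cyclotomic factors of $v$ are the natural tool for deciding which primes may divide $1+s$ as opposed to $1+st$. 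The exceptional isomorphisms that make certain small exceptional groups classical (such as $\mathsf{G}_2(2)'\cong\PSU_3(3)$ and ${}^2\mathsf{G}_2(3)'\cong\PSL_2(8)$) account for the clause ``unless it is isomorphic to a finite classical group'', and these, together with the finitely many small-$q$ survivors from both cases, are checked individually.

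The step I expect to be the genuine obstacle is the defining-characteristic parabolic analysis: the crude bound $v<|P|$ does not by itself eliminate the small parabolics, so the contradiction must come entirely from the rigid interaction between the polynomial factorization of the flag-variety index and the tightly constrained factorization $v=(1+s)(1+st)$ with $1+s$ forced into $(v^{1/4},v^{2/5})$ and coprime to $v$. Managing this uniformly across the exceptional families $\mathsf{G}_2$, ${}^3\mathsf{D}_4$, $\mathsf{F}_4$, $\mathsf{E}_6$, ${}^2\mathsf{E}_6$, $\mathsf{E}_7$, $\mathsf{E}_8$ (and the Suzuki--Ree families $\Sz$, $\R$, ${}^2\mathsf{F}_4$), while simultaneously tracking the extra constraint from the large elementary abelian $r$-subgroup $H$ inside a unipotent radical, is where the bulk of the technical labor lies.
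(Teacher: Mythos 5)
Your skeleton coincides with the paper's: set aside the small groups with classical isomorphisms, handle $G\ne X$ through the outer-automorphism cases (R2), (R3), (R5), use Borel--Tits \cite{BT} to identify $r=r_0$ with the parabolic case, and kill that case by the cyclotomic arithmetic of $v=(1+s)(1+st)$ against Lemmas \ref{lem_1psbound} and \ref{lem_stcond} --- this is exactly the paper's Lemma \ref{lem_ExpPara}, carried out parabolic-by-parabolic in Tables \ref{tab_ExpPara1}--\ref{tab_ExpParaE8}. (One small correction there: largeness does not shrink the list of parabolics, since every maximal parabolic of an exceptional group satisfies $|G:P|<|P|$; all of them must be processed.)

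The genuine gap is in your $r\ne r_0$ branch, in two places. First, the inference ``$v<r^{4c}$ forces $r$ large relative to $q$'' needs the minimal permutation degree of $X$ to beat $4c$, and this fails for the low-rank families: for $\mathsf{G}_2(q)$ or ${}^3\mathsf{D}_4(q)$ one can have $r=3\mid q\mp 1$ with candidates $G_p=\SL_3(q).2$, $\SU_3(q).2$ or $\mathsf{G}_2(q)$, so no largeness of $r$ can be extracted. Second, and fatally, the implication ``$r$ large, hence a primitive prime divisor of some $q^i-1$, hence every $r$-local lies in the normalizer of a maximal torus'' is false: every prime coprime to $q$ is a ppd of some $q^i-1$, and for $r\mid q-\epsilon$ the normalizer of an order-$r$ semisimple element can be a maximal-rank subgroup of enormous order --- for instance $N_X(\mathsf{E}_6^{\epsilon}(q)T_1)$ in $\mathsf{E}_7(q)$, which is maximal, $r$-local, and satisfies $v<|G_p|$, so your second pressure never engages it. What replaces this in the paper is the classification of $r$-local maximal subgroups of exceptional groups \cite{CLSS}: either $(G,E(p))$ lies in a short exceptional list (the paper's Table \ref{tab_CLSS1}, killed by $|G|<|G_p|^2$ in Lemma \ref{lem_ExpCLSS}), or $G_p$ has maximal rank, with candidates read off from \cite{LSS} intersected with \cite{LieSax}. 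Moreover, most maximal-rank candidates (e.g.\ $(\SL_2(q)\circ\mathsf{E}_7(q)).d$ in $\mathsf{E}_8(q)$, or $\mathsf{Spin}_9(q)$ in $\mathsf{F}_4(q)$) are eliminated not by any counting pressure but by a structural fact your proposal never invokes: $E(p)$ is minimal normal in $G_p=N_G(E(p))$ (Proposition \ref{prop_adjEr}), hence lies in the Fitting subgroup of $G_p$, which for those candidates is trivial or a $2$-group while $r$ is odd. Only after this reduction do the surviving few cases fall to your pressure (i), which is essentially the paper's Lemma \ref{lem_ExpMaxRank}. Without \cite{CLSS}/\cite{LSS} and the Fitting-subgroup argument, your two pressures cannot even enumerate, let alone exclude, the non-defining-characteristic candidates.
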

The whole of this section is devoted to the proof of Theorem \ref{thm_Exceptional}. We suppose to the contrary that $X={}^d\mathsf{L}(q)'$ is a finite exceptional simple group of Lie type over $\F_q$, where $q=r_0^f$ ($r_0$ prime). We do not need to consider $\mathsf{G}_2(2)'$ or ${}^2\mathsf{G}_2(3)'$ due to the isomorphisms $\mathsf{G}_2(2)'\cong \PSU_3(3)$, ${}^2\mathsf{G}_2(3)'\cong \PSL_2(8)$. For $G={}^2\mathsf{F}_4(2)'$, we let $G_p$ range over its  maximal subgroups as listed in \cite{2F42T, 2F42W}, and check that there is no integer solution to $(1+s)(1+st)=[G:G_p]$ in each case. Hence we assume that $q>2$ when $X={}^2\mathsf{F}_4(q)'$. We write $\mathsf{E}^+_6(q),\mathsf{E}_6^-(q)$ for $\mathsf{E}_6(q),{}^2\mathsf{E}_6(q)$ as per convention, and fix the following notation:
\[
  d=\gcd(2,q-1),\;e_+=\gcd(3,q-1),\,e_-=\gcd(3,q+1),\,f_-=\gcd(4,q+1).
\]

\begin{lemma}\label{lem_ExpGsimple}
The group $G$ is simple, i.e., $G=X$.
\end{lemma}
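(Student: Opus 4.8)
The plan is to argue by contradiction: suppose $G\neq X$. By Proposition~\ref{prop_adjEr} and Lemma~\ref{lem_GeqXEp}(a)--(b) we then have $G=X{:}E(p)$ with $E(p)=\langle g\rangle$ of odd prime order $r$, where $g$ induces an outer automorphism of $X$ of order $r$, and $X_p=C_X(g)$. Since $X$ is exceptional, cases (R1) and (R4) from the list preceding this section are impossible, so $g$ is (R3) a field automorphism, with $q=q_0^{\,r}$; (R2) a diagonal automorphism with $r=3$ and $X=\mathsf{E}_6^{\pm}(q)$; or (R5) a graph (triality) automorphism with $r=3$ and $X={}^3\mathsf{D}_4(q)$. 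The main tool will be the order estimate of Lemma~\ref{lem_GeqXEp}(d): its hypothesis $|X|>|\Out(X)|_{2'}^3$ holds for every exceptional group, since $|\Out(X)|$ is bounded by a small multiple of $f$ whereas $|X|\approx q^{\dim X}$; so we may assume $|X|<|X_p|^3=|C_X(g)|^3$ and seek a contradiction by bounding $|C_X(g)|$.

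For the field automorphisms (R3), $C_X(g)$ is the fixed subgroup ${}^{t}\mathsf{L}(q_0)$, a group of the same type over $\F_{q_0}$ with $q=q_0^{\,r}$, so $|C_X(g)|^3\approx q_0^{3\dim X}$ while $|X|\approx q_0^{r\dim X}$. For $r\ge 5$ this already gives $|C_X(g)|^3<|X|$; for $r=3$ the comparison is carried out factor by factor in the order formulas, and is favorable for every exceptional group because each such group has a factor $q^2-1$ (contributing a ratio $>1$), the remaining degrees are large (so the factors attached to $+1$-signs stay close to $1$), and the central correction is $\ge 1$. In both subcases $|C_X(g)|^3<|X|$, contradicting the order estimate. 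For the diagonal case (R2), a diagonal automorphism of order $3$ of $\mathsf{E}_6^{\pm}(q)$ has centralizer which, up to a bounded central factor, is a group of type $\mathsf{A}_2\mathsf{A}_2\mathsf{A}_2$ (the fixed-point subgroup of the order-$3$ grading of the ambient Lie algebra, cf.\ \cite{GorLyons, GorLySol}), whence $|C_X(g)|<q^{25}$ and $|C_X(g)|^3<q^{75}<q^{77}<|X|$, again contradicting Lemma~\ref{lem_GeqXEp}(d). Thus (R2) and (R3) cost only centralizer bookkeeping.

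The delicate case is (R5), where $X={}^3\mathsf{D}_4(q)$ and $C_X(g)=\mathsf{G}_2(q)$: here $|\mathsf{G}_2(q)|^3\approx q^{42}$ exceeds $|X|\approx q^{28}$, so the order estimate is vacuous and the contradiction must come from the geometry. I would use the explicit index
\[
  v=|\cP|=[\,{}^3\mathsf{D}_4(q):\mathsf{G}_2(q)\,]=q^6(q^8+q^4+1)=q^6(q^2+q+1)(q^2-q+1)(q^4-q^2+1).
\]
By Lemma~\ref{lem_stcond}(a) we have $\gcd(s,v)=1$ and $s\mid|X_p|=|\mathsf{G}_2(q)|=q^6(q^6-1)(q^2-1)$; since $q^2+q+1$, $q^2-q+1$ and $q^4-q^2+1$ all divide $v$, coprimality forces $s\mid(q^2-1)^2$, so in particular $1+s<q^4$. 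Combined with the lower bound $1+s>v^{1/4}\approx q^{7/2}$ of Lemma~\ref{lem_1psbound}, this confines $s$ to the few large divisors of $(q^2-1)^2$. The plan is then to impose the remaining constraints $1+s\mid v$, the integrality of $t=(v-1-s)/\bigl(s(1+s)\bigr)$, the congruence $r^m\mid t$ (so $3\mid t$) from Lemma~\ref{lem_stcond}(a), and the inequalities $s\le t^2$, $t\le s^2$, and to verify that the resulting Diophantine system admits no admissible pair $(s,t)$. I expect this residual arithmetic for ${}^3\mathsf{D}_4(q)$ to be the main obstacle; the small parameters (e.g.\ $q=2$, where $(q^2-1)^2=9$ is already too small to satisfy $1+s>v^{1/4}$) are eliminated immediately, which suggests the general argument will likewise pit the divisor constraint $s\mid(q^2-1)^2$ against the size window of Lemma~\ref{lem_1psbound}.
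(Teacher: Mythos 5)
Your reduction to cases (R2), (R3), (R5) and the idea of playing order estimates against Lemma \ref{lem_GeqXEp} follow the paper's skeleton, and your treatment of (R3) is sound in substance (the paper reaches the same conclusion more cheaply, invoking \cite[Theorem~1.6]{AlaMaxExp} and then the stronger bound $|X|<3|X_p|^2$ from Corollary \ref{cor_GltGp2}). However, case (R2) is wrong as written. You claim that an order-$3$ diagonal automorphism $g$ of $\mathsf{E}_6^{\pm}(q)$ has centralizer of type $\mathsf{A}_2\mathsf{A}_2\mathsf{A}_2$, hence $|C_X(g)|<q^{25}$ and $|C_X(g)|^3<|X|$. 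That is not what \cite[Table~4.7.3A]{GorLySol} gives for the relevant elements of $\texttt{Inndiag}(X)\setminus X$: after filtering by $|G_p|>|G|^{1/2}$ (via \cite[Theorem~1]{LieSax}), the surviving candidates are $X_p=(\SL_2(q)\circ A_5^\epsilon(q)).d$, of order roughly $q^{38}$, and, for $\epsilon=-1$, $X_p=(\mathsf{Spin}^-_{10}(q)\circ(q+1)/3).f_-$, of order roughly $q^{46}$. For the latter, $|X_p|^2\approx q^{92}$ already exceeds $|X|\approx q^{78}$, so \emph{no} estimate of the form $|X|<r|X_p|^2$ or $|X|<|X_p|^3$ can yield a contradiction. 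This is exactly where the paper has to work harder: Lemma \ref{lem_GeqXEp}(c) produces an elementary abelian $3$-subgroup of $X_p$ of size at least $\tfrac{2}{3}(s+1)$, the rank bound \cite[10-2]{GorLyons} caps such subgroups at $3^6$, and together with $1+s>q^{7.75}$ (Lemma \ref{lem_1psbound}) this forces $q=2$, which is then killed by a gcd computation. Your purported contradiction for (R2) simply does not exist, and nothing in the proposal replaces it.

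Case (R5) is a gap of a different kind: not wrong, but unfinished. You correctly derive $s\mid (q^2-1)^2$ and the window $q^{3.5}<1+s<q^4$, but then defer to an unexecuted ``Diophantine system'' check which you yourself flag as the main obstacle. The paper closes this case with a Bezout identity (XGCD in Magma \cite{magma}): there are $a(x),b(x)\in\mathbb{Z}[x]$ with $a(q)(v-1)+b(q)|X_p|=2^{10}$, so $s\mid 2^{10}$ by Lemma \ref{lem_stcond}, whence $q\le 7$ by Lemma \ref{lem_1psbound}, and the few remaining $q$ die by direct checks. In fact your own constraints do suffice, with one observation you did not make: modulo $q^2-1$ one has $v\equiv 3$, so $v-1\equiv 2$; since $s$ divides $v-1$ (Lemma \ref{lem_stcond}(a)) and every prime divisor of $s$ divides $q^2-1$, the integer $s$ must be a power of $2$; for odd $q$ we have $v-1\equiv 2\pmod 8$, so $s\le 2$, and for even $q$, $v-1$ is odd, so $s=1$ --- either way contradicting $1+s>q^{3.5}$. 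As submitted, though, (R5) is a plan rather than a proof, and (R2) is an error, so the lemma is not proved.
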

\begin{proof}
Write $E=E(p)$ and $K=\texttt{Inndiag}(X)$. Suppose to the contrary that $G\ne X$. By Lemma \ref{lem_GeqXEp}, $E$ is cyclic of order $r$,  $X_p=C_X(E)$, $G=X:E$ and $G_p=X_p\times E$. We write $E=\la g\ra$, and regard $g$ as an element of $\Out(X)$.   By (7-3) and (7-4) of \cite{GorLyons}, we have one of the cases (R2), (R3) and (R5) listed at the end of Section \ref{subsec_prel}. The centralizers of $g$'s for (R2), (R5) are available in \cite[Table~4.7.3.A]{GorLySol}. We have $|G_p|>|G|^{1/2}$, i.e., $|X|<r|X_p|^2$, by Corollary \ref{cor_GltGp2}.

We first consider (R2), where $r=3$, $g\in K$, and $X=\mathsf{E}^\epsilon_6(q)$ with $\epsilon=\pm 1$ and  $q\equiv\epsilon\pmod{3}$. It holds that $|X|\ge 3q^{74}$ upon direct check, so we have $|G_p|>3q^{37}$  by the bound $|G_p|>|G|^{1/2}$.  The conditions in \cite[Theorem~1]{LieSax} are satisfied, so $X_p$ is either a parabolic subgroup or appears in  \cite[Table~1]{LieSax}. Comparing with the information about $C_X(g)$ in \cite[Table~4.7.3A]{GorLySol}, we have either $X_p=(\SL_2(q)\circ A_5^\epsilon(q)).d$, or $\epsilon=-1$ and $X_p=(\mathsf{Spin}^-_{10}(q)\circ(q+1)/3).f_-$. We exclude the former case by $|X|<3|X_p|^2$. In the latter case, we have
$|X_p|=\frac{q+1}{3}q^{20}(q^5+1)\prod_{i=1}^4(q^{2i}-1)$ and
$|\cP|=q^{16}\frac{(q^{12}-1)(q^9+1)}{(q^4-1)(q+1)}$. We have $1+s>q^{7.75}$ by Lemma \ref{lem_1psbound}. We deduce from \cite[10-2]{GorLyons} that an elementary abelian $3$-subgroup of $X_p$ has size at most $3^6$, so by Lemma \ref{lem_GeqXEp} we have $3^6\le \frac{2}{3}(1+s)<\frac{2}{3}q^{7.75}$. It holds only if $q=2$, in which case $\gcd(|\cP|-1,|X_p|)=17$. It follows that $s=17$, which contradicts the bound $1+s>q^{7.75}$. This excludes (R2).

We next consider (R5), where  $r=3$, $X={}^3\mathsf{D}_4(q)$ and $g$ is a graph automorphism. It holds that $|X|\ge 3q^{24}$ upon direct check, so we have $|G_p|>3q^{24}$  by the bound $|G_p|>|G|^{1/2}$.  The conditions in \cite[Theorem~1]{LieSax} are satisfied, so $X_p$ is either a parabolic subgroup or appears in  \cite[Table~1]{LieSax}. Comparing with the information about $C_X(g)$ in \cite[Table~4.7.3A]{GorLySol}, we have $X_p=\mathsf{G}_2(q)$. We have $|X_p|=q^{6}(q^2-1)(q^6-1)$ and $|\cP|=q^6(q^8+q^4+1)$. There are polynomials $u(x),v(x)\in\mathbb{Z}[x]$ such that $(|\cP|-1)u(q)+|X_p|v(q)=2^{10}$, so $s\mid 2^{10}$ by Lemma \ref{lem_stcond}. We have $1+s>q^{3.5}$ by Lemma \ref{lem_1psbound}, so $q\le 7$. We have $\gcd(|\cP|-1,|X_p|)\le 2$ for each such $q$, so $s=2$. There is no $q$ such that $1+s>q^{3.5}$: a contradiction. This excludes (R5).

Finally, we consider (R3), where $g$ is a field automorphism of order $r$ and $q=q_0^r$ for some $q_0\in\mathbb{N}$.  It is routine to check that $|X|>|\Out(X)|^3$ for any finite simple exceptional group $X$ of Lie type, so we have $|X|<|X_p|^3$ by Lemma \ref{lem_GeqXEp} (d). By \cite[Theorem~1.6]{AlaMaxExp} and the fact $r$ is odd, we have $r=3$, and $X$ is one of ${}^2B_2(q)$, ${}^2\mathsf{G}_2(q)$ with $q=3^{3k}$,  ${}^2\mathsf{F}_4(q)$ with $q=2^{3k}$, $\mathsf{G}_2(q)$, $\mathsf{F}_4(q)$, $\mathsf{E}_6(q)$, ${}^2\mathsf{E}_6(q)$, $\mathsf{E}_7(q)$ and $\mathsf{E}_8(q)$. They are all excluded by the bound $|X|<3|X_p|^2$, so (R3) does not occur. This completes the proof.
\end{proof}
Suppose that $G$ is simple from now on. By Proposition \ref{prop_adjEr} (d), $G_p=N_G(E(p))$ is a local maximal subgroup of $G$, and such subgroups of finite exceptional groups have been determined in \cite{CLSS}. If $r=r_0$, then $G_p$ is a parabolic subgroup by \cite[3.12]{BT}. We now handle this case. The index of a maximal parabolic subgroup in $G$ can be calculated by using \cite[Proposition~10.7.3]{BCN}, and we do not need further information about the structure of $G_p$. Let  $\Phi_n(x)$ be the cyclotomic polynomial of the complex $n$-th root of unity. By definition  it is the unique irreducible polynomial with integer coefficients that is a divisor of $x^n - 1$ and not a divisor of $x^k - 1$ for any $k\in\mathbb{N}$ such that $1\le k \le n-1$. It is well known that
$\Phi_n(x) = \prod_{\substack{1 \leq k \leq n\\ \mathrm{gcd}(k,n) = 1}}(x - e^{\frac{2\pi ki}{n}})$. We shall write $\Phi_n$ instead of $\Phi_n(q)$ for brevity. Here are the first few $\Phi_i$'s:
\begin{align*}
\Phi_1=q-1,\;\Phi_2=q+1,\;\Phi_3=q^2+q+1,\;\Phi_4=q^2+1,\;\Phi_5=q^4+q^3+q^2+q+1,\\
\Phi_6=q^2-q+1,\;\Phi_8=q^4+1,\;\Phi_9=q^6+q^3+1,\;\Phi_{12}=q^4-q^2+1.
\end{align*}
We refer to \cite[Table~10:2]{GorLyons} for a decomposition of $|G|_{r_0'}$ into a product of $\Phi_i$'s, where $|G|_{r_0'}$ is the largest divisor of $|G|$ relatively prime to $q=r_0^f$. \medskip

Here is a general strategy that we will take.\\

$\left\{
\begin{tabular}{p{0.9\textwidth}}
 We write $|\cP|=f(q)$, $|G_p|=d_0^{-1}h(q)$, where $f(x),h(x)\in\mathbb{Z}[x]$, $f(0)=1$, $h$ is monomial, and $d_0\in\{1,d,e_+,e_-\}$. By the XGCD command in Magma \cite{magma}, we find polynomials $u(x),v(x)\in\mathbb{Z}[x]$, a low degree polynomial $c(x)\in \mathbb{Z}[x]$ such that
\begin{equation}\label{eqn_cq}
   u(q)\left(f(q)-1\right)+v(q)h(q)=c(q),
\end{equation}
and $\deg(c(x))$ equals that of $\gcd(f(x),h(x))\in\mathbb{Q}[x]$. It follows from Lemma \ref{lem_stcond} that $s$ divides $c(q)$. Together with Lemmas \ref{lem_1psbound} and \ref{lem_stcond}, we obtain severe restrictions on $s$. We are usually left to consider a few small values of $q$, so that we can check where $(1+s)(1+st)=|\cP|$ has a solution in $(s,t)$ with $s$ satisfying all those conditions.
\end{tabular}
\right.$

\medskip
\begin{lemma}\label{lem_ExpPara}
If $G$ is simple, then $G_p$ is not a parabolic subgroup of $G$.
\end{lemma}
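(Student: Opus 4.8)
The plan is to rule out the possibility that $G_p$ is a parabolic subgroup of the exceptional group $G$ by combining the arithmetic constraints from Lemmas \ref{lem_1psbound} and \ref{lem_stcond} with the explicit knowledge of indices of maximal parabolic subgroups. First I would enumerate, for each exceptional type $\mathsf{L}\in\{{}^2\mathsf{B}_2,\mathsf{G}_2,{}^2\mathsf{G}_2,{}^3\mathsf{D}_4,{}^2\mathsf{F}_4,\mathsf{F}_4,\mathsf{E}_6,{}^2\mathsf{E}_6,\mathsf{E}_7,\mathsf{E}_8\}$, the conjugacy classes of maximal parabolic subgroups (indexed by the nodes of the relevant Dynkin diagram, modulo the graph automorphism action in the twisted cases). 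For each such parabolic I would write $|\cP|=[G:G_p]=f(q)$ as a polynomial in $q$ using \cite[Proposition~10.7.3]{BCN}, and express $|G_p|=d_0^{-1}h(q)$ with $h$ monomial as in the general strategy. This reduces the lemma to a finite list of cases, one per (type, node) pair.

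The core of the argument is the XGCD step displayed in \eqref{eqn_cq}: for each case I would run the extended Euclidean algorithm in $\mathbb{Z}[x]$ (via Magma) on $f(x)-1$ and $h(x)$ to produce $u(x),v(x),c(x)$ with $u(q)(f(q)-1)+v(q)h(q)=c(q)$ and $\deg c=\deg\gcd_{\mathbb{Q}[x]}(f,h)$. Since $s\mid v-1$ and $s\mid |X_p|$ by Lemma \ref{lem_stcond}(a), and $|G_p|=|X_p|$ here as $G=X$, we get $s\mid c(q)$. Because $c$ has low degree, this forces $s$ into a short explicit list of divisors, and I would cross-check against the lower bound $1+s>|\cP|^{1/4}$ from Lemma \ref{lem_1psbound}. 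In the typical case $c(q)$ is a fixed small integer (a power of $2$ times a small factor) independent of $q$, so the bound $1+s>f(q)^{1/4}$ immediately caps $q$ at a handful of small values; for those finitely many $q$ I would compute $\gcd(f(q)-1,h(q))$ numerically, extract the admissible $s$, and verify directly that $(1+s)(1+st)=f(q)$ has no integer solution $(s,t)$ with $t\equiv 0\pmod{r^m}$ and $\gcd(s,f(q))=1$.

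I would also invoke the defining-characteristic constraint of Lemma \ref{lem_stcond}(b): when $r=r_0$ (the case in which $G_p$ is forced to be parabolic by \cite[3.12]{BT}), the quantity $(v-1)/s$ is prime to $r_0$, which gives an additional $r_0$-adic obstruction that frequently kills a case outright before any size estimate is needed. The main obstacle I anticipate is the sheer bookkeeping of the largest groups, $\mathsf{E}_7$ and $\mathsf{E}_8$, where each maximal parabolic has an index $f(q)$ that is a product of many cyclotomic factors $\Phi_i$, so the polynomials involved have high degree and the XGCD output $c(x)$ must be computed carefully to ensure its degree genuinely matches $\deg\gcd(f,h)$ rather than picking up spurious content; I would rely on the $\Phi_i$-factorizations in \cite[Table~10:2]{GorLyons} to organize these and to read off which cyclotomic factor can divide $s$. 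The remaining delicate point is the handful of very small exceptional groups (and the already-separated case ${}^2\mathsf{F}_4(2)'$, together with $q=2$ subcases), where the polynomial bounds are too weak and one must argue by direct inspection of the maximal subgroup list, but these are finite checks.
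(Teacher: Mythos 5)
Your overall plan coincides with the paper's proof in most respects (deduce $r=r_0$ from parabolicity, enumerate maximal parabolics, run the XGCD step \eqref{eqn_cq}, play $s\mid c(q)$ against Lemma \ref{lem_1psbound}, and finish $\mathsf{E}_7$, $\mathsf{E}_8$ by computer), but as written it has two genuine gaps. The first is that your list of feasibility conditions on a candidate pair $(s,t)$ — namely $t\equiv 0\pmod{r^m}$, $\gcd(s,v)=1$, Higman's inequalities and the bounds of Lemma \ref{lem_1psbound} — omits the divisibility $s+t\mid st(t+1)$, which holds here because $\cS$ carries a nontrivial point-symmetry (\cite[8.1.2]{FGQ}) and which the paper builds into its notion of a feasible pair. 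This condition is not a luxury: for the ${}^3\mathsf{D}_4(q)$ parabolic with $v=\Phi_2\Phi_3\Phi_6^2\Phi_{12}$ (line 3 of Table \ref{tab_ExpPara1}), the candidate $s=q^3$ yields the \emph{exact} integer solution $t=q^5+q$, since $(1+q^3)(1+q^3(q^5+q))=\Phi_2\Phi_6\cdot(q^8+q^4+1)=v$; this pair passes every test you list, for every $q$, and only $s+t\mid st(t+1)$ rules it out. Note also that in this case $c(q)=25q^3$ exceeds the lower bound $q^{2.75}$ for all $q$, so your "cap $q$, then finite check" scheme never even reduces to finitely many $q$; one must argue polynomially in $q$, as the paper does.

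The second and more serious gap is the $\mathsf{E}_6(q)$ parabolic with $v=\Phi_3^2\Phi_6\Phi_9\Phi_{12}$ (line 1 of Table \ref{tab_ExpParaE6}). There $\deg_q v=16$, so Lemma \ref{lem_1psbound} only gives $1+s>q^4$, while $c(q)=13^6q(q^4+1)$ has degree $5$; divisors of $c(q)$ can exceed $q^4$, so no bound on $q$ results and the uniform XGCD-plus-bound method is structurally insufficient, not merely a bookkeeping burden. The paper closes this case by an argument of a different nature: the action of $G$ on $\cP$ has rank $3$, so the two nontrivial suborbit lengths $a=q(q^3+1)(q^8-1)/(q-1)$ and $b=q^8(q^4+1)(q^5-1)/(q-1)$ must be $s(t+1)$ and $s^2t$ in some order; comparing $r_0$-parts forces $s(t+1)=a$ and $t_{r_0}=q^6$, whence $s\equiv a\equiv 2q^6+2q^5+2q^4+q^3+q^2+q\pmod{q^7}$, while an XGCD on $a,b$ gives $s\mid q(q^4+1)$ — incompatible statements. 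Without some substitute for this suborbit argument your proof cannot be completed. (Two minor points: since $r=r_0$ is odd, the types ${}^2\mathsf{B}_2$ and ${}^2\mathsf{F}_4$ need not be enumerated at all; and $c(q)$ is typically a bounded constant times a power of $q$ matching $(v-1)_{r_0}$, not a constant independent of $q$ — the capping mechanism, where it works, comes from the constant cofactor.)
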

\begin{proof}
We write $v=|\cP|$ in this proof. Suppose to the contrary that $G_p$ is a parabolic subgroup of $G$. It is well known that its unipotent radical is its Fitting subgroup, so we have $r=r_0$, where $r_0$ is the defining characteristic of $G$. Since $r$ is odd, $G$ is not of type ${}^2\textsf{B}_2$ or ${}^2\textsf{F}_4$. If $G={}^2\mathsf{G}_2(q)$ and $G_p$ is its unique maximal parabolic subgroup up to conjugacy, then $\cP$ has size $q^3+1$ and $G$ acts $2$-transitively on $\cP$, cf. \cite[p.~137]{wilson}. This is impossible, since $G$ cannot map a pair of collinear points to a pair of noncollinear points. Therefore, $G$ is not of type ${}^2\mathsf{G}_2$.

We consider the remaining exceptional groups in what follow. We list the relevant information in Tables  \ref{tab_ExpPara1}-\ref{tab_ExpParaE8}. By Lemma \ref{lem_stcond}, $s$ satisfies the following properties: it has the same $r_0$-part as $v-1$, i.e., $s_{r_0}=(v-1)_{r_0}$; it is relatively prime to $v$, and it divides both $c(q)$ and $|G_p|$, where $c(q)$ is as in \eqref{eqn_cq}. We deduce that $s_{r_0'}$ divides the product of the $\Phi_i$'s (counting multiplicity) that appear in the expression of $|G_p|$ but not in that of $v$. We have a lower bound $b(q)$ on $1+s$ by Lemma \ref{lem_1psbound}, which we list in the respective tables.  Given a candidate $s$ satisfying those conditions, we solve $t$ from $v=(1+s)(1+st)$. It should be in the interval $[\sqrt{s}, s^2]$ and satisfies $s+t\mid st(t+1)$ by \cite{FGQ}. If the pair $(s,t)$ satisfies all those conditions, we say that it is a \ul{feasible pair}.

{\bf We consider $\mathsf{G}_2$, ${}^3\mathsf{D}_4$ and $\mathsf{F}_4$}, where there are at most two sizes for maximal parabolic subgroups. There are five cases to consider, cf. Table \ref{tab_ExpPara1}.
\begin{enumerate}
  \item[(1)]For line 1, we have $s_{r_0}=q$ and $s\mid 5q$, so $s\in\{q,5q\}$.  We deduce from $1+5q>q^{1.25}$ that $q\le 625$. There is no feasible $(s,t)$ pair with this property for each $q$.
  \item[(2)]For line 2, we have $s_{r_0}=q$ and $s\mid 25q$, so $s\in\{q,5q,25q\}$. We deduce from $1+25q>q^{2.25}$ that $q\le 13$, and there is no feasible $(s,t)$ pair for each $q$.
  \item[(3)]For line 3,  we have $v=1+q^3(q^8+q^5+q^4+q+1)$,  so $s_{r_0}=q^3$. Also, $s$ divides both $25q^3$ and $q^{12}(q-1)^2$. It follows that $s\in\{q^3,5q^3,25q^3\}$, and the latter two values occur only if $q\equiv 1\pmod{5}$. If $s=q^3$, then we deduce that $t=q^5+q$, which does not satisfy $s+t\mid st(t+1)$. Assume that $q\equiv 1\pmod{5}$. If $s=5q^3$, then $5^3v\equiv 4q^2 - 20q + 100\pmod{1+s}$ in $\mathbb{Z}[q]$. It follows that $s+1$ divides $4q^2 - 20q + 100$. The latter number is positive, so  $5q^3+1\leq 4q^2 - 20q + 100$, which does not hold for $q\equiv 1\pmod{5}$. If $s=25q^3$, then $5^6v\equiv  24q^2 - 600q + 15000\pmod{1+s}$ in $\mathbb{Z}[q]$, and we exclude it similarly.
  \item[(4)]For line 4, $s_{r_0}=q$ and $s$ divides both $23^4q$ and $q^{24}(q-1)^4$. We thus have  $s=23^iq$ with $0\le i\le 4$, where $i\ge 1$ only if $q\equiv 1\pmod{23}$. We have $1+q<q^{3.75}$ for all $q$, so $q\equiv 1\pmod{23}$. We have $q\le 89$ by the bound $1+23^4q>q^{3.75}$, so $q=47$. We deduce from $1+s>47^{3.75}$ that $s=23^4\cdot 47$, but then $1+s$ does not divide $v$: a contradiction.
  \item[(5)] For line 5, $s_{r_0}=q$ and $s$ divides both $5\cdot 19^4q$ and $q^{24}(q-1)^4$. We have $q^5>1+5q$ for all $q$, so $19\mid s$. It follows that $q\equiv 1\pmod{19}$. We deduce from $1+5\cdot 19^4q>q^5$ that $q\le 27$. There is no such prime power $q$, so this case does not occur.
\end{enumerate}
\begin{table}[h]
\captionsetup{justification=centering}
\caption{Information for $\mathsf{G}_2,{}^3\mathsf{D}_4,{}^2\mathsf{F}_4, \mathsf{F}_4$. In the last column $b(q)$  is a lower bound on $1+s$ given by Lemma \ref{lem_1psbound}.}\label{tab_ExpPara1}
\begin{tabular}{|cccccc|}
\hline
Line & $G$          & $|G_p|$                                    & $v=|\cP|$                                         & $c(q)$         & $b(q)$\\ \hline
1& $\mathsf{G}_2(q)$     & $q^6\Phi_1^2\Phi_2$                        & $\Phi_2\Phi_3\Phi_6$                            & $5q$           & $q^{1.25}$           \\ \hline
2& ${}^3\mathsf{D}_4(q)$ & $q^{12}\Phi_1^2\Phi_2\Phi_3\Phi_6$         & $\Phi_2\Phi_3\Phi_6\Phi_{12}$                   & $25q$          & $q^{2.25}$           \\
3&              & $q^{12}\Phi_1^2\Phi_2\Phi_3$               & $\Phi_2\Phi_3\Phi_6^2\Phi_{12}$                 & $25q^3$        & $q^{2.75}$           \\ \hline
4& $\mathsf{F}_4(q)$     & $q^{24}\Phi_1^4\Phi_2^3\Phi_3\Phi_4\Phi_6$ & $\Phi_2\Phi_3\Phi_4\Phi_6\Phi_8\Phi_{12}$       & $23^4q$        & $q^{3.75}$           \\
5&             & $q^{24}\Phi_1^4\Phi_2^2\Phi_3$             & $\Phi_2^2\Phi_3\Phi_4^2\Phi_6^2\Phi_8\Phi_{12}$ & $5\cdot 19^4q$ & $q^{5}$              \\ \hline
\end{tabular}
\end{table}

{\bf We next consider the cases $G=\mathsf{E}_6(q)$ or ${}^2\mathsf{E}_6(q)$.} Here we handle the maximal parabolic subgroups in the same order as in the corresponding tables in \cite{Craven1}. We list the relevant information in Table  \ref{tab_ExpParaE6}. For line 1 of Table \ref{tab_ExpParaE6}, the lower bound on $1+s$ is too weak for our purpose, so we handle it separately. We have $v=\frac{(q^{12}-1)(q^9-1)}{(q^4-1)(q-1)}$, and $s_{r_0}=(v-1)_{r_0}=q$ in this case. The action of $G$ on $\cP$ is rank $3$, and the two non-trivial suborbits have lengths $a=q(q^3 + 1)\frac{q^8-1}{q-1}$, $b=q^8(q^4 + 1)\frac{q^5-1}{q-1}$ respectively by \cite[4.10.5]{wilson}. They should be the sizes of $p^\perp\setminus\{p\}$, $\cP\setminus p^\perp$ respectively. Since $s_{r_0}=q$ and $t\equiv 0 \pmod{r_0}$, we deduce that $s(t+1)=a$ by considering the respective $r_0$-parts on both sides. On the other hand, we have that $s^2t=b$. It follows that $t_{r_0}=q^6$.  There are $u(x),v(x)\in\mathbb{Z}[x]$ such that $u(q)a+v(q)b=q(q^4+1)$ by Magma \cite{magma}, so $s$ divides $q(q^4+1)$. Taking modulo $q^7$, we obtain from $s(t+1)=a$ that $s\equiv h(q)\pmod{q^7}$, where $h(q)=2q^6 + 2q^5 + 2q^4 + q^3 + q^2 + q$. We have $q^7>h(q)>q(q^4+1)$ for each odd prime power $q$, so there is no feasible $s$. This excludes the case in line 1. We next consider the remaining cases in Table \ref{tab_ExpParaE6}.
\begin{enumerate}
  \item[(1)]For line 2,  $s_{r_0}=q$ and $s$ divides $71^6q$. It follows from $1+s>q^{5.25}$ that $s\ne q$, so $71\mid s$ and $v\equiv 1\pmod{71}$. By this condition and $1+71^6q>q^{5.25}$, we deduce that $q\in\{121,263\}$. For both values we have $(v-1)_{71}=71$, so $s=71q$. We then check that $1+s>q^{5.25}$ does not hold, so this case does not occur.
  \item[(2)]For line 3, $s_{r_0}=q$ and $s$ divides both $43^6q\Phi_5$ and $q(q-1)^6\Phi_5$. It is elementary to deduce that $43\nmid\Phi_5$  for all $q$'s. We deduce from $1+s>q^{6.25}>1+q\Phi_5$ that $43\mid s$, and so $q\equiv 1\pmod{43}$. Let $i$ be the largest integer such that $43^i\mid s$, so that $1\le i\le 5$.   It follows from $1+43^6q\Phi_5>q^{6.25}$ that $q\le 69286460<43^5$. It is routine to check that $v-1\equiv 0\pmod{43^5}$ if and only if $q\equiv 2\cdot 89\cdot 90121\pmod{43^5}$. Since $q<43^5$ and $q$ is odd, we deduce that $i\ne 5$. We then have $1+43^4q\Phi_5>q^{6.25}$, which yields $q\le 168677$. We check that $v-1\equiv 0\pmod{43^4}$ does not hold for such $q$'s, so $i\ne 4$. The bound $1+43^3q\Phi_5>q^{6.25}$ then yields $q\le 8317$. We check that $(v-1)_{43}=43$ for each such $q$, so $i=1$. We similarly obtain $q\le 19$ by $1+43q\Phi_5>q^{6.25}$, which contradicts the fact  $q\equiv 1\pmod{43}$. Hence this case does not occur.
   \item[(3)]For line 4, $s_{r_0}=q$, $s$ divides both $719^6q$ and $q(q-1)^6$, and $1+s>q^{7.25}$. We similarly deduce that $719\mid s$, $q\equiv 1\pmod{719}$, and we have $q\le  547$ by the bound $1+719^6q>q^{7.25}$. There is no such $q$, so this case does not occur.
  \item[(4)]For line 5, $s_{r_0}=q$, $s$ divides both $23^4q$ and $q(q-1)^4\Phi_{10}$, and $1+s>q^{5.25}$. It is elementary to show that $23\nmid \Phi_{10}$ for each $q$. We similarly deduce that $23\mid s$, $q\equiv 1\pmod{23}$, and we have $q\le 19$ by $1+23^4q>q^{5.25}$. There is no such $q$, so this case is impossible.
  \item[(5)]For line 6, $s_{r_0}=q^2$, $s$ divides both $23^4q^2$ and $q^2(q-1)^4\Phi_8$, and $1+s>q^{6}$. It is elementary to show that $23\nmid \Phi_{8}$. We deduce that $23\mid s$, $q\equiv 1\pmod{23}$, and we have $q\le 23$ by $1+23^4q^2>q^{6}$. There is no such $q$, so this case is impossible.
  \item[(6)]For line 7, $s_{r_0}=q$, $s$ divides both $5^4\cdot 19^4q$ and $q(q-1)^4$, and $1+s>q^{7.25}$. We deduce from $1+95^4q>q^{7.25}$ that $q\le 17$, so $s$ divides $5^4q$. The bound $1+5^4q>q^{7.25}$ holds for no odd $q$, so this case does not occur.
  \item[(7)]For line 8, $s_{r_0}=q^2$, $s$ divides both $5^4\cdot 19^4q^2$ and $q^2(q-1)^4$, and $1+s>q^{7.75}$. If $q<11$, then $s=q^2$ and the bound on $1+s$ is violated. If $q\ge 11$, then $1+5^4q^2<1+19^4q^2<q^{7.75}$ holds, so $95$ divides $s$ and $q\equiv 1\pmod{95}$. We deduce from $1+95^4q^2>q^{7.75}$ that $q\le 23$. There is no such $q$, so this case is impossible.
\end{enumerate}

\begin{table}[]
\captionsetup{justification=centering}
\caption{Information for $\mathsf{E}_6,\,{}^2\mathsf{E}_6$. In the last column $b(q)$  is a lower bound on $1+s$ given by Lemma \ref{lem_1psbound}.}\label{tab_ExpParaE6}
\begin{tabular}{|cccccc|}
\hline
Line & $G$ & $e_+\cdot |G_p|$       & $|\cP|$                                                   & $c(q)$                                                    & $b(q)$      \\ \hline
1& $\mathsf{E}_6(q)$ & $ q^{36}\Phi_1^6\Phi_2^4\Phi_3\Phi_4^2\Phi_5\Phi_6\Phi_{8}$       & $\Phi_3^2\Phi_6\Phi_9\Phi_{12}$                           & $13^6q(q^4+1)$                                            & $q^4$      \\
2&          & $ q^{36}\Phi_1^6\Phi_2^3\Phi_3^2\Phi_4\Phi_5\Phi_6$    & $\Phi_2\Phi_3\Phi_4\Phi_6\Phi_8\Phi_9\Phi_{12}$                           & $71^6q$                                                   & $q^{5.25}$ \\
3&         & $q^{36}\Phi_1^6 \Phi_2^3 \Phi_3 \Phi_4\Phi_5$    & $\Phi_2\Phi_3^2\Phi_4\Phi_6^2\Phi_8\Phi_9\Phi_{12}$       & $43^6q\Phi_5$                                  & $q^{6.25}$ \\
4&         & $q^{36}\Phi_1^6 \Phi_2^3 \Phi_3^2$    & $\Phi_2\Phi_3\Phi_4^2\Phi_5\Phi_6^2\Phi_8\Phi_9\Phi_{12}$ & $719^6q$                                                  & $q^{7.25}$ \\ \hline
5&${}^2\mathsf{E}_6(q)$ & $q^{36}\Phi_1^4\Phi_2^5\Phi_3\Phi_4\Phi_6^2\Phi_{10}$ & $\Phi_2\Phi_3\Phi_4\Phi_6\Phi_8\Phi_{12}\Phi_{18}$        & $23^4q$ & $q^{5.25} $ \\
6&             &$q^{36}\Phi_1^4 \Phi_2^4 \Phi_3\Phi_4\Phi_6 \Phi_{8}$& $\Phi_2^2\Phi_3\Phi_4\Phi_6^2\Phi_{10}\Phi_{12}\Phi_{18}$ & $23^4q^2$               & $q^6$      \\
7&             &$q^{36}\Phi_1^4\Phi_2^3 \Phi_3\Phi_4\Phi_6 $&  $\Phi_2^3\Phi_3\Phi_4\Phi_6^2\Phi_8\Phi_{10}\Phi_{12}\Phi_{18}$ & $5^4\cdot 19^4q$   & $q^{7.25}$ \\
8&            &$q^{36}\Phi_1^4 \Phi_2^3 \Phi_3\Phi_4$ & $\Phi_2^3\Phi_3\Phi_4\Phi_6^3\Phi_8\Phi_{10}\Phi_{12}\Phi_{18}$        &          $5^4\cdot 19^4q^2$    & $q^{7.75}$ \\ \hline
\end{tabular}
\end{table}

{\bf We next consider the cases $G=\mathsf{E}_7(q)$.} We list the relevant information in Table \ref{tab_ExpParaE7}, where we follow the same order as in \cite[Table~4.1]{Craven2}. In the last column we give the largest odd $q$ such that $1+c(q)>b(q)$. In all cases, we check by computer that $1+\gcd(v-1,|G_p|)<b(q)$ for the $q$'s that satisfy $1+c(q)>b(q)$. Hence the bound on $1+s$ in Lemma \ref{lem_1psbound} is violated, and we cannot have $G=\mathsf{E}_7(q)$.  \medskip
\begin{table}[]
\captionsetup{justification=centering}
\caption{Information for $\mathsf{E}_7$. The 4th column  is a lower bound $b(q)$ on $1+s$ by Lemma \ref{lem_1psbound}, and the last column is the largest $q$ such that $1+c(q)>b(q)$.}\label{tab_ExpParaE7}
\begin{tabular}{|ccccc|}
\hline
Line &  $d\cdot |G_p|$                                                      & $c(q)$          & $b(q)$   & max $q$    \\ \hline
1&  $ q^{63}\Phi_1^7\Phi_2^6\Phi_3^2\Phi_4^2\Phi_5\Phi_6^2\Phi_{8}\Phi_{10}$                        & $5^{22}\cdot 13^2q$                                            & $q^{8.25}$  &  $263$  \\
2&  $ q^{63}\Phi_1^7\Phi_2^3\Phi_3^2\Phi_4\Phi_5\Phi_6\Phi_{7}$                         & $5^{15}\cdot 23^7q$                                            & $q^{10.5}$ & $127$     \\
3&  $ q^{63}\Phi_1^7\Phi_2^4\Phi_3^2\Phi_4\Phi_5\Phi_6$              & $q^{11.75}$ & $5^8\cdot13^7\cdot 31^7q$ & $163$     \\
4&  $ q^{63}\Phi_1^7\Phi_2^4\Phi_3^2\Phi_4$                             & $10079^7q$                                            & $q^{13.25}$   & $193$   \\
5&  $ q^{63}\Phi_1^7\Phi_2^3\Phi_3^2\Phi_4\Phi_5$          & $29^7\cdot 139^7q$                                            & $q^{12.5}$   & $151$    \\
6&  $ q^{63}\Phi_1^7\Phi_2^5\Phi_3\Phi_4^2\Phi_5\Phi_6\Phi_8$        & $5^8\cdot37^2\cdot 151^7q$                                            & $q^{10.5}$   &$331$   \\
7&  $ q^{63}\Phi_1^7\Phi_2^4\Phi_3^3\Phi_4^2\Phi_5\Phi_6^2\Phi_8\Phi_9\Phi_{12}$          & $5^8\cdot11^7\cdot 13^2q$                                            & $q^{6.75}$  &$421$    \\ \hline
\end{tabular}
\end{table}

{\bf Finally, we consider the case $G=\mathsf{E}_8(q)$.} We list the relevant information in Table \ref{tab_ExpParaE8}, where in the second column we indicate the type of a Lie subgroup in the corresponding parabolic subgroup $G_p$. We exclude all the eight cases using computer by the same approach as in the $\mathsf{E}_7$ case, and this completes the proof.
\begin{table}[]
\captionsetup{justification=centering}
\caption{Information for $\mathsf{E}_8$. The 5th column  is a lower bound $b(q)$ on $1+s$ by Lemma \ref{lem_1psbound}, and the last column is the largest $q$ such that $1+c(q)>b(q)$.}\label{tab_ExpParaE8}
\begin{tabular}{|cccccc|}
\hline
Line & type of $G_p$  &   $|G_p|$            & $c(q)$                & $b(q)$  & max $q$    \\ \hline
1&  $\mathsf{D}_7$  & $q^{120}\Phi_1^8\Phi_2^6 \Phi_3^2\Phi_4^3\Phi_{5}\Phi_6^2 \Phi_{7}\Phi_{8}\Phi_{10}\Phi_{12}$   & $17^8\cdot43\cdot 127^8q$            & $q^{19.5}$  & $31$     \\
2&  $\mathsf{A}_7$  & $q^{120}\Phi_1^8\Phi_2^4 \Phi_3^2\Phi_4^2\Phi_{5}\Phi_6\Phi_{7}\Phi_{8}$   & $37^8\cdot43\cdot 467^8q$           & $q^{23}$ & $41$\\
3& $\mathsf{A}_1\mathsf{A}_6$ &  $q^{120}\Phi_1^8\Phi_2^4 \Phi_3^2\Phi_4^2\Phi_{5}\Phi_6\Phi_{7}$ &$69119^8q$ & $24.5$ & $43$\\
4& $\mathsf{A}_1\mathsf{A}_2\mathsf{A}_4$ &$q^{120}\Phi_1^8\Phi_2^4 \Phi_3^2\Phi_4\Phi_{5}$ &  $483839^8q$ & $26.5$ & $59$\\
5&$\mathsf{A}_4\mathsf{A}_3$ & $q^{120}\Phi_1^8\Phi_2^4 \Phi_3^2\Phi_4^2\Phi_{5}$ & $241919^8q$ & $q^{26}$ & $49$\\
6&$\mathsf{D}_5\mathsf{A}_2$ & $q^{120}\Phi_1^8\Phi_2^5 \Phi_3^2\Phi_4^2\Phi_{5}\Phi_6\Phi_8$ & $197^8\cdot 307^8q$ & $q^{24.25}$& $43$\\
7& $\mathsf{E}_6A_1$ & $q^{120}\Phi_1^8\Phi_2^5 \Phi_3^3\Phi_4^2\Phi_{5}\Phi_6^2\Phi_8\Phi_9\Phi_{12}$ & $109\cdot 6719^8q$ &$q^{20.75}$& $43$\\
8& $\mathsf{E}_7$ & $q^{120}\Phi_1^8\Phi_2^7\Phi_3^3\Phi_4^2\Phi_{5}\Phi_6^3\Phi_7\Phi_8\Phi_9\Phi_{10}\Phi_{12}\Phi_{14}\Phi_{18}$
& $109\cdot 239^8q$ &$q^{14.25}$ & $37$ \\ \hline
\end{tabular}
\end{table}
\end{proof}

By Lemma \ref{lem_ExpPara}, we have $r\ne r_0$, where $r_0$ is the defining characteristic of the simple group $G$ of Lie type. By \cite{CLSS} and the fact $r$ is odd, either $G_p$ is a subgroup of  maximal rank, or $(G,E(p))$ is one of the cases in Table \ref{tab_CLSS1}. Here, $K=\texttt{Inndiag}(X)$ in the table.  The maximal subgroups of maximal ranks in finite exceptional groups of Lie type have been determined in \cite{LSS}. 
\begin{table}[h]
\captionsetup{justification=centering}
\caption{The cases in \cite[Table~1]{CLSS} with $|E|$ odd, where $E=E(p)$ and $K=\texttt{Inndiag}(X)$.}\label{tab_CLSS1}
\begin{tabular}{|ccccc|}
\hline
$L$                 & $E$ & $C_K(E)$  &$N_K(E)/C_K(E)$           & Conditions                         \\ \hline
$\mathsf{F}_4(r_0)$          & $3^3$  & $E$     & $\SL_3(3)$             & $r_0\ge 5$                                             \\
$\mathsf{E}_6^\epsilon(r_0)$ & $3^3$ & $[3^6]$ &  $\SL_3(3)$ &$\epsilon=\pm1,3\mid p-\epsilon,r_0\ge 5$              \\
$\mathsf{E}_8(r_0^a)$        & $5^3$  & $E$ & $\SL_3(5)$                 & $r_0\ne 2,5$, $a=\begin{cases}1,  \textup{ if } 5\mid r_0^2-1,\\ 2,\textup{ if } 5\mid r_0^2+1. \end{cases}$ \\
${}^2\mathsf{E}_6(2)$        & $3^2$  & $E\times \mathsf{G}_2(2)$ &$N_X(E)=\PSU_3(2)\times \mathsf{G}_2(2)$    &                                                        \\ \hline
\end{tabular}
\end{table}

\begin{lemma}\label{lem_ExpCLSS}
If $G=X$ and $r$ is not the defining characteristic $r_0$, then the cases in Table \ref{tab_CLSS1} do not occur.
\end{lemma}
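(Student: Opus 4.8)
The plan is to eliminate all four lines of Table~\ref{tab_CLSS1} by a single coarse order comparison, feeding off the bound $|G| < |G_p|^2$ from Corollary~\ref{cor_GltGp2}. Since $G = X$ by hypothesis and $G_p = N_G(E(p)) = N_X(E)$ by Proposition~\ref{prop_adjEr}(d), the point stabiliser is exactly the normaliser of $E = E(p)$ computed inside the simple group $X$. As $X \unlhd K = \texttt{Inndiag}(X)$, I would bound
\[
  |G_p| = |N_X(E)| \le |N_K(E)| = |C_K(E)|\cdot |N_K(E)/C_K(E)|,
\]
reading the two factors on the right off the third and fourth columns of the table (for line~4 the table lists $N_X(E)$ directly).

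First I would record the resulting numerical bounds. In line~1, $|N_K(E)| = 3^3\cdot|\SL_3(3)| = 151632$; in line~2, $|N_K(E)| = 3^6\cdot|\SL_3(3)| = 4094064$; in line~3, $|N_K(E)| = 5^3\cdot|\SL_3(5)| = 46500000$; and in line~4, $|N_X(E)| = |\PSU_3(2)|\cdot|\mathsf{G}_2(2)| = 72\cdot 12096 = 870912$. The decisive feature is that each of these is a fixed constant: in lines~1--3 it does not depend on $r_0$, and in line~4 it is genuinely constant since $q = 2$.

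Next I would contrast these with the order of $X$. In lines~1--3 the group $X$ is an exceptional group over $\F_q$ with $|X|$ a polynomial in $q$ of degree $52$, $78$ and $248$ respectively, and the admissible values of $q$ (namely $q = r_0 \ge 5$ in lines~1--2, and $q = r_0^a \ge 9$ in line~3) already force $|X|^{1/2}$ to dwarf the constant bounds above. In line~4, $q = 2$ is fixed but $|{}^2\mathsf{E}_6(2)| \approx 7.6\times 10^{22}$, so $|X|^{1/2}$ again far exceeds $|G_p|$. In every case this gives $|G_p| < |G|^{1/2}$, i.e. $|G| > |G_p|^2$, contradicting Corollary~\ref{cor_GltGp2}; hence none of the four lines can occur.

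There is no genuine obstacle here beyond bookkeeping: the whole argument is the observation that $G_p$ in these \cite{CLSS} cases is the normaliser of a fixed small $r$-group and therefore has bounded order, whereas $G = X$ is forced to be enormous, so the quadratic order bound $|G| < |G_p|^2$ — which morally encodes that a generalized quadrangle has point set of size comparable to $|G_p|$ — simply cannot hold. The only care needed is to compute $|N_X(E)|$ correctly from the table and to verify that even the smallest admissible $q$ in each family already violates the bound.
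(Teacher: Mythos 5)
Your proposal is correct and follows essentially the same route as the paper: both invoke $G_p=N_X(E)\le N_K(E)$ together with the bound $|G|<|G_p|^2$ of Corollary~\ref{cor_GltGp2}, and then verify that $|X|>|N_K(E)|^2$ in each line of Table~\ref{tab_CLSS1}. The paper leaves this verification as ``routine,'' whereas you carry out the explicit computations ($151632$, $4094064$, $46500000$, $870912$) and the degree comparison; your numbers check out, so there is no gap.
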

\begin{proof}
Let $E=E(p)$. We have $G_p=N_X(E)$ by Proposition \ref{prop_adjEr}. It is routine to check that $|X|>|N_K(E)|^2$ holds in all cases listed in Table \ref{tab_CLSS1}, and this contradicts the condition $|G|<|G_p|^2$ in Corollary \ref{cor_GltGp2}. This completes the proof.
\end{proof}

\begin{lemma}\label{lem_ExpMaxRank}
If $G=X$ and $r$ is not the defining characteristic $r_0$, then $G_p$ is not a subgroup of  maximal rank in $G$.
\end{lemma}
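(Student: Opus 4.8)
Since Lemma~\ref{lem_ExpPara} has disposed of the parabolic possibilities and Lemma~\ref{lem_ExpCLSS} the cases of Table~\ref{tab_CLSS1}, by \cite{LSS} it remains to rule out that $G_p$ is a maximal subgroup of maximal rank. My plan is to run through the classification of such subgroups in \cite{LSS} type by type and to eliminate each possibility by the same numerical machinery used in Lemma~\ref{lem_ExpPara}. Recall that these subgroups come in two flavours: normalizers $N_G(T)$ of maximal tori, and normalizers $N_G(D_\sigma)$ of $\sigma$-stable reductive subgroups $D$ of maximal rank (the subsystem subgroups). The governing constraint throughout is $|G|<|G_p|^2$ from Corollary~\ref{cor_GltGp2}, i.e. $|G_p|>|G|^{1/2}$, which already forces $G_p$ to be very large.

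First I would dispose of the torus normalizers. For a maximal torus $T$ one has $|N_G(T)|\le |W|\,|T|$, where $W$ is the Weyl group and $|T|$ has $r_0'$-part a product of $\mathrm{rank}(G)$ cyclotomic factors; hence $|N_G(T)|$ is at most $|W|\,q^{\mathrm{rank}(G)}$ up to the fixed constant $|W|$. Since $2\,\mathrm{rank}(G)<\dim G$ for every exceptional type, this gives $|N_G(T)|^2<|G|$ for all but finitely many $q$, the remaining small $q$ being checked directly. So $G_p$ cannot be a torus normalizer, and we may assume $G_p=N_G(D_\sigma)$ is a subsystem subgroup.

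Next, the inequality $|G_p|>|G|^{1/2}$ prunes the subsystem subgroups to a very short list in each type: writing $\ell=\dim D$, the order $|G_p|$ has leading term $q^\ell$ up to bounded constants, so survival essentially requires $\ell>\tfrac12\dim G$. Going type by type this leaves at most $\mathsf{A}_2(q)$ and ${}^2\mathsf{A}_2(q)$ inside $\mathsf{G}_2(q)$; $\mathsf{B}_4(q)$ and $\mathsf{D}_4(q).\mathsf{S}_3$ inside $\mathsf{F}_4(q)$; the $\mathsf{A}_5\mathsf{A}_1$-type subgroups inside $\mathsf{E}^{\pm}_6(q)$ (where $\dim D=38$ only just misses $\tfrac12\dim\mathsf{E}_6=39$, so only finitely many $q$ survive); the $\mathsf{D}_6\mathsf{A}_1$-type subgroup inside $\mathsf{E}_7(q)$; and the $\mathsf{E}_7\mathsf{A}_1$-type subgroup inside $\mathsf{E}_8(q)$, with ${}^3\mathsf{D}_4(q)$ admitting no survivor. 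For each case in which $D$ is semisimple with center a $2$-group (possibly with an $\mathsf{S}_3$ on top, for $\mathsf{D}_4$) one has $O_r(G_p)=1$ for our odd prime $r\ne r_0$; but $G_p=N_G(E(p))$ with $E(p)$ a nontrivial \emph{normal} $r$-subgroup forces $O_r(G_p)\ne\{\id\}$, a contradiction. This removes $\mathsf{B}_4(q)$ and $\mathsf{D}_4(q).\mathsf{S}_3$ in $\mathsf{F}_4(q)$, the $\mathsf{D}_6\mathsf{A}_1$ case in $\mathsf{E}_7(q)$, the $\mathsf{E}_7\mathsf{A}_1$ case in $\mathsf{E}_8(q)$, and the residual ${}^3\mathsf{D}_4(q)$ possibility outright.

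The only survivors are then those where the center of $D$ can carry $r$-torsion (necessarily $r=3$): the subgroups $\mathsf{A}_2(q)=\SL_3(q)$, ${}^2\mathsf{A}_2(q)=\SU_3(q)$ in $\mathsf{G}_2(q)$, and the $\mathsf{A}_5\mathsf{A}_1$-type subgroups of $\mathsf{E}^{\pm}_6(q)$, which are genuinely $3$-local via the central $C_3$. These I would finish by the arithmetic of the strategy box: write $v=|\cP|=[G:G_p]=f(q)$ and $|G_p|=d_0^{-1}h(q)$, extract via the XGCD command in Magma \cite{magma} a low-degree $c(q)$ with $s\mid c(q)$ as in \eqref{eqn_cq}, and combine $s_{r_0}=(v-1)_{r_0}$, $\gcd(s,v)=1$ and $s\mid|G_p|$ from Lemma~\ref{lem_stcond} with the window $b(q)<1+s<v^{2/5}$ from Lemmas~\ref{lem_1psbound} to show that no feasible pair $(s,t)$ solves $(1+s)(1+st)=v$. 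The main obstacle is exactly the borderline order comparison for $\mathsf{A}_5\mathsf{A}_1\le\mathsf{E}^{\pm}_6(q)$: since $\dim D$ sits so close to $\tfrac12\dim G$, the crude leading-term estimate no longer decides $|G_p|^2\gtrless|G|$, so one must track the exact orders, including the central factor $d_0$ and the relative Weyl/$\mathsf{S}_3$ contributions, and handle the finitely many genuinely small $q$ by hand; correctly determining $O_r(G_p)$, i.e. whether the fundamental group of $D$ contributes an odd-$r$ normal subgroup and hence makes $G_p$ genuinely $r$-local, is the other delicate point.
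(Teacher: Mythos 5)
Your overall architecture --- prune the maximal-rank subgroups of \cite{LSS} by the order bound $|G_p|>|G|^{1/2}$ from Corollary \ref{cor_GltGp2}, kill most survivors by noting that $E(p)\unlhd G_p$ forces $O_r(G_p)\ne\{\id\}$ while their Fitting subgroups are trivial or $2$-groups, and finish the residue arithmetically --- is exactly the paper's. The problem is that your enumeration of the survivors is wrong in both directions, and one of the omissions is fatal. First, the case you single out as the delicate survivor is not a survivor at all: for the $\mathsf{A}_5\mathsf{A}_1$ subgroups of $\mathsf{E}_6^{\pm}(q)$, the central $C_3$ of $\SL_2(q)\circ\SL_6^{\pm}(q)$ (present when $3\mid q\mp1$) is precisely the center of the simply connected group, so it vanishes in the simple group $G$; the image $(\SL_2(q)\circ A_5^{\pm}(q)).d$ has Fitting subgroup a $2$-group, and these cases are excluded by the same Fitting argument as the others --- this is exactly what the paper's Table \ref{tab_ExpMaxRank} records. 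So the ``other delicate point'' you flag (whether the fundamental group of $D$ contributes odd $r$-torsion) is resolved negatively here, and the borderline order comparison you worry about never needs to be decided.

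Second, and this is the genuine gap: by your own criterion $\dim D>\tfrac12\dim G$, the subgroups of $\mathsf{E}_7(q)$ of type $\mathsf{E}_6^{\epsilon}\mathsf{T}_1$, with structure $e_\epsilon.(\mathsf{E}^\epsilon_6(q)\times (q-\epsilon)/(de_\epsilon)).e_\epsilon.2$ for $\epsilon=\pm1$, survive the pruning ($78+1=79>66.5$), yet they appear nowhere in your list. Unlike every case you eliminate, their Fitting subgroup contains the cyclic torus factor of order $(q-\epsilon)/(de_\epsilon)$, which has odd prime divisors for almost all $q$; they are therefore genuinely $r$-local for odd $r\mid q-\epsilon$ and are immune to the Fitting argument. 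These are in fact the hardest cases of the lemma, and the paper devotes its case (b) to them: one bounds the rank of an elementary abelian $r$-subgroup $H\le G_p$ by $|H|\le r^7$ via \cite[(10-2)]{GorLyons}, combines this with $|H|\ge(r-1)(s+1)+1$ from Corollary \ref{cor_GltGp2} and $|\cP|<(1+s)^4$ from Lemma \ref{lem_1psbound}, and checks that the resulting inequality fails against $|\cP|=\tfrac12 q^{27}(q^5+\epsilon)(q^9+\epsilon)(q^{14}-1)/(q-\epsilon)$. Your proposal never treats these subgroups, so the proof is incomplete. (Smaller omissions of the same kind: ${}^3\mathsf{D}_4(q){:}3\le\mathsf{F}_4(q)$ with $28>26$, and the $\mathsf{D}_5^-\mathsf{T}_1$-type subgroup $(\mathsf{Spin}_{10}^-(q)\circ(q+1)/e_-).f_-\le{}^2\mathsf{E}_6(q)$ with $46>39$, both pass your dimension test and are also missing from your list; the paper places them in Table \ref{tab_ExpMaxRank}.) Finally, for the two $\mathsf{G}_2(q)$ survivors you propose XGCD/cyclotomic arithmetic; the paper instead gets $s+1\le4$ directly from the rank bound $|H|\le 3^2$ of \cite[(10-2)]{GorLyons} and contradicts $|\cP|<(1+s)^4$, which is shorter and avoids any unverified computation.
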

\begin{proof}
Suppose to the contrary that $G_p$ is a maximal subgroup of  maximal rank in $G$, so that it appears in Tables 1 and 2 of \cite{LSS}.  We have $|G_p|>|G|^{1/2}$ by Corollary \ref{cor_GltGp2}, so $G_p$ appears in \cite[Table~1]{LieSax}. By combining those two results, we see that there are $13$ candidate $(G,G_p)$ pairs. Among them, the nine candidate $(G,G_p)$ pairs
in Table \ref{tab_ExpMaxRank}  are excluded by the fact that their Fitting subgroups are either trivial or $2$-groups.
\begin{table}[h]
\captionsetup{justification=centering}
\caption{The maximal subgroup $G_p$'s of maximal rank excluded by considering Fitting subgroups.}\label{tab_ExpMaxRank}
\begin{tabular}{|c|c|}
\hline
$G$                                 & $G_p$                                                            \\ \hline
$\mathsf{F}_4(q)$                            & ${}^3\mathsf{D}_4(q):3$, \;$\mathsf{Spin}_9(q)$,\; $\mathsf{Spin}_8^+(q).S_3$ \\
$E^\epsilon_6(q)$, $\epsilon=\pm 1$ & $(\SL_2(q)\circ A_5^\epsilon(q)).d$                               \\
${}^2\mathsf{E}_6(q)$                        & $\mathsf{Spin}_{10}^-(q)\circ(q+1)/e_-).f_-$                     \\
$\mathsf{E}_7(q)$                            & $(\SL_2(q)\circ\mathsf{P\Upomega}_{12}^+(q)).d$                     \\
$\mathsf{E}_8(q)$                            & $(\SL_2(q)\circ \mathsf{E}_7(q)).d$,\; $\mathsf{P\Upomega}_{16}^+(q).d$     \\ \hline
\end{tabular}
\end{table}

We consider the remaining four cases:
\begin{enumerate}
\item[(a)]$G=\mathsf{G}_2(q)$ with $q>2$, and $G_p\in \{\SL_3(q).2,\SU_3(q).2\}$,
\item[(b)]$G=\mathsf{E}_7(q)$, and $G\in\{e_\epsilon.(E^\epsilon_6(q)\times (q-\epsilon)/(de_\epsilon)).e_\epsilon.2:\epsilon=\pm 1\}$.
\end{enumerate}
For (a), we deduce that $r=3$ and $3$ divides $q-1$ or $q+1$ respectively by considering the Fitting subgroups. By Corollary \ref{cor_GltGp2}, $G_p$ has an elementary abelian $3$-subgroup $H$ of size at least $2(s+1)+1$. On the other hand, we have $|H|\le 3^2$ by \cite[(10-2)]{GorLyons}. It follows that $|H|=9$, and $s+1\le 4$. We have $|\cP|\in\{q^3(q^3-1),q^3(q^3+1)\}$, and the condition $|\cP|<(1+s)^4$ in Lemma \ref{lem_1psbound} holds for no $q$. This contradiction shows that the cases in (a) do not occur. For (b), we have $r\mid q-\epsilon$, $|E|=r$ and $G_p$ has an elementary abelian $r$-subgroup $H$ such that $|H|\ge (r-1)(s+1)+1$.  Similarly, we have $|H|\le r^7$ by \cite[(10-2)]{GorLyons}. Together with the bound $|\cP|\le (1+s)^4$, we deduce that $|\cP|(q-\epsilon-1)^4<((q-\epsilon)^7-1)^4$. We have $|\cP|=\frac{1}{2}q^{27}(q^5+\epsilon)(q^9+\epsilon)\frac{q^{14}-1}{q-\epsilon}$, and we check that this inequality holds for no $q$ in either case. This excludes (b) and completes the proof.
\end{proof}

$\left\{
\begin{tabular}{p{0.9\textwidth}}
We now summarize what we have done so far in this subsection. Suppose that $G$ is a finite exceptional simple group of Lie type, and let $r_0$ be its defining characteristic. We have shown that $G_p$ is a local maximal subgroup of $G$ in Proposition \ref{prop_adjEr}, and that $G$ is simple in Lemma \ref{lem_ExpGsimple}. We cannot have $r=r_0$, since otherwise $G_p$ would be a maximal parabolic subgroup by \cite{BT}  and we have excluded this possibility in Lemma \ref{lem_ExpPara}.  It follows that $r\ne r_0$, and either $(G,G_p)$ is one of the four pairs listed in Table \ref{tab_CLSS1}, or it is a subgroup of maximal rank by \cite{CLSS}. The former case has been excluded in Lemma \ref{lem_ExpCLSS}, and the latter case has been excluded in Lemma \ref{lem_ExpMaxRank}. This completes the proof of Theorem \ref{thm_Exceptional}.\hspace*{\fill} {\bf QED}
\end{tabular}
\right.$

\medskip
\section{Proof of the first main result: classical groups}\label{sec_class}
\label{secclass}
We continue to use the same notation as in Section \ref{subsec_prel}: the odd prime $r$ and the $E(x)$'s are as introduced in Proposition \ref{prop_adjEr}, $G=\la E(x):x\in\cP\ra$, and write $|E(x)|=r^m$ with $m\in\mathbb{N}$.  The whole of this subsection is devoted to the proof of the following result.
\begin{theorem}\label{thm_Classical}
If the socle $X=\soc(G)$ is a finite classical group of Lie type, then $\cS$ is isomorphic to one of $\mW(3,q) $, $\mH(3,q^2)$ and $\mH(4,q^2)$ for some prime power $q$.
\end{theorem}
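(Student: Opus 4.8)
The plan is to mirror the architecture of the exceptional case (Section \ref{secLie}): first pin down the finitely many admissible pairs $(G,G_p)$ by combining the maximal subgroup theory of classical groups with the numerical constraints of Lemma \ref{lem_1psbound}, Lemma \ref{lem_stcond} and Corollary \ref{cor_GltGp2}, and only afterwards reconstruct the geometry of $\cS$ from the natural module of $X$. As before I would split according to whether $r$ equals the defining characteristic $r_0$ of $X$ or not. Since $G_p=N_G(E(p))$ is an $r$-local maximal subgroup of $G$ (Proposition \ref{prop_adjEr}(d)), when $r=r_0$ the subgroup $G_p$ is a maximal parabolic by \cite[3.12]{BT}, and when $r\neq r_0$ it is one of the non-parabolic maximal subgroups of a classical group, to which Aschbacher's theorem and its refinements in the literature apply.

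\textbf{The case $r=r_0$.} Here $G_p$ is a maximal parabolic, say the stabiliser of a totally isotropic or singular subspace of the natural module $V$. I would run the XGCD strategy of Section \ref{secLie} essentially verbatim: writing $v=|\cP|=[G:G_p]=f(q)$ and $|G_p|$ as monomials in the cyclotomic values $\Phi_i$, one extracts from \eqref{eqn_cq} and Lemma \ref{lem_stcond} a low-degree polynomial $c(q)$ with $s\mid c(q)$. The point that tames the \emph{unbounded rank} of the classical families is that $v=[G:G_p]\equiv 1\pmod{r_0}$, so $s_{r_0}=(v-1)_{r_0}=q^{a}$ is a fixed power of $q$ determined by the parabolic, while Lemma \ref{lem_1psbound} forces $1+s>f(q)^{1/4}$. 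Combining $q^{a}\mid s\mid c(q)$ with this lower bound yields $1+c(q)>f(q)^{1/4}$, which bounds $\deg f$ and hence the Lie rank of $X$; after that only finitely many $(X,q)$ and parabolic types survive, and each is settled by solving $(1+s)(1+st)=v$ for a feasible pair $(s,t)$ as in Tables \ref{tab_ExpPara1}--\ref{tab_ExpParaE8}. I expect the only survivors to be the point-parabolics of $\PSp_4(q)$, $\PSU_4(q)\cong\mathsf{P\Upomega}_6^-(q)$ and $\PSU_5(q)$, the orthogonal groups being killed either by these numerics or, as in Section \ref{sec_examples}, by the fact that the polar quadrangles $\mQ(4,q)$ and $\mQ^-(5,q)$ admit no nontrivial symmetry in odd characteristic.

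\textbf{The case $r\neq r_0$.} Now $G_p$ is non-parabolic. If $G\neq X$, then Lemma \ref{lem_GeqXEp} together with the list (R1)--(R5) confines $X$ to $\PSL_{rn}(q)$ or $\PSU_{rn}(q)$ with $g$ inner-diagonal, or to a field-automorphism situation $q=q_0^{r}$; in each the bound $|X|<|X_p|^{3}$ of Lemma \ref{lem_GeqXEp}(d) is violated once the rank grows, leaving a short finite check. If $G=X$, the decisive extra datum is from Corollary \ref{cor_GltGp2}: $G_p$ contains an elementary abelian $r$-group $H$ with $|H|\ge (r^m-1)(s+1)+1$. Since $r\neq r_0$, the rank of an elementary abelian $r$-subgroup of $X$ is bounded by essentially the Lie rank, so $|H|\le r^{\,\mathrm{rk}(X)}$; comparing this with $|H|\gtrsim s>f(q)^{1/4}-1$ and running through the Aschbacher classes $\mathcal{C}_2,\mathcal{C}_3,\mathcal{C}_6$ of $r$-local maximal subgroups (discarding the small ones via $|G|<|G_p|^{2}$) should eliminate every non-parabolic possibility. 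This is the part I expect to be the most laborious, since the Aschbacher list is long and the arithmetic is case-heavy, but it is conceptually routine given these bounds.

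\textbf{Reconstruction and the main obstacle.} Once the admissible pairs are reduced to the point-stabilisers inside $\PSp_4(q)$, $\PSU_4(q)$ and $\PSU_5(q)$, it remains to identify $\cS$ itself. Here I would pass to the natural module $V$ with its $\Upomega$-invariant alternating or Hermitian form, identify $\cP$ with the $G$-orbit of isotropic $1$-spaces and $\cL$ with the totally isotropic $2$-spaces, and match the parameters $(s,t)$ and the hyperbolic-line size $|\{x,y\}^{\perp\!\perp}|=q+1$ recorded in Section \ref{sec_examples}; the freeness of the $E(p)$-action and inequality \eqref{symineq} force $\{x,y\}^{\perp\!\perp}$ to be precisely the set of isotropic points on $\la x,y\ra$, which pins the incidence to the classical one and gives $\cS\cong\mW(3,q)$, $\mH(3,q^2)$ or $\mH(4,q^2)$. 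The principal obstacle, exactly as in the defining-characteristic analysis, is to make the reduction \emph{uniform in the dimension of $V$}: the classical groups form infinite families with many maximal-subgroup types, so the whole argument hinges on first extracting, from $s_{r_0}=(v-1)_{r_0}$ and the bounded $r$-rank of $H$, an a priori bound on $\dim V$ strong enough to collapse each family to a finite, checkable list.
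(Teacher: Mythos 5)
Your skeleton for the non-defining-characteristic case ($r\neq r_0$) does match the paper's: Aschbacher classes, the large-subgroup bound $|G|<|G_p|^2$ from Corollary \ref{cor_GltGp2}, and elementary abelian $r$-rank estimates. But your treatment of the defining-characteristic case $r=r_0$ rests on a step that fails, and this is exactly where the paper's real work lies. You propose to run the XGCD numerics of Section \ref{secLie} ``essentially verbatim'' over all maximal parabolics and to bound the Lie rank from $1+c(q)>f(q)^{1/4}$. Unlike the exceptional groups, however, the classical groups form families of unbounded rank whose number of parabolic types grows with the rank, so this is an infinite case analysis, and there is no uniform reason why $\deg c$ stays below $\tfrac14\deg f$. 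Concretely, for the point-parabolic of $\PSp_{2n}(q)$ the polynomial gcd of $v-1$ and $|G_p|$ contains $\Phi_d(q)$ for every proper divisor $d>1$ of $2n-1$, so its degree is $2n-1-\phi(2n-1)$ ($\phi$ the Euler totient), which for $2n-1$ highly composite (e.g.\ $2n-1=15,105$) exceeds $\tfrac14(2n-1)$: your mechanism for collapsing the rank simply does not fire. Moreover, even in the cases you expect to survive, your reconstruction step is not a proof: matching $(s,t)$ and hyperbolic-line sizes does not identify a generalized quadrangle ($\mW(3,q)$ and $\mQ(4,q)$, $q$ odd, share parameters), and you invoke no theorem converting the pair $(G,G_p)$ into the incidence structure. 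The paper's route is entirely different and is uniform in the rank: it lifts $E(p)$ to $\widehat{E}\le\Upomega(V)$, sets $W=C_V(\widehat{E})$, and proves $G_p=G_W$ with $W$ totally singular and $\widehat{E}$ equal to the full elementwise stabilizer $U_0$ of $W^\perp$ (Lemmas \ref{lem_ClasEWdef} and \ref{lem_ClaPara}). Then for $\dim W=1$ the action on $\cP$ has rank $3$, so $G$ is distance-transitive and the classification of distance-transitive generalized quadrangles \cite{DisTrans} applies; for $\dim W$ maximal it uses dual polar graphs and Muzychuk's theorem \cite{Muzi}; for intermediate $\dim W$ it derives contradictions from property (E1) via transvections and Siegel transformations. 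None of this is numerical, and none of it is replaceable by the Section \ref{secLie} machinery.

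Two of your predicted outcomes are also wrong, although they do not contradict the statement itself. First, the orthogonal groups are not ``killed'': in the guises $\mathsf{\Upomega}_5(q)\cong\PSp_4(q)$ and $\mathsf{P\Upomega}_6^-(q)\cong\PSU_4(q)$, with $W$ a maximal totally singular subspace, they produce precisely the point-line duals of $\mQ(4,q)$ and $\mQ^-(5,q)$, i.e.\ $\mW(3,q)$ and $\mH(3,q^2)$ (cases (ii) and (iv) of Lemma \ref{lem_ClasTotSing}), so they must be carried through, not discarded. Second, in the case $G=X$, $r\neq r_0$, it is false that every non-parabolic possibility is eliminated: the $\cC_2$-subgroup $3^3{:}S_4$ of $\PSU_4(2)$ survives all the order and rank bounds; there one finds $s=t=3$ and must invoke \cite[6.2.1]{FGQ} together with the absence of central symmetries in $\mQ(4,3)$ to conclude $\cS\cong\mW(3,3)$. (You also omit possibility (R4), triality for $\mathsf{P\Upomega}_8^+(q)$, from the $G\neq X$ analysis, and in (R1) the unequal-eigenspace case is excluded by a maximality argument, not by order bounds.) A proof written along your lines, asserting elimination where a genuine survivor exists, would be incorrect as stated.
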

Suppose that $X={}^d\mathsf{L}(q)'$ is a finite simple classical group, where $q=r_0^f$ with $r_0$ prime.  If $X=\PSp_4(2)'$ or $\mathsf{P\Upomega}_4^-(3)$, then $X\cong A_6$ and it has been excluded in the proof of Lemma \ref{GisLie} (2) by using \cite[Theorem~1.2]{Alt}, so $X\ne\PSp_4(2)',\mathsf{P\Upomega}_4^-(3)$. Let $V$ be an $n$-dimensional vector space over $\F$ which is the natural (projective) module of $X$. Here, we have $\F=\F_{q^u}$  with $u=2$ if $X$ is unitary and $u=1$ otherwise. \begin{color}{black} Let $\Upomega(V)$ be the quasisimple subgroup of $\GL(V)$ whose quotient group modulo its center is $X$\end{color}. Let $I(V)$ be the isometry group of $V$, and write $\overline{I(V)}$, $\overline{\Upomega(V)}$ for the corresponding quotient groups modulo their respective centers. We have $X=\overline{\Upomega(V)}$ by the fact $X\ne\PSp_4(2)'$. We write $\Upomega=\Upomega(V)$ for brevity if no confusion arises. If $G$ is unitary, symplectic or orthogonal, we write $\upkappa$ for an associated nondegenerate $\Upomega$-invariant Hermitian, alternating or quadratic form on $V$; if $G$ is linear, we let $\upkappa=0$.

Two classical groups can be isomorphic and yet have different natural modules, cf. \cite[p.~46]{KL}, but this will cause no confusion in our arguments below. We shall make use of the following inequalities from \cite{AlaBur}: for natural numbers $a,q\ge 2$ we have
\begin{align}
  (1-q^{-1}-q^{-2})q^{a^2}<&|\GL_a(q)|\le  (1-q^{-1})(1-q^{-2})q^{a^2},\label{eqn_GLineq}\\
  (1+q^{-1})(1-q^{-2})q^{a^2}<&|\GU_a(q)|\le  (1+q^{-1})(1-q^{-2})(1+q^{-3})q^{a^2}.\label{eqn_GUineq}
\end{align}
\begin{lemma}\label{lem_ClaGsimple}
The group $G$ is simple, i.e., $G=X$.
\end{lemma}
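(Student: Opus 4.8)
The plan is to mirror the proof of Lemma~\ref{lem_ExpGsimple}. Suppose for contradiction that $G\ne X$. By Lemma~\ref{lem_GeqXEp}, $E=E(p)=\la g\ra$ is cyclic of order $r$, $G=X:E$, $G_p=X_p\times E=C_G(E)$ with $X_p=C_X(g)$, and $G_p$ is maximal in $G$; moreover Corollary~\ref{cor_GltGp2} gives $|X|<r|X_p|^2$, and Lemma~\ref{lem_GeqXEp}(d) gives $|X|<|X_p|^3$ whenever $|X|>|\Out(X)|_{2'}^3$, which holds for every classical $X$ of the sizes in play. Regarding $g$ as an outer automorphism of order $r$ and invoking (7-3)--(7-4) of \cite{GorLyons}, only the cases (R1), (R3) and (R4) can occur for classical $X$ (cases (R2), (R5) being exceptional).

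The key lever for (R1) is a maximality argument. Here $X=\PSL^{\epsilon}_{rn}(q)$ with $r\mid q-\epsilon$, and $g$ is represented by a semisimple element $\hat g$ of order $r$ in $\GL^{\epsilon}_N(q)$, $N=rn$, acting on the natural module $V$. Since $r\mid q-\epsilon$ the eigenvalues of $\hat g$ lie in the base field, so $V$ decomposes into at least two $\hat g$-eigenspaces $V=\bigoplus_i V_i$, and $X_p=C_X(g)=\big(\prod_i\GL^{\epsilon}_{a_i}(q)\big)/Z$ is the stabiliser of this decomposition. In the linear case ($\epsilon=+$), any single eigenspace $V_1$ is a proper nonzero $g$-invariant subspace, so $P=\mathrm{Stab}_X(V_1)$ is a $g$-invariant proper \emph{parabolic} strictly containing $X_p$ (it contains the unipotent radical); then $G_p=X_pE<PE<G$ contradicts the maximality of $G_p$. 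In the unitary case the eigenspaces are mutually orthogonal and nondegenerate; if there are at least three of them I would take $U=V_1\perp V_2$ and rerun the argument with the reducible subgroup $\mathrm{Stab}_X(U)$, again contradicting maximality. This leaves only $X=\PSU_{rn}(q)$ with exactly two eigenspaces, where $X_p=(\GU_a(q)\times\GU_{N-a}(q))/Z$ is a genuinely maximal $\cC_1$-subgroup and no maximality contradiction is available.

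For (R3), $g$ is a field automorphism with $q=q_0^{r}$ and $X_p=C_X(g)$ is the subfield subgroup, whose order is $\asymp|X|^{1/r}$; then $|X|<r|X_p|^2\asymp r|X|^{2/r}$ forces $|X|^{1-2/r}<r$, which for odd $r\ge3$ caps $|X|$ below a small constant, impossible for any classical simple $X$. For (R4), $X=\mathsf{P\Upomega}_8^+(q)$ and $g$ is a triality; reading $C_X(g)$ from \cite[Table~4.7.3A]{GorLySol}, the graph-field centralisers (of type ${}^3\mathsf{D}_4(q_0)$) and the small graph centralisers ($\PGL_3^{\epsilon}(q)$) are eliminated by $|X|<r|X_p|^2$, and the only survivor of the order test is the pure-graph centraliser $X_p=\mathsf{G}_2(q)$.

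The two residual configurations, $X=\PSU_{rn}(q)$ with $X_p=(\GU_a\times\GU_{N-a})/Z$ and $X=\mathsf{P\Upomega}_8^+(q)$ with $X_p=\mathsf{G}_2(q)$, survive all order estimates, and excluding them is the main obstacle. I would dispatch them by the parametric arithmetic already used for the parabolic cases in Section~\ref{secLie}: writing $v=|\cP|=[X:X_p]$ as an explicit expression in $q$ (and $N$), Lemma~\ref{lem_stcond} forces $\gcd(s,v)=1$, $s\mid v-1$ and $s\mid|X_p|$, so $s$ divides a gcd computable via an XGCD-type polynomial identity; combined with the window $v^{1/4}<1+s<v^{2/5}$ of Lemma~\ref{lem_1psbound}, the divisibilities $t\equiv0\pmod r$ and $s+t\mid st(t+1)$, and the rank bound on elementary abelian $r$-subgroups of $X_p$, one checks that no feasible pair $(s,t)$ exists. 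The genuine difficulty is uniformity: unlike the exceptional groups, the unitary rank $N=rn$ is unbounded and $r$ may be comparable to $q$, so the estimates must be carried out as polynomial identities in $q$ valid for all $n$, rather than by a finite case check. Combining all three cases contradicts $G\ne X$, giving $G=X$.
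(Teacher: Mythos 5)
Your proposal follows the paper's opening moves (contradiction, Lemma~\ref{lem_GeqXEp}, the trichotomy (R1)/(R3)/(R4) from \cite{GorLyons}, the bound $|X|<r|X_p|^2$ from Corollary~\ref{cor_GltGp2}), and your treatment of (R3) is essentially the paper's. But your handling of (R1) has two genuine gaps. First, the premise ``since $r\mid q-\epsilon$ the eigenvalues of $\hat g$ lie in the base field'' is false: an element $g$ of order $r$ in $\PGL^\epsilon_N(q)$ need not lift to an element of order $r$ in $\GL^\epsilon_N(q)$. If every lift satisfies $\hat g^{\,r}=\lambda I$ with $\lambda$ a non-$r$-th power, the minimal polynomial of $\hat g$ divides the irreducible polynomial $x^r-\lambda$, there are \emph{no} eigenspaces over the base field, and $C_K(g)$ is of extension-field type $\GL_a(q^r)$ (resp. $\GU_a(q^r)$), $a=N/r$ --- this is exactly case (iii) of \cite[Table~B.3]{BurGiu}, which the paper excludes by the order bound $|X|<r|X_p|^2$ together with the $\GL/\GU$ order estimates. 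Your eigenspace machinery has nothing to act on here, and the case simply never enters your analysis. Second, even in the diagonalizable case your maximality argument breaks down when all eigenvalue multiplicities are equal (the paper's case (ii)): there $|C_K(g)|=\frac{r}{q-1}|\GL_a(q)|^r$, the extra factor $r$ coming from elements that cyclically permute the eigenspaces, so $X_p=C_X(g)$ is \emph{not} contained in the stabilizer of a single eigenspace $V_1$ (nor of $V_1\perp V_2$ in the unitary variant), and the asserted chain $G_p<PE<G$ does not exist. One could repair this by passing to the stabilizer of the full decomposition (a $\cC_2$-subgroup, which does strictly contain $C_X(g)$ since it induces $S_r\supsetneq C_r$ on the blocks), but as written your argument is wrong precisely where the paper switches from maximality to the order bound.

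The remaining configurations are also not actually resolved. For $X=\mathsf{P\Upomega}_8^+(q)$ with $X_p=\mathsf{G}_2(q)$ you defer to ``parametric arithmetic'', but no such computation is needed: the paper applies Corollary~\ref{cor_GltGp2} (an elementary abelian $3$-subgroup of $G_p$ of order at least $2(s+1)+1$) together with the $3$-rank bound of \cite[(10-2)]{GorLyons} to force $s+1\le 4$, which contradicts $|\cP|\ge q^{12}$ --- a tool your proposal never invokes, and the one that makes this case finite rather than arithmetic. To your credit, your observation that $\PSU_{rn}(q)$ with exactly two (nondegenerate, unequal) eigenspaces resists both the maximality argument and the crude order bound is sharp --- the centralizer there is a genuinely maximal $\cC_1$-subgroup of type $\GU_a\perp\GU_{N-a}$, and the paper's ``very analogous'' remark glosses over precisely this subcase. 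But your proposal leaves it open as well, conceding that the required estimates must be uniform in the unbounded rank $N=rn$ and in $r\sim q$; an acknowledged unexecuted computation is not a proof, so the lemma is not established by your argument.
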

\begin{proof}
Write $E=E(p)$ and $K=\texttt{Inndiag}(X)$. We suppose to the contrary that $G\ne X$. By Lemma \ref{lem_GeqXEp}, $E$ is cyclic of order $r$,  $X_p=C_X(E)$, $G=X:E$ and $G_p=X_p\times E$. We write $E=\la g\ra$, and regard $g$ as an element of $\Out(X)$.  Let $\widetilde{g}$ be a preimage of $g$ in $\GL(V)$ of smallest possible order. If $g\in K$ and $r\ne r_0$, then $g^X=g^K$ by \cite[Theorem~4.2.2(j)]{GorLySol}. By (7-3) and (7-4) of \cite{GorLyons}, we have one of the cases (R1), (R3) and (R4) listed at the end of Section \ref{subsec_prel}. We have $|G|<|G_p|^2$, i.e., $|X|<r|X_p|^2$, by Corollary \ref{cor_GltGp2}.

We first consider (R1). We only give details for $X=\PSL_{n}(q)$ here, since the case $X=\PSU_n(q)$ is very analogous. In this case, we have $K=\PGL_n(q)$, $a=\frac{n}{r}\in\mathbb{N}$, $g\in K$, and  $q\equiv1\pmod{r}$.  By \cite[Table~B.3]{BurGiu},  we have three cases:
\begin{enumerate}
  \item[(i)]
 There are integers $a_0\le a_1\le\cdots\le a_{r-1}$ (at least two nonzero) such that $\sum_{i=0}^{r-1}a_i=n$ and $a_j\ne a$ for some $j$, and $C_{K}(g)=\frac{1}{q-1}\prod_j|\GL_{a_j}(q)|$;
  \item[(ii)]
 $\vert C_{K}(g)\vert = \frac{r}{q-1}|\GL_{a}(q)|^r$;
  \item[(iii)]
 $\vert C_{K}(g)\vert = \frac{r}{q-1}|\GL_{a}(q^r)|$.
\end{enumerate}
For (i), $\widetilde{g}$ has order $r$, and it has at least two nontrivial eigenspaces in $V$ which are $G_p$-invariant by \cite[Proposition~3.2.2]{BurGiu}. Since $g$ is an inner-diagonal automorphism, $G_p$ fixes each eigenspace of $\widetilde{g}$. If $V_0,\ldots,V_m$ are the eigenspaces of $\widetilde{g}$, then $G_p$ is a proper subgroup of $G_{V_0}$. This contradicts the fact that $G_p$ is maximal in $G$, so (i) does not occur. For (ii) and (iii), we have $|X_p|=\frac{1}{d}|C_K(g)|$, where $d=\gcd(n,q-1)=\frac{|K|}{|X|}$. We check that $|X|<r|X_p|^2$ does not hold in either case, where we make use of \eqref{eqn_GLineq} when $a\ge 2$. This excludes (R1).

We next consider (R3), where $q=q_0^r$ for some prime power $q_0$, and $g$ is a field automorphism. By \cite[(7-2)]{GorLyons}, there is one conjugacy class of such $E$ in $\Aut(K)$. Therefore, $X_p$ is a classical group of the same type defined over $\F_{q_0}$ up to conjugacy in $G$ and $r\mid f_{2'}$, where $f_{2'}$ is the largest odd divisor of $f$. It is routine to check that $|X|>f_{2'}^3$ for all simple classical groups $X$, so it follows from $|X|<r|X_p|^2$ that $|X|<|X_p|^3$. By \cite[Tables~3.5.H,~3.5.I]{KL} and \cite{BHR}, $X_p$ is a maximal subgroup of $X$. We examine Propositions 4.7 (linear), 4.17 (unitary), 4.22 (symplectic) and 4.23 (orthogonal) of \cite{AlaBur} to deduce that $r=3$, and $X_p=\PSL_n(q_0)$ or $X_p=\PSU_n(q_0)$. Both cases are excluded by the bound $|X|<3|X_p|^2$ by using \eqref{eqn_GLineq} and \eqref{eqn_GUineq}, and we omit the details.

It remains to consider (R4), where $r=3$, $X=\mathsf{P\Upomega}_8^+(q)$, and $g$ is a graph or graph-field automorphism. By \cite[Theorem~9.1]{GorLyons} (see also \cite[Proposition~3.5.23]{BurGiu}), $C_X(g)$ is one of the following: $\mathsf{G}_2(q)$, $\PGL_3(q)$ ($q\equiv 1\pmod{3}$),  $\PGU_3(q)$ ($q\equiv 2\pmod{3}$, $q>2$), $[q^5].\SL_2(q)$ with $q=3^f$, ${}^3\mathsf{D}_4(q_0)$ with $q=q_0^3$. We exclude $\PGL_3(q)$, $\PGU_3(q)$ and ${}^3\mathsf{D}_4(q_0)$ by the bound $|X|<3|X_p|^2$, and exclude $[q^5].\SL_2(q)$ by the fact that $G_p=C_G(g)$ is not maximal in $G$, cf. \cite{Kleide8dim} or \cite[Table~8.50]{BHR}.  It remains to consider the case where $C_X(g)=\mathsf{G}_2(q)$.  By Corollary \ref{cor_GltGp2}, $G_p$ has an elementary abelian $3$-subgroup $H$ of order at least  $2(s+1)+1$. By \cite[(10-2)]{GorLyons}, we have $|H|\le 9$. It follows that $s+1\le 4$, and so $|\cP|<4^4$.  On the other hand, $|\cP|=\frac{1}{d^2}q^6(q^4-1)^2$ with $d=\gcd(2,q-1)$. We have $|\cP|>q^{12}>4^4$ for any prime power $q$: a contradiction. This excludes (R4) and completes the proof.
\end{proof}

From now on we assume that $G$ is a simple classical group of Lie type, i.e., $G=X$.

\begin{lemma}\label{lem_ClasEWdef}
If $G=X$ and $r$ is the defining characteristic $r_0$, then there is a unique $r_0$-subgroup $\widehat{E}$ of $\Upomega(V)$  such that its quotient group in $G$ is $E(p)$ and $|\widehat{E}|=|E(p)|$. Let $W=C_V(\widehat{E})$, where $C_V(\widehat{E})=\{v\in V:h(v)=v\textup{ for each }h\in\widehat{E}\}$.
\begin{enumerate}
  \item[(a)]We have $W\ne 0$, $G_p=G_W$, and $\widehat{E}$ is minimal normal in $\Upomega(V)_W$.
  \item[(b)]If $\upkappa=0$, $\widehat{E}$ acts trivially on $W$ and $V/W$. 
  \item[(c)]If $\upkappa\ne0$, then $W$ is totally singular/isotropic, and $\widehat{E}$ acts trivially on $W$, $W^\perp/W$ and $V/W^\perp$.
\end{enumerate}
\end{lemma}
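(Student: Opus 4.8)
The plan is to first lift $E(p)$ into the quasisimple group $\Upomega(V)$ and then read off the geometry from the fact that $\widehat{E}$ is a unipotent subgroup in defining characteristic. Write $Z=Z(\Upomega(V))$ for the kernel of $\Upomega(V)\to X=G$. The centre of a quasisimple classical group in characteristic $r_0$ has order dividing $\gcd(n,q\mp 1)$ or a power of $2$, so $|Z|$ is coprime to the odd defining prime $r_0=r$. Hence the preimage $\widehat{E}_0$ of $E(p)$ decomposes as $\widehat{E}_0=Z\times \widehat{E}$, where $\widehat{E}$ is its unique Sylow $r_0$-subgroup; this is an $r_0$-group mapping isomorphically onto $E(p)$ with $|\widehat{E}|=|E(p)|$, and uniqueness holds because any such subgroup is a Sylow $r_0$-subgroup of $\widehat{E}_0$ while $\widehat{E}$ is characteristic in $\widehat{E}_0$. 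Since $\widehat{E}$ is a nontrivial $r_0$-group acting on the nonzero space $V$ over a field of characteristic $r_0$, it has nonzero fixed points, so $W=C_V(\widehat{E})\ne 0$; and $W\ne V$ because $\widehat{E}\ne \{\id\}$ acts faithfully.

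For (a), I would first show $G_p\le G_W$: any $g\in G_p=N_G(E(p))$ lifts to some $\widehat{g}\in \Upomega(V)$ normalising $\widehat{E}_0$, hence normalising its characteristic subgroup $\widehat{E}$, and therefore stabilising $W=C_V(\widehat{E})$; thus $g\in G_W$. Since $X$ acts irreducibly on $V$ and $0\ne W\ne V$, the subspace $W$ is not $G$-invariant, so $G_W\ne G$; as $G_p$ is maximal by Proposition \ref{prop_adjEr}(d), we conclude $G_p=G_W$, and consequently $\Upomega(V)_W$ is exactly the preimage of $G_p$. Pulling $E(p)\unlhd G_p$ back and again using that $\widehat{E}$ is the characteristic Sylow $r_0$-subgroup of $\widehat{E}_0\unlhd \Upomega(V)_W$ shows $\widehat{E}\unlhd \Upomega(V)_W$. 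Minimal normality then transfers from the minimal normality of $E(p)$ in $G_p$ (Proposition \ref{prop_adjEr}(c)): the isomorphism $\widehat{E}\cong E(p)$ is $\Upomega(V)_W$-equivariant and, because $\widehat{E}\cap Z=\{\id\}$, induces a bijection between normal subgroups of $\Upomega(V)_W$ inside $\widehat{E}$ and normal subgroups of $G_p$ inside $E(p)$.

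For (b) and (c), the key observation is that $\widehat{E}\unlhd \Upomega(V)_W$ is an $r_0$-group, so $\widehat{E}\le O_{r_0}(\Upomega(V)_W)$. Since $r=r_0$ is the defining characteristic, $G_p=G_W$ is a maximal parabolic subgroup by Borel--Tits \cite{BT}, so $O_{r_0}(\Upomega(V)_W)$ is precisely the unipotent radical of that parabolic. In the linear case $\upkappa=0$, the stabiliser of $W$ has Levi $\GL(W)\times \GL(V/W)$ and unipotent radical isomorphic to $\mathrm{Hom}(V/W,W)$; every element $1+N$ of the latter kills $W$ and maps $V$ into $W$, giving trivial action on $W$ and on $V/W$, which is (b). In the form case $\upkappa\ne 0$, the fact that $G_W$ is parabolic forces $W$ to be totally singular/isotropic --- otherwise the restricted form on the nondegenerate quotient $W/(W\cap W^\perp)$ would contribute a nontrivial reductive (hence non-parabolic) factor --- and then $\Upomega(V)_W$ is the parabolic stabilising the totally singular subspace $W$, whose unipotent radical fixes $W$ pointwise and acts trivially on $W^\perp/W$ and $V/W^\perp$; as $\widehat{E}$ lies inside it, (c) follows.

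I expect the main obstacle to be the form case (c): pinning down that $W=C_V(\widehat{E})$ is genuinely totally singular and that $\widehat{E}$ sits in the unipotent radical realised as the group of "$W$-transvections" acting trivially on all three flag quotients, while handling the orthogonal subtleties (the two $\mathsf{P\Upomega}_{2m}^+$ classes of maximal totally singular subspaces, the identification of $O_{r_0}(\Upomega(V)_W)$ with the unipotent radical, and the small-rank coincidences already set aside in the running hypotheses). The reduction to the unipotent radical is clean, but matching the abstractly defined subspace $W$ with the concrete flag $0\subset W\subset W^\perp\subset V$ requires the explicit description of parabolic subgroups of classical groups as stabilisers of totally singular subspaces.
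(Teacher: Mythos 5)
Your lifting of $E(p)$ to $\widehat{E}$ and your part (a) coincide with the paper's own proof: the paper quotes Schur--Zassenhaus where you argue via nilpotency of the preimage $Z\times\widehat{E}$, and the fixed-point lemma, the maximality argument giving $G_p=G_W$, and the transfer of minimal normality through the decomposition $Z\times\widehat{E}$ are the same. For (b) and (c), however, you take a genuinely different route. The paper does not pass through parabolic structure theory here: it re-applies the fixed-point lemma for $r_0$-groups to the quotient modules $V/W$, $W^\perp/W$ and $V/W^\perp$, pulls each nonzero fixed space back to a $G_p$-invariant subspace, and uses maximality of $G_p=G_W$ to force that subspace to be the full quotient; for (c) it deduces ``totally singular or nondegenerate'' from maximality and then eliminates the nondegenerate alternative because stabilizers of nondegenerate subspaces are not $r_0$-local (\cite[Lemmas~4.1.3--4.1.6]{KL}), whereas $G_p=N_G(E(p))$ is. Your route --- Borel--Tits making $G_p$ a maximal parabolic with $\widehat{E}\le O_{r_0}(\Upomega(V)_W)$, then the explicit flag action of the unipotent radical --- is consistent with how the paper itself uses Borel--Tits in the exceptional-group section, and once $W$ is known to be totally singular it is cleaner than the paper's repeated maximality pull-backs.

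The gap is in the step you yourself flagged as the main obstacle: the deduction that $W$ is totally singular. The inference ``otherwise the restricted form on $W/(W\cap W^\perp)$ would contribute a nontrivial reductive (hence non-parabolic) factor'' is invalid, because parabolic subgroups do have nontrivial reductive Levi factors: the stabilizer of a totally singular subspace $T$ has a Levi of shape $\GL(T)$ times the isometry group of $T^\perp/T$, acting on the nondegenerate section $T^\perp/T$. What your reasoning genuinely rules out is the nondegenerate case (such stabilizers contain no Sylow $r_0$-subgroup, so cannot be parabolic) and the mixed case $0\ne W\cap W^\perp\ne W$ with $W\ne(W\cap W^\perp)^\perp$ (such stabilizers are non-maximal). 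But the coisotropic configuration $0\ne W^\perp\subseteq W$ survives both tests: there $G_W=G_{W^\perp}$ is itself a maximal parabolic, and Borel--Tits is perfectly compatible with $\widehat{E}$ lying in the centre of its unipotent radical, namely the elementwise stabilizer of $(W^\perp)^\perp=W$, whose fixed space is exactly $W$ --- yet $W$ is not totally singular. So ``$G_W$ is parabolic'' does not by itself yield (c) for the subspace $W=C_V(\widehat{E})$. To repair your argument you should replace the reductive-factor inference by the $r_0$-locality criterion of \cite[Lemmas~4.1.3--4.1.6]{KL} for the nondegenerate case, and add an explicit exclusion (or reduction, by passing from $W$ to $W^\perp$) of the coisotropic case; the paper's one-line maximality dichotomy is admittedly silent about that last configuration as well, but its mechanism for the case it does treat is sound, whereas the one you propose is not.
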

\begin{proof}
(a) Let $Z$ be the center of $\Upomega(V)$, and write $E=E(p)$ in this proof. The order of $Z$ divides $q+1$ or $q-1$ according as $G$ is unitary or not, and it is relatively prime to $|E|$. The existence and uniqueness of $\widehat{E}$ follow by applying the Schur-Zassenhaus Theorem \cite[Theorem~6.2.1]{FinGrps} to the full preimage of $E$ in $\Upomega(V)$. The claim $W\ne 0$ follows by \cite[Lemma~2.6.3]{FinGrps}. The subspace $W$ is invariant under  $G_p=N_G(E)$. By the maximality of $G_p$, we deduce that $G_p=G_W$, where $G_W$ is the stabilizer of $W$ in $G$.  Since $E$ is minimal normal in $G_p$ by Proposition \ref{prop_adjEr} and its full preimage in $\Upomega(V)_W$ is $Z\times \widehat{E}$, it follows that $\widehat{E}$ is also minimal normal in $\Upomega(V)_W$.

(b) The group $\widehat{E}$ acts trivially on $W$ by definition. Similarly, we have $C_{V/W}(\widehat{E})\ne \{ 0 \}$ by \cite[Lemma~2.6.3]{FinGrps}. Let $U$ be its full preimage in $V$; then $U$ is $G_p$-invariant, and we deduce that $U=V$ because $G_p=G_W$ is a maximal subgroup and $W<U$.

(c) Suppose that $\upkappa\ne 0$. Since $G_p$ is maximal in $G$, we deduce that $W$ is totally singular/isotropic or nondegenerate. The group $G_p$ is not $r_0$-local if $W$ is nondegenerate by \cite[Lemmas~4.1.3-4.1.6]{KL}, so $W$ is totally singular/isotropic. Since $G_p=G_W$, it also stabilizes $W^\perp$. The claim that $\widehat{E}$ acts trivially on $W^\perp/W$ and $V/W^\perp$ follows by similar arguments to those in the proof of (b). 
\end{proof}

\begin{lemma}\label{lem_ClasTotSing}
Suppose that $G=X$ and $r=r_0$, and let $V$ be an $n$-dimensional vector space over $\F$ which is the natural projective module of $X$. 
 Let $\widehat{E}$, $W$ be as introduced in Lemma \ref{lem_ClasEWdef}. Assume that $\upkappa\ne 0$, and let $a=\dim(W)$.
\begin{enumerate}
  \item[(i)] If $a=1$, then $\cS$ is one of $\mW(3,q) $, $\mH(3,q^2)$ and $\mH(4,q^2)$.
  \item[(ii)] If $\upkappa$ is a parabolic quadratic form (in which case $a \leq \frac{n - 1}{2}$) and if $a=\frac{n-1}{2}$, then  $\cS$ is the point-line dual of $\mQ(4,q) $.
  \item[(iii)] If $\upkappa$ is a hyperbolic quadratic form, then $a < \frac{n}{2}-1$.
  \item[(iv)] If $\upkappa$ is an elliptic quadratic form (in which case $a \leq \frac{n}{2} - 1$) and if $a=\frac{n}{2}-1$, then $\cS$ is the point-line dual of $\mQ^-(5,q) $.
\end{enumerate}
\end{lemma}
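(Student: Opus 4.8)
The plan is to read off the generalized quadrangle $\cS$ from the linear algebra of $(V,\upkappa)$. By Lemma~\ref{lem_ClasEWdef} I identify $\cP$ with the set of totally isotropic $a$-subspaces of $V$, with $p\leftrightarrow W$ and $G_p=G_W$. The decisive input is that a central symmetry fixes $p^\perp$ pointwise while, by \cite[Theorem~III.2.3]{Ealy}, $E(p)$ acts \emph{freely} on $\cP\setminus p^\perp$; hence the fixed points of $E(p)$ on $\cP$ are precisely $p^\perp$. Lifting to $\widehat{E}\le\Upomega(V)_W$, this becomes
\[
  p^\perp=\{\,U\in\cP:\ \widehat{E}(U)=U\,\},
\]
so the whole lemma reduces to determining which totally isotropic $a$-spaces are stabilised by $\widehat{E}$.

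To compute this fixed set I would use Lemma~\ref{lem_ClasEWdef}(c): every $g\in\widehat{E}$ is trivial on $W$, on $W^\perp/W$ and on $V/W^\perp$, so $g-1$ maps $V$ into $W^\perp$, $W^\perp$ into $W$, and kills $W$. Making this explicit by identifying $\widehat{E}$ with the appropriate root subgroup -- the transvections $x\mapsto x+\lambda\upkappa(x,w)w$ when $a=1$ and $\upkappa$ is alternating or Hermitian, and a long root subgroup when $W$ is a maximal totally isotropic subspace -- one reads off the stabilised $a$-spaces directly. I expect the dictionary: for $a=1$ (alternating/Hermitian) $\langle u\rangle$ is fixed iff $u\in W^\perp$, so collinearity in $\cS$ is perpendicularity, i.e. collinearity in the polar space $\Pi$ of $\upkappa$; and for $W$ a generator, $U$ is fixed iff $\dim(U\cap W)\ge\dim W-1$, so collinearity in $\cS$ is that of the dual polar space $\Delta$ of $\Pi$.

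The four cases then follow from the shape of the collinearity graph. For (i), once collinearity in $\cS$ agrees with that of $\Pi$, the quadrangle axiom forces $\Pi$ to have rank $2$: a polar space of rank $\ge 3$ contains a totally isotropic plane, producing a point collinear with every point of a line not through it, which no GQ allows. Hence $\cS\cong\Pi$ is a thick rank-$2$ polar space, one of $\mW(3,q)$, $\mQ(4,q) $, $\mQ^-(5,q) $, $\mH(3,q^2)$, $\mH(4,q^2)$; by Section~\ref{sec_examples} the two orthogonal members carry no nontrivial central symmetry when $q$ is a power of the odd prime $r$, contradicting our hypothesis, so $\cS$ is one of $\mW(3,q) $, $\mH(3,q^2)$, $\mH(4,q^2)$. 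For (ii) and (iv), $W$ is a generator and the collinearity graph of $\cS$ is that of the dual polar space $\Delta$, whose diameter equals the rank of $\Pi$; as the collinearity graph of a GQ has diameter $2$, the rank of $\Pi$ is $2$, forcing $n=5$ in the parabolic case and $n=6$ in the elliptic case, whence $\cS$ is the point-line dual of $\mQ(4,q) $ respectively $\mQ^-(5,q) $. For (iii), $a=\tfrac n2$ is excluded because $\cP$ would be a single $G$-orbit (family) of generators, whose half-spin collinearity geometry is not a thick GQ (its collinearity graph is already complete for $n=6$, and the dual polar lines have size $2$ since the rank-$1$ residue is $\mathrm{O}_2^+(q)$); and $a=\tfrac n2-1$ is excluded because in hyperbolic type such a subspace lies in exactly two generators, so its stabiliser is the intersection of the two generator-stabilisers, a \emph{non}-maximal parabolic, contradicting the maximality of $G_p$ in Proposition~\ref{prop_adjEr}(d). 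Hence $a<\tfrac n2-1$.

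The main obstacle is the second step, the \emph{type-uniform} determination of the totally isotropic $a$-spaces fixed by $\widehat{E}$. This requires locating $\widehat{E}$ precisely inside the centre of the unipotent radical of $G_W$ (one checks $|\widehat E|=q$, so $\widehat E$ is the full transvection/long root group rather than the larger space of symmetric forms) and then a separate fixed-space computation for the alternating, Hermitian and quadratic forms. I expect the orthogonal sub-case of (i) and the hyperbolic sub-case (iii) to be the delicate points: there $\widehat{E}$ need not act trivially on all of $W^\perp$ -- reflecting the absence of orthogonal transvections in odd characteristic and the splitting of the maximal totally isotropic subspaces into two families -- so the naive dictionary fails and one must argue by hand that no thick GQ can arise.
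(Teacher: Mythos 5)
Your core observation --- that since $E(p)$ fixes $p^\perp$ pointwise and acts freely on $\cP\setminus p^\perp$, the set $p^\perp$ is exactly the fixed-point set of $\widehat{E}$ on the orbit of $W$, so the collinearity graph can be read off linear-algebraically --- is correct, and it is a genuinely different route from the paper's. The paper deliberately proves this lemma \emph{without} determining $\widehat{E}$: it only uses the identification $\cP\cong W^G$ and then argues combinatorially, via the rank-$3$/distance-transitivity classification \cite{DisTrans} in case (i), via Muzychuk's classification \cite{Muzi} of strongly regular fusions of (half) dual polar graphs together with the arithmetic of Lemma \ref{lem_stcond} in cases (ii)--(iii), and via a valency computation plus Lemma \ref{lem_stcond} in case (iv). The explicit identification of $\widehat{E}$ is postponed to Lemma \ref{lem_ClaPara}, where it is carried out only under the restrictions on $a$ that the present lemma provides.

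That postponement is exactly where your proposal has genuine gaps. First, your guess for $\widehat{E}$ is wrong in the cases (ii)--(iv) that need it most: when $W$ is a maximal totally singular subspace of Witt index $a\ge 3$, a single long root subgroup of order $q$ is \emph{not} normal in $\Upomega(V)_W$, so $\widehat{E}$ cannot be ``the full transvection/long root group'' and ``$|\widehat{E}|=q$'' is not something one can check --- nothing bounds $|E(p)|$ a priori. What your dictionary actually requires is $\widehat{E}=Z(R)\cong\wedge^2 W$ (order $q^{a(a-1)/2}$), where $R$ is the unipotent radical of $\Upomega(V)_W$: the inclusion $\widehat{E}\le Z(R)$ follows from minimal normality and $[R,R]\le Z(R)$, but the equality needs $\F_{r_0}$-irreducibility of $Z(R)$ as a module for the Levi subgroup, i.e.\ precisely the module-theoretic input (\cite[Section 5.2]{BHR}, \cite{schaffer}) that the paper invokes in Lemma \ref{lem_ClaPara}. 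If $\widehat{E}$ were a proper Levi-invariant subgroup of $Z(R)$, its fixed-point set --- being a union of $G_W$-orbits containing the codimension-$1$ class --- could include deeper distance classes of the dual polar graph, your diameter argument would collapse, and you would be back in the fusion situation the paper kills with \cite{Muzi} and Lemma \ref{lem_stcond}. Second, your exclusion of $a=\tfrac n2$ in (iii) is incomplete: the half dual polar graph is complete only for $n=6$, while for $n=8,10$ it has diameter $2$, and these are exactly the cases the paper must work to eliminate; ironically your own dictionary disposes of them instantly, because every generator meeting $W$ in codimension $1$ lies in the \emph{opposite} family, so $\mathrm{Fix}(\widehat{E})\cap\cP=\{W\}$, i.e.\ $p^\perp=\{p\}$, contradicting thickness. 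Third, the orthogonal subcase of (i) is left as ``argue by hand''; it can be closed in the same spirit (there $\widehat{E}$ is the full unipotent radical and stabilizes no singular point outside $W$, again contradicting $p^\perp\supsetneq\{p\}$), but as submitted it is a hole, not a proof. In short: the strategy is sound and attractive, but the three computations on which everything rests are either misstated or missing.
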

\begin{proof}
(i) Suppose that $a=1$. The group $G$ is transitive on the totally singular/isotropic subspaces of dimension $1$ \begin{color}{black}by \cite[Lemma 2.10.5]{KL}\end{color}, and we can identify $\cP$ with the set of all $1$-dimensional totally singular/isotropic subspaces. This action has rank $3$, so we deduce that $G$ is distance-transitive on the points of $\cS$. By the classification results of such generalized quadrangles in \cite{DisTrans}, $\cS$ is one of the classical generalized quadrangles. The claim then follows.

(ii) Suppose that $\upkappa$ is a parabolic quadratic form, and assume that $W$ is a maximal totally singular space.  Let $\textsf{G}$ be the collinearity graph of $\cP$ in $\cS$, which is strongly regular by \cite[Theorem 2.2.10]{SRG}.  Let $M$ be the set of maximal totally singular spaces, and let \textsf{$\Upgamma$} be the associated {\em dual polar graph}. \begin{color}{black} The graph $\Upgamma$ has $M$ as vertex set, and two vertices $U_1,U_2$ are adjacent if and only if $U_1\cap U_2$ has codimension $1$ in $U_1$.\end{color} The group $G$ is transitive on $M$ \begin{color}{black}by \cite[Lemma 2.5.10]{BurGiu}\end{color}, so we identify $\cP$ with $M$. For two elements $W',W''$ of $\cP$, they are at distance $i$ in \textsf{$\Upgamma$} if and only if $\dim(W'\cap W'')=a-i$. The group $G$ acts edge-transitively on each distance-$i$ graph of \textsf{$\Upgamma$}, and thus $\textsf{G}$ is one of them by the fact $G$ is transitive on collinear point pairs. If $a=2$, there are only two $G$-orbits on the edges, i.e., $G$ is rank $3$ on the maximal totally singular subspaces. We have $\mathsf{PSp}(4,q) = \mathsf{\Upomega}_5(q)$ by \cite[Theorem 11.3.2]{SRG}, and we deduce that
$\cS$ is the point-line dual of the classical generalized quadrangle $\mQ(4,q)$ if $\mathsf{G} = {\Upgamma}$. If $\textsf{G}$ is the complement of \textsf{$\Upgamma$}, then $(1+s)(1+st)=(1+q)(1+q^2)$, $s(t+1)=q^3$ by \cite[Theorem 2.2.19]{SRG}. Here, the second equality follows by considering the valency of $\textsf{G}$. By Lemma \ref{lem_stcond}, we deduce that $s_{r_0}=q$ and $r_0\mid t$.  It follows that $\gcd(q,t+1)=1$, and thus $q^3\mid s$ by the second equation: a contradiction to $s_{r_0}=q$. Assume that $a>2$, so that \textsf{$\Upgamma$} has diameter at least $3$. The complement of $\textsf{G}$ is a strongly regular graph formed by fusing the edges of the distance-$j$ graphs. By \cite{Muzi} (see also \cite[p.~279]{BCN}), we have $n=7$, and $\textsf{G}$ is the distance $3$-graph of $\Upgamma$ and has valency $q^6$. By \cite[3.2.4]{SRG}, we have $(1+s)(1+st)=(q+1)(q^2+1)(q^3+1)$, and $s(t+1)=q^6$. By Lemma \ref{lem_stcond}, we deduce that $s_{r_0}=q$ and $\gcd(q,t+1)=1$. It follows that $q^6\mid s$ by the second equation: a contradiction.

(iii) Suppose that $\upkappa$ is a hyperbolic quadratic form, and let $n=2m$. We cannot have $a =  m-1$, since $G_p$ is a maximal subgroup of $G=\overline{\Upomega(V)}$. Assume to the contrary that $a=m$. Let $M$ be the $\Upomega(V)$-orbit of $W$, and let \textsf{$\Upgamma$} be the associated {\em half dual polar graph} on $M$. \begin{color}{black} The graph $\Upgamma$ has $M$ as vertex set, and two vertices $U_1,U_2$ are adjacent if and only if $U_1\cap U_2$ has codimension $2$ in $U_1$.\end{color} We identify $\cP$ with $M$. For two elements $W',W''$ of $\cP$, they are at distance-$i$ in \textsf{$\Upgamma$} if and only if $\dim(W'\cap W'')=m-2i$. Let $\textsf{G}$ be the collinearity graph of $\cP$ in $\cS$,  so that $G$ is edge-transitive on it. The collinearity graph of $\cP$ is a distance-$i$ graph of \textsf{$\Upgamma$}, and the latter graph is equivalent to the distance $1$-or-$2$ graph of the dual polar graph of an elliptic quadratic space of dimension $n-1$, cf. \cite[p.~278]{BCN}. We deduce that $n=8$ or $10$ again by \cite{Muzi}, and in both cases \textsf{$\Upgamma$} is strongly regular. It follows that $\textsf{G}$ is either \textsf{$\Upgamma$} or its complement, and $G$ is distance-transitive on $\cP$. This is impossible by the main result of \cite{DisTrans}: a contradiction.

(iv) Suppose that $\upkappa$ is an elliptic quadratic form and $a=m$, where $n=2m+2$. We similarly identify $\cP$ with the set of maximal totally singular subspaces, and suppose that $W',W''$ are collinear if and only if $\dim(W'\cap W'')=m-i$ for some $i$. If $n=4$, then $G$ is $2$-transitive on $\cP$ which is impossible.   Assume that $n\ge 6$. By \cite[Theorem~1.33]{GGG}, we have $|\cP|=\prod_{i=2}^{m+1}(q^i+1)$. Since $|\cP|\equiv 1+q^2\pmod{q^3}$, we have $s_{r_0}=q^2$ by Lemma \ref{lem_stcond}. By \cite[Lemma 1.36]{GGG}, the valency of the collinearity graph of $\cS$ is $s(t+1)=q^{i(i+3)/2}\prod_{k=i+1}^m(q^k-1)\prod_{j=1}^{m-i}(q^j-1)^{-1}$. We have $r_0\mid t$ by Lemma \ref{lem_stcond}, so $s_{r_0}=q^{i(i+3)/2}$. It follows that $i(i+3)/2=2$, i.e., $i=1$. That is, the collinearity graph of $\cP$ is the dual polar graph associated with $(V,\upkappa)$. Since these two  graphs have diameters $2$, $m$ respectively, it follows that $m=2$. We deduce that $\cS$ is the point-line dual of $\mQ^-(5,q) $ as desired. This completes the proof.
\end{proof}

\begin{lemma}\label{lem_ClaParaLinear}
If $G=X$ and $r$ is its defining characteristic $r_0$, then $G$ is not linear, i.e., $\upkappa\ne0$.
\end{lemma}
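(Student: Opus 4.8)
The plan is to argue by contradiction. Suppose $\upkappa=0$, so that $X=\PSL_n(q)$ is linear, $\Upomega(V)=\SL_n(q)$, and—since we are in the case $r=r_0$ with $r$ odd—$q=r_0^f\ge 3$. By Lemma \ref{lem_ClasEWdef}, the point stabiliser is $G_p=G_W$, the stabiliser of the subspace $W=C_V(\widehat E)$, where $a:=\dim W$ satisfies $1\le a\le n-1$; moreover $\widehat E$ acts trivially on both $W$ and $V/W$ and is minimal normal in $\Upomega(V)_W$. As $G=\PSL_n(q)$ is transitive on the $a$-dimensional subspaces of $V$, I would identify $\cP$ with the set of all such subspaces, so that
\[
  v:=|\cP|=\frac{(q^n-1)(q^{n-1}-1)\cdots(q^{n-a+1}-1)}{(q^a-1)(q^{a-1}-1)\cdots(q-1)}.
\]

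The key structural step is to pin down $|E(p)|$ exactly. The elements of $\Upomega(V)_W=\SL_n(q)_W$ acting trivially on both $W$ and $V/W$ are precisely the unipotent radical $U$ of the maximal parabolic $G_W$, and $U\cong \mathrm{Hom}(V/W,W)$ is an elementary abelian $r_0$-group of order $q^{a(n-a)}$. As a module for the Levi factor $S(\GL(W)\times \GL(V/W))$, this $U$ is the tensor product of the natural module for $\GL(W)$ with the dual natural module for $\GL(V/W)$, hence irreducible; consequently $U$ is the unique minimal normal subgroup of $G_W$ contained in it. Since $\widehat E\le U$ is minimal normal in $\Upomega(V)_W$, this forces $\widehat E=U$, and therefore $|E(p)|=|\widehat E|=q^{a(n-a)}$ (the boundary cases $a=1$ and $n-a=1$ are covered, one tensor factor then being one-dimensional).

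The contradiction then follows from a crude size estimate, uniform in $n$. Write $N=a(n-a)\ge 1$. By Lemma \ref{lem_stcond}(a) we have $r^m=|E(p)|=q^N$ with $r^m\mid t$, so $t\ge q^N$. On the other hand, pulling out the top power of $q$ from the displayed fraction gives
\[
  v< q^{N}\prod_{j\ge1}(1-q^{-j})^{-1}<2\,q^{N}\qquad(q\ge3),
\]
the last inequality because $\prod_{j\ge1}(1-q^{-j})>1-\sum_{j\ge1}q^{-j}=\tfrac{q-2}{q-1}\ge\tfrac12$ for $q\ge3$. Combining this with the generalized quadrangle inequality $t\le s^2$ of \cite{slet2} yields $v=(1+s)(1+st)>s^2t\ge t^2$, whence
\[
  t^2<v<2\,q^N\le 2\,t.
\]
This forces $t<2$, contradicting $t\ge q^N\ge q\ge3$. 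Hence $\upkappa\ne0$, proving the lemma.

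The step I expect to demand the most care is the exact identification $\widehat E=U$: one must verify that the unipotent radical of the subspace-stabilising maximal parabolic really is irreducible as a Levi-module over $\F_q$ (so that minimality of $\widehat E$ upgrades to equality), and check the small-dimensional boundary cases. Everything afterwards is elementary and, pleasantly, requires neither the finer arithmetic of Lemma \ref{lem_stcond} (the $r_0$-part conditions and coprimality with $v$) nor the Higman-type bound of Lemma \ref{lem_1psbound} beyond $t\le s^2$.
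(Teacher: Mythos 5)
Your proof is correct, and its first half coincides with the paper's own argument: the paper likewise identifies $\widehat{E}$ with the full unipotent radical $R$ of $\Upomega(V)_W$, using exactly your tensor-module reasoning (absolute irreducibility of $W\otimes (V/W)^*$ under $\SL_a(q)\circ\SL_{n-a}(q)$, via \cite[Proposition~2.10.6]{KL} and \cite[Lemma~4.4.3(vi)]{KL}) to upgrade minimal normality of $\widehat{E}$ to the equality $\widehat{E}=R$. One refinement on the step you flagged as delicate: a subgroup of $U$ normal in the parabolic is a priori only an $\F_{r_0}$-subspace, not an $\F_q$-subspace, so irreducibility ``over $\F_q$'' is strictly weaker than what is needed; one wants irreducibility over the prime field (equivalently, that the Levi module is not realizable over a proper subfield), which does hold here and is what the paper's appeal to absolute irreducibility together with \cite[Lemma~4.4.3(vi)]{KL} is meant to deliver. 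Where you genuinely diverge is the endgame. The paper finishes group-theoretically: since $E(p)$ is the image of the full unipotent radical, $C_{G_p}(E(p))=E(p)$ (the radical is self-centralizing modulo the center of $\SL_n(q)$), and this contradicts Corollary~\ref{cor_GltGp2}, whose elementary abelian subgroup $H$ (generated by the symmetries at the points of a line through $p$) commutes with $E(p)$ and has order at least $(s+1)(r^m-1)+1>|E(p)|$. You instead count: $|\cP|$ is a Gaussian coefficient bounded above by $2q^{a(n-a)}=2|E(p)|\le 2t$ (using $r^m\mid t$ from Lemma~\ref{lem_stcond}(a)), while Higman's bound $t\le s^2$ forces $|\cP|=(1+s)(1+st)>s^2t\ge t^2$, whence $t^2<2t$, absurd. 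Your route trades the centralizer computation and Corollary~\ref{cor_GltGp2} for an explicit index estimate plus two standard facts; it is arguably more elementary and makes transparent \emph{why} the linear case dies (the parabolic has too small an index relative to $t$), whereas the paper's argument never computes $|\cP|$ and runs entirely inside $G_p$.
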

\begin{proof}
Suppose to the contrary that $G=\PSL_n(q)$, and let $E=E(p)$. We have $\Upomega(V)=\SL_n(q)$ in this case. Let $\widehat{E}$ be as in Lemma \ref{lem_ClasEWdef}, and let $W=C_V(\widehat{E})$, $a=\dim(W)$. Then $G_p$ is the quotient image of $\Upomega(V)_W$ in $G$ by Lemma \ref{lem_ClasEWdef}.  For $A\in\GL_a(q)$, $C\in\GL_{n-a}(q)$, $B\in M_{a,n-a}(q)$, set $m(A,B,C)=\begin{pmatrix} A&B\\0&C\end{pmatrix}$. There is a basis of $V$ such that
\[
  \Upomega(V)_W=\left\{m(A,B,C):A\in\GL_a(q),C\in\GL_{n-a}(q),B\in M_{a,n-a}(q),\det(A)\det(C)=1\right\}.
\]
Its unipotent radical $R=\{m(0,B,0): B\in M_{a,n-a}(q)\}$ is the elementwise stabilizer of $W$ in $\Upomega(V)_W$, and so it contains $\widehat{E}$. The subgroup $\SL_a(q)\circ \SL_{n-a}(q)$ acts on $R$ via conjugation, and this is equivalent to its action on $\F_q^a\otimes\F_q^{n-a}$ via $(u\otimes v)^{(A,B)}=uA^{-\top}\otimes vB$, where $A\in\SL_a(q)$, $B\in\SL_{n-a}(q)$. By \cite[Proposition~2.10.6]{KL} both actions are absolutely irreducible, so by \cite[Lemma~4.4.3 (vi)]{KL} $R$ is a minimal normal subgroup of $\Upomega(V)_W$. We thus have $\widehat{E}=R$.  It is now routine to deduce that the centralizer of $E$ in $G_p$ is $E$ itself. This contradicts the fact that $|C_{G_p}(E)|\ge (1+s)(|E|-1)+1$, cf. Corollary \ref{cor_GltGp2}. This completes the proof.
\end{proof}

\begin{lemma}\label{lem_ClaPara}
If $G=X$ and $r$ is its defining characteristic $r_0$, then $\cS$ is one of $\mW(3,q) $, $\mH(3,q^2)$ and $\mH(4,q^2)$.
\end{lemma}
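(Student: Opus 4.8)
The plan is to feed the output of Lemma~\ref{lem_ClasEWdef} into Lemma~\ref{lem_ClasTotSing}, so that the whole argument reduces to pinning down the dimension $a=\dim(W)$ of the totally singular/isotropic space $W=C_V(\widehat E)$ attached to the symmetry. First I would invoke Lemma~\ref{lem_ClaParaLinear} to discard the linear case, so that $\upkappa\ne 0$ and $G$ is symplectic, unitary or orthogonal. By Lemma~\ref{lem_ClasEWdef} we may identify $\cP$ with the $G$-orbit of the totally isotropic subspace $W$, with $G_p=G_W$ the corresponding maximal parabolic; since $G$ is transitive on collinear point pairs by (E2)--(E3), collinearity is a single $G$-orbital and the collinearity graph of $\cS$ is a $G$-invariant strongly regular graph on the set of totally isotropic $a$-spaces.

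When $a=1$ I would quote Lemma~\ref{lem_ClasTotSing}(i): $G$ acts as a rank $3$ group, hence distance-transitively, on $\cP$, and $\cS$ is a classical quadrangle. Combined with the description in Section~\ref{sec_examples} and the standing hypothesis that every point carries a nontrivial symmetry, this forces $\cS\in\{\mW(3,q),\mH(3,q^2),\mH(4,q^2)\}$, because $\mQ(4,q)$ and $\mQ^-(5,q)$ admit no nontrivial central symmetry and are thereby eliminated. For the extremal orthogonal values of $a$ I would invoke Lemma~\ref{lem_ClasTotSing}(ii) and (iv): a maximal totally singular $W$ in the parabolic case yields the point-line dual of $\mQ(4,q)$, i.e.\ $\mW(3,q)$, and in the elliptic case the dual of $\mQ^-(5,q)$, i.e.\ $\mH(3,q^2)$; both lie on our list. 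Lemma~\ref{lem_ClasTotSing}(iii) already disposes of the near-maximal hyperbolic case.

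It then remains to exclude all intermediate dimensions $2\le a\le m-1$ (with $m$ the Witt index) in every type, and in particular every $a\ge 2$ in the symplectic and unitary cases. Here I would argue graph-theoretically: the $G$-action on totally isotropic $a$-spaces has an associated dual polar (or half dual polar) graph as its distance-regular skeleton, of diameter $m$, and the collinearity graph of $\cS$ must be one of its fused distance graphs. Since a generalized quadrangle forces diameter $2$, the strong-regularity constraints together with the Muzichuk-type classification already used in Lemma~\ref{lem_ClasTotSing} should leave no room for intermediate $a$, the surviving parameters collapsing to Witt index $2$ with $a\in\{1,m\}$, already treated above. Where the skeleton is not decisive I would fall back on the numerical machinery: Lemma~\ref{lem_1psbound} bounds $1+s$ between $|\cP|^{1/4}$ and $|\cP|^{2/5}$, Lemma~\ref{lem_stcond} fixes $s_{r_0}=(|\cP|-1)_{r_0}$ and forces $s\mid|X_p|$ with $\gcd(s,|\cP|)=1$, and Corollary~\ref{cor_GltGp2} demands an elementary abelian $r$-subgroup of $C_{G_p}(E(p))$ of order at least $(r^m-1)(s+1)+1$. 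Computing $C_{G_p}(E(p))$ from the parabolic structure (the Levi $\GL_a\times\mathrm{Cl}(W^\perp/W)$ acts irreducibly on $\widehat E$, so that its centralizer shrinks once $a\ge 2$ and the complementary classical factor is small) should give an upper bound incompatible with this lower bound, ruling out the intermediate cases type by type.

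The step I expect to be the main obstacle is exactly this last one: uniformly excluding the intermediate and the symplectic/unitary $a\ge 2$ configurations. The delicacy is that for small Witt index the centralizer $C_{G_p}(E(p))$ can still be sizeable (for instance the factor $\GL_2$ acting on a one-dimensional $\wedge^2W$), so the crude centralizer bound does not by itself suffice and must be supplemented with the exact parameters of the dual polar graph. Equally, keeping track of which pairs $(G,a)$ genuinely produce one of the three symmetry-bearing quadrangles, as opposed to the symmetry-free $\mQ(4,q)$, $\mQ^-(5,q)$ and their duals, across the module-choice coincidences $\mathsf{PSp}_4(q)\cong\mathsf{\Upomega}_5(q)$ and $\mathsf{PSU}_4(q)\cong\mathsf{P\Upomega}_6^-(q)$, is where the bookkeeping is heaviest.
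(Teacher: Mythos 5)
Your reduction matches the paper's: Lemma \ref{lem_ClaParaLinear} kills the linear case, and Lemma \ref{lem_ClasTotSing} settles $a=1$ and the extremal orthogonal dimensions, leaving exactly the intermediate cases ($a\ge 2$ in the symplectic and unitary types, $2\le a\le\frac{n-3}{2}$ in the orthogonal types) to be contradicted. But both of the tools you propose for these remaining cases break down. First, the dual polar graph/Muzychuk machinery applies only when $\cP$ consists of \emph{maximal} totally singular subspaces --- which is precisely why the paper confines it to the maximal-$W$ cases inside Lemma \ref{lem_ClasTotSing}. For intermediate $a$, the $G$-orbitals on pairs $(W_1,W_2)$ are not parametrized by $\dim(W_1\cap W_2)$ alone (one also needs, e.g., $\dim(W_1\cap W_2^{\perp})$), so the orbital scheme is not the metric scheme of a dual polar graph and there is no fusion classification to quote. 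Second, your numerical fallback via $C_{G_p}(E(p))$ cannot succeed, for a structural reason: the paper proves that $\widehat{E}$ equals $U_0$, the elementwise stabilizer of $W^\perp$ in $\Upomega(V)$, and $U_0$ is central in the full unipotent radical $R$ of $\Upomega(V)_W$ (property (b) gives $\eta(A,B,C)\eta(A',0,0)=\eta(A+A',B,C)=\eta(A',0,0)\eta(A,B,C)$). Hence $C_{G_p}(E(p))$ contains the image of all of $R$, so the centralizer does not ``shrink once $a\ge2$'' as you hoped, and no conflict with Corollary \ref{cor_GltGp2} arises. The self-centralizing phenomenon you are extrapolating from occurs only in the linear case (Lemma \ref{lem_ClaParaLinear}), where $\widehat{E}$ is the whole unipotent radical.

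What your proposal is missing is the paper's actual key step. After establishing $\widehat{E}=U_0$ (via minimal normality of $U_0$ in $\Upomega(V)_W$ and a tensor-module argument inside $R$), the paper exploits property (E1) --- the groups $E(x)$ intersect trivially, and commute exactly when the points are collinear --- together with explicit transvections $\tau_{\alpha,u,u}$ (symplectic/unitary) and Siegel transformations (orthogonal). A transvection with center $u\in W_1\cap W_2$ lies in both $G[W_1^\perp]$ and $G[W_2^\perp]$, so (E1) forces distinct points of $\cP$ to meet trivially in the symplectic and unitary cases, which is absurd for $a\ge 2$; in the orthogonal case one gets $\dim(W_1\cap W_2)\le 1$, hence $a=2$, and then the pairs $(W,\la e_1,e_2'\ra)$ and $(W,\la e_2,e_3\ra)$ are both collinear pairs yet lie in distinct $G$-orbits (one span is totally singular, the other is not), contradicting transitivity of $G$ on collinear point pairs. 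This geometric use of (E1), not an association-scheme or counting argument, is what closes the intermediate cases, and without it your outline does not go through.
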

\begin{proof}
Let $\widehat{E}$ and $W$ be as in Lemma \ref{lem_ClasEWdef}, and set $a=\dim(W)$. Let $u=2$ and $\sigma=q$ in the unitary case, and let $u=1$ and $\sigma=1$ otherwise. We have $|\F|=q^u$. The case where $G$ is linear has been settled in Lemma \ref{lem_ClaParaLinear}, so assume that $\upkappa\ne 0$. Let $f:V\times V\rightarrow\F$ be a nondegenerate $\Upomega(V)$-invariant reflexive sesquilinear form on $V$, where $f$ is the associated bilinear form in the orthogonal case and $f=\upkappa$ otherwise. By Lemma \ref{lem_ClasTotSing}, we may suppose that $a>1$ in all cases (i)-(ii)-(iii)-(iv) and $a\leq \frac{n - 3}{2}$ in the orthogonal cases (ii)-(iii)-(iv), and it is our goal to produce a contradiction in these cases. 

By \cite[Lemma~4.1.12]{KL}, there exist subspaces $Y,V_o$ such that $Y$ has dimension $a$ and is totally singular/isotropic, $W\oplus Y$ and $V_o$ are nondegenerate, and $V=(W\oplus Y)\perp V_o$. We have $W^\perp=W\oplus V_o$. We choose a basis $e_1,\ldots,e_a$ of $W$, a basis $f_1,\ldots,f_a$ of $Y$ such that $f(e_i,f_j)=1$ or $0$ according as $i=j$ or not, and then extend them to a  basis $\cB$ of $V$ by adding a  basis of $V_o$. We have $\Upomega(V)_W=R:D$, where $R$ is its unipotent radical and $D=\Upomega(V)\cap (\GL_a(q^u)\times I(V_o))$ with
\[
\GL_a(q^u)\times I(V_o)=\left\{\diag(P,P^{-\sigma\top},Q):\,P\in\GL_a(q^u), Q\in I(V_o)\right\}.
\]
Take $h=\diag(P,P^{-\sigma\top},Q)\in \GL_a(q^u)\times I(V_o)$ such that $\det(h)=1$. In the symplectic and unitary case, $h$ is in  $D$. In the orthogonal case, $h\in D$  if $\det(P)$ is a square and $Q\in \Upomega(V_o)$, cf. \cite[Lemma~4.1.9]{KL}. The group $R$ acts trivially on $W$, $V/W^\perp$ and $W^\perp/W$, and it contains $\widehat{E}$ by Lemma \ref{lem_ClasEWdef}.  Each element of $R$ is of the form $\eta(A,B,C)$, where
\[
  \eta(A,B,C)=\begin{pmatrix}I_a&A&B\\0&I_a&0\\0&C^\top&I_{n-2a}\end{pmatrix}
\]
for $A\in M_a(\F_{q^u})$ and $B,C\in M_{a,n-2a}(\F_{q^u})$. Let $U_0$ be the elementwise stabilizer of $W^\perp=W\oplus V_o$ in $\Upomega(V)$. Then it stabilizes $V_0^\perp=W\oplus Y$, and it is routine to check that $U_0=\{\eta(A,0,0):\,A^{\top}+\epsilon A^\sigma=0\}$, where $\epsilon=-1$ or $1$ according as $\upkappa$ is alternating or not.  For nonzero vectors $u,v\in W$ and $\alpha\in \F_{q^u}^*$, the linear transformation
\begin{equation}\label{eqn_tauuv}
\tau_{\alpha,u,v}:\,x\mapsto x+\alpha\upkappa(x,u)v-\alpha^\sigma\upkappa(v,x)^\sigma u
\end{equation}
lies in $U_0$. If $\upkappa$ is unitary or alternating, then the nontrivial $\tau_{\alpha,u,u}$'s  are transvections, cf. \cite{Grove}.  If $\la u\ra\ne\la v\ra$ and $\upkappa$ is a quadratic form, then $\tau_{\alpha,u,v}$ is a Siegel transformation. We shall use the following properties in the arguments below:
\begin{itemize}
  \item[(a)]If $g=\eta(A,0,C)\in R$, then $g\in U_0$ and thus $C=0$.
  \item[(b)]$\eta(A,B,C)\eta(A',B',C')=\eta(A+A'+BC'^\top,B+B',C+C')$.
  \item[(c)]$\eta(A,B,C)^h=\eta(P^{-1}AP^{-\sigma\top},P^{-1}BQ,P^{-\sigma}CQ^{-\top})$, for $h=\diag(P,P^{-\sigma\top},Q)\in D$.
\end{itemize}
By the last property and facts on irreducible $\GL_a(q^u)$-modules in characteristic $r_0$, cf. \cite[Section~5.2]{BHR} and \cite{schaffer}, we deduce that $U_0$ is a minimal normal subgroup of $\Upomega_W$.

We claim that $\widehat{E}=U_0$. Recall that $\widehat{E}$ is  minimal normal in $\Upomega(V)_W$ by Lemma \ref{lem_ClasEWdef}. If $n=2a$, then $V_o=0$, $R=U_0$, and the claim follows from the fact $U_0$ is minimal normal in $\Upomega(V)_W$ and $\widehat{E}\le R$. Assume that $n>2a$, and suppose to the contrary that $\widehat{E}\ne U_0$. Then $\widehat{E}\cap U_0= \{ \id \}$, and it follows from (a) and (b) that different $\eta(A,B,C)$'s in $\widehat{E}$ have different $B$'s. Hence $\widehat{E}=\{\eta(A(x),x,C(x)):\,x\in T\}$ for some subspace $T$ of $M_{a,n-2a}(\F)$, where $A(x),C(x)$ are taken from $M_{a,n-2a}(\F)$, $M_{a,a}(\F)$ respectively. By \cite[Proposition~2.10.6]{KL} and our assumption on $a$, $\SL_a(q^u)$ and $\Upomega(V_0)$ are  absolutely irreducible on $W$ and $V_o$ respectively.
As in the proof of Lemma \ref{lem_ClaParaLinear}, we use (c) and tensor module arguments to show that $T=M_{a,n-2a}(q^u)$. By the properties (a) and (b), we deduce that $R=\widehat{E}\times U_0$. It follows that $R$ should be elementary abelian, but this is not the case by the property (b).  This proves the claim $\widehat{E}=U_0$.

We now identify $\cP$ with the $\Upomega(V)$-orbit of $W=C_V(\widehat{E})$. By our assumptions on $a$, $\cP$ is the set of all totally isotropic/singular subspaces of dimension $a$. For $W'\in\cP$, $E(W')$ is the group of symmetries at the point $W'\in\cP$ as in Proposition \ref{prop_adjEr}. We have $E(W')=G[W'^\perp]$ by the previous paragraph, where $G[W'^\perp]$ is the elementwise stabilizer of $W'^\perp$ in $G=\overline{\Upomega(V)}$. By the property (E1), we have $G[W_1^\perp]\cap G[W_2^\perp]=\{ \id \}$ for any distinct elements $W_1,W_2$ of $\cP$. In the symplectic and unitary cases, we deduce that $W_1\cap W_2=\{0\}$ for any distinct $W_1,W_2$  by the existence of transvections, cf. \eqref{eqn_tauuv}. This is impossible under our assumption $a>1$. Therefore, $\upkappa$ must be a quadratic form.  We have $a\ge 2$ and $n\ge 2a+3$ by assumption. Take $\{e_i,e_i':\,1\le i\le a+1\}\subset V$ such that $\la e_1,\ldots,e_{a+1}\ra$ and $\la e_1',\ldots,e_{a+1}'\ra$ are totally singular, $f(e_i,e_j')=1$ or $0$ according as $i=j$ or not, and $W=\la e_1,e_2,\ldots,e_a\ra$. By a similar argument that involves Siegel transformations, we deduce that $\dim(W_1\cap W_2)\le 1$ for distinct $W_1,W_2$ in $\cP$. If $a\ge 3$, then $W'=\la e_1,\ldots,e_{a-1},e_a'\ra$ is in $\cP$ and $W\cap W'=\la e_1,\ldots,e_{a-1}\ra$: a contradiction. Hence we have $a=2$. By the property (E1), distinct elements $W_1,W_2$ of $\cP$ are collinear in $\cS$ if and only if $G[W_1^\perp]$ and $G[W_2^\perp]$ commute. For $W_1=\la u,v\ra\in\cP$, we have $G[W_1^\perp]=\la\tau_{\alpha,u,v}: \alpha\in\F_q\ra$. If $\dim(W_1\cap W_2)=1$, then it is straightforward to check that  $G[W_1^\perp]$ and $G[W_2^\perp]$ commute. It follows that $W_1,W_2$ are collinear in $\cS$ if $\dim(W_1\cap W_2)=1$. Let $W_1=\la e_1,e_2'\ra$, and $W_2=\la e_2,e_3\ra$. Then $W$ is collinear with $W_1,W_2$, but  $(W,W_1)$ and $(W,W_2)$ are in distinct $G$-orbits. This contradicts the fact that $G$ is transitive on collinear point pairs, so this case does not occur. This completes the proof.
\end{proof}

\begin{lemma}
If $G=X$ and $r$ is not the defining characteristic $r_0$, then $G=\PSU_4(2)$ and $\cS=\mW(3,3)$.
\end{lemma}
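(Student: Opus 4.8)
The plan is to exploit the fact that $G_p=N_G(E(p))$ is an $r$-local maximal subgroup (Proposition \ref{prop_adjEr}(d)) which is moreover \emph{large}, in the sense that $|G_p|>|G|^{1/2}$ by Corollary \ref{cor_GltGp2}. Since $r\ne r_0$, the normal elementary abelian $r$-subgroup $E(p)$ acts completely reducibly on the natural module $V$, and in particular $G_p$ cannot be parabolic: a parabolic subgroup has Fitting subgroup equal to its unipotent radical, an $r_0$-group, so $O_r(G_p)=\{\id\}$, contradicting $r$-locality. Thus $G_p$ lies in one of Aschbacher's geometric families or is almost simple, and since $|G_p|>|G|^{1/2}$ (so a fortiori $|G_p|^3>|G|$) we may restrict attention to the explicit classification of large maximal subgroups of finite simple classical groups in \cite{AlaBur}, exactly the tables already invoked in Lemma \ref{lem_ClaGsimple}.

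First I would impose $r$-locality on this list. A maximal subgroup with $O_r(G_p)\ne\{\id\}$ for $r$ coprime to $r_0$ must have its Fitting subgroup contain a nontrivial $r$-group. This eliminates at once the almost simple entries, the subfield ($\cC_5$) and classical ($\cC_8$) subgroups, the field-extension subgroups ($\cC_3$) with $\GL_{n/b}(q^b)$ of rank at least $2$, and the nondegenerate subspace stabilizers and product-type ($\cC_1$, $\cC_2$) subgroups whose base is a product of smaller classical groups, since in all these cases the Fitting subgroup is trivial or an $r_0$-group. What remains are the fully imprimitive monomial/torus normalizers ($\cC_2$ with one-dimensional blocks), the Singer-type normalizers ($\cC_3$ of degree $n$), and the normalizers of symplectic-type $r$-groups ($\cC_6$), together with a small number of low-dimensional coincidences appearing in the tables of \cite{AlaBur}. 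The torus and Singer normalizers have order roughly $(q-1)^{n}n!$ and $(q^n-1)n$ respectively, hence are large only for a short, explicit range of small $n$ and $q$; the $\cC_6$ normalizers $r^{1+2m}.\Sp_{2m}(r)$ (with $\dim V=r^m$) are likewise large only for small $(r,m,q)$. I would enumerate all these finitely many candidates directly using \cite{KL,BHR}.

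For each surviving pair $(G,G_p)$ I would then run the arithmetic elimination already used for the exceptional groups in Section \ref{secLie}. Writing $v=|\cP|=[G:G_p]$, Lemma \ref{lem_stcond} forces $s_{r_0}=(v-1)_{r_0}$, $\gcd(s,v)=1$, and $s\mid\gcd(v-1,|G_p|)$, while Lemma \ref{lem_1psbound} confines $1+s$ to the window $\bigl(v^{1/4},v^{2/5}\bigr)$; an XGCD computation in \cite{magma} bounds the $r_0'$-part of $s$ by a fixed integer, after which only finitely many $q$ survive and one checks that $(1+s)(1+st)=v$ has no admissible solution subject to $t\equiv 0\pmod r$ and the subquadrangle inequalities of Proposition \ref{prop_fgq222}. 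The single exception is $G=\PSU_4(2)\cong\PSp_4(3)$: here the coincidence of characteristics makes a maximal parabolic of $\PSp_4(3)$ appear as a $3$-local maximal subgroup $G_p$ of $\PSU_4(2)$ with $[G:G_p]=40=(1+3)(1+3\cdot 3)$, giving $(s,t)=(3,3)$. In this case $G$ is of rank $3$ on $\cP$, hence distance-transitive on the collinearity graph, and by \cite{DisTrans} (exactly as in Lemma \ref{lem_ClasTotSing}(i)) we conclude $\cS\cong\mW(3,3)$.

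I expect the main obstacle to be the bookkeeping in the $\cC_6$ (extraspecial normalizer) case. There $E(p)$ must be the centre of the symplectic-type group, so $|E(p)|=r$ and $G_p$ has tightly controlled order $r^{1+2m}|\Sp_{2m}(r)|$ with $\dim V=r^m$; one then has to verify across \emph{all} admissible triples $(r,m,q)$ that the index $[G:N_G(R)]$ never factors as $(1+s)(1+st)$ with $s$ meeting the constraints above. It is the cross-characteristic arithmetic, rather than any geometric input, that does the work here, and the delicate point is to organise the bounds so that only finitely many $(r,m,q)$ and finitely many $q$ in each family need to be checked, leaving $\PSU_4(2)$ as the unique survivor.
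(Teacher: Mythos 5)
Your overall skeleton is the same as the paper's: $G_p$ is an $r$-local maximal subgroup (Proposition \ref{prop_adjEr}) which is large in the sense of \cite{AlaBur} (Corollary \ref{cor_GltGp2}), parabolics are excluded via the self-centralizing unipotent radical, the survivors are screened through the Aschbacher classes and then killed by the arithmetic of Lemmas \ref{lem_stcond} and \ref{lem_1psbound}; and you correctly locate the exceptional configuration in the torus normalizer $3^3{:}S_4$ of $\PSU_4(2)$ of index $40$, which is exactly the paper's case (c2.2). Nevertheless, your screening step contains a genuine error that would leave infinite families of candidates unexamined.

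The claim that the product-type $\cC_2$ subgroups and the $\cC_3$ subgroups of type $\GL_{n/b}(q^b)$ with $n/b\ge 2$ can be discarded because ``the Fitting subgroup is trivial or an $r_0$-group'' is false in the linear and unitary cases, and for totally singular decompositions in the hyperbolic orthogonal case: the $\GL$/$\GU$ factors carry central tori of order $q-1$ or $q+1$, so the Fitting subgroup of such a maximal subgroup contains a nontrivial cyclic group whose order is divisible by the odd primes dividing $q\mp1$, and the subgroup is then genuinely $r$-local for every such odd $r\ne r_0$ (by maximality, $M=N_G(O_r(M))$ as soon as $O_r(M)\ne\{\id\}$). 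Concretely, the paper must and does treat at length: subgroups of type $\GL_d(q).2$ in $\mathsf{P\Upomega}_{2d}^+(q)$ (its case (i), $r\mid q-1$), of type $\GU_2(q)\wr S_m$ in unitary groups (case (c2.1)), of type $\GL_d(q^2)$ in $\PSL_{2d}(q)$ (case (c3.1)) and of type $\GU_d(q)$ in $\mathsf{P\Upomega}^{\pm}_{2d}(q)$ (case (c3.3)); in each the Fitting subgroup is a cyclic group $[a]$ with odd part dividing $q\mp 1$. These are large, $r$-local, and exist for all $d$ and all admissible $q$, so they are not ``low-dimensional coincidences'' caught by a finite check; excluding them is where most of the paper's work lies (bounding elementary abelian $r$-subgroups via \cite[(10-2)]{GorLyons} against Corollary \ref{cor_GltGp2}, then cyclotomic/XGCD arithmetic in \cite{magma}), and your proposal never touches them. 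Your expectation that $\cC_6$ is the bottleneck is also misplaced: largeness alone reduces $\cC_6$ to the two pairs $(\PSL_3(4),3^2.Q_8)$ and $(\PSU_3(5),3^2.Q_8)$, which die immediately, and the same torus phenomenon forces a few $\cC_5$ cases (e.g.\ $G_p=\PGL_2(2)$, $\PGU_3(2)$) to be checked rather than waved away. Finally, a small point in your endgame: distance-transitivity and \cite{DisTrans} only yield that $\cS$ is classical of order $(3,3)$, i.e.\ $\mW(3,3)$ or $\mQ(4,3)$; one still needs the paper's observation that $\mQ(4,3)$ admits no nontrivial central symmetry to conclude $\cS\cong\mW(3,3)$.
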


\begin{proof}
Suppose that $r\ne r_0$, and write $E=E(p)$.  
The maximal subgroup $G_p$ of $G$ is one of the geometric classes $\cC_1$-$\cC_8$ or is of class $\cS$ by \cite{AscMax}, and we follow the refined classification in \cite{KL} here. If $G_p$ is of class $\cC_1$, then its unipotent radical $R$ is an $r_0$-group, and so $[E,R]=E\cap R= \{ \id \}$ by the fact $r\ne r_0$. We deduce that $E\le C_{G_p}(R)\le R$: a contradiction to $r\ne r_0$. Hence $G_p$ is not a parabolic subgroup. Also, $G_p$ cannot be of class $\cS$, since otherwise it would be almost simple and not local. Therefore, $G$ is of geometric class $\cC_2,\ldots,\cC_7$ or $\cC_8$. Since we have $|G_p|>|G|^{1/2}$ by Corollary \ref{cor_GltGp2}, $G_p$ is a large subgroup of the simple group $G$ in the sense of \cite{AlaBur}. We go over Propositions 4.7 (linear), 4.17 (unitary), 4.22 (symplectic) and 4.23 (orthogonal) of \cite{AlaBur} and examine such large subgroups whose Fitting subgroup is not trivial, nor a $2$-group one by one.\medskip

Suppose that $G_p$ is of class $\cC_2$. Then $G_p$ stabilizes a decomposition $\cD:V=V_1\oplus\cdots\oplus V_{d}$, where $\dim(V_i)$ is a constant $k=\frac{n}{d}$.  First suppose that $G$ is linear. We have $k\in\{2,3\}$ by \cite{AlaBur}. If $d=1$, then $|G_p|=(q-1)^{n-1}\cdot n!$, and $|G|<|G_p|^{2}$ holds only if $(n,q)=(2,5)$. The case $G=\PSL_2(5)\cong A_5$ has been excluded in \cite{Alt}, so assume that $d>1$. If $k=2$ and $d\ge 2$, we use the same argument as in the proof of \cite[Lemma~4.10]{AlaBur} to deduce from $|G|<|G_p|^2$ that
\begin{equation*}
  (q-1)(1-q^{-1}-q^{-2})\gcd(d,q-1)^2<4\gcd(2d,q-1)(1-q^{-1})^4(1-q^{-2})^4.
\end{equation*}
It does not hold for any $(q,d)$ pairs with $d\ge 2$, so $k\ne 2$. When $k=3$, we similarly obtain
\begin{equation*}
  (q-1)(1-q^{-1}-q^{-2})\gcd(d,q-1)^2q^{3d^2}<36\gcd(3d,q-1)(1-q^{-1})^6(1-q^{-2})^6,
\end{equation*}
which does not hold for any $(q,d)$ pairs with $d\ge 2$. Hence $G$ is not linear. We next consider the case $\upkappa\ne0$. By \cite{AlaBur}, there are two possible cases:
\begin{itemize}
  \item[(i)]$k=2$ and $V_1,V_2$ are totally singular;
  \item[(ii)]each $V_i$ is nondegenerate.
\end{itemize} 
For (i), we check that $|G|<|G_p|^2$ holds for the candidates in \cite{AlaBur} only if $\upkappa$ is a hyperbolic quadratic form with $d\ge 4$. By \cite[Proposition~4.2.7]{KL}, $r$ divides $q-1$, and $|G_p|\le \gcd(d,2)|\GU_d(q)|$. By Corollary \ref{cor_GltGp2}, $G_p$ has an elementary abelian $r$-subgroup $H$ such that $|H|\ge(r-1)(s+1)+1$. By \cite[(10-2)]{GorLyons}, we have $|H|\le r^d$. Together with the bound $|\cP|<(1+s)^4$ in Lemma \ref{lem_1psbound} and $r\le q-1$, we deduce that $|G|(q-2)^4<((q-1)^d-1)^4|G_p|$. By the bounds on $|G|$ in \cite[Corollary~4.3]{AlaBur} and \eqref{eqn_GLineq}, we deduce that
\[
 q^{d^2-d}(q-2)^4<8\gcd(2,d)(1-q^{-1})(1-q^{-2})((q-1)^d-1)^4.
\]
It holds for no $q$ if $d\ge 6$, so $d=4$. We then have
\begin{align*}
|G_p|&=\frac{2}{\gcd(2,q-1)^2}|\GL_4(q)|=\frac{1}{\gcd(2,q-1)^2}q^6\Phi_1^4\Phi_2^2\Phi_3\Phi_4,\\
|\cP|&=\frac{1}{2}q^6\frac{q^4-1}{q-1}(q^3+1)=\frac{1}{2}q^6\Phi_2^2\Phi_4\Phi_6.
\end{align*}
Since $s$ is relatively prime to $|\cP|$ and divides $|G_p|$ by Lemma \ref{lem_stcond}, we deduce that $s$ is odd and it divides $h(q)=\Phi_1^4\Phi_3$. There are polynomials $u(x),v(x)\in\mathbb{Z}[x]$ such that $u(q)h(q)+v(q)(|\cP|-1)=2\Phi_3$ by the XGCD command in Magma. It follows that $s\mid q^2+q+1$, but this contradicts the bound $1+s<|\cP|^{1/4}$ in Lemma \ref{lem_1psbound}. We conclude that (i) does not occur. For (ii), we use the conditions that $G_p$ is $r$-local ($r$ odd) and $|G_p|>|G|^{1/2}$ to exclude all the  candidate cases in \cite{AlaBur} with the following possible exceptions:
\begin{itemize}
  \item[(c2.1)]$G$ is unitary, $d\ge 2$ and $k=2$;
  \item[(c2.2)]$G$ is unitary, $d=1$ and $k\ge 3$;
  \item[(c2.3)]$G=\mathsf{P\Upomega}_8^+(2)$, $G_p=\mathsf{\Upomega}_2^-(2)^4.2^4.S_4$.
\end{itemize}
For (c2.3), $s$ divides $\gcd(|\cP|-1,|G_p|)=3$ by Lemma \ref{lem_stcond}, and $1+s>10$ by Lemma \ref{lem_1psbound}. This is impossible, so (c2.3) does not occur. For (c2.2), $|G|<|G_p|^2$ holds only when $q=k=3$, or $q=2$ and $k\in\{3,4,5\}$. When $q=k=3$, $s$ divides $\gcd(|\cP|-1,|G_p|)$ and $|\cP|=63$. It follows that $s=2$ and $t=10$, and it does not satisfy $t\le s^2$. We exclude the cases $(q,k)\in\{(2,3),(2,5)\}$ similarly, and obtain $s=t=3$ for $(q,k)=(2,4)$. By \cite[6.2.1]{FGQ}, $\cS$ is isomorphic to $\mW(3,3)$ or its point-line dual $\mQ(4,3)$. Since $\mQ(4,3)$ has no central symmetry, we have $\cS \cong \mW(3,3)$. It remains to consider (c2.1), in which case we have $G_p=[a].\PSU_m(q)^2.[b].S_2$ by \cite[Proposition~4.2.9]{KL}, where $a=\frac{(q+1)\gcd(q+1,m)}{\gcd(q+1,2m)}$, $[a]$ is a group of order $a$ whose group structure is unspecified, and $b=\gcd(q+1,m)$. By \eqref{eqn_GUineq}, we deduce that
\[
(q+1)\gcd(2m,q+1)<4(1+q^{-1})^3(1-q^{-2})^3(1+q^{-3})^4,
\]
which holds only if $q=2$ or $4$. We exclude the cases $(m,q)=(2,2)$ as in (c2.2), so $(m,q)\ne(2,2)$.  The group $[a]$ is the Fitting subgroup of $G_p$, and $a=q+1$ is an odd prime for $q=2,4$. It follows that $r=q+1$ and $|E|=r$. By Corollary \ref{cor_GltGp2}, $G_p$ has an elementary abelian $r$-group $H$ such that $|H|>(s+1)(r-1)+1$.  By \cite[(10.2)]{GorLyons}, we deduce that $|H|\le r^{2(m-1)+1}$, and so $s+1\le\frac{r^{2m-1}-1}{r-1}$. Together with the bound $|\cP|<(s+1)^4$, we obtain $q^4|G|<((q+1)^{2m-1}-1)^4|G_p|$. This condition is satisfied only if $q=2$ and $m\le 5$, or $q=4$ and $m\le 3$. We exclude those cases in the same way as in (c2.3).  This concludes the $\cC_2$ case.\medskip

Suppose that $G_p$ is of class $\cC_3$, so that $G_p$ is a subgroup of $\Gamma\textup{L}_{n/k}(q^{uk})$ for some integer $k>1$. Let $d:=\frac{n}{k}$.  If $d=1$, then we have $G=\PSL_n(q)$ with $n=2$ or $(n,q)\in\{(3,i): i=2,3,5\}$, or $G=\PSU_3(3)$ by \cite{AlaBur}. By using the expression of $|G_p|$ in \cite[Lemma~4.3.6]{KL}, we deduce that $G=\PSL_2(4)$ or $\PSL_3(2)$ by the bound $|G|<|G_p|^2$.  We thus have $(1+s)(1+st)=6$ or $8$, which does not hold for any $s,t\ge 2$. Assume that $d\ge 2$. If $k\ge 3$, we check the candidates in \cite{AlaBur} and conclude that the bound $|G|<|G_p|^2$ holds only if $G=\PSp_6(2)$ and $G_p=\PSp_2(8).3$, and we exclude it by the fact $G_p$ is not local. Therefore, we have $k=2$. By \cite[Section~4.3]{KL} and \cite{AlaBur}, we have three candidates with $k=2$ and $G_p$ local:
\begin{color}{black}
\begin{itemize}
  \item[(c3.1)]$G=\PSL_{2d}(q)$, and $G_p$ is of type $\GL_d(q^2)$,
  \item[(c3.2)]$G=\PSp_{2d}(q)$ with $d\ge 2$, and $G_p$ is of type $\GU_d(q^2)$,
  \item[(c3.3)]$G=\mathsf{P\Upomega}^{\epsilon}_{2d}(q)$ with $d\ge 4$ and $\epsilon=(-1)^d$, and $G_p$ is of type $\GU_d(q)$.
\end{itemize}
\end{color}
We exclude (c3.2) by the bound $|G|<|G_p|^2$. For (c3.1),  we have $G_p=[a].\PSL_d(q^2).b.2$ by \cite[Proposition~4.3.6]{KL}, where $a=\frac{(q+1)\gcd(q-1,d)}{\gcd(q-1,2d)}$, $b=\frac{\gcd(q^2-1,d)}{\gcd(q-1,d)}$. Its Fitting subgroup is the cyclic subgroup $[a]$ and an odd prime divisor of $a$ divides $q+1$. Since $E$ is mininal normal in $G_p$ and is a subgroup of $[a]$, we deduce that $r$ divides $q+1$ and $|E|=r$.  By the bound \eqref{eqn_GLineq}, we deduce from $|G|<|G_p|^2$ that
\[
  (q-1)\gcd(q-1,2d)(1-q^{-1}-q^{-2})<4(1-q^{-2})^2(1-q^{-4})^2,
\]
which holds only if $q\le 4$. Since $r$ is odd and divides $q+1$, we have $q\in\{2,4\}$ and $r=q+1$. By Corollary \ref{cor_GltGp2}, $G_p$ has an elementary abelian $r$-group $H$ such that $|H|>(s+1)(r-1)+1$.  By \cite[(10.2)]{GorLyons}, we deduce that $|H|\le r^{m}$, and so $s+1\le\frac{r^{m}-1}{r-1}$. Together with the bound $|\cP|<(s+1)^4$, we obtain $q^4|G|<((q+1)^{m}-1)^4|G_p|$. This holds only if $G=\PSL_4(2)$, in which case we deduce that $(s,t)=(17,79)$. It does not satisfy $r\mid t$, cf. Lemma \ref{lem_stcond}, so this case is impossible. For (c3.3), we have $G_p=(q+1)/a.\PSU_d(q).[b\gcd(q+1,d)]$, where $a=\gcd(q+1,3-(-1)^d)$, $b=\gcd(2,d)$ if $q$ is even and $b=1$ otherwise. We similarly deduce that $r\mid(q+1)_{2'}$ and $|E|=r$. By Corollary \ref{cor_GltGp2}, $G_p$ has an elementary abelian $r$-subgroup $H$ such that $|H|\ge(r-1)(s+1)+1$. By \cite[(10-2)]{GorLyons}, we deduce that $|H|\le r^d$. Together with the bound $|\cP|^{1/4}<s+1$ in Lemma \ref{lem_1psbound}, we obtain $|G|(r-1)^4<(r^d-1)^4|G_p|$. By the estimates of $|G|$ and $\PSU_d(q)$ in \cite[Corollary~4.3]{AlaBur} and the fact $r\le q+1$, we obtain
\[
  q^{d^2-d+5}\gcd(q+1,3-(-1)^d)<8(q+1)\gcd(q+1,d)\gcd(2,d)((q+1)^d-1)^4.
\]
It holds only if $d=4$, or $d=5$ and $q\le 4$, or $d\in\{6,7\}$ and $q=2$. For $d\ge 5$, we use the same arguments as in handling (c2.2) to deduce that $(d,q)=(5,2)$. It follows that $r=3$, $G=\mathsf{P\Upomega}_{10}^-(2)$. There is a unique solution $(s,t)=(33,543)$, but $s(t+1)$ does not divide $|G_p|$: a contradiction to the fact that $G_p$ is transitive on the points collinear with $p$. For $d=4$, we have
\begin{align*}
|G_p|&=\frac{2}{\gcd(2,q-1)^2}q^6\prod_{i=1}^4(q^i-(-1)^i)=\frac{2}{\gcd(2,q-1)^2}q^6\Phi_1^2\Phi_2^4\Phi_6\Phi_4,\\
|\cP|&=\frac{1}{2}q^6(q^3-1)\frac{q^4-1}{q+1}=\frac{1}{2}q^6\Phi_1^2\Phi_3\Phi_4.
\end{align*}
Since $s\mid\gcd(|\cP|-1,|G_p|)$ and $|\cP|$ is a multiple of $2q$, $s$ divides $h(q):=\Phi_2^4\Phi_6$. By the XGCD command in Magma, there are polynomials $u(x),v(x)\in\mathbb{Z}[x]$ such that $u(q)h(q)+v(q)(|\cP|-1)=2\Phi_6$. Therefore, $s$ divides $\Phi_6=q^2-q+1$. This leads to $|\cP|<\Phi_6^4$ by Lemma \ref{lem_1psbound}. It holds for no $q$, so this case does not occur. This excludes the $\cC_3$ case.\medskip

The other geometric classes are more easily dealt with.
\begin{itemize}
\item If $G_p$ is of class $\cC_4$, then $G=\mathsf{\Upomega}_{n}^+(2)$ with $n\in\{8,12\}$, $G_p$ is of type $\Sp_2(2)\times\Sp_{n/2}(2)$ by \cite{AlaBur}, and we exclude them by the bound $|G|<|G_p|^2$.
\item If $G_p$ is of class $\cC_5$, by \cite[Section~4.5]{KL} and \cite{AlaBur} we have one of the following $r$-local candidates: (1) $G=\PSL_2(2^k)$ with $k\in\{2,3\}$, $G_p=\PGL_2(2)$, $r=3$; (2) $G=\PSU_3(2^3)$, $G_p=\PGU_3(2)$, $r=3$. We exclude all of them by the  bound $|G|<|G_p|^2$.
\item If $G_p$ is of  class $\cC_6$, $(G,G_p)$  is one of $(\PSL_3(4),3^2.Q_8)$, $(\PSU_3(5),3^2.Q_8)$  by \cite{AlaBur}. In both cases, $s\mid 72$ and $9\mid t$ by Lemma \ref{lem_stcond}. It is routine to check that $(1+s)(1+st)=[G:G_p]$ does not hold for such $(s,t)$ pairs in either case.
\item If $G_p$ is of class $\cC_7$, then $G=\mathsf{\Upomega}_8^+(2)$ and $G_p=\Sp_2(2)\wr S_3$  by \cite{AlaBur}, and we exclude it by the bound $|G|<|G_p|^2$.
\item If $G_p$ is of class $\cC_8$ and appears in \cite[Table~4.8.A]{KL}, then its Fitting subgroup is not trivial or a $2$-group only if $(G,G_p)=(\PSL_3(4),\PSU_3(2))$, and we exclude it by the bound $|G|<|G_p|^2$.
\end{itemize}
This completes the proof of Theorem \ref{thm_Classical}. By the outline in the last paragraph of Section \ref{subsec_prel}, we have now completed the proof of Theorem \ref{thm_EalyOdd}.
\end{proof}

\medskip
\section{The general Ealy problem}\label{secgen}

In this section, we consider the following general Ealy problem: characterize such finite thick finite generalized quadrangles that there is a central symmetry about each point. Our main theorem in this section is the following.
\begin{theorem}[General Ealy problem]\label{Ealygen}
Let $\cS$ be a finite thick generalized quadrangle. If the full group $\wE(u)$ of symmetries about each point $u$ is nontrivial, then $\cS$ is one of $\mW(3,q) $, $\mH(3,q^2)$ and $\mH(4,q^2)$ for a prime power $q$.
\end{theorem}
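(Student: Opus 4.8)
Set $G=\la \wE(u):u\in\cP\ra$. The three target quadrangles are exactly those admitting a nontrivial central symmetry (Section \ref{sec_examples}), so the content is the converse. The plan is to reduce Theorem \ref{Ealygen} to Theorem \ref{thm_EalyOdd} together with Ealy's even-order theorem (recalled at the start of Section \ref{subsec_prel}), and the bridge between them is \emph{point-transitivity of $G$}. Indeed, suppose we have shown that $G$ is transitive on $\cP$. Since $\wE(u^g)=\wE(u)^g$ for every $g\in G$, all the groups $\wE(u)$ are then conjugate in $G$, hence of one common order $n\ge 2$; choosing any prime $r\mid n$ gives $r\mid |\wE(u)|$ for every point $u$. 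If $r$ is odd we invoke Theorem \ref{thm_EalyOdd}, and if $r=2$ we invoke Ealy's theorem, and in either case $\cS$ is one of $\mW(3,q)$, $\mH(3,q^2)$, $\mH(4,q^2)$. Thus the whole problem becomes \emph{geometric}: produce the point-transitivity. I would stress that under the bare hypothesis ``every point is a center'' the families $\{\wE(u)\}$ need not be compatibly conjugate, so the transitivity results (E2), (E3) of Section \ref{subsec_prel} — which were set up relative to a uniform prime $r$ — are not available a priori, and this is exactly what must be recovered synthetically, without CFSG.

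\textbf{Reducing transitivity to hyperbolic lines.} Recall from \eqref{symineq} and the free action of $\wE(u)$ on $\cP\setminus u^\perp$ that for noncollinear $x,y$ the span $H=\{x,y\}^{\perp\perp}$ is $\wE(x)$-invariant and consists of the fixed point $x$ together with $\wE(x)$-orbits, each of length $|\wE(x)|$; in particular $|\wE(x)|$ divides $|H|-1$, and symmetrically $|\wE(y)|$ divides $|H|-1$. The first main step is the implication: \emph{if for every noncollinear pair $x,y$ the group $\la \wE(x),\wE(y)\ra$ is transitive on $H=\{x,y\}^{\perp\perp}$, then $G$ is point-transitive.} Granting the hypothesis, any two noncollinear points lie in a common $G$-orbit, since some element of $\la\wE(x),\wE(y)\ra\le G$ carries $x$ to $y$. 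For two \emph{collinear} points $x,w$ it then suffices to exhibit a point noncollinear with both; a standard count in a GQ of order $(s,t)$ gives, for $x\sim w$ with $x\ne w$,
\[
  |\cP\setminus(x^\perp\cup w^\perp)| \;=\; st(s-1)\;>\;0,
\]
using that $x^\perp\cap w^\perp$ is the $(s+1)$-point line $xw$. Since the collinearity graph of a thick GQ is connected, point-transitivity of $G$ follows.

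\textbf{The main obstacle: transitivity on a hyperbolic line.} It remains to prove that $\la\wE(x),\wE(y)\ra$ acts transitively on $H=\{x,y\}^{\perp\perp}$ for noncollinear $x,y$; this is the heart of the argument and the step I expect to be genuinely hard. The plan is to first establish the sharp form of \eqref{symineq}, namely that \emph{$\wE(x)$ acts regularly on $H\setminus\{x\}$}, i.e. $|H|=|\wE(x)|+1$. Granting this for both $x$ and $y$ forces $|\wE(x)|=|\wE(y)|=|H|-1$ and places the pair $(\wE(x),\wE(y))$ acting on $H$ in the classical rank-one configuration — two point-stabilizers each regular on the complement of their fixed point — from which $2$-transitivity, and in particular transitivity, of $\la\wE(x),\wE(y)\ra$ on $H$ is immediate. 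This is precisely the behaviour of the classical models, where $\la\wE(\la e_1\ra),\wE(\la e_2\ra)\ra=\PSL_2(q)$ acts $2$-transitively on the $q+1$ isotropic points of $\la e_1,e_2\ra_{\F}$ (Section \ref{sec_examples}). The whole difficulty is thus concentrated in the regularity statement $|H|=|\wE(x)|+1$: one must rule out that $\wE(x)$ has more than one orbit on $H\setminus\{x\}$. I would attack this through the interplay of the two symmetry groups on their common span via property (E1), the divisibility constraints $|\wE(x)|\mid(|H|-1)$ and $|\wE(x)|\mid t$, and the action of the normalizer $G_x$ permuting the $\wE(x)$-orbits inside $H$, together with the classical theory of spans and regular pairs of finite generalized quadrangles. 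Once regularity, hence transitivity on every hyperbolic line, is in hand, the reduction of the previous paragraph yields point-transitivity of $G$, and the argument of the first paragraph completes the proof.
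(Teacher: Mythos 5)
Your top-level reduction is sound and in fact coincides with the paper's announced strategy: the paper also obtains point-transitivity of $G$, notes that transitivity makes all the $\wE(u)$ conjugate and hence of a common order, extracts a common prime $r$, and then invokes Theorem \ref{thm_EalyOdd} (odd $r$) or Ealy's even-characteristic theorem ($r=2$). Your counting argument that transitivity on all hyperbolic lines yields point-transitivity (via a point noncollinear with two given collinear points) is also correct. The problem is everything after that.

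The genuine gap is the regularity claim $|\{x,y\}^{\perp\!\perp}|=|\wE(x)|+1$ for \emph{every} noncollinear pair, which you correctly identify as the crux but do not prove; and the tools you propose cannot prove it. The mod-$r$ orbit counting that drives all arguments of this kind (the $\wE(x)$-orbits on $\{x,y\}^{\perp\!\perp}\setminus\{x\}$ are free, so contribute $0 \bmod r$) produces information only when $|\wE(x)|$ and $|\wE(y)|$ share a prime divisor: it is exactly how the paper proves its Lemma \ref{lem_Omegauv}, and that lemma is stated only for two points in a common $\cP_r$. When $|\wE(x)|$ and $|\wE(y)|$ are coprime, the divisibility constraints you list ($|\wE(x)|$ and $|\wE(y)|$ divide $|H|-1$, $|\wE(x)|\mid t$) are mutually compatible with many orbit structures and give no handle at all; this mixed-prime situation is precisely where the paper has to work hardest, and it does so by a completely different route: a minimal counterexample, the existence of linewise stabilizers strictly larger than $\wE(u)$ (Lemma \ref{lem_GbruExist}), nontrivial homologies obtained from a Frobenius-group argument (Lemma \ref{lem_homology}), the unfolding Lemma \ref{lem_Genkey}, the resulting $m_r$-ovoid structure of each orbit $\cP_r$ (Corollary \ref{cor_GlinTrMovoid}), exclusion of subquadrangles via minimality (Lemma \ref{lem_subGQ}), and a final count forcing $|\mathscr{R}|=1$. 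Note also that your intermediate claim is strictly stronger than anything the paper establishes even in the favorable same-prime case: Lemma \ref{lem_Omegauv} only shows that the $\langle\wE(x),\wE(y)\rangle$-orbit of $x$ contains $y$, never that a single $\wE(x)$ is transitive on $\{x,y\}^{\perp\!\perp}\setminus\{x\}$ (the paper is careful to state only the inequality \eqref{symineq}, not equality). Indeed, regularity for all noncollinear pairs would immediately force $|\wE(x)|=|\wE(y)|$ throughout, i.e. it already encodes the uniform-prime conclusion you are trying to reach, so assuming a direct attack on it will succeed amounts to assuming the theorem.
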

This whole section is devoted to the proof of Theorem \ref{Ealygen}. Let $\mS = (\mP,\mL)$ be a minimal counterexample to Theorem \ref{Ealygen} with respect to inclusion. That is, the full group $\vert \wE(x) \vert $ of symmetries about each point $x$ is nontrivial, but $\cS$ is not one of $\mW(3,q) $, $\mH(3,q^2)$ and $\mH(4,q^2)$ for a prime power $q$. Suppose that $\mS$ has parameters $(s,t)$.
Let $G=\la \wE(x):x\in\cP\ra$. For a point $u$,  let $G_{[u]}$ be the linewise stabilizer of $u$ in $G$, i.e.,  the subgroup of $G$ that fixes each line through $u$. Let  $x,y$ be two noncollinear points of $\cS$. A {\em homology} with centers $\{x,y\}$ is an automorphism of $\cS$ that fixes all the lines that contain $x$ or $y$. All the homologies with centers $\{x,y\}$ in $G$ form  a subgroup, which we denote by $G_{[x, y]}$. By definition, we have $G_{[x, y]}=G_{[x]}\cap G_{[y]}$. For a prime $r$, we define
\begin{equation}\label{eqn_Prdef}
  \cP_r=\{y\in\cP:\vert \wE(y) \vert \equiv 0\mod{r}\}.
\end{equation}
Let $\cL_r$ be the set of lines that contain at least two points of $\cP_r$. If $\cP_r$ is not empty, then we call its points {\em $r$-points}. We define
\begin{equation}\label{eqn_Rdef}
  \mathscr{R} = \{r\textup{ prime}:\,\cP_r\ne\emptyset\}.
\end{equation}
We have $\cP=\cup_{r\in \mathscr{R}}\cP_r$, and $\cP_r\ne\cP$ for each $r\in \mathscr{R}$ by our first main theorem and the assumption that $\mS$ is a counterexample to Theorem \ref{Ealygen}.

\begin{lemma}\label{lem_cover}
Let $u,v$ be two noncollinear points, and let $g$ be a nontrivial symmetry with center $u$. Then $v^g$ is in $\{u,v\}^{\perp\!\perp}$.
\end{lemma}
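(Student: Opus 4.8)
The plan is to unwind the definitions and exploit the fact that a symmetry about a point fixes the entire perp of its center. Fix an arbitrary point $w\in\{u,v\}^\perp=u^\perp\cap v^\perp$. By the definition of $\{u,v\}^{\perp\!\perp}$ as the set of points collinear with all of $\{u,v\}^\perp$, it suffices to show that $v^g\sim w$ for every such $w$.

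First I would observe that $w\sim u$, so $w$ lies in $u^\perp$ and is therefore fixed by the symmetry $g$ about $u$ (recall that $g$ fixes every point collinear with its center $u$); that is, $w^g=w$. Next, since $w\in v^\perp$ we have $v\sim w$, and because $g$ is a collineation it preserves the collinearity relation, whence $v^g\sim w^g=w$. As $w$ was an arbitrary point of $\{u,v\}^\perp$, the image $v^g$ is collinear with all points of $\{u,v\}^\perp$, which is precisely the assertion $v^g\in\{u,v\}^{\perp\!\perp}$.

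There is no real obstacle here: the only point to notice is that the definition of a central symmetry guarantees the pointwise fixing of $u^\perp$, and since $\{u,v\}^\perp\subseteq u^\perp$, every point $w$ of $\{u,v\}^\perp$ is available as a fixed point through which the collinearity $v\sim w$ can be transported to $v^g\sim w$. Nontriviality of $g$ is not even needed for the inclusion itself; it will matter only in the applications of this lemma that follow.
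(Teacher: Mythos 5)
Your proof is correct and is essentially the paper's own argument: the paper also fixes the points of $\{u,v\}^{\perp}$ (since they lie in $u^{\perp}$) and transports the collinearity $v\sim z_i$ through the fixed point $z_i$, merely phrasing it via the image of the line through $v$ and $z_i$ rather than via preservation of collinearity directly. Your closing observation that nontriviality of $g$ is not needed for the inclusion is also accurate.
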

\begin{proof}
Let $\ell_0,\ell_1,\ldots,\ell_t$ be the lines incident with $u$. For $0\le i\le t$, let $z_i$ be the point on $\ell_i$ that is incident with $v$ and let $\ell_i'$ be the line incident with both $v$ and $z_i$. We have $\{u,v\}^{\perp}=\{z_0,z_1,\ldots,z_t\}$. The element $g$ maps $\ell_i'$ to a different line  $\ell_i''$ through $z_i$, and thus $v^g$ is collinear with $z_i$  for each $i$. This completes the proof.
\end{proof}

\begin{lemma}\label{lem_Omegauv}
For two noncollinear points $u,v$ in $\mP_r$, the $\langle \wE(u), \wE(v) \rangle$-orbit that contains $u$ also contains $v$.
\end{lemma}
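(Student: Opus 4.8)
The plan is to work inside the hyperbolic line $L=\{u,v\}^{\perp\!\perp}$, which contains both $u$ and $v$ since $u\not\sim v$, and whose points are pairwise noncollinear (a standard property of hyperbolic lines). I would first record that a symmetry about $u$ fixes every point of $u^\perp$, hence fixes $\{u,v\}^\perp$ pointwise, and therefore stabilizes $L=\{u,v\}^{\perp\!\perp}$ setwise, because $L$ is exactly the set of points collinear with all of $\{u,v\}^\perp$. The same holds for $\wE(v)$. Consequently $\langle\wE(u),\wE(v)\rangle$ preserves $L$, and every orbit of (a subgroup of) this group that meets $L$ is contained in $L$. Note that this already refines Lemma \ref{lem_cover}, which becomes a weak consequence of the setwise stabilization of $L$.

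The crucial point is to exploit the hypothesis $u,v\in\mP_r$. Since $r\mid|\wE(u)|$ and $r\mid|\wE(v)|$, Cauchy's theorem supplies elements $g\in\wE(u)$ and $h\in\wE(v)$ of order exactly the prime $r$. Because $\wE(u)$ acts freely on $\cP\setminus u^\perp$ and the points of $L$ other than $u$ are all noncollinear with $u$, the element $g$ fixes $u$ but moves every other point of $L$; as $\langle g\rangle$ has prime order $r$, each $g$-orbit in $L\setminus\{u\}$ has size exactly $r$. Thus $u$ is the \emph{unique} fixed point of $g$ in $L$, and symmetrically $v$ is the unique fixed point of $h$ in $L$.

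Now I would run a counting argument modulo $r$. Set $H_0=\langle g,h\rangle$, a subgroup of $\langle\wE(u),\wE(v)\rangle$, and let $O$ be the $H_0$-orbit of $u$; by the first paragraph $O\subseteq L$. Since $g$ fixes exactly the point $u$ of $O$ while partitioning $O\setminus\{u\}$ into $g$-orbits of size $r$, we get $|O|\equiv 1\pmod r$. Suppose, for contradiction, that $v\notin O$. Then $h$ has no fixed point in $O$ (its only fixed point of $L$ is $v$), and $h$ does not act trivially on $O$ (otherwise it would fix $u\in O$); hence $h$ partitions $O$ into orbits of size $r$, giving $|O|\equiv 0\pmod r$. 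This contradicts $|O|\equiv 1\pmod r$, so $v\in O$. As $H_0\le\langle\wE(u),\wE(v)\rangle$, the point $v$ lies in the $\langle\wE(u),\wE(v)\rangle$-orbit of $u$, which is exactly the assertion.

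The argument is short once the pieces are assembled, and I expect the main obstacle to lie in the \emph{setup} rather than the counting: one must verify carefully that $g$ (resp.\ $h$) has a unique fixed point on $L$ and that all its remaining $L$-orbits have full size $r$. This is precisely where both the freeness of the action on $\cP\setminus u^\perp$ and the primality of $r$ (hence the membership $u,v\in\mP_r$, rather than mere nontriviality of $\wE(u)$, $\wE(v)$) are indispensable: the two congruences $|O|\equiv 1$ and $|O|\equiv 0$ modulo the prime $r$ are what force $v$ into the orbit. It is worth noting that this approach does not use the inequality \eqref{symineq} at all, relying instead only on the free action and the mutual noncollinearity of points on $L$.
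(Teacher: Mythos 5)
Your proof is correct and follows essentially the same route as the paper's: both confine the relevant orbit of $u$ to the hyperbolic line $\{u,v\}^{\perp\!\perp}$ and then force $v$ into that orbit by a double count modulo $r$, using the free action of the symmetries about $u$ (resp.\ $v$) on the points not collinear with $u$ (resp.\ $v$). Your only cosmetic deviation is to replace the full groups $\wE(u),\wE(v)$ by single elements of order $r$ supplied by Cauchy's theorem and to phrase the final step as a contradiction, whereas the paper gets the congruences $|\Upomega|\equiv 1\pmod{r}$ and $|\Upomega|\equiv [\![v\in\Upomega]\!]\pmod{r}$ directly from the orbits of the full groups.
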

\begin{proof}
We write $I(u,v)=\langle \wE(u), \wE(v) \rangle$ in this proof. Let $\Upomega$ be the  $I(u,v)$-orbit that contains $u$. The elements of $\Upomega$ are collinear with those in $\{u,v\}^\perp$, so $\Upomega$ is a subset of $\{u,v\}^{\perp\!\perp}$. In particular, no two points of $\Upomega$ are collinear. Since $\wE(u)$ acts freely on $\mP\setminus u^\perp$, we deduce that $|\Upomega|\equiv 1\pmod{r}$ by considering the $\wE(u)$-orbits on $\Upomega$.  Similarly, we have $|\Upomega|\equiv [\![v\in\Upomega]\!]\pmod{r}$ (where $[\![ \cdot ]\!]$ is the Iverson bracket) by considering the $\wE(v)$-orbits on $\Upomega$. It follows that $v$ is in  $\Upomega$.  This completes the proof.
\end{proof}

\begin{lemma}\label{lemtransr}
Let $u$ be any point in $\mS$, and let $r$ be a prime such that $(\cP_r\cap u^\perp)\setminus \{ u\}\ne\emptyset$. Then $G_u$ acts transitively on the $r$-points in $u^\perp \setminus \{ u\}$.
\end{lemma}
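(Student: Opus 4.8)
The plan is to show that any two $r$-points $x,y$ of $u^\perp\setminus\{u\}$ lie in a common $G_u$-orbit. The starting observation is that for every point $z\in u^\perp$ the full symmetry group $\wE(z)$ fixes $u$ (a symmetry about $z$ fixes every point collinear with $z$, and $u\sim z$), so $\wE(z)\le G_u$ for all $z\in u^\perp$. Since $\cS$ has no triangles, two points of $u^\perp\setminus\{u\}$ are collinear precisely when they lie on a common line through $u$. Thus it suffices to treat two cases: $x$ and $y$ on distinct lines through $u$ (so $x\not\sim y$), and $x,y$ on a common line $\ell\ni u$.

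The different-line case is immediate from Lemma~\ref{lem_Omegauv}. If $x\not\sim y$, then $x,y$ are noncollinear $r$-points, so the $\la\wE(x),\wE(y)\ra$-orbit of $x$ contains $y$; as both $\wE(x)$ and $\wE(y)$ lie in $G_u$ by the observation above, the element realizing $x\mapsto y$ lies in $G_u$, as wanted.

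For the same-line case I would reduce to the previous one by producing a third $r$-point $z\in u^\perp\setminus\{u\}$ lying on a line through $u$ different from $\ell$. Given such a $z$ we have $z\not\sim x$ and $z\not\sim y$, and two applications of the different-line case give $x\mapsto z\mapsto y$ inside $G_u$. To locate $z$, first note that the $r$-points cannot all lie on $\ell$: the set $\cP_r$ is $G$-invariant (conjugation preserves $|\wE(\cdot)|$), and if $\cP_r\subseteq\ell$ then, picking any two of the (at least two) $r$-points on $\ell$, their line $\ell$ is preserved by every element of $G$, i.e.\ $G\le G_\ell$. This is impossible: for any point $z'\notin\ell$ and any nontrivial $g\in\wE(z')$, if $w$ is a point of $\ell$ not collinear with $z'$, then $g(w)\ne w$ by the free action of $\wE(z')$ off $z'^\perp$, while $g(w)\in\{z',w\}^{\perp\!\perp}$ by Lemma~\ref{lem_cover}; since $\{z',w\}^{\perp\!\perp}$ consists of pairwise noncollinear points it meets the line $\ell$ only in $w$, so $g(w)\notin\ell$ and $g(\ell)\ne\ell$, contradicting $G\le G_\ell$. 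Hence there is an $r$-point $v\notin\ell$; if $v\sim u$ we are done, and otherwise $v\not\sim u$.

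The main obstacle is the remaining configuration, in which every $r$-point collinear with $u$ lies on $\ell$ while some $r$-point $v$ is not collinear with $u$. Here I would exploit Lemma~\ref{lem_Omegauv} once more: choosing $v$ with $v\not\sim x$, the $\la\wE(x),\wE(v)\ra$-orbit of $x$ is a set of at least $r+1$ pairwise noncollinear $r$-points contained in $\{x,v\}^{\perp\!\perp}$, one of which is $x\sim u$. The crux is to show that at least one further point of this orbit is collinear with $u$; such a point, being noncollinear with $x$, then lies on a line through $u$ distinct from $\ell$ and serves as the required $z$. Establishing that $\{x,v\}^{\perp\!\perp}$ meets $u^\perp$ in more than the single point $x$ (for a suitable choice of the external $r$-point $v$, or after averaging over such $v$) is the delicate geometric point on which the argument turns; once it is in hand, the reduction above closes the same-line case and hence the lemma.
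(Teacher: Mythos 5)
Your treatment of the case where the two $r$-points lie on distinct lines through $u$ is exactly the paper's, and your reduction strategy for the same-line case (produce a third $r$-point of $u^\perp$ off $\ell$, then apply the noncollinear case twice) is also the paper's. The gap is the one you concede: you cannot produce that third point in the residual configuration where every $r$-point of $u^\perp$ lies on $\ell$ while some $r$-point $v$ is not collinear with $u$. Your proposed crux --- that the orbit $x^{\la\wE(x),\wE(v)\ra}\subseteq\{x,v\}^{\perp\!\perp}$ meets $u^\perp$ in a second point --- is not something you can extract from the lemmas you invoke: any such second point is noncollinear with $x$, hence would itself be an $r$-point of $u^\perp$ off $\ell$, so the claim amounts to a restatement of ``the configuration is impossible'' rather than a reduction of it. Moreover, the intersection property you want is not even true in the classical quadrangles, which satisfy every symmetry hypothesis in sight: in $\mW(3,q)$, for $x\sim u$ and $v\not\sim u$, the hyperbolic line $\{x,v\}^{\perp\!\perp}$ is a line of $\PG(3,q)$ not contained in the plane $u^\perp$, so it meets $u^\perp$ exactly in $\{x\}$. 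Hence it cannot follow from general facts about quadrangles with symmetries; it could only emerge from a contradiction specific to the configuration, and you supply no mechanism for that (``averaging over $v$'' has no visible content).

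The idea you are missing is that in Section \ref{secgen} the standing hypothesis (Theorem \ref{Ealygen}) is that \emph{every} point of $\cS$ carries a nontrivial symmetry, not only the $r$-points, and this lets you manufacture the third $r$-point instead of hunting for a pre-existing one. Take \emph{any} point $w\in u^\perp\setminus\ell$, $w\ne u$, and any nontrivial $h\in\wE(w)$. Then $h$ fixes $u$ (since $u\sim w$), and $x_1:=x^h$ is again an $r$-point because $\wE(x_1)=\wE(x)^h$. Since $x\not\sim w$ (a collinearity would create a triangle on $u,x,w$), the element $h$ moves $x$, and by Lemma \ref{lem_cover} the point $x_1$ lies in $\{w,x\}^{\perp\!\perp}$, whose points are pairwise noncollinear; hence $x_1\not\sim x$ and $x_1\notin\ell$. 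Now $x_1$ is an $r$-point of $u^\perp$ noncollinear with both $x$ and $y$ (collinearity with $y$ would again force a triangle through $u$), so $h$ composed with the element of $G_u$ furnished by your different-line case sends $x$ to $x_1$ to $y$. This closes the same-line case in a few lines, with no case distinction on where the $r$-points sit, and it also renders your preliminary argument that $\cP_r\not\subseteq\ell$ unnecessary.
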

\begin{proof}
Let $v,w$ be two distinct $r$-points in $u^\perp \setminus \{ u\}$. If $v,w$ are not collinear $r$-points, then $\langle \wE(v), \wE(w) \rangle$ has an element $g$ which sends $v$ to $w$ by Lemma \ref{lem_Omegauv}. It is clear that $g$ stabilizes $u$, i.e., $g\in G_u$. Suppose that $v,w$ are on the same line $\ell$ through $u$. Take a point $x$ of $u^\perp$ which is not on $\ell$. For any nontrivial element $h\in\widetilde{E}(x)$, the line $\ell^h$ is distinct from $\ell$ and contains the $r$-point $v_1=v^h$. We can find $g_1,g_2\in G_u$ such that $v^{g_1}=v_1$ and $v_1^{g_2}=w$ as in the previous case, so that $g_1g_2$ is the desired element of $G_u$ that maps $u$ to $w$. This completes the proof.
\end{proof}

In the next lemma, We prove some properties of $\mP_r$ and $\mL_r$ that we shall use.

\begin{lemma}\label{lem_PrProp}
Let $r$ be a prime such that $\cP_r\ne\emptyset$, where $\cP_r$ is as in \eqref{eqn_Prdef}.
\begin{enumerate}
  \item[(1)]$\cP_r$ has size at least $3$ and is not contained in a line.
  \item[(2)]$G$ is transitive on $\cP_r$; if furthermore $\cL_r\ne\emptyset$, then $G$ is transitive on $\cL_r$, and $G_\ell$ is $2$-transitive on $\cP_r\cap\ell$ for each line $\ell\in\cL_r$.
  \item[(3)]There is a point $v\in\cP\setminus\cP_r$ which is not collinear with all points of $\cP_r$.
  \item[(4)]If $\cL_r\ne\emptyset$, then there are at least two lines in $\cL_r$ incident with any point of $\cP_r$.
\end{enumerate}
\end{lemma}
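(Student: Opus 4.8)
The plan is to use throughout that $\cP_r$ (and hence $\cL_r$) is $G$-invariant: since $\wE(x^g)=\wE(x)^g$, conjugation preserves the order of the symmetry group, so every $g\in G$ carries $r$-points to $r$-points. Consequently $G$ permutes $\cP_r$ and $\cL_r$, which is what makes the transitivity assertions meaningful. For (1) I would first manufacture a non-collinear pair of $r$-points. Fix $u\in\cP_r$ and a point $x\not\sim u$ (which exists by thickness); for a nontrivial $h\in\wE(x)$, Lemma \ref{lem_cover} gives $u^h\in\{x,u\}^{\perp\perp}$, and since a hyperbolic line consists of pairwise non-collinear points while $u^h\ne u$ (free action), the conjugate $u':=u^h$ is an $r$-point with $u'\not\sim u$. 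A third $r$-point comes from applying a nontrivial $g\in\wE(u)$ to $u'$: as $u'\not\sim u$, the point $u'^{g}\in\{u,u'\}^{\perp\perp}$ differs from $u'$ and also from $u$ (the latter because $g$ fixes $u$). Hence $|\cP_r|\ge 3$, and the non-collinear pair $u,u'$ shows $\cP_r$ is not contained in a line.

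For (2), the non-collinear case of point-transitivity is immediate from Lemma \ref{lem_Omegauv}. I expect the collinear case to be the main obstacle, and I would reduce it to the claim that for collinear $r$-points $u\sim w$ there is an $r$-point $v$ with $v\not\sim u$ and $v\not\sim w$ (then $u,v$ and $v,w$ are non-collinear $r$-point pairs, so $u,v,w$ share one $G$-orbit by Lemma \ref{lem_Omegauv}). To prove the claim, assume instead $\cP_r\subseteq u^\perp\cup w^\perp$; choose a point $p$ non-collinear with both $u$ and $w$ (an easy count gives $st(s-1)>0$ such points) and a nontrivial $h\in\wE(p)$. Then $u^h$ is an $r$-point with $u^h\not\sim u$, so by assumption $u^h\sim w$; thus $u$ and $u^h$ are non-collinear points of $\{p,u\}^{\perp\perp}$ both collinear with $w$, giving $w\in\{u,u^h\}^\perp$. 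Since $\{p,u\}^\perp\subseteq\{u,u^h\}^\perp$ and both sets have size $t+1$, they coincide, forcing $w\in\{p,u\}^\perp$, i.e.\ $w\sim p$, a contradiction. With point-transitivity established, the rest of (2) is organizational: Lemma \ref{lemtransr} makes $G_u$ transitive on the $r$-points of $u^\perp\setminus\{u\}$ and hence on the $\cL_r$-lines through $u$; combining this with point-transitivity yields transitivity on incident (point, line) pairs, whence $G_\ell$ is transitive on $\cP_r\cap\ell$. Finally, the standard principle that the stabilizer of a block of a transitive action acts transitively on that block, applied to the partition of the $r$-points of $u^\perp\setminus\{u\}$ according to the line through $u$ on which they lie, upgrades this to the desired $2$-transitivity.

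For (3), I would argue by contradiction: if every point outside $\cP_r$ were collinear with all of $\cP_r$, then equivalently every point non-collinear with some $r$-point would itself be an $r$-point. Fixing $u\in\cP_r$, this turns all $s^2t$ points of $\cP\setminus u^\perp$ into $r$-points, so a non-$r$-point $v$ (which exists because $\cP_r\ne\cP$) would have to be collinear with every one of them; but then $s^2t\le|v^\perp\setminus\{v\}|=s(t+1)$, which fails for $s,t\ge 2$. For (4), transitivity from (2) furnishes at least one line $\ell\in\cL_r$ through a given $u\in\cP_r$; I then pick $x\sim u$ with $x\notin\ell$ and a nontrivial $h\in\wE(x)$. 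Such $h$ fixes $u$ but moves each point $z'\in\ell\setminus\{u\}$ (these satisfy $z'\not\sim x$), and its image lies on the hyperbolic line $\{x,z'\}^{\perp\perp}$, so $z'^h\not\sim z'$ cannot lie on $\ell$; hence $\ell^h\ne\ell$ is a second line of $\cL_r$ through $u$. The only genuinely delicate step is the collinear case of (2); the remaining arguments are either direct invocations of Lemmas \ref{lem_cover}, \ref{lem_Omegauv}, \ref{lemtransr} or elementary counting in the quadrangle.
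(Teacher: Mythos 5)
Your proposal is correct, and its skeleton coincides with the paper's: part (1) via Lemma \ref{lem_cover} and the free action of $\wE(x)$ off $x^\perp$, the non-collinear case of (2) via Lemma \ref{lem_Omegauv}, the line-transitivity statements via Lemma \ref{lemtransr}, and part (4) via conjugating a given line of $\cL_r$ by a symmetry whose center is collinear with $u$ but off that line. Two of your sub-arguments, however, take a genuinely different route. For collinear $r$-points $u\sim w$ in (2), the paper disposes of the case in one line: it picks a third point $z$ on the line $uw$ and applies Lemma \ref{lemtransr} with center $z$ (that lemma does not require $z\in\cP_r$, only that $z^\perp\setminus\{z\}$ contain $r$-points); you instead prove that some $r$-point is non-collinear with both $u$ and $w$ --- via the count $st(s-1)>0$ of common non-neighbours and the coincidence $\{p,u\}^\perp=\{u,u^h\}^\perp$ of two perp-sets of size $t+1$ --- and then apply Lemma \ref{lem_Omegauv} twice; this is longer but self-contained, and it does not lean on the collinear case already built into Lemma \ref{lemtransr}. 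For (3), the paper bounds both $|\cP_r|$ and $|\cP\setminus\cP_r|$ by $t+1$, which tacitly requires each of the two sets to contain a non-collinear pair (for $\cP\setminus\cP_r$ this needs a small extra argument the paper omits); your version --- the negation forces all $s^2t$ points of $\cP\setminus u^\perp$ into $\cP_r$, so a non-$r$-point $v$ would satisfy $s^2t\le s(t+1)$, impossible for $s,t\ge 2$ --- sidesteps that subtlety and is arguably the cleaner argument. Finally, your flag-transitivity plus block-stabilizer derivation of the $2$-transitivity of $G_\ell$ on $\cP_r\cap\ell$ is a correct unpacking of what the paper compresses into ``follows from Lemma \ref{lemtransr}''; it could be shortened by observing that any $g\in G_z$ ($z\in\ell$) carrying one point of $\ell$ to another automatically stabilizes $\ell$, since $\ell$ is the line joining $z$ to either point.
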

\begin{proof}
(1) Take a point $u\in \cP_r$. For a point $x$ not collinear with $u$,  $u^{\wE(x)}$ has size at least two and consists of noncollinear points. Since its elements are $r$-points, it follows that $\mP_r$ is not contained in a line. If $\cP_r=\{u,v\}$ and has size $2$, then take a point $x$ collinear with $v$ but not $u$, and we derive a contradiction by considering the size of $u^{\wE(x)}$. Hence we have $|\cP_r|\ge 3$.

(2) Take two points $v,w$ of $\cP_r$. If $v,w$ are not collinear, there is $g\in \la\wE(v),\wE(w)\ra$ such that $v^g=w$ by Lemma \ref{lem_Omegauv}.  If $v,w$ are on the same line $\ell$, then there is $g\in G_u$ such that $v^g=w$ for a point $u\in\ell\setminus\{v,w\}$ by Lemma \ref{lemtransr}. This proves the first claim.  Suppose that $\cL_r\ne\emptyset$, and take a point $u\in \cP_r$.  Then $G_u$ is transitive on the set of lines of $\cL_r$ incident with $u$ by Lemma \ref{lemtransr}, and so $G$ is transitive on $\cL_r$ by the first claim. The last claim follows from Lemma \ref{lemtransr}.

(3) Let $Y=\cP\setminus\cP_r$, and set $c=|\cP_r|$, $d=|Y|$. Suppose to the contrary that all points of $\cP_r$ are collinear with all points of $Y$. We have $c,d\ge 3$ by (1), so $c\le |Y^\perp|\le t+1$ and similarly $d\le t+1$. It follows that $(1+s)(1+st) = \vert \mP_r \vert + \vert Y \vert \le 2(t+1)$ which is a contradiction to $s>1$.

(4) Suppose that $\cL_r\ne\emptyset$, and take $u\in\cP_r$. There is at least one line $\ell\in\cL_r$ through $u$ by the fact that $G$ is transitive on $\mP_r$. For a point $v\in u^\perp\setminus\ell$, a nontrivial element of $\wE(v)$ maps $\ell$ to a distinct line in $\cL_r$ that is incident with $u$.  This completes the proof.
\end{proof}

We next establish the existence of certain automorphisms of $\cS$. We recall that $G_{[u]}$ is the linewise stabilizer of the point $u$, and a homology with centers $\{x,y\}$ is an element of $G_{[x]}\cap G_{[y]}$ for two noncollinear points $x,y$.
\begin{lemma}\label{lem_GbruExist}
For each point $u$ of $\cS$, $\wE(u)$ is a proper subgroup of $G_{[u]}$.
\end{lemma}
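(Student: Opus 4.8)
First I would record the trivial inclusion together with a clean description of both groups. A symmetry about $u$ fixes $u^\perp$ pointwise, hence fixes every line through $u$; thus $\wE(u)\le G_{[u]}$. In fact $\wE(u)$ is exactly the pointwise stabilizer of $u^\perp$ in $G$, whereas $G_{[u]}$ is the stabilizer of every line through $u$ (so $G_{[u]}\le G_u$). Hence the statement is equivalent to producing an element $g\in G$ that fixes each line through $u$ but moves some point of $u^\perp\setminus\{u\}$.

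The engine of the proof is a carefully chosen noncollinear pair together with one geometric observation. Fix a prime $r$ with $r\mid|\wE(u)|$, so $u\in\cP_r$, and choose $v\in\cP_r$ with $v\not\sim u$ (such $v$ exists because $\cP_r$ is not contained in a line, cf.\ Lemma~\ref{lem_PrProp}). Put $H=\la\wE(u),\wE(v)\ra$ and $\cO=\{u,v\}^{\perp\!\perp}$. Since $u\not\sim v$ we have $|\{u,v\}^\perp|=t+1$, its points are pairwise noncollinear, and each is collinear with $u$; therefore they lie one on each of the $t+1$ lines through $u$, so the map sending a line through $u$ to its unique point of $\{u,v\}^\perp$ is a bijection. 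Now $\wE(u)$ and $\wE(v)$ both fix $\{u,v\}^\perp=u^\perp\cap v^\perp$ pointwise, hence so does $H$. Consequently every $h\in H_u$ fixes $u$ together with all of $\{u,v\}^\perp$, and by the bijection $h$ fixes every line through $u$; that is, $H_u\le G_{[u]}$. As $\wE(u)\le H_u$ as well, it now suffices to prove the strict inequality $\wE(u)\neq H_u$.

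To analyse $H_u$ I would exploit the rank-$1$ action of $H$ on $\cO$. By Lemma~\ref{lem_Omegauv} the orbit $\Omega=u^{H}$ contains $v$, so $H$ is transitive on $\Omega$, while $\wE(u)$ fixes $u$ and acts semiregularly on $\Omega\setminus\{u\}$ (it acts freely on $\cP\setminus u^\perp\supseteq\Omega\setminus\{u\}$). If $H_u=\wE(u)$, then every nontrivial element of $H_u$ fixes only $u$ in $\Omega$, so $H$ is a Frobenius group with complement $\wE(u)$; in view of the bound $|\cO|\ge|\wE(u)|+1$ of \eqref{symineq}, this pins $H$ down to the sharply transitive, $\PSL_2$-type configuration realised by the classical quadrangles of Section~\ref{sec_examples}. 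Excluding this is the crux of the argument: I would argue that if $H_u=\wE(u)$ held for every such pair $(u,v)$, the induced structure on each hyperbolic line would be exactly that of a classical quadrangle, contradicting the standing hypothesis that $\cS$ is a \emph{counterexample} to Theorem~\ref{Ealygen}. The main obstacle is precisely this separation of the classical from the non-classical case at the level of a single hyperbolic line; the inclusion $H_u\le G_{[u]}$ and the reduction to strictness are formal, but breaking the uniform Frobenius pattern is where the real work lies, and this is where the fact that $\cP_r\neq\cP$ for each $r$ (so that at least two primes occur in $\mathscr R$) must be brought to bear.
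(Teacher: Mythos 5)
Your reductions are sound as far as they go: $\wE(u)\le H_u\le G_{[u]}$ for $H=\la \wE(u),\wE(v)\ra$, and if $H_u=\wE(u)$ then $H$ acts on $\Omega=u^H$ as a Frobenius group with complement $\wE(u)$. But the proof stops exactly where the lemma's content lies: you never exclude this Frobenius configuration, and the route you sketch for doing so cannot work. The counterexample hypothesis on $\cS$ is global; nothing in the paper, and nothing you prove, converts a Frobenius action of $\la\wE(u),\wE(v)\ra$ on a single hyperbolic line into the statement that $\cS$ is classical, so no contradiction is available. Worse, your identification of the Frobenius case with ``the $\PSL_2$-type configuration realised by the classical quadrangles'' is backwards: in $\mW(3,q)$, $\mH(3,q^2)$ and $\mH(4,q^2)$ the group $\la\wE(u),\wE(v)\ra\cong\PSL_2(q)$ acts $2$-transitively (not as a Frobenius group) on the hyperbolic line, and the point stabilizer is a Borel subgroup of order $q(q-1)/\gcd(2,q-1)$ \emph{properly} containing $\wE(u)$ --- precisely what the paper records at the end of Section \ref{sec_examples}. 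So the classical examples satisfy the lemma's conclusion rather than exemplifying the case you must rule out, and the dichotomy you set up has no classical side to collapse onto. A Frobenius action with complement $\wE(u)$ is not excluded by \eqref{symineq} or by any information carried by the two groups $\wE(u)$, $\wE(v)$ alone.

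The paper's proof closes this gap by enlarging the generating set and counting. It works with all of $X=\{u,v\}^{\perp\!\perp}$, setting $X_r=X\cap\cP_r$ and $A=\la\wE(x):x\in X\ra$, with $m=|\wE(x)|$ constant on $\cP_r$. Every nontrivial symmetry $\beta$ with center $x\in X\setminus\{u\}$ fixes $\{u,v\}^\perp$ pointwise and maps $u$ to a point of $X_r\setminus\{u\}$ (Lemma \ref{lem_cover} together with invariance of $\cP_r$); there are at least $(|X_r|-1)(m-1)+(|X|-|X_r|)$ such symmetries --- here the hypothesis that \emph{every} point of $\cS$ carries a nontrivial symmetry enters --- but only $|X_r|-1$ possible images. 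Since two symmetries with the same center and the same image of $u$ must coincide, for $m>2$ (and for $m=2$, using $|\wE(y)|\ge 3$ for $y\in X\setminus X_2$) the pigeonhole principle yields $\beta\ne\beta'$ with distinct centers and $u^\beta=u^{\beta'}$. Then $\beta'\beta^{-1}$ is nontrivial, fixes $u$ and each point of $\{u,v\}^\perp$, hence lies in $G_{[u]}$, and by the point-line dual of \cite[8.1.3]{FGQ} it is not a symmetry about any point, so it is not in $\wE(u)$. This counting over the whole hyperbolic line is the missing idea; with only $\wE(u)$ and $\wE(v)$ in hand, the Frobenius alternative you reduced to is genuinely consistent and cannot be refuted.
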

\begin{proof}
Suppose that $u$ is an $r$-point for the prime $r$. By Lemma \ref{lem_PrProp}, there is a point $v\in \cP_r$ which  is not collinear with $u$, and $m=|\wE(x)|$ is a constant for $x\in \cP_r$. Let $X=\{u,v\}^{\perp\!\perp}$ (and note that this set consists of noncollinear points). Let $X_r=\cP_r\cap X$ and $A=\la\wE(x):x\in X\ra$. By Lemma \ref{lem_Omegauv}, we deduce that $A$ is transitive on $X_r$. There are at least $(|X_r|-1)\cdot(m-1)+(|X|-|X_r|)\cdot 1$ nontrivial point-symmetries in $A\setminus\wE(u)$ (where the last term is the trivial lower bound for the number of nontrivial symmetries with center in $X \setminus X_r$), and each maps $u$ to a point of $X_r\setminus\{u\}$.
If $m>2$, then
there are at least two point-symmetries $\beta,\beta'$ with distinct centers in $X\setminus\{u\}$ such that $u^\beta=u^{\beta'}$. By the point-line dual of \cite[(8.1.3)]{FGQ}, $\beta'\beta^{-1}$ is not a point-symmetry. It stabilizes $u$ and each point in $\{u,v\}^\perp$, so it lies in $G_{[u]}$. This establishes the claim when $m>2$.

It remains to consider the case $m=2$, i.e., $r=2$ and $|\wE(x)|=2$ for $x\in \cP_2$. Let $X$, $A$ and $X_2$ be as in the previous paragraph. Then $\vert \widetilde{E}(y) \vert > 2$ for $y \in X \setminus X_2$, and the claim follows by the same arguments. This completes the proof.

\end{proof}

\begin{lemma}\label{lem_homology}
Let $x,y$ be two noncollinear points, where $x$ is an $r$-point and $y$ is an $r'$-point with $r\ne r'$. Then there is a nontrivial homology with both centers in $\{x,y\}^{\perp\!\perp}$.
\end{lemma}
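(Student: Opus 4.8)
The plan is to reduce the existence of a homology to a fixed-point statement on the hyperbolic line and then to exploit the coprimality $r\neq r'$. Write $H=\{x,y\}^{\perp\!\perp}$ and $H^{\perp}=\{x,y\}^{\perp}$. For every $z\in H$ the group $\widetilde{E}(z)$ fixes $H^{\perp}$ pointwise (since $H^{\perp}\subseteq z^{\perp}$) and preserves $H=(H^{\perp})^{\perp}$, acting on $H$ with the single fixed point $z$ because $\widetilde{E}(z)$ is free on $\cP\setminus z^{\perp}$. First I would record the key geometric observation: for any $a\in H$ the $t+1$ points of $H^{\perp}$ (each collinear with $a$, as $a\in H=(H^{\perp})^{\perp}$) lie on $t+1$ distinct lines through $a$, and hence meet every line through $a$ exactly once; consequently any automorphism fixing $a$ together with $H^{\perp}$ pointwise fixes every line through $a$. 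Thus, for noncollinear $a,b\in H$, a nonidentity element of $G$ fixing $a$, $b$ and $H^{\perp}$ pointwise is precisely a homology with centers $\{a,b\}$. Since the pointwise stabilizer $L:=G_{[H^{\perp}]}$ contains all the $\widetilde{E}(z)$ with $z\in H$ and fixes $H^{\perp}$ pointwise, it suffices to produce a nonidentity element of $L$ having at least two fixed points on $H$.

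I would then argue by contradiction, assuming no such element exists; equivalently, $L$ acts on $H$ so that every nonidentity element fixes at most one point, i.e.\ with pairwise trivially intersecting point stabilizers. Fix $\sigma\in\widetilde{E}(x)$ of order $r$ and $\tau\in\widetilde{E}(y)$ of order $r'$, both lying in $L$, and note $|H|\equiv 1\pmod r$ and $|H|\equiv 1\pmod{r'}$ (from the free actions of $\widetilde{E}(x)$ and $\widetilde{E}(y)$ on $H\setminus\{x\}$ and $H\setminus\{y\}$). The engine is a fixed-point count modulo the two primes, applied to the $\langle\tau\rangle$-invariant subgroup $S=\langle \sigma^{\tau^{i}}:0\le i<r'\rangle$, whose generators $\sigma^{\tau^{i}}\in\widetilde{E}(x^{\tau^{i}})$ have the $r'$ \emph{distinct} fixed points $x^{\tau^{i}}$. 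If $S$ were an $r$-group then, since $|H|\equiv 1\pmod r$, it would fix some point of $H$; this fixed set is contained in the fixed set $\{x\}$ of $\sigma\in S$, so $S$ would fix $x$, forcing $\sigma^{\tau}\in S$ to fix both $x$ and $x^{\tau}\neq x$ --- a nonidentity element with two fixed points, a contradiction. Hence $S$ is not an $r$-group, and I would combine this with the congruence $|H|\equiv 1\pmod{r'}$ and the $\langle\tau\rangle$-action normalizing $S$ to locate a nonidentity $r'$-element of $L$ whose number of fixed points on $H$ is $\equiv 1\pmod{r'}$ and exceeds $1$, again yielding two fixed points.

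The hard part will be this last step: turning ``$S$ is not an $r$-group'' into an honest element with two fixed points, which amounts to excluding the possibility that $L$ acts on $H$ as a sharply $2$-transitive (equivalently, Frobenius) group --- exactly the configuration in which every nonidentity element has at most one fixed point. This is where $r\neq r'$ is indispensable: the presence of symmetries of two coprime orders $r$ and $r'$, together with the simultaneous congruences $|H|\equiv 1\pmod r$ and $|H|\equiv 1\pmod{r'}$ and the freeness of $\widetilde{E}(x)$ and $\widetilde{E}(y)$, should be incompatible with a uniform Frobenius-complement structure on the point stabilizers. Concretely I would analyze the $\langle\sigma,\tau\rangle$-orbit $\Omega$ containing $x$: on $\Omega$ the action is Frobenius with a complement containing $\sigma$, and tracking whether $y\in\Omega$ (so that $\tau$ either lies in a complement or, being fixed-point-free on $\Omega$, lies in the nilpotent Frobenius kernel) forces the kernel order to satisfy contradictory divisibility conditions with respect to $r$ and $r'$. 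Establishing this incompatibility cleanly, rather than through a lengthy case analysis, is the main obstacle I anticipate.
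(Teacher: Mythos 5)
Your first two paragraphs are correct: the reduction of the lemma to finding a nonidentity element of $L=G_{[\{x,y\}^{\perp}]}$ with at least two fixed points on $\{x,y\}^{\perp\!\perp}$ is valid (and usefully makes explicit what the paper leaves implicit), the congruences $|\{x,y\}^{\perp\!\perp}|\equiv 1\pmod{r}$ and $\equiv 1\pmod{r'}$ are right, and your argument that $S$ is not an $r$-group is sound. The genuine gap is the final step, which you yourself flag as not established, and the mechanism you propose for it --- ``contradictory divisibility conditions'' on the kernel of the Frobenius action of $\la\sigma,\tau\ra$ on the single orbit $\Omega=x^{\la\sigma,\tau\ra}$ --- cannot work. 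Both branches of your dichotomy are realized by honest Frobenius groups, so no counting on one orbit refutes them: the branch ``$\tau$ fixed-point-free, hence in the kernel'' is modelled by $C_7\rtimes C_3$ acting on $7$ points with $\tau$ of order $7$ in the kernel and $\sigma$ of order $3$ in a complement (degree $\equiv 1\pmod{3}$ and $\equiv 0\pmod{7}$, exactly the constraints you would derive); the branch ``$\tau$ in a complement'' is modelled by $\mathrm{AGL}(1,7)$, which is generated by an element of order $3$ fixing $0$ and an element of order $2$ fixing $4$ (degree $7\equiv 1$ modulo both primes). For the same reason, your intermediate goal of an $r'$-element whose fixed-point count is $\equiv 1\pmod{r'}$ ``and exceeds $1$'' is unreachable by congruences alone: every $r'$-element of $L$ automatically has fixed-point count $\equiv 1\pmod{r'}$ on $\{x,y\}^{\perp\!\perp}$, and this is perfectly consistent with the count being exactly $1$. (A further slip: Frobenius is not equivalent to sharply $2$-transitive; the latter is far stronger.)

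The missing idea --- and it is the crux of the paper's proof --- is to compare \emph{two} Frobenius actions of one and the same abstract group and invoke the uniqueness of the Frobenius kernel: the kernel equals the Fitting subgroup \cite[Corollary~17.5]{Pass}, hence does not depend on the faithful Frobenius representation chosen. The paper applies this to $\la\wE(u):u\in\{x,y\}^{\perp\!\perp}\ra$ acting on the two sets $X_r$, $X_{r'}$ of $r$-points and $r'$-points of the hyperbolic line (transitivity on each coming from Lemma \ref{lem_Omegauv}); equivalence of the two representations forces $|X_r|=|X_{r'}|$, contradicting $|X_r|\equiv 1\pmod{r}$, $|X_{r'}|\equiv 0\pmod{r}$. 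Your setup can be completed the same way in miniature: under your standing hypothesis, $\la\sigma,\tau\ra$ acts as a Frobenius group on $\Omega=x^{\la\sigma,\tau\ra}$ \emph{and} on $\Omega'=y^{\la\sigma,\tau\ra}$; these orbits are disjoint because $\Omega\subseteq\cP_r$, $\Omega'\subseteq\cP_{r'}$, and the classes $\cP_r$, $\cP_{r'}$ are disjoint --- this uses the tacit assumption that $x,y$ lie in distinct $G$-orbits, which holds in the application (Lemma \ref{lem_homxyPerp}) and which the paper's own proof also needs when it asserts $|X_r|\equiv 0\pmod{r'}$. On $\Omega$, $\sigma$ fixes $x$ while $\tau$ is fixed-point-free, so $\tau$ lies in the kernel and $\sigma$ does not; on $\Omega'$ the roles are reversed. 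Since both kernels equal the Fitting subgroup of $\la\sigma,\tau\ra$, this is the desired contradiction. Without this kernel-uniqueness input (or an equivalent), your proof does not close.
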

\begin{proof}
Let $X_r$ be the sets of $r$-points in $\{x,y\}^{\perp\!\perp}$, and define $X_{r'}$ similarly. We write $k_r=|X_r|$ and $k_{r'}=|X_{r'}|$. Let $H=\la \widetilde{E}(u): u \in \{x,y\}^{\perp\!\perp}\ra$, which stabilizes $\{x,y\}^{\perp\!\perp}$. By Lemma \ref{lem_Omegauv}, we deduce that $H$ is transitive on both $X_r$ and $X_{r'}$. We have $k_r\equiv 1\pmod{r}$, $k_{r}\equiv 0\pmod{r'}$ by considering the $\wE(x)$-orbits and the $\wE(y)$-orbits on $X_r$ respectively. It follows that $k_r\ge 3$. We obtain similar congruences for $k_{r'}$, from which we deduce that $k_{r'}\ge 3$ and $k_r\ne k_{r'}$. If $H$ is not faithful on $X_r$ or $X_{r'}$, then a nontrivial element in the corresponding kernel is a homology. Suppose that $H$ is faithful on both sets. If a nontrivial element $g$ of $H$ stabilizes two points of $X_r$ or $X_{r'}$, then $g$ is a  homology.  It thus suffices to consider the case where $H$ is a Frobenius group on both $X_r$ and $X_{r'}$.  Since a finite group can only give rise to one Frobenius group up to equivalence, $(H, X_r)$  and $(H, X_{r'})$ are equivalent (the Frobenius kernel is the Fitting subgroup of $H$ \cite[Corollary 17.5]{Pass}, and all Frobenius complements are conjugate). It follows that $\vert X_r \vert = \vert X_{r'} \vert$, a contradiction. This completes the proof.
\end{proof}

\begin{color}{black}
\begin{lemma}\label{lem_Genkey}
Let  $\ell$ be a line through a point $v$, and assume that it contains an $r$-point $u$ distinct from $v$. Then $\ell \setminus\{v\}$ contains a second $r$-point.
\end{lemma}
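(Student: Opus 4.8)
The plan is to reduce the statement to a purely numerical claim using the transitivity of Lemma~\ref{lemtransr}, and then to exclude the bad value by a free-action argument on a hyperbolic line. Since $u\sim v$, $u\neq v$ and $u\in\cP_r$, the point $u$ lies in $(\cP_r\cap v^\perp)\setminus\{v\}$, so Lemma~\ref{lemtransr} applies \emph{at} $v$ and $G_v$ is transitive on the $r$-points of $v^\perp\setminus\{v\}$. I would then observe that the lines through $v$ carrying an $r$-point distinct from $v$ (call them the good lines through $v$) form a single $G_v$-orbit: if $a_1,a_2$ are $r$-points on good lines $m_1,m_2$, an element $g\in G_v$ with $a_1^g=a_2$ fixes $v$ and hence maps $m_1=\la v,a_1\ra$ to $m_2=\la v,a_2\ra$. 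As such a $g$ also preserves the property of being an $r$-point, every good line through $v$ carries the same number $D\geq 1$ of $r$-points other than $v$. Since $\ell$ is a good line through $v$, the assertion is exactly the inequality $D\geq 2$.

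It remains to exclude $D=1$, which is the heart of the matter, and here I would argue by contradiction, assuming $u$ is the unique $r$-point of $\ell\setminus\{v\}$. By Lemma~\ref{lem_PrProp}(1), $\cP_r$ is not contained in a line, so there is an $r$-point $w$ with $w\not\sim u$; applying Lemma~\ref{lem_Omegauv} to the pair $u,w$ produces the $\la\wE(u),\wE(w)\ra$-orbit $\Omega$ of $u$, contained in $\{u,w\}^{\perp\!\perp}$ and consisting of $r$-points, with $|\Omega|\equiv 1\pmod r$ (by the free action of $\wE(u)$ on $\cP\setminus u^\perp$) and hence $|\Omega|\geq r+1$. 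Put $z=\proj_\ell w$, the unique point of $\ell$ collinear with $w$; then $z\neq u$ because $w\not\sim u$, and $z\in\{u,w\}^\perp$, so $z$ is collinear with every point of $\{u,w\}^{\perp\!\perp}$, in particular with all $\geq r+1$ pairwise noncollinear $r$-points of $\Omega$, which therefore occupy distinct lines through $z$. If $z$ were itself an $r$-point, then $u$ and $z$ would be two collinear $r$-points of $\ell$ with $z\neq u,v$, and the lemma would follow; so I may assume $z\notin\cP_r$.

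With $z$ in hand I would invoke Lemma~\ref{lemtransr} a second time, at $z$: since $u\in(\cP_r\cap z^\perp)\setminus\{z\}$, the group $G_z$ is transitive on the $r$-points of $z^\perp\setminus\{z\}$, so all good lines through $z$ carry a common number $d_z$ of $r$-points, and $\ell$ carries exactly $d_z$ of them; as $z\notin\cP_r$ this means $|\cP_r\cap\ell|=d_z$. The contradiction is then to be extracted by feeding this rigidity back through the free action: a fixed $\sigma\in\wE(u)$ of order $r$ fixes $\ell$ pointwise and fixes $z$, so it stabilises the set of $r$-points collinear with $z$, fixing those collinear with $u$ and permuting the remaining ones in orbits of size $r$; since $\Omega$ contributes $\geq r+1$ such points on distinct lines through $z$, counting these against the \emph{constant} value $d_z$ on the $G_z$-orbit of good lines over-determines the configuration. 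Where the bare free-action count is not decisive, I would bring in Lemma~\ref{lem_homology} — legitimate because $\cP_r\neq\cP$ forces a second prime $r'\in\mathscr{R}$ and hence an $r'$-point noncollinear with $u$ — to manufacture a homology whose action on the relevant hyperbolic line, together with the transitivity already obtained, forces a second $r$-point onto $\ell$.

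The main obstacle is precisely this exclusion of $D=1$. The two reductions via Lemma~\ref{lemtransr} are routine, but converting the abundance of $r$-points collinear with the foot $z$ into a second $r$-point on $\ell$ \emph{itself}, rather than merely on the neighbouring lines through $z$, is delicate, since symmetries preserve noncollinearity and therefore never move $u$ along $\ell$. I expect the final argument to require a case split — in particular the degenerate configuration $\cP_r\subseteq v^\perp$, where one chooses $w$ on a second good line through $v$ so that $z=v$, versus the generic configuration — combined with the free-action congruences, the presence of two distinct primes, and the homologies supplied by Lemma~\ref{lem_homology}.
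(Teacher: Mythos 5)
Your opening reduction is sound, and it is in fact the same move the paper makes at the very end of its own proof: by Lemma \ref{lemtransr}, $G_v$ is transitive on the $r$-points of $v^\perp\setminus\{v\}$, hence permutes the good lines through $v$ transitively, so each such line carries the same number $D\ge 1$ of $r$-points distinct from $v$, and the lemma is equivalent to $D\ge 2$. The problem is that you never actually exclude $D=1$. Everything after the reduction is a list of ingredients rather than an argument: the congruences you can legitimately extract (for instance, letting $\sigma\in\wE(u)$ of order $r$ act on $\cP_r\cap z^\perp$ gives $|\cP_r\cap z^\perp|\equiv|\cP_r\cap\ell|\pmod r$, which combined with the constancy of $d_z$ yields only relations like $d_z(N_z-1)\equiv 0\pmod r$) do not visibly contradict $D=1$, and your closing steps --- ``counting these against the constant value $d_z$ \ldots over-determines the configuration'' and ``manufacture a homology whose action \ldots forces a second $r$-point onto $\ell$'' --- are statements of intent, not deductions. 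There is also a local slip: you dismiss the case $z\in\cP_r$ on the grounds that $z\ne u,v$, but $z=\proj_\ell(w)=v$ whenever $w\sim v$; you flag this case split at the end but never carry it out.

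The idea that is actually needed, and that is entirely absent from your proposal, is the one supplied by Lemma \ref{lem_GbruExist}: there exists $\upalpha\in G_{[v]}\setminus\wE(v)$, a nontrivial collineation fixing every line through $v$ but not every point of $v^\perp$. The paper chooses $b\in v^\perp\setminus\{v\}$ with $b\not\sim u$ and $b^\upalpha\ne b$ (the existence of such a $b$ requires its own small argument), and then plays the two symmetry groups $\wE(b)$ and $\wE(b^\upalpha)=\wE(b)^\upalpha$ against each other: since $\upalpha$ fixes the whole pencil of lines through $v$, these two groups produce the \emph{same} line orbits through $v$, while their point orbits $\Upomega=u^{\wE(b)}$ and $\Upomega'=u^{\wE(b^\upalpha)}$ satisfy $\Upomega\cap\Upomega'=\{u\}$, because they lie in $\{v,c\}^\perp$ and $\{v,c'\}^\perp$ for the distinct projections $c\ne c'$ of $b$ and $b^\upalpha$ on a second line through $u$. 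Consequently every line $v\widehat{u}$ with $\widehat{u}\in\Upomega\setminus\{u\}$ carries two distinct $r$-points $\widehat{u}$ and $\widehat{u}'=v\widehat{u}\cap\Upomega'$, both different from $v$; your transitivity reduction (the paper cites Lemma \ref{lem_PrProp}(2) here) then transfers this to $\ell$ and gives $D\ge 2$. This control of the intersection of two symmetry orbits by means of an element of $G_{[v]}\setminus\wE(v)$ is precisely what your free-action counts and homologies cannot replicate --- the paper's remark following the lemma stresses that this element is essential --- so the proposal has a genuine gap at its central step.
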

\begin{proof}


Let $\upalpha$ be a nontrivial element in $G_{[v]} \setminus \widetilde{E}(v)$ (such an element exists by Lemma \ref{lem_GbruExist}), and let $b$ be a point in $v^{\perp} \setminus \{ v\}$ which is not collinear with $u$, and such that $b^\upalpha \ne b$. If no such point exists, then consider any point $x\in \ell \setminus\{v\}$, and choose a point $y\in v^\perp\setminus \ell$. The set $\{x,y\}^{\perp\!\perp}$ has size at least three by considering the $\wE(x)$-orbits on $\{x,y\}^{\perp\!\perp}$. All its points except $x$ are in $v^\perp\setminus \ell$ and are thus fixed by $\upalpha$. We deduce that $\upalpha$ stabilizes $\{x,y\}^\perp$, and so it also stabilizes $\{x,y\}^{\perp\!\perp}$. It follows that $\upalpha$ fixes $x$. Since $x$ was arbitrary, $\upalpha$ fixes all points of $v^\perp$: a contradiction to the fact that $\upalpha\not\in\wE(v)$. So we can indeed find a point $b \in v^{\perp} \setminus \{ v\}$ for which $b^{\upalpha} \ne b$. 
Let $\jmath$ be any line incident with $u$ and different from $\ell$; then the projection $c$ of $b$ on $\jmath$ is different than the projection $c'$ of $b^{\upalpha}$ on $\jmath$, and $\{ v, c \}^{\perp} \cap \{ v, c' \}^{\perp} = \{ u \}$. Now consider the orbits $\Upomega := u^{\widetilde{E}(b)}$
and $\Upomega' := u^{\widetilde{E}(b^{\alpha})}$; then $\Upomega \subseteq \{ v, c \}^{\perp}$ and $\Upomega' \subseteq \{ v, c' \}^{\perp}$, and $\Upomega \cap \Upomega' = \{ u \}$. Also, since $\upalpha$ fixes all lines incident with $v$, $\widetilde{E}(b)$ and $\widetilde{E}(b^{\alpha})$ have the same action on $\ell$ (that is, produce the same line orbits when they act on $\ell$). It follows that for every point $\widehat{u} \in \Upomega \setminus \{ u \}$,
$\widehat{u}' := v\widehat{u} \cap \Upomega'$ is an $r$-point on $v\widehat{u}$ which is different from the $r$-point $\widehat{u}$.
By Lemma \ref{lem_PrProp} (2), we deduce that $\ell \setminus\{v\}$ contains at least two $r$-points: a contradiction.
\end{proof}
\end{color}

\begin{remark}{\rm
Lemma \ref{lem_Genkey} plays a key role in understanding the structures of $\cP_r$ and $\cL_r$. The cohomological nature of this lemma allows us to ``unfold" a hypothetical complete set of $r$-points in $v^{\perp} \setminus \{ v\}$ which hypothetically meets each line on $v$ in at most one point. We observe that the existence of $g\in G_{[v]}\setminus\wE(v)$ is essential to control $\Upomega_1 \cap \Upomega_2$ in the proof. }
\end{remark}

The following result is a direct consequence of Lemmas \ref{lem_PrProp} and \ref{lem_Genkey}.
\begin{corollary}\label{cor_LrProp}
Let $r$ be a prime such that $\cP_r\ne\emptyset$, and let $\cL_r$ be the set of lines that contain at least two points of $\cP_r$. Then $G$ is transitive on $\cL_r$, each line through a point of $\cP_r$ is in $\cL_r$, and each line in $\cL_r$ contains at least three points of $\cP_r$.\eop
\end{corollary}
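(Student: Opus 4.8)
The three assertions should all follow by feeding Lemmas~\ref{lem_PrProp} and~\ref{lem_Genkey} the right inputs; the real engine is Lemma~\ref{lem_Genkey}, which I read as a \emph{bootstrapping} device: as soon as a line $\ell$ carries one $r$-point $u$ and we designate any other point $v\in\ell$ to be ``protected'', a further $r$-point is forced to appear in $\ell\setminus\{v\}$. My plan is to apply this twice, with two different choices of the protected point $v$, so as to manufacture first two and then three $r$-points on a line, and to slot in Lemma~\ref{lem_PrProp}(2) for the transitivity statement once nonemptiness of $\cL_r$ is secured.

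First I would establish assertion~(2), that every line $\ell$ through a point $p\in\cP_r$ lies in $\cL_r$. Since $\cS$ is thick, $\ell$ carries at least $s+1\ge 3$ points, so I may pick a point $v\in\ell$ with $v\ne p$. Applying Lemma~\ref{lem_Genkey} to $\ell$ with $u=p$ and this $v$ produces a second $r$-point $p'\in\ell\setminus\{v\}$ distinct from $p$; hence $\ell$ contains the two distinct $r$-points $p,p'$ and so $\ell\in\cL_r$. In particular this already shows $\cL_r\ne\emptyset$, which unlocks the conditional clauses of Lemma~\ref{lem_PrProp}(2).

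Assertion~(1) is then immediate: once $\cL_r\ne\emptyset$, the transitivity of $G$ on $\cL_r$ is exactly the corresponding clause of Lemma~\ref{lem_PrProp}(2). For assertion~(3) I take any $\ell\in\cL_r$, which by definition carries two distinct $r$-points $p_1,p_2$. The key move is now to \emph{protect one of the known $r$-points}: applying Lemma~\ref{lem_Genkey} to $\ell$ with $v=p_2$ and $u=p_1$ yields a further $r$-point $p_3\in\ell\setminus\{p_2\}$ with $p_3\ne p_1$. Since $p_3$ lies in $\ell\setminus\{p_2\}$ it also differs from $p_2$, so $p_1,p_2,p_3$ are three distinct $r$-points on $\ell$, as required.

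There is no genuine obstacle here---all the substance resides in Lemma~\ref{lem_Genkey}---so the only things to watch are the order of the argument and the bookkeeping of distinctness. Concretely, (2) must be proved before invoking Lemma~\ref{lem_PrProp}(2) for (1), because that lemma's transitivity conclusion is conditional on $\cL_r\ne\emptyset$; and in (3) the fact that the produced point differs from \emph{both} starting points is precisely what dictates the choice $v=p_2$ rather than an arbitrary third point of $\ell$ (taking $v\notin\{p_1,p_2\}$ would only be guaranteed to reproduce $p_1,p_2$ and so fail to deliver a third $r$-point). With these two observations in place, the corollary falls out of the two lemmas directly.
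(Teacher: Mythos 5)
Your proof is correct and is essentially the paper's own argument: the paper presents Corollary~\ref{cor_LrProp} with no written proof beyond the remark that it follows directly from Lemmas~\ref{lem_PrProp} and~\ref{lem_Genkey}, and your bootstrapping is the natural way to fill in the details (an arbitrary protected point $v$ to show every line through an $r$-point lies in $\cL_r$, hence $\cL_r\ne\emptyset$ and transitivity via Lemma~\ref{lem_PrProp}(2), then $v=p_2$ to force a third $r$-point). Your parenthetical observation that the protected point must be chosen as the known second $r$-point $p_2$ (rather than an arbitrary third point of $\ell$) is exactly the right piece of bookkeeping.
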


An {\em $m$-ovoid} of $\cS$ is a set $M$ of points such that each line of $\cS$ contains exactly $m$ points in $M$ \cite{TSmOv, mOvoTh}. If $M$ is an $m$-ovoid, then
$\vert M \vert = m(st + 1)$, and $|x^\perp\cap M|$ is $1+(t+1)(m-1)$ or $(t+1)m$ according as $x$ is in $M$ or not. 

\begin{corollary}\label{cor_GlinTrMovoid}
The group $G$ is transitive on the set of lines of $\cS$, and each $G$-orbit $\cP_r$ on points is an $m_r$-ovoid for some $m_r\ge 3$.
\end{corollary}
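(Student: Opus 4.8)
The plan is to leverage the three facts already proved about each prime $r\in\mathscr{R}$: that $\cP_r$ is a single $G$-orbit on points (Lemma \ref{lem_PrProp}(2)), that $\cL_r$ is a single $G$-orbit on lines (Corollary \ref{cor_LrProp}), and the closure property of Corollary \ref{cor_LrProp} that \emph{every} line through a point of $\cP_r$ lies in $\cL_r$ and then carries at least three points of $\cP_r$. The whole statement will then reduce to the connectedness of the point graph of the thick quadrangle $\cS$.

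First I would record the combinatorial skeleton. Since $\cP=\bigcup_{r\in\mathscr{R}}\cP_r$ and each $\cP_r$ is a $G$-orbit, the distinct sets among the $\cP_r$ are precisely the $G$-orbits on points; list them as $\cP_1,\dots,\cP_k$ with associated line families $\cL_1,\dots,\cL_k$, each a single $G$-orbit on lines. Every line of $\cS$ has $s+1\ge 3$ points, each lying in some $\cP_i$, so by Corollary \ref{cor_LrProp} every line lies in at least one $\cL_i$; and since each $\cL_i$ is a full $G$-orbit, any $G$-orbit on lines coincides with some $\cL_i$ (take a line in it, it lies in some $\cL_i$, and both are the $G$-orbit of that line). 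Hence it suffices to prove that there is only one $G$-orbit on lines.

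The crux is a disconnection argument. Suppose there were $m\ge 2$ distinct line-orbits $\mathcal{O}_1,\dots,\mathcal{O}_m$, and for each $a$ set $\cQ_a:=\bigcup\{\cP_i:\cL_i=\mathcal{O}_a\}$. I would check that a line of $\mathcal{O}_a$ can meet only the orbits making up $\cQ_a$: if such a line met $\cP_j$ it would lie in $\cL_j$, forcing $\cL_j=\mathcal{O}_a$ and $j$ into block $a$. Consequently every line through a point of $\cQ_a$ lies in $\mathcal{O}_a$ (by the closure property) and therefore has all its points in $\cQ_a$; thus no point of $\cQ_a$ is collinear with a point outside $\cQ_a$. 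The nonempty sets $\cQ_1,\dots,\cQ_m$ would then partition $\cP$ with no edges running between distinct parts, contradicting the fact that the point graph of a thick GQ is connected (any two noncollinear points share the $t+1\ge 1$ points of $\{x,y\}^\perp$, so the graph has diameter $2$). Hence $m=1$ and $G$ is transitive on $\cL$. Finally, line-transitivity together with the $G$-invariance of each $\cP_i$ makes $|\ell\cap\cP_i|$ independent of $\ell$; calling this value $m_i$ shows $\cP_i$ is an $m_i$-ovoid, with $m_i\ge 3$ since every line now meets $\cP_i$ and Corollary \ref{cor_LrProp} gives at least three points there. I expect the only delicate point to be the bookkeeping establishing that an $\mathcal{O}_a$-line carries only $\cQ_a$-points; once that closure statement is in place, the clash with connectedness of $\cS$ is immediate.
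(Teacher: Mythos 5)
Your proposal is correct and follows essentially the same route as the paper: both arguments obtain line-transitivity by showing that all the classes $\cL_r$ ($r\in\mathscr{R}$) must coincide, using from Corollary \ref{cor_LrProp} that each $\cL_r$ is a single $G$-orbit which absorbs every line through a point of $\cP_r$, together with connectedness (diameter $2$) of the collinearity graph, and then both read off the $m_r$-ovoid property with $m_r\ge 3$ exactly as you do. The only difference is presentational: where you partition the point-orbits into blocks $\cQ_a$ and contradict connectedness, the paper supposes two classes $\cL_r,\cL_{r'}$ are disjoint and bridges noncollinear points $x\in\cP_r$, $y\in\cP_{r'}$ through a common neighbour $z\in\cP_{r''}$, forcing $\cL_r=\cL_{r''}=\cL_{r'}$ --- the same diameter-$2$ fact you invoke.
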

\begin{proof}
Let $\mathscr{R}$ be as in \eqref{eqn_Rdef}. For two distinct primes $r,r'$ in $\mathscr{R}$, we deduce that either $\cL_r=\cL_{r'}$ or $\cL_r\cap \cL_{r'}=\emptyset$ by Corollary \ref{cor_LrProp}. Moreover, $\cL_r=\cL_{r'}$ if and only if there is a line that contains both an $r$-point and an $r'$-point, and we have $\cL=\cup_{r\in\mathscr{R}}\cL_r$. Suppose that there are primes $r,r'\in \mathscr{R}$ such that $\cL_r\cap \cL_{r'}=\emptyset$, i.e., no point of $\cP_r$ is collinear with a point of $\cP_{r'}$.  Take $x\in\cP_r$ and $y\in\cP_{r'}$, and let $z$ be an $r''$-point in $\{x,y\}^\perp$ for some $r''\in\mathscr{R}$. We then have $\cL_{r}=\cL_{r''}=\cL_{r'}$: a contradiction. We have shown that $\cL_r=\cL_{r'}$ for all $r,r'\in \mathscr{R}$, so $\cL_r=\cL$ for  $r\in \mathscr{R}$. It follows  that $G$ is transitive on $\cL$ by Corollary \ref{cor_LrProp}.

Let $\cP_r$ be as in \eqref{eqn_Prdef}, and note that it is a $G$-orbit by Lemma \ref{lem_PrProp} (2). Since $G$ is line-transitive, each line of $\cS$ contains exactly $m_r$ points for some constant $m_r$. By Corollary \ref{cor_LrProp}, we have $m_r\ge 3$. This completes the proof.
\end{proof}

\begin{lemma}\label{lem_subGQ}
The generalized quadrangle $\cS$ does not contain a subquadrangle of order $(s',t)$, where $1<s'<s$.
\end{lemma}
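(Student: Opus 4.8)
The plan is to argue by contradiction, exploiting that the second parameter of the putative subquadrangle equals that of $\cS$. So suppose $\cS'=(\cP',\cL')$ is a thick subquadrangle of order $(s',t)$ with $1<s'<s$. The first observation, which drives everything, is that since the second parameter is unchanged, every line of $\cS$ through a point of $\cP'$ is already a line of $\cS'$: each point of $\cP'$ lies on $t+1$ lines of $\cS'$ and on $t+1$ lines of $\cS$, and these must coincide. Likewise, any two noncollinear points $z_1,z_2\in\cP'$ satisfy $\{z_1,z_2\}^{\perp}=\{z_1,z_2\}^{\perp}_{\cS'}\subseteq\cP'$, because both perps have exactly $t+1$ points, the $t+1$ common neighbours computed inside $\cS'$ are common neighbours in $\cS$, and $\cS$ has no further ones.

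Next I would show that every symmetry of $\cS$ about a point of $\cP'$ stabilizes $\cS'$. Fix $u\in\cP'$ and $g\in\wE(u)$, and take $v\in\cP'$ with $v\not\sim u$. The set $\{u,v\}^{\perp}$ lies in $\cP'$ and consists of pairwise noncollinear points, so I may pick distinct $z_1,z_2\in\{u,v\}^{\perp}$; as $g$ fixes $u^{\perp}$ pointwise it fixes $z_1,z_2$, and $v\sim z_i$ gives $v^g\sim z_i$, whence $v^g\in\{z_1,z_2\}^{\perp}\subseteq\cP'$ by the previous paragraph. Thus $g$ maps $\cP'$ into, hence onto, $\cP'$ and stabilizes $\cS'$; its restriction fixes $u^{\perp}\cap\cP'$ pointwise, so it is a symmetry of $\cS'$ about $u$, and it is nontrivial because $\wE(u)$ acts freely on $\cP\setminus u^{\perp}$ while $\cP'\setminus u^{\perp}\neq\emptyset$. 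Consequently the restriction map $\wE(u)\to\Aut(\cS')$ is injective with image in the symmetries of $\cS'$ about $u$, and each point of $\cS'$ is the centre of a nontrivial symmetry of $\cS'$.

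Now $\cS'$ is a thick generalized quadrangle, properly contained in $\cS$, satisfying the hypothesis of Theorem \ref{Ealygen}. Since $\cS$ is a minimal counterexample with respect to inclusion, $\cS'$ cannot itself be a counterexample, so it is classical: one of $\mW(3,q')$, $\mH(3,q'^2)$, $\mH(4,q'^2)$, of order $(q',q')$, $(q'^2,q')$, $(q'^2,q'^3)$ respectively, where I write $q'=p'^a$. The case $\mH(3,q'^2)$ is settled immediately: Proposition \ref{prop_fgq222}(2) gives $s\le t^2$, so $s'<s\le t^2$, whereas here $s'=q'^2=t^2$. For the remaining two types I would use the induced structure rather than the bare parameters: the injection above forces $|\wE(u)|\le q'$ for $u\in\cP'$, so every point of $\cP'$ is a $p'$-point and $\cP'\subseteq\cP_{p'}$; together with the equality $\{u,v\}^{\perp\!\perp}=\{u,v\}^{\perp\!\perp}_{\cS'}$ for noncollinear $u,v\in\cP'$ (proved exactly as the equality of perps above) and the explicit description of the classical symmetries in Section \ref{sec_examples}, the aim is to conclude that \emph{every} point of $\cS$ lies in the single orbit $\cP_{p'}$, so that $\cS$ is classical by the First Main Result, contradicting that $\cS$ is a counterexample.

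The hard part will be exactly this last step for $\mW(3,q')$ and $\mH(4,q'^2)$. Proposition \ref{prop_fgq222} alone leaves an admissible range for $s$, so the exclusion must come from the symmetry data: the delicate point is to upgrade $\cP'\subseteq\cP_{p'}$ to $\cP_{p'}=\cP$, equivalently to show that the induced symmetries generate the full classical symmetry group on $\cS'$ and propagate from $\cS'$ to all of $\cS$, after which the First Main Result closes the argument. The earlier steps, the stabilization of $\cS'$ and the exclusion of $\mH(3,q'^2)$, are routine; the reduction to the First Main Result is where the real work lies.
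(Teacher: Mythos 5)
Your first half is sound and matches the paper: the containments $\{x,y\}^{\perp}\subseteq\cP'$ and $\{x,y\}^{\perp\!\perp}\subseteq\cP'$ for noncollinear $x,y\in\cP'$, the fact that each $g\in\wE(u)$ with $u\in\cP'$ maps $\cP'$ into $\cP'$ and restricts to a nontrivial symmetry of $\cS'$ about $u$, and the invocation of minimality to conclude that $\cS'$ is one of $\mW(3,q')$, $\mH(3,q'^2)$, $\mH(4,q'^2)$ --- all of this is exactly the paper's argument (the paper gets $x^g\in\cP'$ from Lemma \ref{lem_cover}, you get it from two points of $\{u,v\}^{\perp}$; same substance). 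Your exclusion of $\mH(3,q'^2)$ via $s'=t^2\geq s>s'$ is also correct, though it turns out to be unnecessary.

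The genuine gap is that for $\mW(3,q')$ and $\mH(4,q'^2)$ you never finish: you state the \emph{aim} of upgrading $\cP'\subseteq\cP_{p'}$ to $\cP_{p'}=\cP$ by ``propagating'' symmetries from $\cS'$ to $\cS$, and you explicitly defer this as ``where the real work lies.'' That is not a proof, and the propagation route you sketch (showing the induced symmetries generate the full classical symmetry group of $\cS'$ and then spread to $\cS$) is both unjustified and much harder than what is needed. The idea you are missing is a one-line arithmetic observation that bypasses any orbit or generation argument: the order of every nontrivial central symmetry of $\cS$ divides $t$ (cf.\ \cite[8.1.2]{FGQ}; concretely, $\wE(x)$ acts freely on the $t$ lines through any point $y\sim x$ other than the line $xy$). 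Now $\cS'$ is classical \emph{with the same parameter $t$ as $\cS$}, and in each of $\mW(3,q')$, $\mH(3,q'^2)$, $\mH(4,q'^2)$ the parameter $t$ is a power of the prime $p'$ (namely $q'$, $q'$, or $q'^3$). Hence for \emph{every} point $x\in\cP$ --- not just points of $\cP'$ --- the nontrivial group $\wE(x)$ has order a power of $p'$, so $|\wE(x)|\equiv 0\pmod{p'}$ everywhere; that is, $\cP_{p'}=\cP$ automatically. Then Ealy's even-characteristic theorem together with Theorem \ref{thm_EalyOdd} forces $\cS$ to be classical, contradicting that $\cS$ is a counterexample. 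This closes all three cases uniformly, with no need to analyze the induced symmetry groups on $\cS'$ at all.
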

\begin{proof}
Suppose to the contrary that $\cS'=(\cP',\cL')$ is a subquadrangle of order $(s',t)$ of $\cS$.  For two noncollinear points $x,y$ of $\cS'$, the set $\{x,y\}^\perp$ of common neighbors of $x,y$ in $\cS$ are all in $\cP'$, and similarly we have $\{x,y\}^{\perp\!\perp}\subseteq\cP'$. For a nontrivial element $g\in\wE(y)$, we deduce that $x^g$ is in $\cP'$ by Lemma \ref{lem_cover}. Therefore, the restriction of $g$ to $\cS'$ is a nontrivial symmetry about the point $y$. That is, there is a nontrivial symmetry of $\cS'$ about each of its points. Since $\cS$ is a minimal counterexample, we deduce that $\cS'$ is one of $\mW(3,q) $, $\mH(3,q^2)$ and $\mH(4,q^2)$, where $q = r^m$ for a prime $r$ and some $m \in \mathbb{N}_0$. It follows that $t$ is a power of $r$. Since the order of a nontrivial central symmetry of $\cS$ divides $t$, we deduce that $|\wE(x)|\equiv 0\pmod{r}$ for each point $x\in\cP$. By \cite{Ealy} and Theorem \ref{thm_EalyOdd}, $\cS$ cannot be a counterexample to Theorem \ref{Ealygen}. This completes the proof.
\end{proof}

For a non-incident point-line pair $(x,\ell)$, let $\texttt{proj}_\ell(x)$ be the unique point incident with $\ell$ that is collinear with $x$.
\begin{lemma}\label{lem_homFaith}
Let $x,y$ be two noncollinear points such that $G_{[x, y]}\ne \{ \id \}$. Then $G_{[x, y]}$ acts freely on $\ell\setminus\{x,\texttt{proj}_\ell(y)\}$ for each line $\ell$ incident with $x$.
\end{lemma}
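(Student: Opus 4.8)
The plan is to study the fixed point/line structure $\cS_g=(\cP_g,\cL_g)$ of a nontrivial homology $g\in G_{[x,y]}=G_{[x]}\cap G_{[y]}$, and to show that the moment $g$ fixes a \emph{third} point of $\ell$, this substructure already exhausts $\cS$, forcing $g=\id$. First I would record what every $g\in G_{[x,y]}$ fixes, before imposing any extra hypothesis. Since $g$ fixes every line through $x$ and every line through $y$, it fixes $x$ and $y$ themselves (each is the common point of its pencil), and it fixes every $c\in\{x,y\}^\perp$ pointwise: indeed $c$ is collinear with both $x$ and $y$, the lines $xc$ and $yc$ are distinct fixed lines (distinct because $x\not\sim y$), and $c$ is their unique intersection. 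Writing $p=\texttt{proj}_\ell(y)$, which is distinct from $x$ precisely because $x\not\sim y$, we have $p\in\{x,y\}^\perp$, so $x,p\in\cP_g$ and $\ell\in\cL_g$ hold automatically. The freeness assertion is therefore exactly the claim that no nontrivial $g$ can fix any \emph{further} point of $\ell$.

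Next I would argue by contradiction. Suppose $g\ne\id$ fixes some $z\in\ell\setminus\{x,p\}$. The fixed structure $\cS_g$ of any collineation is a substructure in the sense of Section~\ref{secnot} (fixed collinear points determine a fixed line, fixed concurrent lines meet in a fixed point, and the projection of a fixed point onto a fixed line is fixed), so by \cite[Theorem~2.1]{walker77} it is of one of the types (A)--(E). I would eliminate all but a subquadrangle: types (A) and (B) are excluded since $\cS_g$ contains both a point and a line; types (C) and (C${}^\textup{d}$) are excluded because $x,y$ are two \emph{noncollinear} points of $\cP_g$, each incident with all of its $t+1\ge 3$ fixed lines, so $\cL_g$ cannot consist of lines through a single point nor $\cP_g$ of points on a single line; the grid (D) is excluded because $x$ lies on $t+1\ge 3$ lines of $\cL_g$; and the dual grid (D${}^\textup{d}$) is excluded because $\ell$ carries the three distinct fixed points $x,p,z$. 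Hence $\cS_g$ is a subquadrangle, say of order $(s',t')$.

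Finally I would pin down its parameters and invoke Lemma~\ref{lem_subGQ}. Since all $t+1$ lines through $x$ lie in $\cL_g$, and these are exactly the lines of $\cS_g$ on $x$, we get $t'=t$; and since $\ell\in\cL_g$ meets $\cP_g$ in at least the three points $x,p,z$, we get $s'+1\ge 3$, i.e.\ $s'\ge 2$. By Lemma~\ref{lem_subGQ} there is no subquadrangle of order $(s',t)$ with $1<s'<s$, so necessarily $s'=s$; but a subquadrangle of full order $(s,t)$ coincides with $\cS$, whence $g=\id$, contradicting $g\ne\id$. This proves that $G_{[x,y]}$ fixes no point of $\ell\setminus\{x,\texttt{proj}_\ell(y)\}$ nontrivially, i.e.\ it acts freely there. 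The only genuinely delicate step is the type-by-type elimination that forces $\cS_g$ to be a subquadrangle; once that, together with $t'=t$ and $s'\ge 2$, is in place, Lemma~\ref{lem_subGQ} closes the argument at once.
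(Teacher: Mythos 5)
Your proof is correct and follows exactly the paper's own argument: analyze the fixed substructure $\cS_g$ of a nontrivial homology via the substructure classification of \cite[Theorem~2.1]{walker77}, conclude it must be a subquadrangle of order $(s',t)$ with $s'>1$, and contradict Lemma~\ref{lem_subGQ}. The only difference is that you spell out the type-by-type elimination and the case $s'=s$ (which forces $g=\id$), details the paper compresses into ``by examining the possible substructures.''
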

\begin{proof}
Take a nontrivial element $g\in G_{[x, y]}$, and assume that it fixes a further point collinear with $x$ besides $\{x,y\}^{\perp}$. By examining the possible substructures of $\cS$, we deduce that the fixed substructure of $g$ must be a subquadrangle of order $(s',t)$ for some integer $s'$ such that $1<s'<s$. This contradicts Lemma \ref{lem_subGQ}, so $g$ fixes no point in $x^\perp\setminus\{x,y\}^\perp$. This completes the proof.
\end{proof}

\begin{lemma}\label{lem_homxyPerp}
For a pair of noncollinear points $x,y$ that are in distinct $G$-orbits, the points in $\{x,y\}^\perp$ are in the same $G$-orbit.
\end{lemma}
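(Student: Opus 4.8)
Since every element of $G$ permutes the $G$-orbits, two points lying in distinct orbits can never be mapped to one another; hence the assertion is \emph{equivalent} to the statement that $\{x,y\}^{\perp}$ is contained in a single $G$-orbit, i.e. meets only one orbit. I would first record the two structural inputs that drive the argument. By Lemma~\ref{lem_PrProp}(2) each set $\cP_r$ is a single $G$-orbit; consequently two distinct $G$-orbits are supported on \emph{disjoint} sets of primes (a point whose symmetry order is divisible by $r$ lies in $\cP_r$, and $\cP_r$ is one orbit), so the symmetry orders $M_i:=|\wE(z)|$ ($z\in\mathcal{O}_i$) are \emph{pairwise coprime} across distinct orbits. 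Secondly, for any $z\in\{x,y\}^{\perp}$ the two points $x,y$ lie in $z^{\perp}$, so $\wE(z)\le G_x\cap G_y$; as $\wE(z)$ acts freely on the points not collinear with $z$ and the points of $\{x,y\}^{\perp}\setminus\{z\}$ are pairwise noncollinear with $z$, the group $\wE(z)$ acts freely on $\{x,y\}^{\perp}\setminus\{z\}$ while preserving every $G$-orbit.

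\textbf{Per-prime transitivity and congruences.} Writing $c_i:=|\mathcal{O}_i\cap\{x,y\}^{\perp}|$ for the orbits meeting $\{x,y\}^{\perp}$, the free action above yields $c_i\equiv 1\pmod{M_i}$ and $c_j\equiv 0\pmod{M_i}$ for $j\ne i$; with the coprimality of the $M_i$ this forces $M_j\mid c_i$ for all $j\ne i$, and $\sum_i c_i=t+1$. Next, any two noncollinear points of $\{x,y\}^{\perp}$ that lie in the same $\cP_r$ are joined by an element of $\langle\wE(\cdot),\wE(\cdot)\rangle\le G_x\cap G_y$ via Lemma~\ref{lem_Omegauv}; hence $G_x\cap G_y$ is transitive on each slice $\mathcal{O}_i\cap\{x,y\}^{\perp}$, and the $G_x\cap G_y$-orbits on $\{x,y\}^{\perp}$ are precisely the nonempty slices. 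It therefore suffices to prove that at most one slice is nonempty, and I would argue by contradiction, assuming that two orbits $\mathcal{O}_a$ and $\mathcal{O}_b$ both meet $\{x,y\}^{\perp}$.

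\textbf{Producing a homology fixing $x$ and $y$.} Choose $z_i\in\mathcal{O}_a\cap\{x,y\}^{\perp}$ and $z_j\in\mathcal{O}_b\cap\{x,y\}^{\perp}$. These are noncollinear, and by coprimality one can select primes $r\ne r'$ with $z_i$ an $r$-point and $z_j$ an $r'$-point, so Lemma~\ref{lem_homology} applies to the pair $(z_i,z_j)$ and produces a nontrivial homology $h$ with both centers $c_1,c_2\in\{z_i,z_j\}^{\perp\!\perp}$. Since $z_i,z_j\in\{x,y\}^{\perp}$ we have $\{x,y\}^{\perp\!\perp}\subseteq\{z_i,z_j\}^{\perp}$ and $\{z_i,z_j\}^{\perp\!\perp}\subseteq\{x,y\}^{\perp}$; as $c_1,c_2\in\{z_i,z_j\}^{\perp\!\perp}$ give $\{c_1,c_2\}^{\perp}\supseteq\{z_i,z_j\}^{\perp}\ni x,y$, and a homology fixes $\{c_1,c_2\}^{\perp}$ pointwise, it follows that $h$ fixes $x$ and $y$, that $h\in G_x\cap G_y$, and that $c_1,c_2\in\{x,y\}^{\perp}$. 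Now consider the line $\ell=xc_1$: here $x$ and $c_1$ are collinear (as $x\in\{z_i,z_j\}^{\perp}$ and $c_1\in\{z_i,z_j\}^{\perp\!\perp}$), $x\ne c_1$, and $x=\texttt{proj}_\ell(c_2)$ because $c_2\sim x$ while $c_2\not\sim c_1$ forces $c_2\notin\ell$. Lemma~\ref{lem_homFaith}, applied to the homology $h\in G_{[c_1,c_2]}$, then shows that $h$ acts freely on $\ell\setminus\{c_1,x\}$.

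\textbf{The crux.} The contradiction should now come from reconciling this free action with the rigid arithmetic forced by the two coexisting orbits: $h$ preserves every $G$-orbit, fixes exactly $c_1$ and $x$ on $\ell$, and acts freely on the remaining $s-1$ points, while the orbit slices $\mathcal{O}_i\cap\ell$ (with $\sum_i|\mathcal{O}_i\cap\ell|=s+1$) and the slice sizes $c_i$ satisfy the coprime divisibility relations above and $c_i\equiv 1\pmod{M_i}$, $M_j\mid c_i$. I expect the genuinely delicate point to be extracting a numerical impossibility from these two pieces of data, since the congruences on the $c_i$ alone are consistent (one checks by the Chinese Remainder Theorem that $\{x,y\}^{\perp\!\perp}$, which contains $x$ and $y$ in distinct orbits, satisfies exactly the analogous congruences without contradiction). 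Breaking this $\perp$ versus $\perp\!\perp$ symmetry is precisely what the homology $h$ must accomplish, and pinning down how the order of $h$ interlocks with the pairwise-coprime symmetry orders $M_a,M_b$ on a suitable family of lines through $c_1$ and $c_2$ is where the real work of the proof lies.
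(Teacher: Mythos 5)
Your construction of the homology is correct (and your application of Lemma~\ref{lem_homFaith} to the line $\ell = xc_1$ through the center $c_1$ is exactly within that lemma's hypotheses), but the proof is not finished: the paragraph you call ``the crux'' is precisely the step where the paper does its real work, and the ingredient it needs is one you never invoke, namely Corollary~\ref{cor_GlinTrMovoid}. By that corollary each $G$-orbit $\cP_r$ is an $m_r$-ovoid, so $\vert m\cap\cP_r\vert=m_r$ is the \emph{same for every line} $m$ of $\cS$. This line-independent quantity --- not any interlocking of the order of $h$ with the coprime symmetry orders $M_a,M_b$ --- is what breaks the $\perp$ versus $\perp\!\perp$ symmetry you point to: your congruences all live on a single coclique $\{x,y\}^\perp$ and are, as you yourself observe, consistent; the ovoid property is what lets one compare \emph{different} lines of a pencil.

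With that ingredient your own configuration finishes immediately. Let $n=\vert G_{[c_1,c_2]}\vert\ge 2$ and let $m$ run over the $t+1$ lines through $c_1$. The group $G_{[c_1,c_2]}$ lies in $G$, hence preserves every $G$-orbit $\cP_r$, and by Lemma~\ref{lem_homFaith} it acts freely on $m\setminus\{c_1,\texttt{proj}_m(c_2)\}$; therefore
\[
  m_r \;\equiv\; [\![c_1\in\cP_r]\!] \,+\, [\![\texttt{proj}_m(c_2)\in\cP_r]\!] \pmod{n}.
\]
Since $m_r$ and $[\![c_1\in\cP_r]\!]$ do not depend on $m$, the indicator $[\![\texttt{proj}_m(c_2)\in\cP_r]\!]\in\{0,1\}$ is constant over the pencil, and as $m$ varies, $\texttt{proj}_m(c_2)$ ranges bijectively over $\{c_1,c_2\}^\perp$. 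Hence $\{c_1,c_2\}^\perp$ is contained in a single $G$-orbit --- but $x,y\in\{c_1,c_2\}^\perp$ lie in distinct orbits by hypothesis, a contradiction. This is exactly the paper's counting argument, run there with a homology whose centers $u,v$ lie in $\{x,y\}^{\perp\!\perp}$ (obtained by applying Lemma~\ref{lem_homology} to the pair $(x,y)$ itself, which is legitimate since distinct orbits carry coprime symmetry orders) and using $\{u,v\}^\perp=\{x,y\}^\perp$. So the gap is genuine but local: you assembled the right objects, and what is missing is the single observation that the $m$-ovoid structure of the orbits supplies the constant $m_r$ that your congruences need; your preliminary per-prime transitivity and the coprimality bookkeeping, while correct, play no role in closing the argument.
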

\begin{proof}
Let $\ell_0,\ell_1,\ldots,\ell_t$ be the lines through $x$, and let $z_i$ be the unique point on $\ell_i$ that is collinear with $y$ for each $i$. Suppose that $z_0$ is in the $G$-orbit $\cP_r$. By Corollary \ref{cor_GlinTrMovoid}, $m_r=|\ell_i\cap\cP_r|$ is a constant independent of $i$. By Lemma \ref{lem_homology}, there exists a nontrivial homology group $G_{[u, v]}$ for some distinct points $u,v$ in $\{x,y\}^{\perp\!\perp}$. By Lemma \ref{lem_homFaith},  $G_{[u, v]}$ acts freely on $\ell_i\setminus\{x,z_i\}$ for each $i$. Let $n=|G_{[u, v]}|$. We deduce that $m_r\equiv |\cP_r\cap\{x,z_i\}|\pmod{n}$ for each $i$. Since $z_0$ is in $\cP_r$, we have $m_r\equiv |\cP_r\cap\{x\}|+1\pmod{n}$. It follows that
$|\cP_r\cap\{z_i\}|\equiv 1 \pmod{n}$, i.e., $z_i\in\cP_r$ for each $i$. This completes the proof.
\end{proof}

We are now ready to complete the proof of Theorem \ref{Ealygen}. Take the same notation as introduced in the beginning of this section. Fix a prime $r\in \mathscr{R}$. By Corollary \ref{cor_GlinTrMovoid}, $\cP_r$ is an $m_r$-ovoid for a constant $m_r\ge 3$. Let  $x,z$ be two $r$-points on a line $\ell$, and let $\ell'$ be another line incident with $z$. There are $s+1-m_r$ points on $\ell'$ which are not $r$-points, and we have $\{x,y\}^{\perp}\subseteq\cP_r$ for each such point $y$ by Lemma \ref{lem_homxyPerp}. Those $s+1-m_r$ subsets of $\cP_r$ pairwise intersect in $\{z\}$, and each also intersects  $\ell$ in $\{z\}$. It follows that $|x^\perp\cap\cP_r|\ge m_r+(s+1-m_r)t$, i.e., $1+(t+1)(m_r-1)\ge m_r+(s+1-m_r)t$. We deduce that $m_r\ge \frac{s}{2}+1$ for $r\in \mathscr{R}$. Since $\sum_{r\in \mathscr{R}}|\cP_r|=|\cP|$, we deduce that $1+s\ge |\mathscr{R}|(s/2+1)$. This does not hold for $|\mathscr{R}|\ge 2$: a contradiction. This completes the proof of Theorem \ref{Ealygen}. \eop \\

\begin{remark}[The Minimized Ealy Problem] {\rm
If we would consider the combinatorial version of the general Ealy problem, we would be inclined to classify finite generalized quadrangles whose every hyperbolic line is nontrivial --- that is, has size at least $3$. Without any further constraints on the parameters, divisibility conditions on the sizes of the hyperbolic lines, assumptions on the dual hyperbolic lines as well or ``mild'' group-theoretical properties, the problem seems intractable. On the other end of the spectrum, classifying those finite generalized quadrangles with only trivial hyperbolic lines and dual hyperbolic lines might be as difficult, although one might be tempted to conjecture that with the appropriate group-theoretical assumptions, such quadrangles cannot exist with possibly finitely many exceptions. In the infinite case, on the other hand, many examples exist: if one performs a standard free construction on a finite non-closed begin configuration such that we end up with a thick countable free generalized quadrangle, one notices that all hyperbolic and dual hyperbolic lines are trivial. (The essential reason is that in each step, we add a {\em new} incident point-line pair for each non-incident point-line pair $(a,B)$ for which there is no point incident with $B$ which is collinear with $a$ \cite[1.2.13]{vanMa}; this immediately implies that we cannot obtain point and line spans in any way beyond trivial ones.)}
\end{remark}

\section{Applications to locally primitive generalized quadrangles}\label{sec_Kanto}

The classification of flag-transitive generalized polygons is one of the outstanding problems listed in the appendix of \cite{vanMa}. Kantor made the following conjecture in \cite{KantConj}.
\begin{conjecture}If $\cS$ is a finite flag-transitive generalized quadrangle and $\cS$ is not a classical generalized quadrangle, then (up to duality) $\cS$ is the unique generalized quadrangle of order $(3, 5)$ or the generalized quadrangle of order $(15, 17)$ arising from the Lunelli-Sce hyperoval.
\end{conjecture}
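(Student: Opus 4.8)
The plan is to attack the conjecture by the three-stage route that has become standard for flag-transitive incidence geometries: reduce to a point-primitive action, classify the primitive type via the O'Nan--Scott theorem, and then grind through the almost simple groups with the arithmetic machine of Sections \ref{secLie}--\ref{sec_class}. Let $G\le\Aut(\cS)$ be flag-transitive. Then $G$ is transitive on points and on lines, the point stabilizer $G_p$ is transitive on the $t+1$ lines through $p$, the line stabilizer $G_\ell$ is transitive on the $s+1$ points of $\ell$, and since the flag axiom is self-dual the dual $\cS^{\mathrm d}$ is flag-transitive as well. The first task is to exclude nontrivial $G$-invariant partitions of $\cP$. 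Given a block system, I would analyze how a block meets collinearity classes and the line/point orbits, matching the resulting closure conditions against the substructure dichotomy underlying Theorem \ref{thm_Walker77}: the subquadrangle possibility is governed by Proposition \ref{prop_fgq222}, while coclique- and pencil-type configurations are attacked through the strongly regular point graph, whose spectrum is pinned by $(s,t)$, together with $s+t\mid st(t+1)$ from \cite[8.1.2]{FGQ} and the bounds of Lemma \ref{lem_1psbound}.

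Assuming point-primitivity, I would then eliminate every O'Nan--Scott type except the almost simple one. As in Proposition \ref{GisLie}, sporadic and alternating socles are ruled out via \cite{spor} and \cite{Alt}. The holomorphic and diagonal types (HS, HC, SD, CD) and the twisted-wreath type (TW) are incompatible with $|\cP|=(1+s)(1+st)$ for arithmetic reasons; the product-action type forces $|\cP|$ to be a proper power, colliding with $|\cP|^{1/4}<1+s<|\cP|^{2/5}$ outside finitely many checkable cases; and the affine type forces $|\cP|=p^{d}$ with $G_p\le\GL_d(p)$ a transitive linear group, which I would attack via the Hering classification constrained by the geometry of the point graph. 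The intended outcome is that $G$ is almost simple with socle $X$ a simple group of Lie type.

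For the almost simple case the point stabilizer is a maximal subgroup of $G$, and the inequalities $1+s>|\cP|^{1/4}$, $s\le t^2$, $t\le s^2$ keep $[G:G_p]=|\cP|$ small enough to place $G_p$ among the subgroups catalogued in \cite{KL,BHR} and, for exceptional $X$, in \cite{CLSS,LSS,LieSax,AlaBur}. For each candidate pair $(G,G_p)$ I would compute $|\cP|$, factor $|G|_{r_0'}$ into cyclotomic polynomials $\Phi_i(q)$ as in \cite[Table~10:2]{GorLyons}, and solve $(1+s)(1+st)=[G:G_p]$ subject to $s+t\mid st(t+1)$, $t\le s^2$, $s\le t^2$ and Lemma \ref{lem_1psbound}, exactly in the style of the tables of Section \ref{secLie}. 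The surviving solutions should be the classical quadrangles (recovered, according to the type of $X$, from the natural module as in Theorem \ref{thm_Classical}), together with two exceptional small-parameter solutions producing the unique generalized quadrangle of order $(3,5)$ and the Lunelli--Sce quadrangle of order $(15,17)$ over $\GF(16)$.

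The principal obstacle, and the reason the conjecture is still open, is that none of these steps is currently unconditional. There is no known argument forcing a flag-transitive generalized quadrangle to be point-primitive, and both the affine and product-action primitive types resist complete exclusion: without a transitive system of point-symmetries one loses precisely the $r$-local control of $G_p$ that powered Corollary \ref{cor_GltGp2} and all of Sections \ref{subsec_prel}--\ref{sec_class}. I expect the affine case to be the true crux, requiring either a Hering-type analysis of $G_p\le\GL_d(p)$ fully adapted to the incidence structure, or a strengthening of the $m$-ovoid and homology arguments of Section \ref{secgen} that manufactures a nontrivial symmetry at each point from flag-transitivity alone; the latter would reduce the whole problem to Theorem \ref{Ealygen} and our First Main Result. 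Imposing local primitivity, as in Section \ref{sec_Kanto}, is exactly what restores enough $r$-local structure to make the Wielandt--Thompson approach viable, and removing that hypothesis is the heart of the difficulty.
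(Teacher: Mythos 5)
The statement you were asked about is Kantor's conjecture, which the paper states \emph{as a conjecture} and does not prove: Section \ref{sec_Kanto} only records that it holds under local $2$-arc-transitivity by \cite{BamLiSw21}, and then uses the First Main Result together with the Thompson--Wielandt-like Theorem \ref{ThomWie} to constrain a hypothetical locally primitive counterexample (forcing $G_x^{[2]}=G_y^{[2]}=\{\id\}$ and the dichotomy (a)/(b) on $G_p^{[1]}\cap G_\ell^{[1]}$ and the generalized Fitting subgroups), explicitly leaving the rest to future work. Your proposal, to its credit, admits the same: it is a research program, not a proof, and as such it cannot be accepted as a proof of the statement. The two irreparable gaps you yourself identify are real --- there is no known reduction from flag-transitivity to point-primitivity, and without a transitive family of point-symmetries the $r$-local control of $G_p$ that drives Corollary \ref{cor_GltGp2} and Sections \ref{subsec_prel}--\ref{sec_class} evaporates.

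Beyond the admitted incompleteness, one concrete step in your plan is wrong in direction. You claim the affine O'Nan--Scott type can be excluded ``outside finitely many checkable cases'' and that the two exceptional quadrangles would surface as small-parameter solutions of $(1+s)(1+st)=[G:G_p]$ in the \emph{almost simple} analysis. In fact both exceptions live squarely in the affine case: the quadrangle of order $(3,5)$ has $|\cP|=4\cdot 16=2^6$ and the Lunelli--Sce quadrangle of order $(15,17)$ has $|\cP|=16\cdot 256=2^{12}$; both are translation generalized quadrangles of type $T_2^*(\mathcal{H})$ for a hyperoval $\mathcal{H}$, with point set an affine space and flag-transitive group of affine type. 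So the affine case cannot be eliminated by Hering-type arithmetic --- it must be \emph{classified}, with the hyperoval constructions emerging as the exceptions, and this (together with product actions, cf. \cite{AS}) is precisely where the conjecture remains hard. Your closing observation --- that manufacturing a nontrivial central symmetry at every point from flag-transitivity would reduce the problem to Theorem \ref{Ealygen} --- is a sensible way to connect the conjecture to this paper's results, but no such production of symmetries is known, and the affine examples show any such argument must fail for them (they admit no classical-type symmetry structure), so at best it could apply after the affine case is split off.
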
\label{KanConj}

Let $\cS$ be a finite thick generalized quadrangle of order $(s,t)$ with a flag-transitive automorphism group $G$. Let $\Upgamma$ be the incidence graph of  $\cS$, and let $d(x,y)$ be the distance function on $\Upgamma$ between two vertices $x,y$. For a vertex of $\Upgamma$ and $i\in\mathbb{N}$, we define
\[
 G_x^{[i]}=\{g\in G_x\mid  y^g=y\textup{ for each vertex $y$ with }d(x,y)\le i\}.
\]
By \cite{BamLiSw21}, Conjecture \ref{KanConj} holds if  $G$ is locally $2$-arc-transitive on $\Upgamma$, or equivalently $G$ is transitive on both pairs of collinear points and pairs of concurrent lines by \cite[Lemma 3.2]{localsarc}. It is natural to consider Conjecture \ref{KanConj} under the assumption that $G$ acts locally primitively on $\Upgamma$, i.e., $G_p$ is primitive on the $t+1$ lines through a point $p$ and $G_\ell$ is transitive on the $s+1$ points on a line $\ell$. By combining Theorems 1.1 and 1.2 of \cite{vanBon}, we have the following Thompson-Wielandt-like theorem.

\begin{theorem}\label{ThomWie}
Let $\cS$ be a finite thick generalized quadrangle with an automorphism group $G$, and let $\Upgamma$ be its incidence graph. Suppose that for each vertex $x$, the induced action of $G_x$ on its neighbors in $\Gamma$ is (quasi)primitive. Let $\{x,y\}$ be an edge. Then
\begin{itemize}
  \item[(i)]either $G_x^{[1]}\cap G_{y}^{[1]}$ is an $r$-group for a prime $r$, or (possibly after interchanging $x,y$) $G_x^{[1]}\cap G_{y}^{[1]}=G_x^{[2]}$ and the group $G_y^{[2]}=G_y^{[3]}$ is an $r$-group with $r$ prime;
  \item[(ii)]either there is a prime $r$ such that $F^*(G_{x,y})$, $F^*(G_{x})$ and $F^*(G_{y})$ are $r$-groups, or (possibly after interchanging $x,y$) $G_x^{[1]}\cap G_{y}^{[1]}=G_x^{[2]}$ and $G_y^{[2]}= \{ \id \}$.
\end{itemize}
\end{theorem}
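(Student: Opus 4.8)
The plan is to obtain this statement as a direct specialization, to the incidence graph of $\cS$, of the two general theorems of \cite{vanBon}. The first step is to check that $\Upgamma$ meets the structural hypotheses of that paper. Since $\cS$ is a thick generalized quadrangle, its incidence graph is a \emph{connected} bipartite graph whose bipartition classes are $\cP$ and $\cL$: connectivity follows because any two elements of the geometry are linked by an incidence chain, and bipartiteness is clear. Moreover $\Upgamma$ is biregular, every point being incident with $t+1\ge 3$ lines and every line carrying $s+1\ge 3$ points; in fact, being the incidence graph of a generalized quadrangle, $\Upgamma$ has girth $8$ and diameter $4$, so the iterated stabilizers $G_x^{[i]}$ for $i\le 3$ appearing in the statement are all meaningful. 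The standing assumption that, for each vertex $x$, the group $G_x$ induces a (quasi)primitive permutation group on the set $\Upgamma(x)$ of neighbors of $x$ is precisely the local (quasi)primitivity hypothesis under which \cite{vanBon} operates.

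Once the hypotheses are in place, the proof is simply to quote the two results. I would derive part (i) from \cite[Theorem~1.1]{vanBon}: for any edge $\{x,y\}$ of a connected, locally quasiprimitive graph, either the edge kernel $G_x^{[1]}\cap G_y^{[1]}$ is an $r$-group for some prime $r$, or, after possibly interchanging $x$ and $y$, one has $G_x^{[1]}\cap G_y^{[1]}=G_x^{[2]}$ and $G_y^{[2]}=G_y^{[3]}$ is an $r$-group. Part (ii) is then the generalized-Fitting-subgroup refinement provided by \cite[Theorem~1.2]{vanBon}: either a single prime $r$ works simultaneously for $F^*(G_{x,y})$, $F^*(G_x)$ and $F^*(G_y)$, or the degenerate alternative $G_x^{[1]}\cap G_y^{[1]}=G_x^{[2]}$ with $G_y^{[2]}=\{\id\}$ holds after swapping $x$ and $y$. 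Concatenating the two conclusions yields the theorem verbatim.

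The only genuine care required — and the step I expect to absorb most of the attention — is the reconciliation of conventions between \cite{vanBon} and the present setting. One must confirm that the indexing of the ball-stabilizers $G_x^{[i]}$ agrees, that the local primitivity is being imposed on the correct side (recalling that the two bipartition classes of $\Upgamma$ have \emph{different} valencies $s+1$ and $t+1$, and hence a priori different induced local actions), and that van Bon's hypotheses genuinely cover the quasiprimitive case and not merely the primitive one. No independent group-theoretic argument beyond this bookkeeping is needed; the substance of the theorem rests entirely on the two cited results.
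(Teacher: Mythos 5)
Your proposal is correct and follows essentially the same route as the paper, which derives the statement precisely by combining Theorems 1.1 and 1.2 of \cite{vanBon} applied to the incidence graph of $\cS$. Your additional verification of the hypotheses (connectivity, biregularity with valencies $s+1,t+1\ge 3$, and the local (quasi)primitivity assumption) is sound bookkeeping that the paper leaves implicit.
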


Suppose that $\cS$ is a counterexample to Conjecture \ref{KanConj}, and assume that it has an automorphism group $G$ that is locally primitive on the incidence graph. Since $G$ is flag-transitive and $\cS$ is not a classical generalized quadrangle, we deduce from our first main result and its point-line dual that $G_x^{[2]}=G_y^{[2]} = \{ \id \}$ for each point $x$ and each line $y$. Take an incident point-line pair $(p,\ell)$. By Theorem \ref{ThomWie}, we have the following two possibilities:
\begin{itemize}
  \item[(a)] $G_p^{[1]}\cap G_\ell^{[1]} = \{ \id \}$, or
  \item[(b)] there is a prime $r$ such that $G_p^{[1]}\cap G_\ell^{[1]}$ is a nontrivial $r$-group, and the generalized Fitting subgroups of $G_{p,\ell}$, $G_p$ and $G_\ell$ are all $r$-groups.
\end{itemize}
We observe that $G_p^{[1]}\cap G_\ell^{[1]}$ is a normal subgroup of $G_{p,\ell}$, and it is contained in the Fitting subgroup of the latter group in the case (b).
It would be natural to examine properties of the subgroups $G_p^{[1]}$ and $G_\ell^{[1]}$, which we leave for future work.

\bigskip

\newpage
\section{Appendix: Ealy's arguments}
Let $\mS=(\mP,\mL)$ be a thick generalized quadrangle of order $(s,t)$. Let $E(p)$ be a group of symmetries about a point $p$, and suppose that there is a prime $r$ such that $|E(p)|\equiv 0\pmod{r}$ for each point $p$. Let $G=\la E(p):p\in\mP\ra$. In this appendix, we reproduce Ealy's proofs in \cite{Ealy} for the results that we quote following Theorem \ref{thm_EalyOdd}.
\begin{lemma}\label{lem_ExEycom}
Let $x,y,z$ be three distinct points.
\begin{enumerate}
  \item[(1)] We have $E(x)\cap E(y)= \{ \id \}$, and $[E(x),E(y)]= \{ \id\}$ if and only if $x\sim y$.
  \item[(2)] If $x,y,z$ are collinear, then $E(x)E(y)\cap E(z)= \{ \id \}$.
\end{enumerate}
\end{lemma}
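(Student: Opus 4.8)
The plan is to derive everything from the two basic facts already in force: each symmetry group $E(p)$ acts freely on $\cP\setminus p^\perp$ (Theorem~III.2.3 of \cite{Ealy}), and a thick generalized quadrangle contains no triangles, so two distinct collinear points have no common neighbour off the line joining them. For the first assertion of (1), take $g\in E(x)\cap E(y)$, so that $g$ fixes $x^\perp\cup y^\perp$ pointwise. If $x\not\sim y$, then $y\in\cP\setminus x^\perp$ is fixed by $g\in E(x)$, whence $g=\id$ by freeness. If $x\sim y$, I would choose a line $m\neq xy$ through $y$ and a point $w\in m\setminus\{y\}$; then $w\in y^\perp\setminus x^\perp$ (otherwise $x,y,w$ is a triangle), and freeness of $E(x)$ again forces $g=\id$.

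For the commutator statement, first suppose $x\sim y$ and fix $a\in E(x)$, $b\in E(y)$. Since $y\in x^\perp$ and $x\in y^\perp$, each of $a,b$ fixes the other's centre, hence preserves the other's perp. A direct four-step evaluation of $c:=[a,b]=a^{-1}b^{-1}ab$ on a point $p\in x^\perp$ shows $p^c=p$ (the factors $a^{-1},a$ act trivially on $x^\perp$, while $b^{-1},b$ cancel after $b^{-1}$ carries $p$ inside $x^\perp$), and symmetrically for $p\in y^\perp$. Thus $c$ fixes $x^\perp\cup y^\perp$ pointwise, so $c$ is a symmetry about $x$; applied to the point $w\in y^\perp\setminus x^\perp$ found above, $c$ fixes $w\notin x^\perp$, and freeness yields $c=\id$, i.e. $[E(x),E(y)]=\{\id\}$. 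Conversely, if $x\not\sim y$, pick nontrivial $a\in E(x)$, $b\in E(y)$; then $y^a\neq y$ by freeness, so $a^{-1}ba$ is a nontrivial symmetry whose unique centre is $y^a\neq y$. Were $a,b$ to commute, we would get $a^{-1}ba=b$, forcing $b$ to be a nontrivial symmetry about two distinct centres, contradicting freeness exactly as in the intersection case. Hence $[E(x),E(y)]\neq\{\id\}$.

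For (2), let $g=ab\in E(z)$ with $a\in E(x)$, $b\in E(y)$ and $x,y,z$ collinear on a line $\ell$. I would choose a line $m\neq\ell$ through $x$ and a point $w\in m\setminus\{x\}$, so that $w\sim x$ but $w\not\sim y$ and $w\not\sim z$. Since $a$ fixes $w\in x^\perp$, we have $w^g=w^b$. By the covering property (Lemma~\ref{lem_cover}), applied to $b$ about $y$ and to $g$ about $z$, the point $w^g=w^b$ lies in both $\{y,w\}^{\perp\!\perp}$ and $\{z,w\}^{\perp\!\perp}$. The crux is to prove that these two hyperbolic lines meet only in $w$: one first checks that $\{y,w\}^\perp\cap\{z,w\}^\perp=\{x\}$, because a common neighbour of $y$ and $z$ must lie on $\ell$ and $w$'s only neighbour on $\ell$ is $x$. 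Any point of $\{y,w\}^{\perp\!\perp}\cap\{z,w\}^{\perp\!\perp}$ is then collinear with all $2t+1$ points of $\{y,w\}^\perp\cup\{z,w\}^\perp$, and so is $w$; comparing with the number of common neighbours of a point-pair (at most $t+1$ in the non-collinear case, while in the collinear case they would all lie on a single line, impossible since $\{y,w\}^\perp$ is a set of pairwise non-collinear points) forces that point to be $w$. Hence $w^g=w$ with $w\notin z^\perp$, and freeness of $E(z)$ gives $g=\id$.

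The bookkeeping in (1) is routine, and the reduction to freeness is automatic there. The one genuinely geometric step, which I expect to be the main obstacle to write cleanly, is the claim in (2) that $\{y,w\}^{\perp\!\perp}$ and $\{z,w\}^{\perp\!\perp}$ intersect in the single point $w$; this rests on the exact count $\lvert\{y,w\}^\perp\cap\{z,w\}^\perp\rvert=1$ together with the pairwise non-collinearity of a perp-set, and it is this over-determination of common neighbours that plays the role of the ``linear independence of directions'' visible in the classical models.
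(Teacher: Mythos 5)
Your proof is correct, and it takes a genuinely more self-contained route than the paper's. The paper disposes of the lemma almost entirely by citation: the intersection claim is handled by examining the fixed substructure of an element of $E(x)\cap E(y)$ (the same fact as your freeness argument), the commutator criterion is quoted as the point-line dual of \cite[8.1.3(i)]{FGQ}, and part (2) is deduced from part (1) together with \cite[8.1.3(ii)]{FGQ}, which says that a product of two nonidentity symmetries about distinct points is not a symmetry about \emph{any} point. You instead reprove exactly the special cases needed: your four-step evaluation showing that $[a,b]$ fixes $x^\perp\cup y^\perp$ pointwise is in effect the standard proof of 8.1.3(i), and your hyperbolic-line argument in (2) --- establishing $\{y,w\}^{\perp\!\perp}\cap\{z,w\}^{\perp\!\perp}=\{w\}$ by comparing with the $t+1$ common neighbours of a noncollinear pair and the ``one point per line'' constraint in the collinear case --- is a direct substitute for 8.1.3(ii) in the only situation the lemma requires, namely $x,y,z$ collinear. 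What the paper's route buys is brevity and a stronger intermediate statement (valid for arbitrary distinct centres, not just collinear ones); what yours buys is independence from FGQ, using only the freeness of symmetries and Lemma \ref{lem_cover}, neither of which creates circularity here. One point you should make explicit: the commutator $c=[a,b]$ need not lie in $E(x)$, since $E(x)$ is merely \emph{a} group of symmetries about $x$; freeness must therefore be applied to $c$ as a symmetry about $x$ (e.g.\ apply Ealy's Theorem~III.2.3 to $\langle c\rangle$, or invoke freeness of the full group $\widetilde{E}(x)$), not literally to $E(x)$. This is cosmetic, not a gap, since the fixed-substructure argument behind freeness applies to every symmetry.
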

\begin{proof}
For $g\in E(x)\cap E(y)$, we have $g = \id$ by examining its fixed substructure $\cS_g=(\cP_g,\cL_g)$. The second claim in (1) is the point-line dual version of \cite[8.1.3 (i)]{FGQ}. For a nonidentity $g\in E(x)$ and nonidentity $h\in E(y)$, $gh$ is not a symmetry about any point by \cite[8.1.3 (ii)]{FGQ}. The claim (2) then follows from (1). This completes the proof.
\end{proof}

\begin{corollary}\label{cor_IuperpGl}
Let $u$ be a point and $\ell$ be a line of $\cS$.
\begin{enumerate}
  \item[(1)] The group $I(u^\perp)$ is transitive on $u^\perp\setminus\{u\}$, and $I(u^\perp)\unlhd G_u$.
  \item[(2)] The setwise stabilizer $G_\ell$ is $2$-transitive on the points of $\ell$.
\end{enumerate}
\end{corollary}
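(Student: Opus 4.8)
The plan is to prove both parts from the freeness of the point‑symmetry groups on the complement of a perp, together with a short parity count, so that nothing downstream (global transitivity, $2$‑transitivity) is assumed. I would first dispatch the easy structural assertions of part~(1). Every $v\in u^\perp$ is collinear with $u$, so each symmetry in $E(v)$ fixes $u$; hence $E(v)\le G_u$ for all $v\in u^\perp$ and therefore $I(u^\perp)=\la E(v):v\in u^\perp\ra\le G_u$. For normality, an element $g\in G_u$ fixes $u$ and preserves collinearity, so it permutes $u^\perp$; since conjugation by $g$ carries the symmetries about $v$ to those about $v^g$, we have $E(v)^g=E(v^g)$ with $v^g\in u^\perp$, whence $I(u^\perp)^g=I(u^\perp)$ and $I(u^\perp)\unlhd G_u$.

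The heart of the matter is the transitivity of $I(u^\perp)$ on $u^\perp\setminus\{u\}$, which I would obtain by an orbit count modulo $r$. First suppose $v,w\in u^\perp\setminus\{u\}$ are \emph{not} collinear, and set $A=\la E(v),E(w)\ra\le I(u^\perp)$. Since $E(v)$ and $E(w)$ fix $\{v,w\}^\perp$ pointwise, $A$ stabilises the hyperbolic line $\{v,w\}^{\perp\!\perp}$, so the orbit $\Omega=v^A$ lies in $\{v,w\}^{\perp\!\perp}$ and consists of pairwise non‑collinear points. As $E(v)$ fixes $v$ and acts freely on $\Omega\setminus\{v\}\subseteq\cP\setminus v^\perp$, we get $|\Omega|\equiv 1\pmod r$; the same reasoning for $E(w)$ gives $|\Omega|\equiv[\![w\in\Omega]\!]\pmod r$, forcing $w\in\Omega$. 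Thus any two non‑collinear points of $u^\perp\setminus\{u\}$ lie in one $I(u^\perp)$‑orbit. If instead $v\sim w$, they lie on a common line $\ell$ through $u$; choosing a line $\ell'\ne\ell$ through $u$ (possible as $t>1$) and a point $x\in\ell'\setminus\{u\}$, one checks $x\not\sim v$ and $x\not\sim w$ (otherwise a triangle appears), so by the previous case $v,x$ and $w,x$ are orbit‑equivalent, hence so are $v,w$. This proves transitivity on $u^\perp\setminus\{u\}$, and applying any such mover to the line $uv$ gives transitivity of $I(u^\perp)$ on the lines through $u$.

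For part~(2) I would establish the two ingredients of $2$‑transitivity directly. Fix a line $\ell$ and a point $u\in\ell$, and put $L=\mathrm{Stab}_{I(u^\perp)}(\ell)$. Because $I(u^\perp)$ is transitive both on $u^\perp\setminus\{u\}$ (of size $s(t+1)$) and on the $t+1$ lines through $u$, orbit–stabiliser gives $[I(u^\perp):L]=t+1$, while the stabiliser of a point $v\in\ell\setminus\{u\}$ has index $s(t+1)$ and lies in $L$; hence the $L$‑orbit of $v$ has length $s=|\ell\setminus\{u\}|$, so $L$ is transitive on $\ell\setminus\{u\}$. As $L\le G_u\cap G_\ell$, the stabiliser $(G_\ell)_u$ is transitive on $\ell\setminus\{u\}$. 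Running the same construction at a second point $u'\in\ell$ yields $L'\le G_\ell$ fixing $u'$ and transitive on $\ell\setminus\{u'\}$; since $u\ne u'$, the group $\la L,L'\ra\le G_\ell$ already sends $u$ to every point of $\ell$, so $G_\ell$ is transitive on $\ell$. Transitivity on $\ell$ together with transitivity of the point stabiliser gives the $2$‑transitivity of $G_\ell$.

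The genuine difficulty is the collinear case of part~(1): a single point‑symmetry centred on $u^\perp$ can never both stabilise a line $\ell\ni u$ setwise and move its points — symmetries centred on $\ell$ fix it pointwise, those centred off $\ell$ do not preserve it — so two collinear points of $u^\perp$ are not joined by any obvious generator, and the parity count constrains only non‑collinear pairs. The resolution is exactly the routing through an auxiliary point $x$ on a third line through $u$; the only geometric checks needed are the exclusion of triangles and the pairwise non‑collinearity of $\{v,w\}^{\perp\!\perp}$, both immediate in a thick quadrangle. A minor technical point is the identity $E(v)^g=E(v^g)$ used for normality, which holds for the full symmetry groups and for the $G$‑invariant families $\{E(\cdot)\}$ fixed in Proposition~\ref{prop_adjEr}.
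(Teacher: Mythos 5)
Your proof is correct and follows essentially the same route as the paper: the mod-$r$ orbit count inside $\{v,w\}^{\perp\!\perp}$ (the paper's Lemma \ref{lem_Omegauv}) handles noncollinear pairs, routing through a third point on another line through $u$ handles collinear pairs, and $2$-transitivity of $G_\ell$ is deduced from transitivity of the point stabilizers at every point of $\ell$. The only cosmetic difference is in part (2), where you use an orbit–stabilizer index count instead of the paper's direct observation that any element of $I(u^\perp)$ sending one point of $\ell\setminus\{u\}$ to another fixes $u$ and hence automatically stabilizes $\ell$.
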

\begin{proof}
(1) It is clear that $I(u^\perp)\unlhd G_u$. For two noncollinear points $x,y$ in $u^\perp$, there is an element of $I(x,y)$ that maps $x$ to $y$ by Lemma \ref{lem_Omegauv}.  For two collinear points $x,y$ in $u^\perp$, there is a point $w\in u^\perp$ that is not collinear with either of them. We deduce that there is an element $I(\{x,y,z\})$ that maps $x$ to $y$ by applying  Lemma \ref{lem_Omegauv} to the pairs $(x,z)$ and $(z,y)$.

(2) Take any point $x$ on $\ell$. The setwise stabilizer of $\ell$ in $I(x^\perp)$ is transitive on $\ell\setminus\{x\}$ by (1), so $G_{\ell,x}$ is transitive on $\ell\setminus\{x\}$. It holds for all $x\in\ell$, and the claim follows.
\end{proof}

The collinearity graph $\Upgamma$ of the GQ $\cS=(\cP,\cL)$ is the graph whose vertex set is $\cP$ and such that two vertices are adjacent if and only if they are collinear. It is a strongly regular graph with parameters $(v,k,\lambda,\mu)=((s+1)(st+1),s(t+1),s-1,t+1)$, cf.\cite[Proposition 2.2.18]{SRG}. In particular, it is connected and has diameter $2$.  We deduce from Corollary \ref{cor_IuperpGl} that:
\begin{enumerate}
  \item[(a)] $G$ is transitive on the set of collinear point pairs,
  \item[(b)] $G$ is transitive on points, lines and flags respectively.
\end{enumerate}

\begin{lemma}\label{lem_blockOv}
A hypothetical block $B$ for the action of $G$ on $\cP$  is an ovoid of $\cS$ whose size is $st+1$.
\end{lemma}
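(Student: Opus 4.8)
The plan is to assume that $B$ is a nontrivial block (so that $1<|B|<|\cP|$) and to argue in two stages: first that $B$ must be a set of pairwise noncollinear points, and then that such a set has exactly $st+1$ points and meets every line, i.e.\ is an ovoid. Throughout I would use that, since $G$ is transitive on $\cP$ by property (b) above, the $G$-translates of $B$ partition $\cP$ into blocks of one common size, that $G$ permutes them transitively, and that the setwise stabiliser $G_B$ acts transitively on $B$: given $y\in B$ choose $g$ with $x^g=y$; then $B^g$ is a block meeting $B$ in $y$, so $B^g=B$ and $g\in G_B$.

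First I would show $B$ is a coclique of the collinearity graph $\Upgamma$. Suppose $x,y\in B$ with $x\sim y$ and $x\ne y$. Since $G_x\le G_B$ and, by Corollary~\ref{cor_IuperpGl}(1), $G_x\supseteq I(x^\perp)$ is transitive on $x^\perp\setminus\{x\}$, the $G_x$-orbit of $y$ fills out $x^\perp\setminus\{x\}$, so $x^\perp\subseteq B$. Applying the same argument to any $u\in x^\perp\setminus\{x\}\subseteq B$ (which has the neighbour $x\in B$) gives $u^\perp\subseteq B$; as $\Upgamma$ is connected of diameter $2$, one further iteration covers all of $\cP$, forcing $B=\cP$, a contradiction. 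Hence no line carries two points of $B$, and the ratio bound for $\Upgamma$ (equivalently the partial-ovoid bound, cf.\ \cite{FGQ}) gives $|B|\le st+1$, with equality precisely when $B$ meets every line.

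For the size I would fix $x\in B$ and, for an external neighbour $z\in x^\perp\setminus\{x\}$, study $n(z):=|\{b\in B:b\sim z\}|$. Because $G_x\le G_B$ fixes $B$ setwise and is transitive on $x^\perp\setminus\{x\}$, the number $n(z)=:n$ is independent of $z$; double counting the collinear pairs $(b,z)$ with $b\in B$, $z\in x^\perp\setminus\{x\}$ (using that two noncollinear points have exactly $t+1$ common neighbours) yields $s(t+1)\,n=(t+1)(s+|B|-1)$, hence $n=1+(|B|-1)/s\le t+1$. Moreover the full symmetry group $E(x)$ fixes $z$, sends each line $zb$ (with $b\in B\setminus\{x\}$, $b\sim z$) to a line $zb^{g}$ through $z$ still meeting $B$, and acts freely on $\cP\setminus x^\perp$ by \cite{Ealy}; so it permutes freely the $n-1$ points of $B\setminus\{x\}$ collinear with $z$, giving $r\mid n-1$ (indeed $|E(x)|\mid n-1$), consistent with $t\equiv 0\pmod{|E(x)|}$ from Lemma~\ref{lem_stcond}. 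Finally, if some line $\ell$ were disjoint from $B$, projecting each point of $B$ to $\ell$ gives $\sum_{z\in\ell}n(z)=|B|$ with every summand in $\{0,n\}$, whence $n\mid |B|=s(n-1)+1$ and therefore $n\mid s-1$; otherwise every line meets $B$, so $|B|(t+1)=(t+1)(st+1)$ and $B$ is an ovoid of size $st+1$, as desired.

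The main obstacle is exactly this last dichotomy: the combinatorics only give $|B|\le st+1$ (and the chromatic-type bound only gives at least $s+1$ blocks), so one must rule out the ``small block'' alternative $n\mid s-1$. Here I expect to need the finer group structure — the $2$-transitivity of $G_\ell$ on the points of a line (Corollary~\ref{cor_IuperpGl}(2)) together with line- and block-transitivity of $G$, which make the induced incidence of blocks and lines so homogeneous that a proper sub-ovoid block cannot occur, forcing $n=t+1$. Pinning down this homogeneity argument, rather than the coclique and constancy steps, is where the real work lies, and it is the step I would develop most carefully.
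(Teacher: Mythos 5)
Your first two stages are sound and run parallel to the paper: the coclique argument (via $I(x^\perp)\le G_x\le G_B$ plus connectivity) is exactly the paper's opening step, and your count $|B|=s(n-1)+1$ together with $|E(x)|\mid n-1$ is correct bookkeeping. But the proof is not complete, and you say so yourself: the alternative in which some line $\ell$ is disjoint from $B$ (your case $n\mid s-1$) is precisely the heart of the lemma, and you leave it open, offering only a vague hope that $2$-transitivity of $G_\ell$ and block--line ``homogeneity'' will exclude it. That hope is not obviously realizable --- $B$ meets every line in at most one point, so the action of $G_\ell$ on the points of $\ell$ does not see the block in any evident way --- and the case cannot be dismissed by arithmetic: for instance $s=4$, $t=2$, $n=3$, $r=2$, $|B|=9$ satisfies all of your constraints ($n\mid s-1$, $r\mid n-1$, $|B|$ dividing $|\cP|=45$, $n\le t+1$), so finer use of the symmetry groups is unavoidable.

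The paper closes exactly this case with a two-congruence trick, and it slots directly into your setup. Suppose $\ell$ misses $B$, and let $Y$ be the set of projections of the points of $B$ onto $\ell$; then $|Y|=|B|/n\ge 2$ (if $|Y|=1$ then $|B|=n$, contradicting $|B|=s(n-1)+1$ with $s>1$, $n\ge 2$). Fix $x\in Y$ and $u\in B$ with $u\sim x$. Any group $E(c)$ whose center $c$ is collinear with a point of $B$ stabilizes $B$, since it fixes that point and blocks meeting in a point coincide. Now compute $|B\cap x^\perp|$ modulo $r$ in two ways, using that $E(c)$ acts freely off $c^\perp$ and that common neighbours of two collinear points all lie on their joining line (no triangles). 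For a point $a$ on the line $ux$ with $a\ne u,x$: $E(a)$ stabilizes $B$, fixes $x$, and its only fixed point in $B\cap x^\perp$ is $u$ (the unique point of $B$ on the line $ux$), so $|B\cap x^\perp|\equiv 1\pmod{r}$. For $y\in Y\setminus\{x\}$: $E(y)$ stabilizes $B$, fixes $x$, and has no fixed point in $B\cap x^\perp$, since such a point would be a point of $B$ on the line $xy=\ell$, which is empty; so $|B\cap x^\perp|\equiv 0\pmod{r}$. This contradiction shows every line meets $B$, whence your final count gives $|B|=st+1$ and $B$ is an ovoid. So your framework is completable, but the step you deferred is the one genuinely nontrivial step, and the mechanism that closes it is this double congruence, not line-stabilizer homogeneity.
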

\begin{proof}
We first show that all points of $B$ are mutually noncollinear, i.e., $B$ is a partial ovoid. Suppose to the contrary that $B$ has two collinear points $u,v$. Then $B=B^g$ for $g\in I(u^\perp)$, and $B$ contains $u^\perp$ by Corollary \ref{cor_IuperpGl}. We deduce that $B$ contains all points of $\cP$: a contradiction. As a corollary, we deduce from the fact $B$ is $E(u)$-invariant that $|B|\equiv 1\pmod{r}$.

{\color{black} Take two points $u,v\in B$ and a point $x\in\{u,v\}^\perp$.} Each line through $x$ contains at most one point of $B$ by the first paragraph. We claim that $|x^\perp\cap B|=t+1$. Suppose to the contrary that there is a line $\ell$ through $x$ that contains no point of $B$. We consider the set $Y = \{\texttt{proj}_\ell(\widetilde{z}) : \widetilde{z} \in B\}$, which contains $x$ and has size at least $2$ by the previous paragraph.
For each $y\in Y$, the group $E(y)$ stabilizes a point of $B$ and so $B=B^y$, and also $E(y)$ fixes $x$. It follows that $B\cap x^\perp$ is $E(y)$-invariant for each $y\in Y$. Letting $E(a)$ act on $B \cap x^{\perp}$, $a \in \{ u, x\}^{\perp} \setminus \{ u, x \}$, we obtain $|B\cap x^\perp|\equiv 1\pmod{r}$. Taking $y\in Y\setminus\{x\}$ and letting $E(y)$ act, we obtain $|B\cap x^\perp|\equiv 0\pmod{r}$: a contradiction.

Consider $B\cap w^\perp$, where $w$ ranges  over points on the line $\ell$ through $u,x$ except for $u$. Since $I(u^\perp)_\ell$ is transitive on $\ell\setminus\{u\}$ and stabilizes $B$, we deduce that they have the same size $t+1$. Also, it is easy to see that they pairwise intersect in $u$. It follows that $|B|\ge 1+st$. Since a partial ovoid has size at most $st+1$, we deduce that $|B|=1+st$.
\end{proof}

\begin{theorem}\label{thm_Gprim}
 The group $G=I(\cP)$ is primitive on $\cP$.
\end{theorem}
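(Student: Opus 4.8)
The plan is to rule out nontrivial blocks of imprimitivity by combining the ovoid description of blocks from Lemma \ref{lem_blockOv} with the defining property of a symmetry. Suppose, for contradiction, that the transitive action of $G$ on $\cP$ is imprimitive, and let $B$ be a nontrivial block. By Lemma \ref{lem_blockOv}, $B$ is an ovoid of size $st+1$, so its $G$-translates form a system of imprimitivity partitioning $\cP$ into exactly $\vert\cP\vert/\vert B\vert = s+1$ ovoids. Since $s>1$, there are $s+1\ge 3$ such blocks, and $G$ acts transitively on them because it is transitive on $\cP$ (for any two blocks choose a point in each and an element of $G$ carrying the first to the second).

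The crux of the argument is to show that each generating subgroup $E(u)$ fixes \emph{every} block setwise. Fix a point $u$ and an arbitrary block $B'$. Because $B'$ is an ovoid, it meets every line through $u$ in exactly one point, so $B'\cap u^\perp\ne\emptyset$: it equals $\{u\}$ when $u\in B'$, and consists of $t+1$ points otherwise. Now any $g\in E(u)$ fixes $u^\perp$ pointwise, hence fixes each point of $B'\cap u^\perp$; on the other hand $g$ permutes the blocks and sends $B'$ to some block $(B')^g$. The points of $B'\cap u^\perp$ therefore lie in both $B'$ and $(B')^g$, and since distinct blocks of a system of imprimitivity are disjoint, this forces $(B')^g=B'$. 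As $g$ and $B'$ were arbitrary, every $E(u)$ stabilizes every block.

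To finish, I would observe that $G=\langle E(u):u\in\cP\rangle$ is generated by subgroups each fixing all of the blocks, so $G$ itself fixes every block setwise; this contradicts the transitivity of $G$ on the set of $s+1\ge 3$ blocks established in the first step. Hence no nontrivial block can exist, and $G$ is primitive on $\cP$. I expect the only real obstacle to be the middle step, the claim that $E(u)$ fixes every block, since it is where the geometry enters: it rests on the fact that an ovoid necessarily meets the neighbourhood $u^\perp$ of every point together with the fact that a symmetry about $u$ fixes $u^\perp$ pointwise. Once this is secured the contradiction is purely formal, requiring no counting or eigenvalue estimates.
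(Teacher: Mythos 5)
Your proof is correct and takes essentially the same route as the paper's: a nontrivial block is an ovoid by Lemma \ref{lem_blockOv}, every ovoid meets $u^\perp$ nontrivially for each point $u$, and since each $g\in E(u)$ fixes $u^\perp$ pointwise, disjointness of blocks forces every block to be $E(u)$-invariant, hence $G$-invariant, contradicting transitivity. The only cosmetic differences are that you derive the needed incidence fact directly from the ovoid property where the paper cites the theory of intriguing sets, and you phrase the contradiction via transitivity on the block system rather than concluding $B=\cP$.
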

\begin{proof}
Let $B$ be a nontrivial block for the action of $G$ on $\cP$. Then $B$ is an ovoid by Lemma \ref{lem_blockOv}.  By the theory of intriguing sets in \cite{TSmOv}, we have $|x^\perp\cap B|=1+t$ for each $x\not\in B$. It follows that $B$ is $E(x)$-invariant for all points $x$, i.e., $B$ is $G$-invariant. We conclude that $B=\cP$: a contradiction. Therefore, $G$ is primitive on the point set $\cP$ as desired.
\end{proof}

\end{document}